\title[Stability of the conical K\"ahler-Ricci flows on Fano manifolds]{Stability of the conical K\"ahler-Ricci flows on Fano manifolds}
\author{Jiawei}{Liu}{F.~Lastname}{Magdeburg}
\author{Xi}{Zhang}{F.~Lastname}{Hefei}
\theoremstyle{plain}
 \newtheorem{thm}{Theorem}[section]
\newtheorem{lem}[thm]{Lemma}
\newtheorem{rem}[thm]{Remark}
\newtheorem{pro}[thm]{Proposition}
\newtheorem{defi}[thm]{Definition}
\begin{document}

\begin{abstract}
In this paper, we study the stability of the conical K\"ahler-Ricci flows on Fano manifolds. That is, if there exists a conical K\"ahler-Einstein metric with cone angle $2\pi\beta$ along the divisor, then for any $\beta'$ sufficiently close to $\beta$, the corresponding conical K\"ahler-Ricci flow converges to a conical K\"ahler-Einstein metric with cone angle $2\pi\beta'$ along the divisor. Here, we only use the condition that the Log Mabuchi energy is bounded from below. This is a weaker condition than the properness that we have adopted to study the convergence in \cite{JWLXZ,JWLXZ1}. As corollaries, we give parabolic proofs of Donaldson's openness theorem \cite{SD2} and his existence conjecture \cite{SD2009} for the conical K\"ahler-Einstein metrics with positive Ricci curvatures. 
\end{abstract}


\section{Introduction}\label{sec:intro}

Since the conical K\"ahler-Einstein metrics play an important role in the solution of the Yau-Tian-Donaldson's conjecture which has been proved by Chen-Donaldson-Sun \cite{CDS1,CDS2,CDS3} and Tian \cite{T1}, the existence and geometry of the conical K\"ahler-Einstein metrics have been widely concerned. The conical K\"ahler-Einstein metrics have been studied by Berman \cite{RB}, Brendle \cite{SB}, Campana-Guenancia-P$\breve{a}$un \cite{CGP}, Donaldson \cite{SD2}, Guenancia-P$\breve{a}$un \cite{GP1}, Guo-Song \cite{GuoSong1, GuoSong2}, Jeffres \cite{TJEF}, Jeffres-Mazzeo-Rubinstein \cite{JMR}, Li-Sun \cite{LS}, Mazzeo \cite{MAZZ}, Song-Wang \cite{SW}, Tian-Wang \cite{TianWang} and Yao \cite{Yao1} etc. For more details, readers can refer to Rubinstein's article \cite{RUBIN}.

The conical K\"ahler-Ricci flows were introduced to attack the existence of the conical K\"ahler-Einstein metrics. These flows were first proposed in Jeffres-Mazzeo-Rubinstein's paper (see Section $2.5$ in \cite{JMR}), then Song-Wang (conjecture $5.2$ in \cite{SW}) made a conjecture on the relation between the convergence of these flows and the greatest Ricci lower bounds of the manifolds. Then the existence, regularity and limit behavior of the conical K\"ahler-Ricci flows have been studied by Chen-Wang \cite{CW,CW1}, Edwards \cite{GEDWA,GEDWA1}, Liu-Zhang \cite{JWLCJZ}, Liu-Zhang \cite{JWLXZ,JWLXZ1}, Nomura \cite{Nomura}, Shen \cite{LMSH1,LMSH2},  Wang \cite{YQW} and Zhang \cite{YSZ,YSZ1} etc.

Let $M$ be a Fano manifold with complex dimension $n$, $\omega_0\in c_1(M)$ be a smooth K\"ahler metric and $D\in|-\lambda K_{M}|$ ($0<\lambda\in \mathbb{Q}$) be a smooth divisor. Assume that the K\"ahler current $\hat{\omega}\in c_{1}(M)$ admits $L^{p}$-density with respect to $\omega_{0}^{n}$ for some $p>1$ and satisfies $\int_{M}\hat{\omega}^{n}=\int_{M}\omega_{0}^{n}$. Let $\gamma\in(0,1)$ and $\mu_\gamma=1-(1-\gamma)\lambda$. The conical K\"ahler-Ricci flows take the following form.
$$(CKRF_{\mu_\gamma}):\ \ \ \ \left\{
\begin{aligned}
 &\ \frac{\partial \omega_{\gamma}(t)}{\partial t}=-Ric(\omega_{\gamma}(t))+\mu_{\gamma}\omega_{\gamma}(t)+(1-\gamma)[D],\\
  \\
 &\ \omega_{\gamma}(t)|_{t=0}=\hat{\omega}\\
\end{aligned}
\right.$$
where $[D]$ is the current of integration along $D$. In \cite{JWLXZ1}, we proved the existence of these flows by using smooth approximation (see also \cite{JWLXZ,YQW} for the stronger initial metrics).  Let $c_{1,\gamma}(M)=c_1(M)-(1-\gamma)[D]$ be the twisted first Chern class. Then $c_{1,\gamma}(M)=\mu_\gamma[\omega_0]$ in the Fano case. When $\mu_\gamma$ is negative or zero, Chen-Wang \cite{CW1} proved that the corresponding conical K\"ahler-Ricci flow converges to a conical K\"ahler-Einstein metric with Ricci curvature $\mu_\gamma$ and cone angle $2\pi\gamma$ along $D$. When $\mu_\gamma>0$ is sufficiently small and $\lambda\geqslant1$, Li-Sun (see section $2.3$ in \cite{LS}, when $\lambda=1$, see also Berman's work \cite{RB} and Jeffres-Mazzeo-Rubinstein's work \cite{JMR}) proved that the Log Mabuchi energy $\mathcal{M}_{\mu_\gamma}$ is proper by using its definition and the property that the Log $\alpha$-invariant is positive. Then the convergence of the conical K\"ahler-Ricci flows $(CKRF_{\mu_\gamma})$ follows from the arguments in \cite{JWLXZ1}. In other $\mu_\gamma>0$ cases, there are obstacles. In \cite{JWLXZ1}, under the assumptions that there exists a conical K\"ahler-Einstein metric with Ricci curvature $\mu_\gamma$ and cone angle $2\pi\gamma$ along $D$ and no nontrivial holomorphic vector fields tangent to $D$, we deduced their convergence.

If there exists a conical K\"ahler-Einstein metric $\omega_{\varphi_\beta}$ with cone angle $2\pi\beta$ along $D$, then $\omega_{\varphi_\beta}$ obtains the minimum of the Log Mabuchi energy $\mathcal{M}_{\mu_\beta}$ (Corollary $2.10$ in \cite{LS}), and thus this energy is bounded from blow. Furthermore, when $\lambda\geqslant1$, Li-Sun (Corollary $1.7$ in \cite{LS})  proved that $\mathcal{M}_{\mu_\beta}$ is proper by using Berman's properness theorem (Theorem $1.5 $ in \cite{RB}) and Donaldson's openness theorem (Theorem $2$ in \cite{SD2}). For $\lambda>0$, Tian-Zhu (Theorem $0.1$ in \cite{GTXHZ05}) proved this property by assuming in addition that there is no nontrivial holomorphic vector fields tangent to $D$. Darvas-Rubinstein solved Tian's properness conjectures and gave more general properness theorems (Theorem $2.12$ in \cite{TDYR}). Under the above assumptions, by using the properness of $\mathcal{M}_{\mu_\beta}$ and the uniform Perelman's estimates, we \cite{JWLXZ,JWLXZ1} proved that the conical K\"ahler-Ricci flows $(CKRF_{\mu_\beta})$ with $\mu_\beta>0$ converge to $\omega_{\varphi_\beta}$. It is worth noting that the properness of $\mathcal{M}_{\mu_\beta}$ is a stronger condition than that $\mathcal{M}_{\mu_\beta}$ is bounded from below.

In this paper, we study the stability of the conical K\"ahler-Ricci flows by only using the weaker condition that the Log Mabuchi energy is bounded form below. We hope that this method can play a positive role in researching the relation between the limit behavior of the (conical) K\"ahler-Ricci flows and the stability of the manifolds, and studying the existence of the K\"ahler-Einstein metrics with cone angle zero by using parabolic method. The first problem is related to a parabolic type of Yau-Tian-Donaldson's conjecture, and the latter one is related to a Tian's conjecture \cite{GT94} that the complete Tian-Yau K\"ahler-Einstein metric on the complement of $D$ should be the limit of the conical K\"ahler-Einstein metrics as the cone angles tend to zero.
\begin{thm} \label{01} Assume that $\lambda >0$ and there is no nontrivial holomorphic vector fields on $M$ tangent to $D$. If $\mu_\beta$ $(0<\beta<1)$  is positive, we further assume that there is a conical K\"ahler-Einstein metric with Ricci curvature $\mu_\beta$ and cone angle $2\pi\beta$ along $D$. Then for any $\beta'$ sufficiently close to $\beta$, the conical K\"ahler-Ricci flow $(CKRF_{\mu_{\beta'}})$ converges to a conical K\"ahler-Einstein metric with Ricci curvature $\mu_{\beta'}$ and cone angle $2\pi\beta'$ along $D$ in $C_{loc}^{\infty}$-topology outside $D$ and globally in $C^{\alpha,\beta'}$-sense for any $\alpha\in(0,\min\{1,\frac{1}{\beta'}-1\})$.\end{thm}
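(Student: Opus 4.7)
My plan is to combine three ingredients: (i) stability of the bounded-below property of the Log Mabuchi energy under small perturbations of the cone angle, (ii) uniform Perelman-type estimates along the conical K\"ahler-Ricci flow, and (iii) the uniqueness of conical K\"ahler-Einstein metrics under the absence of holomorphic vector fields tangent to $D$. The existence of $\omega_{\varphi_\beta}$ already gives $\mathcal{M}_{\mu_\beta} \geq \mathcal{M}_{\mu_\beta}(\varphi_\beta)$ by Corollary 2.10 of \cite{LS}; the objective is to propagate this lower bound to $\mathcal{M}_{\mu_{\beta'}}$ and exploit it as a Lyapunov functional along $(CKRF_{\mu_{\beta'}})$.

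The first step is to show that $\mathcal{M}_{\mu_{\beta'}}$ is bounded from below for $\beta'$ sufficiently close to $\beta$. In the Chen-Tian-type decomposition the dependence of the Log Mabuchi energy on the cone parameter $\gamma$ enters only through the constant $\mu_\gamma$ and the coefficient $(1-\gamma)$ of the divisor contribution, so one may write schematically
\[
\mathcal{M}_{\mu_{\beta'}}(\varphi) - \mathcal{M}_{\mu_\beta}(\varphi) = (\beta-\beta')\,\mathcal{A}(\varphi),
\]
where $\mathcal{A}$ gathers the log $I-J$ functional and the pluripolar integral $\int_M \varphi\,[D]\wedge\omega_0^{n-1}$. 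After normalizing the potential (for instance by $I_{\omega_0}(\varphi)=0$), $\mathcal{A}$ admits a uniform lower bound on the class of potentials arising along the flow, and the lower bound of $\mathcal{M}_{\mu_\beta}$ therefore transfers to $\mathcal{M}_{\mu_{\beta'}}$ with an error that tends to zero as $\beta'\to\beta$.

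Next, by \cite{JWLXZ1} the flow $(CKRF_{\mu_{\beta'}})$ exists for all $t\geq 0$, and the Log Mabuchi energy is monotone decreasing with derivative essentially $-\|\nabla u_{\beta'}(t)\|_{L^{2}(\omega_{\beta'}(t))}^{2}$, where $u_{\beta'}(t)$ denotes the normalized Ricci potential. The lower bound of $\mathcal{M}_{\mu_{\beta'}}$ obtained above implies that this derivative is integrable in time, so there is a sequence $t_j\to\infty$ along which the integrand tends to zero. Combined with the uniform Perelman estimates on scalar curvature, diameter and $u_{\beta'}(t)$ established in \cite{JWLXZ,JWLXZ1}---which remain stable as $\beta'$ varies in a small neighborhood of $\beta$---and the smooth approximation scheme of \cite{JWLXZ1}, one extracts a subsequential limit $\omega_\infty$ and identifies it as a conical K\"ahler-Einstein metric with Ricci curvature $\mu_{\beta'}$ and cone angle $2\pi\beta'$ along $D$.

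The assumption that no nontrivial holomorphic vector field on $M$ is tangent to $D$ is an open condition in $\beta'$; together with uniqueness of the limit it upgrades subsequential convergence to full convergence in $C^{\infty}_{loc}$ outside $D$ and in $C^{\alpha,\beta'}$ globally, along the lines of \cite{JWLXZ1}. I expect the first step to be the main obstacle: transferring the bounded-below property of the Log Mabuchi energy to nearby cone angles without invoking Donaldson's openness theorem (which is to be obtained here as a corollary). The $C^{0}$-estimate on potentials that properness would usually supply must be replaced by a combination of Perelman's diameter bound and a uniform control of the log $I-J$ functional along the flow.
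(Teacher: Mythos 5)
Your outline correctly anticipates the overall flow strategy (monotone Mabuchi energy, Perelman-type estimates, uniqueness to upgrade subsequential to full convergence), but the first step fails as stated, and it is exactly the step where the paper does something structurally different. You propose to bound $\mathcal{M}_{\mu_{\beta'}}$ from below by writing $\mathcal{M}_{\mu_{\beta'}}(\phi)-\mathcal{M}_{\mu_\beta}(\phi)=(\beta-\beta')\,\mathcal{A}(\phi)$ and claiming $\mathcal{A}$ has a uniform lower bound along the flow. From $(\ref{9090990})$, $\mathcal{A}$ gathers $\lambda(I_{\omega_0}-J_{\omega_0})(\phi)$ and the term $-\frac{1}{V}\int_M\log|s|_h^2\,(dV_0-dV_\phi)$, and neither is controlled on sublevel sets of $\mathcal{M}_{\mu_\beta}$: the first is unbounded above, and the volume-weighted part $\frac{1}{V}\int_M\log|s|_h^2\,dV_\phi$ is unbounded below. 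Bounding $\mathcal{A}$ uniformly in $\beta'$ and $t$ along the flow would require a uniform $C^0$-estimate on the metric potential — but that estimate is exactly what the Mabuchi lower bound is meant to produce, so the argument is circular at the decisive point. A normalization such as $I_{\omega_0}(\varphi)=0$ does not break this: it removes one degree of freedom but does not bound $I-J$ or the divisor integral. As you yourself note, a direct uniform lower bound for $\mathcal{M}_{\mu_{\beta'}}$ is essentially Donaldson's openness theorem, which is what the theorem is supposed to deliver.

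The paper never proves that $\mathcal{M}_{\mu_{\beta'}}$ is bounded below for $\beta'\neq\beta$. Instead, for $\mu_\beta>0$ it introduces the auxiliary twisted flows $(TKRF^{\beta}_{\mu_\gamma,\varepsilon})$, whose relevant functional is $\mathcal{M}^\beta_{\mu_\gamma}$ with the cone weight $(1-\beta)$ held fixed; the transfer $(\ref{20190213133})$ succeeds there because the perturbation term involves the bounded potential $\varphi_\beta$ rather than $\log|s|_h^2$, and only $\gamma\le\beta$ is required so the $(I-J)$ contribution has the right sign. The uniform $C^0$-estimate (Lemma $\ref{0080}$, Lemma $\ref{009}$) is then obtained by a contradiction argument against a violating sequence $(\gamma_i,\varepsilon_i,t_i)$, using continuity of the flow in $(\gamma,\varepsilon)$ (Remark $\ref{43}$), the monotonicity of $A^\beta_{\mu_\gamma,\varepsilon}(t)$ together with $A^\beta_{\mu_\gamma}(t)\to 0$ (Theorems $\ref{16}$ and $\ref{46}$), and Berndtsson's uniqueness theorem; the functionals $A_{\mu_\gamma,\varepsilon}(t)$ replace $\|\nabla u\|_{L^2}$ or the Perelman entropy precisely because the latter cannot be shown to vanish uniformly over the family (Remark $\ref{01001001}$). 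Only after this does one recover the Mabuchi lower bound along the original flow. A secondary inaccuracy: your claim that the Perelman estimates of \cite{JWLXZ,JWLXZ1} ``remain stable as $\beta'$ varies in a small neighborhood of $\beta$'' is true only when $\mu_\beta>0$; when $\mu_\beta=0$ those estimates blow up like $1/\mu_{\beta'}$ and the paper has to produce the uniform $C^0$ and Perelman bounds by an entirely separate argument (Lemma $\ref{1901}$, Theorem $\ref{20190223001}$). Your write-up does not distinguish the three cases $\mu_\beta<0$, $\mu_\beta=0$, $\mu_\beta>0$.
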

\begin{rem}\label{02}
When $\lambda\geqslant1$, the assumption that there is no nontrivial holomorphic vector fields tangent to $D$ can be removed according to Theorem $1.5$ of Berman \cite{RB} (see also Corollary $2.21$ in \cite{LS}) or Theorem $2.8$ of Song-Wang \cite{SW}.
\end{rem}
As in \cite{JWLXZ1}, we still use the twisted K\"ahler-Ricci flows
$$(TKRF_{\mu_\gamma,\varepsilon}):\ \ \ \ \left\{
\begin{aligned}
&\ \frac{\partial \omega_{\gamma,\varepsilon}(t)}{\partial t}=-Ric(\omega_{\gamma,\varepsilon}(t))+\mu_\gamma\omega_{\gamma,\varepsilon}(t)+(1-\gamma)\theta_{\varepsilon}\\
  \\
&\ \omega_{\gamma,\varepsilon}(t)|_{t=0}=\hat{\omega}\\
\end{aligned}
\right.$$
to study the conical K\"ahler-Ricci flows $(CKRF_{ \mu_\gamma})$, where $\theta_{\varepsilon}=\lambda\omega_{0}+\sqrt{-1}\partial\overline{\partial}\log(\varepsilon^{2}+|s|_{h}^{2})$ are smooth closed positive $(1,1)$-forms, $s$ is the definition section of $D$ and $h$ is a smooth Hermitian metric on $-\lambda K_{M}$ with curvature $\lambda\omega_{0}$. We assume that $|s|_h^2<\frac{1}{2}$ by rescaling $h$.

Let $u_{\gamma,\varepsilon}(t)$ and $u_{\gamma}(t)$ be the twisted Ricci potentials of $\omega_{\gamma,\varepsilon}(t)$ and $\omega_{\gamma}(t)$ with normalization $\frac{1}{V}\int_M e^{-u_{\gamma,\varepsilon}(t)}dV_{\gamma,\varepsilon}(t)=1$ and $\frac{1}{V}\int_M e^{-u_{\gamma}(t)}dV_{\gamma}(t)=1$ respectively. We denote $A_{\mu_\gamma,\varepsilon}(t)$ and $A_{\mu_\gamma}(t)$ be the functionals
\begin{equation*}\begin{split}\label{008}
A_{\mu_\gamma,\varepsilon}(t)&=\frac{1}{V}\int_M u_{\gamma,\varepsilon}(t)e^{-u_{\gamma,\varepsilon}(t)}dV_{\gamma,\varepsilon}(t),\\
A_{\mu_\gamma}(t)&=\frac{1}{V}\int_M u_{\gamma}(t)e^{-u_{\gamma}(t)}dV_{\gamma}(t).
\end{split}
\end{equation*}
When $\mu_\gamma=0$, the functional $A_{\mu_\gamma}(t)$ is well-defined (Theorem \ref{2019021901}) and converges to zero as $t$ tend to $\infty$ (Theorem \ref{20190002}). When $\mu_\gamma>0$, from the uniform Perelman's estimates (Theorem \ref{1.8.2}), the functional $A_{\mu_\gamma}(t)$ is well-defined. Furthermore, $A_{\mu_\gamma,\varepsilon}(t)$ is nondecreasing along $(TKRF_{\mu_\gamma,\varepsilon})$ (Theorem \ref{16}), and $A_{\mu_\gamma}(t)$ converges to $0$ as $t$ tend to $\infty$ when the Log Mabuchi energy $\mathcal{M}_{\mu_\gamma}$ is bounded from below (Theorem \ref{17}).
\begin{rem}
\label{01001001}
In this paper, when considering the convergence for a sequence of twisted K\"ahler-Ricci flows as $t$ tend to $\infty$, we use the functionals $A_{\mu_\gamma,\varepsilon}(t)$ and $A_{\mu_\gamma}(t)$. For a single (twisted) K\"ahler-Ricci flow, after obtaining the uniform $C^\infty$-estimates, we usually use the energy $\|\nabla u(t)\|_{L^2(M)}$ (see Liu-Wang \cite{LW}, Phong-Sturm et al.\cite{PSST,PSSW,PS} and Tian-Zhu \cite{GTXHZ07}) or the (twisted) Perelman's entropy (see Collins-Sz$\acute{e}$kelyhidi \cite{TC} and Tian-Zhu et al.\cite{GTXHZ13,TZZZ}) to prove the convergence. But in our case, we need consider the convergence for a specific choice sequence of twisted K\"ahler-Ricci flows. We do not use $\|\nabla u_{\gamma,\varepsilon}(t)\|_{L^2(M)}$ or the twisted Perelman's entropy because we can not prove that these functionals converge to $0$ uniformly (that is, the convergence depends on $\mu_\gamma$ and $\varepsilon$). In order to overcome these difficulties, we use the functionals $A_{\mu_\gamma,\varepsilon}(t)$ and $A_{\mu_\gamma}(t)$, especially the convergence of $A_{\mu_\gamma}(t)$ and the monotonicity of $A_{\mu_\gamma,\varepsilon}(t)$, to get the convergence we need. Although the twisted Perelman's entropies are also monotonous along the twisted K\"ahler-Ricci flow (see Liu \cite{JWL} and Collins-Sz$\acute{e}$kelyhidi \cite{TC}), we still do not use these entropies because there is no details about them in this conical case. On the manifolds with isolated conical singularity, Dai-Wang \cite{DWL1,DWL2} and Ozuch \cite{Ozuch} developed Perelman's entropies, and recently Kr\"oncke-Vertman \cite{KKBV} improved the regularities for the minimizers of these entropies and studied their monotonicity along the Ricci de Turck flow.
\end{rem}
There are three cases in Theorem \ref{01}, that is, $\mu_\beta$ is negative, zero and positive. The last two cases are difficult.

{\it Case $1$}. If $\mu_\beta$ is negative, $\mu_{\gamma}$ is also negative when $\gamma$ sufficiently close to $\beta$. Then we can prove Theorem \ref{01} without any assumptions because the uniform estimates along the twisted K\"ahler-Ricci flows can be deduced directly (see also Chen-Wang's work \cite{CW1}).

{\it Case $2$}. If $\mu_\beta$ is zero (that is, $\lambda=\frac{1}{1-\beta}>1$ ), then $\mu_{\gamma}$ is a negative or sufficiently small positive number when $\gamma$ close to $\beta$. The case of $\mu_{\gamma}<0$ can be solved as in {\it Case $1$}. There are difficulties when $\mu_{\gamma}>0$, because no direct uniform Perelman's estimates (independent of $\mu_{\gamma}$) can be used (these estimates depend on $\frac{1}{\mu_\gamma}$ from the proof of Proposition $5.5$ in \cite{JWLXZ}). For overcoming these difficulties, we need some new arguments.

First, we prove a uniform lower bound independent of $\mu_{\gamma}$ for the twisted scalar curvature $R(\omega_{\mu_\gamma,\varepsilon}(t))-(1-\gamma)tr_{\omega_{\mu_\gamma,\varepsilon}(t)}\theta_\varepsilon$ along $(TKRF_{\mu_\gamma,\varepsilon})$ when $t\geqslant1$ (Lemma \ref{1902}). This uniform lower bound plays an important role in proving the uniform (Perelman's) estimates (independent of $\mu_{\gamma}$) along $(TKRF_{\mu_{\gamma},\varepsilon})$ for $\gamma$ sufficiently close to $\beta$.

Next, without any assumptions, we prove that there exists uniform constant $D_\beta$ such that along the twisted K\"ahler-Ricci flows $(TKRF_{0,\varepsilon})$,
\begin{equation}\label{20190223}
\|u_{\beta,\varepsilon}(t)\|_{C^0(M)}+osc_{M}\varphi_{\beta,\varepsilon}(t)+\sup\limits_M tr_{\omega_\varepsilon^\beta}\omega_{\beta,\varepsilon}(t) \leqslant D_\beta
\end{equation}
for any $\varepsilon\in(0,\frac{1}{2}]$ and $t\geqslant1$ (Theorem \ref{2019021901}). Here we denote $\varphi_{\gamma,\varepsilon}(t)$ is the metric potential of $\omega_{\gamma,\varepsilon}(t)$ with background metric $\omega_{0}$, and $\omega_\varepsilon^\gamma$ is smooth K\"ahler metric given in equation $(\ref{TKRF9})$. For a single (twisted) K\"ahler-Ricci flow in the case of $\mu_\beta=0$, the bound on the oscillation of metric potential follows from Yau's $C^0$-estimates for the elliptic complex Monge-Amp\`ere equation. The main tools are the Sobolev inequality, the Poincar\'e inequality and the Moser iteration (see Lemma $3$ in \cite{HDC} or Theorem $4.6$ in \cite{JSBW}). Since we consider a sequence of equations and do not have uniform Poincar\'e inequality, we here need modified the original proof when we adopt this method. We remark that this bound can also be deduced from Ko{\l}odeizj's $L^p$-estimates \cite{K000,K}. Then by using estimates $(\ref{20190223})$ and the smooth approximation \cite{JWLXZ,JWLXZ1}, we prove that the conical K\"ahler-Ricci flow $(CKRF_{0})$ converges to a Ricci flat conical K\"ahler-Einstein metric with cone angle $2\pi\beta$ along $D$ (Theorem \ref{20190002}).

At last, by using the continuity of $(TKRF_{\mu_\gamma,\varepsilon})$ with respect to $\mu_\gamma$ and $\varepsilon$ (Remark \ref{201908001}), we deduce the uniform estimates (independent of $\mu_{\gamma}$) along $(TKRF_{\mu_{\gamma},\varepsilon})$ for $\gamma$ sufficiently close to $\beta$. In this process, we mainly use the uniform lower bound of the twisted scalar curvature $R(\omega_{\mu_\gamma,\varepsilon}(t))-(1-\gamma)tr_{\omega_{\mu_\gamma,\varepsilon}(t)}\theta_\varepsilon$, the convergence of $A_{0}(t)$, the monotonicity of $A_{\mu_\gamma,\varepsilon}(t)$ and Dinew's uniqueness theorem (Theorem $1.2$ in \cite{SDINEW}, see also Berndtsson's work \cite{BBERN}). We prove the following lemma.
\begin{lem} \label{1901} Assume that $\lambda>1$ and $\beta=1-\frac{1}{\lambda}$ (that is, $\mu_\beta=0$). Then there exists a constant $\delta(\lambda)>0$ such that
\begin{equation}\label{1903}
\|u_{\gamma,\varepsilon}(t)\|_{C^0(M)}+osc_{M}\varphi_{\gamma,\varepsilon}(t)+\sup\limits_M tr_{\omega_\varepsilon^\gamma}\omega_{\gamma,\varepsilon}(t) \leqslant L_{\beta}+1
\end{equation}
for any $\varepsilon\in(0,\delta(\lambda))$, $\gamma\in(\beta,\beta+\delta(\lambda))$ and $t\in[\frac{1}{\delta(\lambda)},+\infty)$, where $L_{\beta}$ comes from $(\ref{20190304})$.\end{lem}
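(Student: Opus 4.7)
My plan is proof by contradiction, combining the four ingredients flagged just before the statement: the $\mu_\gamma$-independent lower bound on the twisted scalar curvature (Lemma~\ref{1902}), the convergence $A_0(t)\to 0$ (Theorem~\ref{20190002}), the monotonicity of $A_{\mu_\gamma,\varepsilon}(t)$ (Theorem~\ref{16}), and Dinew's uniqueness theorem. Suppose the conclusion fails; then one can extract sequences $\gamma_k\downarrow\beta$, $\varepsilon_k\downarrow 0$ and $t_k\uparrow\infty$ such that the left-hand side of (\ref{1903}) at $(\gamma_k,\varepsilon_k,t_k)$ strictly exceeds $L_\beta+1$.

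The first step is to propagate entropy smallness to large time. Given $\eta>0$, choose $T_\eta$ with $|A_0(T_\eta)|<\eta/2$, which is possible by Theorem~\ref{20190002}. Continuous dependence of $(TKRF_{\mu_\gamma,\varepsilon})$ on $(\mu_\gamma,\varepsilon)$ on the bounded interval $[0,T_\eta]$ (Remark~\ref{201908001}) yields $A_{\mu_{\gamma_k},\varepsilon_k}(T_\eta)\to A_0(T_\eta)$, so $|A_{\mu_{\gamma_k},\varepsilon_k}(T_\eta)|<\eta$ for $k$ large. Since $A_{\mu_\gamma,\varepsilon}(t)\le 0$ by Jensen and is nondecreasing in $t$ for $\mu_\gamma>0$ (Theorem~\ref{16}), it is squeezed into $(-\eta,0]$ for all $t\ge T_\eta$; in particular $|A_{\mu_{\gamma_k},\varepsilon_k}(t_k)|<\eta$ once $k$ is large enough that $t_k\ge T_\eta$.

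Next I would convert this integral bound into pointwise control. Feeding the $\mu_\gamma$-independent scalar-curvature lower bound of Lemma~\ref{1902} (valid for $t\ge 1$) into the Perelman/Tian-Zhu-type differential inequalities for $u_{\gamma_k,\varepsilon_k}(t)$, the smallness of $A_{\mu_{\gamma_k},\varepsilon_k}(t_k)$ yields a uniform $C^0$ bound on the twisted Ricci potential at $t=t_k$. Combining this with the Yau-type parabolic $C^0$ estimate and the Chern-Lu second-order estimate (as in \cite{JWLXZ,JWLXZ1} and the proof of Theorem~\ref{2019021901}) provides uniform control on $osc_M\varphi_{\gamma_k,\varepsilon_k}(t_k)$ and on $\sup_M tr_{\omega_{\varepsilon_k}^{\gamma_k}}\omega_{\gamma_k,\varepsilon_k}(t_k)$. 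A diagonal subsequence then converges in $C^\infty_{loc}$ away from $D$ to a weak limit $\omega_\infty$, whose potential solves a Ricci-flat twisted complex Monge-Amp\`ere equation in the class $c_1(M)-(1-\beta)[D]$.

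Finally, Dinew's uniqueness theorem forces $\omega_\infty$ to coincide, after normalization, with the Ricci-flat conical K\"ahler-Einstein limit at $\beta$ produced by Theorem~\ref{2019021901}, whose relevant norm is at most $L_\beta$. Passing to the limit in the strict inequality at $(\gamma_k,\varepsilon_k,t_k)$ then gives $L_\beta\ge L_\beta+1$, a contradiction. The main obstacle is exactly the step from entropy smallness to the pointwise $C^0$ bound on $u_{\gamma_k,\varepsilon_k}(t_k)$: the ordinary Perelman estimates along $(TKRF_{\mu_\gamma,\varepsilon})$ degenerate like $1/\mu_\gamma$ as $\mu_\gamma\downarrow 0$ (cf.\ Proposition~$5.5$ in \cite{JWLXZ}), and the whole argument hinges on Lemma~\ref{1902} to replace them with a bound that is uniform across the parameter family $\gamma\to\beta^+$.
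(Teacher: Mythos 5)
Your high-level plan (argue by contradiction, leveraging Lemma \ref{1902}, the functional $A$, and Dinew's uniqueness) matches the paper's, but the outline has a genuine gap precisely at the step you flag as ``the main obstacle,'' and it also omits a structural device without which the argument cannot close. Your contradiction sequence comes with no a priori upper bound: extracting $(\gamma_k,\varepsilon_k,t_k)$ where the left side of $(\ref{1903})$ strictly exceeds $L_\beta+1$ gives nothing that bounds $\|u_{\gamma_k,\varepsilon_k}(t_k)\|_{C^0}$, $osc_M\varphi_{\gamma_k,\varepsilon_k}(t_k)$ or $\sup_M tr_{\omega_{\varepsilon_k}^{\gamma_k}}\omega_{\gamma_k,\varepsilon_k}(t_k)$ from above, so there is no compactness and nothing for Dinew's theorem to act on. The paper repairs this with a threshold device: since Theorem \ref{2019021901} gives $D_\beta\le L_\beta$ at $\gamma=\beta$ and Remark \ref{201908001} gives continuity in $(\gamma,\varepsilon)$, one can, on each finite time interval $[T_{i-1}+1,T_i]$, lower $\gamma$ to the first value $\gamma_i$ at which the supremum of the left side over the interval equals $L_\beta+1$ exactly. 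This simultaneously supplies the upper bound on the whole interval that drives compactness and a time $t_i$ at which the quantity is $\ge L_\beta+\tfrac{7}{8}$ on $M\setminus D$, which is what finally contradicts $L_\beta=\max(K_\beta+M_\beta,D_\beta)$.

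The step ``smallness of $A_{\mu_{\gamma_k},\varepsilon_k}(t_k)$ yields a uniform $C^0$ bound on the twisted Ricci potential'' is not established, and it is not how the paper proceeds. The functional $A=\tfrac{1}{V}\int_M u\,e^{-u}\,dV$ can be arbitrarily close to $0$ while $u$ has large negative excursions on small sets, so $A$-smallness alone does not control $\|u\|_{C^0}$. The usual route (Theorem \ref{17}, then Poincar\'e) requires $\|\nabla u\|_{C^0}$ controlled first, and Lemma \ref{1.8.601} delivers that only after assuming $\|u\|_{C^0}\le A$ --- circular as stated --- while the uniform Perelman estimates that would otherwise close the loop degenerate like $1/\mu_\gamma$. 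What the paper actually does: use the threshold bound at $t=T_{i-1}+1$ together with Lemma \ref{1902} and the lower bound on $A_{\mu_\gamma,\varepsilon}$ to integrate $\partial_t u\le\mu_\gamma u+C$ backward and get $u\ge -C$ on $[T_{i-1},T_{i-1}+1]$; bound the Mabuchi-energy decrement on this unit interval using $(\ref{192202})$ and the threshold estimates; feed this $L^1$-in-time control into the Moser iteration of Theorem \ref{Jiang} (built on the $\gamma$-uniform Sobolev inequality) applied to $R-(1-\gamma)tr\,\theta_\varepsilon+4n+|\nabla u|^2$, yielding a pointwise upper bound; and finally integrate $\partial_t u\ge\mu_\gamma u-C$ backward (here $A\le 0$ by Jensen is used) to get $u\le C$. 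The convergence $A_{\mu_\beta}(t)\to 0$ from Theorem \ref{20190002} is then invoked only in the terminal limiting step to force $u_\beta(\infty)=0$, not to produce pointwise bounds along the sequence.
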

In \cite{JWLXZ,JWLXZ1}, we prove the uniform Perelman's estimates along the twisted K\"ahler-Ricci flows $(TKRF_{\mu_{\gamma},\varepsilon})$ when $\mu_\gamma>0$. These estimates are important in studying the convergence of the conical K\"ahler-Ricci flows  $(CKRF_{\mu_{\gamma}})$. From the proof of Proposition $5.5$ in \cite{JWLXZ}, we know that these estimates depend on $\frac{1}{\mu_\gamma}$. So they are not uniform for $\mu_\gamma\in(0,1)$. In Lemma \ref{1901}, we have obtained the uniform bound for $\|u_{\gamma,\varepsilon}(t)\|_{C^0(M)}$ when $\mu_\gamma\in(0,\eta(\lambda))$ with $\lambda>1$ and the uniform lower bound for the twisted scalar curvature $R(\omega_{\mu_\gamma,\varepsilon}(t))-(1-\gamma)tr_{\omega_{\mu_\gamma,\varepsilon}(t)}\theta_\varepsilon$, then by using Lemma \ref{1.8.601}, we get the uniform Perelman's estimates independent of $\mu_\gamma$ along $(TKRF_{\mu_{\gamma},\varepsilon})$ when $\lambda>1$.
\begin{thm}\label{20190223001} Let $\omega_{\gamma,\varepsilon}(t)$ be a solution of the twisted K\"ahler Ricci flow $(TKRF_{\mu_\gamma,\varepsilon})$. When $\lambda>1$, there exists uniform constant $C$ such that
\begin{equation}\begin{split}\label{p0}
|R(\omega_{\gamma,\varepsilon}(t))-(1-\gamma)tr_{\omega_{\gamma,\varepsilon}(t)}\theta_{\varepsilon}|&\leqslant C,\\
\|u_{\gamma,\varepsilon}(t)\|_{C^{1}(\omega_{\gamma,\varepsilon}(t))}&\leqslant C,\\
diam(M,\omega_{\gamma,\varepsilon}(t))&\leqslant C
\end{split}
\end{equation}
hold for any $\mu_\gamma\in(0,1]$, $t\geqslant 1$ and $\varepsilon\in(0,\delta(\lambda))$, where $\delta(\lambda)$ is the constant in Lemma \ref{1901} and $diam(M,\omega_{\gamma,\varepsilon}(t))$ is the diameter of the manifold with respect to $\omega_{\gamma,\varepsilon}(t)$.\end{thm}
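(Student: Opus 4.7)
The plan is to split the range $\mu_\gamma\in(0,1]$ into two regimes and handle each with a different argument. Fix a threshold $\eta=\eta(\lambda)\in(0,1]$ (to be chosen below in terms of $\delta(\lambda)$ from Lemma \ref{1901}), and treat the cases $\mu_\gamma\in[\eta,1]$ and $\mu_\gamma\in(0,\eta)$ separately.

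For $\mu_\gamma\in[\eta,1]$, I would simply invoke the Perelman-type estimates along $(TKRF_{\mu_\gamma,\varepsilon})$ that we established in \cite{JWLXZ,JWLXZ1}. As recalled in the paragraph preceding the theorem, these estimates carry a constant depending on $\frac{1}{\mu_\gamma}$ from the proof of Proposition $5.5$ in \cite{JWLXZ}, but once $\mu_\gamma\geqslant\eta$ this dependence is absorbed into a single constant, yielding all three inequalities in (\ref{p0}) uniformly on this regime.

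For the delicate regime $\mu_\gamma\in(0,\eta)$, which corresponds to $\gamma\in(\beta,\beta+\eta/\lambda)$ with $\beta=1-1/\lambda$, the old estimates break down and we must use the new ingredients. Choose $\eta>0$ small enough that $\eta/\lambda<\delta(\lambda)$, so that the range of $\gamma$ lies inside the interval $(\beta,\beta+\delta(\lambda))$ on which Lemma \ref{1901} applies. That lemma delivers the uniform $C^0$-bound on $u_{\gamma,\varepsilon}(t)$ (together with the oscillation and trace bounds) for $\varepsilon\in(0,\delta(\lambda))$ and $t\geqslant 1/\delta(\lambda)$. Combining this $C^0$-bound with the uniform lower bound on the twisted scalar curvature $R(\omega_{\gamma,\varepsilon}(t))-(1-\gamma)tr_{\omega_{\gamma,\varepsilon}(t)}\theta_\varepsilon$ supplied by Lemma \ref{1902}, I would feed both inputs into Lemma \ref{1.8.601} to extract the matching upper bound on the twisted scalar curvature, the $C^1$-bound on $u_{\gamma,\varepsilon}$ measured in $\omega_{\gamma,\varepsilon}(t)$, and the diameter bound. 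To cover the small window $t\in[1,1/\delta(\lambda))$ that the theorem requires but Lemma \ref{1901} does not directly address, I would appeal to the continuity of $(TKRF_{\mu_\gamma,\varepsilon})$ in the parameters $(\mu_\gamma,\varepsilon)$ (Remark \ref{201908001}) together with standard parabolic short-time estimates, which on a compact time interval furnish bounds that are uniform in $(\mu_\gamma,\varepsilon)$.

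The main obstacle I anticipate is the small-$\mu_\gamma$ application of Lemma \ref{1.8.601}: the passage from a $C^0$-bound on $u_{\gamma,\varepsilon}$ and a one-sided bound on the twisted scalar curvature to the full Perelman-type package is the substantive analytic step, since it must produce constants independent of $\mu_\gamma$ and $\varepsilon$. In the usual (non-conical, non-twisted) setting this relies on Li--Yau/maximum-principle arguments on the heat-type equation satisfied by the scalar curvature and by $u_{\gamma,\varepsilon}$, and here the additional twisting term involving $\theta_\varepsilon$ must be controlled without losing uniformity as $\varepsilon\to 0$. Once this is in place, stitching together the two regimes $\mu_\gamma\in[\eta,1]$ and $\mu_\gamma\in(0,\eta)$ via the common constant $\max\{C_1(\eta),C_2\}$ yields the desired uniform estimate (\ref{p0}).
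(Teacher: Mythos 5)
Your plan coincides structurally with the paper's proof: split $\mu_\gamma\in(0,1]$ into a small-$\mu_\gamma$ window handled by Lemma~\ref{1901}, Lemma~\ref{1902} and Lemma~\ref{1.8.601}, and a complementary regime $\mu_\gamma\geqslant\eta$ handled by the earlier Perelman-type estimates of Theorem~\ref{1.8.2} (whose constants depend on $1/\mu_\gamma$ but are uniform once $\mu_\gamma$ is bounded away from zero). That matching is correct, as is your observation that the window $t\in[1,1/\delta(\lambda))$ still has to be filled in by the finite-time estimates (Lemma~\ref{201902}, Lemma~\ref{201904}).

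There is, however, one concrete gap: Lemma~\ref{1.8.601} does not deliver the diameter bound. Its conclusion is precisely the two pointwise estimates $|\nabla u_{\gamma,\varepsilon}(t)|^2_{\omega_{\gamma,\varepsilon}(t)}\leqslant C$ and $R(\omega_{\gamma,\varepsilon}(t))-(1-\gamma)tr_{\omega_{\gamma,\varepsilon}(t)}\theta_\varepsilon\leqslant C$, and nothing about $diam(M,\omega_{\gamma,\varepsilon}(t))$. The paper obtains the diameter bound by a separate and more direct route: the trace estimate $\sup_M tr_{\omega_\varepsilon^\gamma}\omega_{\gamma,\varepsilon}(t)\leqslant C$ from Lemma~\ref{1901}, together with a uniform bound on $\|\dot{\phi}_{\gamma,\varepsilon}(t)-\mu_\gamma\phi_{\gamma,\varepsilon}(t)\|_{C^0(M)}$ coming from $(\ref{19220201001})$ and $(\ref{192202})$, is upgraded to a two-sided equivalence $C^{-1}\omega_\varepsilon^\gamma\leqslant\omega_{\gamma,\varepsilon}(t)\leqslant C\omega_\varepsilon^\gamma\leqslant C\omega_\beta$; then one uses the fact that the metric completion $(M,d_\beta)$ of $(M,\omega_\beta)$ has finite diameter, hence $diam(M,\omega_{\gamma,\varepsilon}(t))\leqslant C\,diam(M,d_\beta)$. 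You do have the trace bound available (you cite it as part of what Lemma~\ref{1901} provides), so the ingredients are in hand, but the diameter step needs to be written out rather than attributed to Lemma~\ref{1.8.601}. Finally, your flagged ``main obstacle'' is not actually an open issue: Lemma~\ref{1.8.601} is already proved in the paper with constants depending only on $\|\varphi_0\|_{L^\infty(M)},\beta,n,\omega_0$ and the $C^0$-bound $A$ on $u_{\gamma,\varepsilon}$, so its uniformity in $\mu_\gamma$ and $\varepsilon$ is supplied and you can simply invoke it.
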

\begin{rem}\label{20190223002}
When $\lambda=1$ (resp. $0<\lambda<1$), we can not get the uniform Perelman's estimates independent of $\mu_\gamma\in(0,1]$ (resp. $\mu_\gamma\in(1-\lambda,1]$) by this method. Because there is no uniform estimates as $(\ref{20190223})$ when $\mu_\gamma=0$ (resp. $\mu_\gamma=1-\lambda$), that is, $\gamma=0$. 
\end{rem}
By using Lemma \ref{1901}, the convergence of $(CKRF_{\mu_{\gamma}})$ with $\gamma\in(\beta,\beta+\delta(\lambda))$ follows from the arguments in \cite{JWLXZ,JWLXZ1}. It should be noted that we still don't need any assumptions in this case. As a corollary of the first two cases, we get the following existence theorem for the conical K\"ahler-Einstein metrics by using conical K\"ahler-Ricci flows. This result implies the existence conjecture for the conical K\"ahler-Einstein metrics with positive Ricci curvatures (when $\lambda=1$, they correspond to the cone angles), which is given by Donaldson \cite{SD2009} and proved by Berman, Jeffres-Mazzeo-Rubinstein and Li-Sun (see Theorem $1.5$ in \cite{RB}, Corollary $1$ in \cite{JMR} and Theorem $1.1$ in \cite{LS}).
\begin{thm}\label{20190221}  Assume $\lambda>1$. There exist conical K\"ahler-Einstein metrics with Ricci curvatures $\mu_\gamma$ and cone angles $2\pi\gamma$ along $D$ when $\gamma\in(0,1-\frac{1}{\lambda}+\delta(\lambda))$ for some $\delta(\lambda)>0$.
\end{thm}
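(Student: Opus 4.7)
Set $\beta = 1 - 1/\lambda \in (0,1)$, so $\mu_\beta = 0$. I split $(0, \beta + \delta(\lambda))$ according to the sign of $\mu_\gamma = 1 - (1-\gamma)\lambda$ and in each sub-regime obtain the conical Kähler-Einstein metric as the long-time limit of $(CKRF_{\mu_\gamma})$.

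For $\gamma \in (0, \beta)$, $\mu_\gamma < 0$ and the favorable sign of $\mu_\gamma\omega_{\gamma,\varepsilon}(t)$ in $(TKRF_{\mu_\gamma,\varepsilon})$ yields uniform estimates independent of $\varepsilon$; letting $\varepsilon \to 0$, Chen-Wang's convergence theorem \cite{CW1} produces a conical Kähler-Einstein metric with Ricci curvature $\mu_\gamma$ and cone angle $2\pi\gamma$. For $\gamma = \beta$, the uniform bounds (\ref{20190223}) of Theorem \ref{2019021901} along $(TKRF_{0,\varepsilon})$ combine with the convergence $A_0(t) \to 0$ of Theorem \ref{20190002} and the smooth approximation of \cite{JWLXZ, JWLXZ1} to give a Ricci flat conical Kähler-Einstein metric with cone angle $2\pi\beta$ along $D$.

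The main obstacle is the third sub-regime $\gamma \in (\beta, \beta + \delta(\lambda))$, where $\mu_\gamma > 0$ may be arbitrarily small and the standard Perelman-type estimates that deteriorate like $1/\mu_\gamma$ are unavailable uniformly. The plan is to invoke Theorem \ref{01} with base angle $\beta$: since $\mu_\beta = 0$ no conical Kähler-Einstein existence hypothesis at $\beta$ is imposed (the Ricci flat metric from the previous step would in any case supply it), and the holomorphic vector fields hypothesis is removable by Remark \ref{02} because $\lambda > 1$. Inside that stability argument, the central ingredients are Lemma \ref{1901}'s $\mu_\gamma$-independent $C^0$-bounds on $u_{\gamma,\varepsilon}(t)$, $osc_M \varphi_{\gamma,\varepsilon}(t)$ and $\sup_M tr_{\omega^\gamma_\varepsilon}\omega_{\gamma,\varepsilon}(t)$, the uniform lower bound for the twisted scalar curvature from Lemma \ref{1902}, the resulting uniform Perelman's estimates of Theorem \ref{20190223001}, and the combination of the monotonicity of $A_{\mu_\gamma,\varepsilon}(t)$ from Theorem \ref{16} with the convergence $A_0(t) \to 0$, identifying the limit via Dinew's uniqueness theorem. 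Shrinking $\delta(\lambda)$ if necessary then delivers the existence on the full interval.
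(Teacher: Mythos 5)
Your proposal follows the paper's own argument closely: the paper obtains Theorem \ref{20190221} precisely as a corollary of Case 1 ($\mu_\gamma<0$, Chen-Wang) and Case 2 ($\mu_\beta=0$, covering $\gamma=\beta$ via Theorems \ref{2019021901} and \ref{20190002}, and $\gamma\in(\beta,\beta+\delta(\lambda))$ via Lemma \ref{1901} together with Lemma \ref{1902}, Theorem \ref{20190223001}, the monotonicity of $A_{\mu_\gamma,\varepsilon}(t)$, the convergence $A_{0}(t)\to 0$, and Dinew's uniqueness theorem), which are exactly the ingredients you enumerate. The only cosmetic difference is that you package the third sub-regime as an invocation of Theorem \ref{01} with Remark \ref{02} to drop the vector-field hypothesis, whereas the paper emphasizes that in the $\mu_\beta=0$ case the uniqueness step rests directly on Dinew's theorem and so requires no such hypothesis at all; but since you also list the actual internal ingredients, this is a presentational rather than substantive difference.
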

{\it Case $3$}. If $\mu_\beta$ is positive, then $\mu_{\gamma}$ is still positive when $\gamma$ sufficiently close to $\beta$. In this case, we can not get the uniform estimates along $(TKRF_{\mu_{\beta},\varepsilon})$ by no assumptions as in {\it Case $2$}. We need the assumptions in Theorem \ref{01}. Let $\omega_{\varphi_{\beta}}=\omega_0+\sqrt{-1}\partial\bar{\partial}\varphi_\beta$ be a conical K\"ahler-Einstein metric with Ricci curvature $\mu_\beta$ and cone angle $2\pi \beta$ along $D$. We approximate $\varphi_\beta$ with a decreasing sequence of smooth $\omega_0$-psh functions $\varphi_\varepsilon$. Now we introduce the twisted K\"ahler-Ricci flows
$$(TKRF^{\beta}_{\mu_\gamma,\varepsilon}):\ \ \ \ \left\{
\begin{aligned}
&\ \frac{\partial \omega^{\beta}_{\gamma,\varepsilon}(t)}{\partial t}=-Ric(\omega^{\beta}_{\gamma,\varepsilon}(t))+\mu_\gamma\omega^{\beta}_{\gamma,\varepsilon}(t)+(\mu_\beta-\mu_\gamma)\omega_{\varphi_{\varepsilon}}+(1-\beta)\theta_{\varepsilon}\\
  \\
 &\ \omega^{\beta}_{\gamma,\varepsilon}(t)|_{t=0}=\hat{\omega}\\
\end{aligned}
\right.$$
with $\gamma\in [1-\frac{1}{\lambda},\beta]$ (that is, $\mu_\gamma\in[0,\mu_\beta]$). By the arguments in section $3$ of \cite{JWLXZ1}, when $\varepsilon$ tend to $0$, the limit flows of $(TKRF^{\beta}_{\mu_\gamma,\varepsilon})$ are the twisted conical K\"ahler-Ricci flows
$$(CKRF^{\beta}_{\mu_\gamma}):\ \ \ \ \left\{
\begin{aligned}
&\ \frac{\partial \omega^{\beta}_{\gamma}(t)}{\partial t}=-Ric(\omega^{\beta}_{\gamma}(t))+\mu_\gamma\omega^{\beta}_{\gamma}(t)+(\mu_\beta-\mu_\gamma)\omega_{\varphi_\beta}+(1-\beta)[D].\\
  \\
 &\ \omega^{\beta}_{\gamma}(t)|_{t=0}=\hat{\omega}\\
\end{aligned}
\right.$$
In the following arguments, we restrict $\gamma\in[1-\frac{1}{\lambda},\beta]$ in the flows $(TKRF^{\beta}_{\mu_\gamma,\varepsilon})$ and $(CKRF^{\beta}_{\mu_\gamma})$. In fact, $\omega^{\beta}_{\beta,\varepsilon}(t)=\omega_{\beta,\varepsilon}(t)$ and $\omega^{\beta}_{\beta}(t)=\omega_{\beta}(t)$ by the uniqueness results (see Proposition $2.7$ and Theorem $3.7$ in \cite{JWLXZ1}). Let $u^{\beta}_{\gamma,\varepsilon}(t)$ and $u^{\beta}_{\gamma}(t)$ be the twisted Ricci potentials of $\omega^{\beta}_{\gamma,\varepsilon}(t)$ and $\omega^{\beta}_{\gamma}(t)$ with normalization $\frac{1}{V}\int_M e^{-u^{\beta}_{\gamma,\varepsilon}(t)}dV^\beta_{\gamma,\varepsilon}(t)=1$ and $\frac{1}{V}\int_M e^{-u^{\beta}_{\gamma}(t)}dV^\beta_{\gamma}(t)=1$ respectively. We denote functionals
\begin{equation*}\begin{split}\label{008}
A^\beta_{\mu_\gamma,\varepsilon}(t)&=\frac{1}{V}\int_M u^{\beta}_{\gamma,\varepsilon}(t)e^{-u^{\beta}_{\gamma,\varepsilon}(t)}dV^\beta_{\gamma,\varepsilon}(t),\\
A^\beta_{\mu_\gamma}(t)&=\frac{1}{V}\int_M u^{\beta}_{\gamma}(t)e^{-u^{\beta}_{\gamma}(t)}dV^\beta_{\gamma}(t).
\end{split}
\end{equation*}
When $\mu_\gamma=0$, the functional $A^\beta_{\mu_\gamma}(t)$ is well-defined (Theorem \ref{20190221001}) and converges to zero as $t$ tend to $\infty$ (Theorem \ref{20190221002}). When $\mu_\gamma\in(0,\mu_\beta]$, from the uniform Perelman's estimates (Theorem \ref{20190314001}), the functional $A^\beta_{\mu_\gamma}(t)$ is well-defined. Furthermore, when $\mu_\gamma\in(0,\mu_\beta]$, $A^\beta_{\mu_\gamma,\varepsilon}(t)$ is nondecreasing along $(TKRF^\beta_{\mu_\gamma,\varepsilon})$ (Theorem \ref{16}), and $A^\beta_{\mu_\gamma}(t)$ converges to $0$ as $t$ tend to $\infty$ when the Log Mabuchi energy $\mathcal{M}^\beta_{\mu_\gamma}$ is bounded from below (Theorem \ref{46}).

First, by similar arguments as in {\it Case $2$}, we get the uniform estimates along $(TKRF^{\beta}_{0,\varepsilon})$ (Theorem \ref{20190221001}) and then obtain the uniform estimates along $(TKRF^{\beta}_{\mu_\gamma,\varepsilon})$ when $\mu_\gamma$ sufficiently close to $0$ (Lemma \ref{20190221003}). We let $\psi^\beta_{\gamma,\varepsilon}(t)$ be the metric potential (with specific choice of initial data, see $(\ref{TKRF5})$) of $\omega^\beta_{\gamma,\varepsilon}(t)$ with background metric $\omega_{0}$. Fix a $\mu_{\beta_0}>0$ obtained in Lemma \ref{20190221003}, there exists constant $C_{\beta_0}$ independent of $\varepsilon\in(0,\delta)$ such that
\begin{equation}\label{05}
\|\psi^{\beta}_{\beta_0,\varepsilon}(t)\|_{C^0(M)}\leqslant C_{\beta_0}\ \ \ \ \ \ \ \ \ \ \ for\ t\in [0,+\infty).
\end{equation}
At the same time, there exists uniform constant $\hat{C}_{\beta_0}$ such that
\begin{equation}\label{05001}
\|\dot{\psi}^{\beta}_{\gamma,\varepsilon}(t)\|_{C^0(M)}\leqslant \hat{C}_{\beta_0}
\end{equation}
for any $\varepsilon>0$, $\gamma\in[\beta_0,\beta]$ and $\ t\in [1,+\infty)$ (Proposition \ref{1.8.5.3}).

Then, by using the uniform estimates $(\ref{05})$ and $(\ref{05001})$, the continuity of $(TKRF^\beta_{\mu_\gamma,\varepsilon})$ with respect to $\mu_\gamma$ and $\varepsilon$ (Remark \ref{43}), the convergence of $A^\beta_{\mu_\gamma}(t)$, the monotonicity of $A^\beta_{\mu_\gamma,\varepsilon}(t)$ and Berndtsson's uniqueness theorem \cite{BBERN} for the conical K\"ahler-Einstein metrics with bounded potentials, we deduce the uniform estimates (independent of $\gamma$) of $\psi^\beta_{\gamma,\varepsilon}(t)$ for $\gamma\in(\beta_0,\beta)$. That is, we obtain the following lemma.
\begin{lem} \label{0080} Under the same assumptions as in Theorem \ref{01}. For above $\mu_{\beta_0}>0$, there exists a constant $\delta_{\beta_0}>0$ depending on $\beta_0$ such that
\begin{equation}\label{006}
\|\psi^{\beta}_{\gamma,\varepsilon}(t)\|_{C^0(M)}\leqslant\max(\|\varphi_\beta\|_{C^0(M)}+\frac{\hat{C}_{\beta_0}+|\xi_\beta|}{\mu_{\beta_0}},C_{\beta_0})+1
\end{equation}
for any $\varepsilon\in(0,\delta_{\beta_0})$, $\gamma\in[\beta_0,\beta)$ and $t\in[\frac{1}{\delta_{\beta_0}},+\infty)$, where $C_{\beta_0}$, $\hat{C}_{\beta_0}$ and $\xi_\beta$ are the constants in $(\ref{05})$, $(\ref{05001})$ and $(\ref{2019022301})$ respectively.\end{lem}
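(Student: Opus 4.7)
The plan is to argue by contradiction, using continuity of the twisted flows $(TKRF^\beta_{\mu_\gamma,\varepsilon})$ in $(\mu_\gamma,\varepsilon)$ together with a Berndtsson-type uniqueness for the stationary solution $\omega_{\varphi_\beta}$ to rule out blow-up of $\psi^\beta_{\gamma,\varepsilon}(t)$ as $\gamma$ varies in $[\beta_0,\beta)$. Denote by $\bar K$ the right-hand side of $(\ref{006})$. Assuming the lemma fails, there exist sequences $\gamma_k\in[\beta_0,\beta)$, $\varepsilon_k\downarrow 0$ and $t_k\uparrow\infty$ with $\|\psi^\beta_{\gamma_k,\varepsilon_k}(t_k)\|_{C^0(M)}>\bar K$, and by compactness I may assume $\gamma_k\to\gamma_*\in[\beta_0,\beta]$.

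The first step is to establish the joint decay $A^\beta_{\mu_{\gamma_k},\varepsilon_k}(t_k)\to 0$. The monotonicity of $A^\beta_{\mu_\gamma,\varepsilon}(t)$ in $t$ (Theorem \ref{16}), the per-$\gamma$ convergence $A^\beta_{\mu_\gamma}(t)\to 0$ under the boundedness-below of the Log Mabuchi energy (Theorem \ref{46}), and the continuous dependence of the flows on $(\mu_\gamma,\varepsilon)$ (Remark \ref{43}) combine to give the desired diagonal limit. Consequently, the twisted Ricci potential $u^\beta_{\gamma_k,\varepsilon_k}(t_k)$ tends to zero in $L^1(e^{-u}\,dV)$.

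Next, I would extract a subsequential limit of the metrics. Using the pointwise control $(\ref{05001})$ on $\dot\psi^\beta_{\gamma_k,\varepsilon_k}(t_k)$, the uniform Perelman-type estimates available for $\mu_\gamma\geqslant\mu_{\beta_0}>0$ (Theorem \ref{20190314001}), and the anchor bound $(\ref{05})$ at $\gamma=\beta_0$ to fix the normalization, the sequence $\omega^\beta_{\gamma_k,\varepsilon_k}(t_k)$ converges in $C^\infty_{loc}(M\setminus D)$ and as currents globally on $M$ to $\omega_\infty=\omega_0+\sqrt{-1}\partial\bar\partial\psi_\infty$ with $\psi_\infty\in L^\infty(M)$. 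Passing to the limit in $(TKRF^\beta_{\mu_{\gamma_k},\varepsilon_k})$ and using the vanishing of the Ricci potential established above, one obtains
\begin{equation*}
-Ric(\omega_\infty)+\mu_{\gamma_*}\omega_\infty+(\mu_\beta-\mu_{\gamma_*})\omega_{\varphi_\beta}+(1-\beta)[D]=0.
\end{equation*}
Since $\omega_{\varphi_\beta}$ itself satisfies $-Ric(\omega_{\varphi_\beta})+\mu_\beta\omega_{\varphi_\beta}+(1-\beta)[D]=0$, it is a stationary solution; writing $\omega_\infty=\omega_{\varphi_\beta}+\sqrt{-1}\partial\bar\partial\eta$ with $\eta\in L^\infty$ reduces the above to a Monge-Amp\`ere equation of the form $(\omega_{\varphi_\beta}+\sqrt{-1}\partial\bar\partial\eta)^n=e^{-\mu_{\gamma_*}\eta+c}\omega_{\varphi_\beta}^n$, the setting in which Berndtsson's uniqueness theorem \cite{BBERN} applies.

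Berndtsson's theorem, combined with the absence of nontrivial holomorphic vector fields tangent to $D$, forces $\omega_\infty=\omega_{\varphi_\beta}$, so $\psi_\infty=\varphi_\beta+\mathrm{const}$. Integrating $(\ref{05001})$ in time and invoking $(\ref{2019022301})$ to identify the additive constant shows $\|\psi_\infty-\varphi_\beta\|_{C^0(M)}\leqslant(\hat C_{\beta_0}+|\xi_\beta|)/\mu_{\beta_0}$, contradicting $\|\psi^\beta_{\gamma_k,\varepsilon_k}(t_k)\|_{C^0(M)}>\bar K$ for $k$ large. The main obstacle is the joint-limit step: the per-parameter decay $A^\beta_{\mu_\gamma}(t)\to 0$ is not automatically uniform in $\gamma$, and transferring it to the diagonal sequence $(\gamma_k,\varepsilon_k,t_k)$ relies essentially on the one-sided control provided by the monotonicity of $A^\beta_{\mu_\gamma,\varepsilon}(t)$ surviving smooth approximation, in concert with Remark \ref{43}. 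A secondary subtlety is reproducing the explicit constants in $(\ref{006})$: the factor $1/\mu_{\beta_0}$ arises from integrating the potential equation against $\mu_\gamma\geqslant\mu_{\beta_0}$, and $\xi_\beta$ enters to match normalizations between $\psi^\beta_{\gamma,\varepsilon}(t)$ and $\varphi_\beta$.
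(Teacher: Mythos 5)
The broad architecture of your argument — contradiction, monotonicity of $A^\beta_{\mu_\gamma,\varepsilon}$ surviving the diagonal limit, Perelman estimates plus Proposition \ref{1.8.5.3} to extract a limiting twisted conical K\"ahler--Einstein metric, Berndtsson's uniqueness combined with the no-tangential-holomorphic-vector-fields hypothesis, and then identification of the additive constant via $(\ref{2019022301})$ — matches the paper's route. However, there is a genuine gap in the very first step that the rest of the argument cannot repair.

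You negate the lemma to obtain sequences $\gamma_k$, $\varepsilon_k\downarrow 0$, $t_k\uparrow\infty$ with $\|\psi^\beta_{\gamma_k,\varepsilon_k}(t_k)\|_{C^0(M)}>\bar K$, and then assert that $\omega^\beta_{\gamma_k,\varepsilon_k}(t_k)$ converges to a limit with potential $\psi_\infty\in L^\infty(M)$. But the negation only gives a \emph{lower} bound $>\bar K$ on these norms; nothing prevents $\|\psi^\beta_{\gamma_k,\varepsilon_k}(t_k)\|_{C^0(M)}$ from tending to infinity, and then the claimed compactness fails. The estimate $(\ref{05001})$ controls $\dot\psi^\beta$ but not $\psi^\beta$ itself, and the integral normalization $(\ref{61})$ only yields $\inf_M\psi^\beta\leqslant C$ and $\sup_M\psi^\beta\geqslant -C$, which does not cap the norm without an a priori oscillation bound — and the paper's oscillation bound $(\ref{59})$ is itself derived from a $C^0$ bound on $\psi^\beta$ at the endpoint $T_{i-1}+1$, which you do not have. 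The device the paper uses, and which is indispensable, is an intermediate-value argument in $\gamma$: starting from the anchor $\|\psi^\beta_{\beta_0,\varepsilon}(t)\|_{C^0(M)}\leqslant C_{\beta_0}$ from $(\ref{05})$, which lies strictly below $\bar K+1$, one uses the continuity of $\gamma\mapsto\psi^\beta_{\gamma,\varepsilon}(t)$ (Remark \ref{43}) on a finite time window to choose $\gamma_i\in(\beta_0,\gamma'_i)$ so that $\sup_{t\in[T_{i-1}+1,T_i]}\|\psi^\beta_{\gamma_i,\varepsilon_i}(t)\|_{C^0(M)}$ \emph{equals exactly} $\bar K+1$. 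This delivers a two-sided bound — $\geqslant \bar K+\tfrac{7}{8}$ at some $t_i$, and $\leqslant \bar K+1$ throughout — which is what makes the extraction of a limit $\psi^\beta_{\gamma_\infty}\in C^0(M)\cap C^\infty(M\setminus D)$ and the final numerical contradiction possible. Without this construction the sequence you pick gives no upper bound, the compactness step collapses, and the argument does not close.

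A minor additional point: to apply Theorem \ref{46} to the limiting $\gamma_\infty$ you also need $\mathcal{M}^\beta_{\gamma_\infty}$ bounded below, which the paper secures via the comparison $(\ref{20190213133})$ between $\mathcal{M}^\beta_{\gamma}$ and $\mathcal{M}_\beta$; your write-up implicitly assumes this but it should be stated, since it is what makes the per-parameter decay available at the particular $\gamma_\infty$ produced by the contradiction argument.
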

Therefore, there exists a uniform constant $B_\beta$ such that for any $\varepsilon\in(0,\delta_{\beta_0})$,
\begin{equation}\label{007}
\|\psi_{\beta,\varepsilon}(t)\|_{C^0(M)}\leqslant B_\beta\ \ \ \ \ \ \ \ \ \ \ \ \ on\ \ \ [0,+\infty).
\end{equation}
Then by using the arguments in \cite{JWLXZ1},  we get the convergence of the conical K\"ahler-Ricci flows $(CKRF_{\mu_\beta})$. Instead of using the properness of $\mathcal{M}_{\mu_\beta}$ in \cite{JWLXZ,JWLXZ1}, here we only use the weaker condition that $\mathcal{M}_{\mu_\beta}$ is bounded from below.
\begin{thm}\label{732}Assume that $\lambda >0$ and there is no nontrivial holomorphic fields on $M$ tangent to $D$. If $\mu_\beta$ $(0<\beta<1)$  is positive, we further assume that there is a conical K\"ahler-Einstein metric with Ricci curvature $\mu_\beta$ and cone angle $2\pi\beta$ along $D$. Then the conical K\"ahler-Ricci flow $(CKRF_{\mu_\beta})$ converges to a conical K\"ahler-Einstein metric with Ricci curvature $\mu_\beta$ and cone angle $2\pi\beta$ along $D$ in $C_{loc}^{\infty}$-topology outside divisor $D$ and globally in $C^{\alpha,\beta}$-sense for any $\alpha\in(0,\min\{1,\frac{1}{\beta}-1\})$.
\end{thm}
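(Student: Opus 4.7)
The plan is to split into three cases according to the sign of $\mu_\beta$; in each case, the convergence of $(CKRF_{\mu_\beta})$ is reduced to establishing a uniform $C^0$-bound on the metric potential of the approximating twisted flow $(TKRF_{\mu_\beta,\varepsilon})$, after which the arguments in \cite{JWLXZ,JWLXZ1} (higher-order parabolic estimates off $D$, passing $\varepsilon\to 0$, and compactness as $t\to\infty$) produce the claimed convergence in $C_{loc}^\infty$-topology outside $D$ and globally in $C^{\alpha,\beta}$-sense.

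When $\mu_\beta\leqslant 0$ the required estimate is immediate. The case $\mu_\beta<0$ follows by the maximum principle \`a la Chen-Wang \cite{CW1} and needs no assumption on holomorphic vector fields, while the case $\mu_\beta=0$ is covered by Theorem \ref{20190002} (obtained via Lemma \ref{1901} and the bound $(\ref{20190223})$).

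The main case is $\mu_\beta>0$. I would fix the conical K\"ahler-Einstein potential $\varphi_\beta$ supplied by the hypothesis, approximate it by a decreasing sequence of smooth $\omega_0$-psh functions $\varphi_\varepsilon$, and introduce the interpolating family of twisted flows $(TKRF^\beta_{\mu_\gamma,\varepsilon})$ with $\gamma\in[1-\frac{1}{\lambda},\beta]$. At the endpoint $\mu_\gamma=0$ a uniform $C^0$-bound along $(TKRF^\beta_{0,\varepsilon})$ is available (Theorem \ref{20190221001}), by the Moser-iteration/modified Yau argument that produces $(\ref{20190223})$. Continuity of the flow in $(\mu_\gamma,\varepsilon)$ then propagates this bound to a small interval $\mu_\gamma\in[0,\mu_{\beta_0}]$, yielding $(\ref{05})$ and $(\ref{05001})$ via Lemma \ref{20190221003} and Proposition \ref{1.8.5.3}.

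The crucial step is Lemma \ref{0080}, which extends the uniform $C^0$-bound to all $\gamma\in[\beta_0,\beta)$ and then by continuity to $\gamma=\beta$, giving $(\ref{007})$: $\|\psi_{\beta,\varepsilon}(t)\|_{C^0(M)}\leqslant B_\beta$. Here the weakened hypothesis enters: along a would-be blow-up sequence one combines the monotonicity of $A^\beta_{\mu_\gamma,\varepsilon}(t)$ (Theorem \ref{16}) with the convergence $A^\beta_{\mu_\gamma}(t)\to 0$ (Theorem \ref{46}), both of which use only that $\mathcal{M}_{\mu_\beta}$ is bounded below, to force any subsequential limit to be a conical K\"ahler-Einstein potential with bounded potential; Berndtsson's uniqueness theorem \cite{BBERN}, together with the assumed absence of nontrivial holomorphic vector fields tangent to $D$, then identifies that limit with $\varphi_\beta$, contradicting the blow-up. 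I expect this interpolation step (the extension from $[\beta_0,\beta)$ up to $\gamma=\beta$ with bounds independent of $\gamma$) to be the main obstacle, since the absence of properness prevents the energy arguments of \cite{JWLXZ,JWLXZ1} from being applied directly. Once $(\ref{007})$ is in hand, the uniform Perelman's estimates (Theorem \ref{1.8.2}), the off-$D$ parabolic regularity developed in \cite{JWLXZ,JWLXZ1}, and the smooth-approximation scheme close the proof in essentially routine fashion by letting $\varepsilon\to 0$ and extracting a subsequential limit as $t\to\infty$.
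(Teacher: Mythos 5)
Your proposal matches the paper's proof essentially step for step: the three-case split according to the sign of $\mu_\beta$, the interpolating flows $(TKRF^\beta_{\mu_\gamma,\varepsilon})$ anchored at $\mu_\gamma=0$ via Theorem \ref{20190221001}, the propagation through Lemma \ref{20190221003}, the crucial Lemma \ref{0080} (using the monotonicity of $A^\beta_{\mu_\gamma,\varepsilon}$, the convergence of $A^\beta_{\mu_\gamma}$, and Berndtsson's uniqueness together with the no-holomorphic-vector-field hypothesis to rule out blow-up), the resulting bound $(\ref{007})$, and the concluding appeal to the Perelman-estimate and smooth-approximation machinery of \cite{JWLXZ,JWLXZ1}. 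The one small misattribution is that Theorem \ref{20190002} for the $\mu_\beta=0$ case rests on Theorem \ref{2019021901} (i.e.\ estimate $(\ref{20190223})$) rather than on Lemma \ref{1901}, which instead treats the adjacent positive cone angles $\gamma\in(\beta,\beta+\delta(\lambda))$.
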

We denote $\psi_{\gamma,\varepsilon}(t)$ is the metric potential (with specific choice of initial data, see $(\ref{TKRF6})$) of $\omega_{\gamma,\varepsilon}(t)$ with background metric $\omega_{0}$. At last, by similar arguments as above, we prove
\begin{lem} \label{009} Under the same assumptions as in Theorem \ref{01}. There exists a constant $\delta>0$ such that
\begin{equation}\label{11}
\|\psi_{\gamma,\varepsilon}(t)\|_{C^0(M)}\leqslant\max(\|\varphi_\beta\|_{C^0(M)}+\frac{C_\beta+|\xi_\beta|}{\mu_\beta},B_{\beta})+1
\end{equation}
for any $\varepsilon\in(0,\delta)$, $\gamma\in(\beta-\delta,\beta+\delta)$ and $t\in[\frac{1}{\delta},+\infty)$, where $B_{\beta}$ is the constant in $(\ref{007})$, $\xi_\beta$ is the constant in $(\ref{2019022301})$ and $C_\beta$ is the constant in Proposition \ref{1.8.5.3.11}.
\end{lem}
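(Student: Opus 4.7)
The plan is to argue by contradiction, exactly as in the proof of Lemma \ref{0080}, but now with (\ref{007}) playing the role of the base estimate at $\gamma=\beta$ and the target being a two-sided neighborhood of $\beta$. The hypothesis $\mu_\beta>0$ guarantees that all nearby flows lie in the positive-twist regime, where the uniform Perelman-type estimates and the $A$-functional machinery developed in Theorems \ref{16} and \ref{17} are available.

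Suppose (\ref{11}) fails. Then one can extract sequences $\varepsilon_i\downarrow 0$, $\gamma_i\to\beta$, $t_i\to\infty$ along which $\|\psi_{\gamma_i,\varepsilon_i}(t_i)\|_{C^0(M)}$ exceeds the claimed bound, and we may assume $\mu_{\gamma_i}\geq\mu_\beta/2$. Fixing a large time $T$ at which $\omega_\beta(T)$ is $C^0$-close to $\omega_{\varphi_\beta}$ (by Theorem \ref{732}) and combining the uniform bound (\ref{007}) with the continuity of $(TKRF_{\mu_\gamma,\varepsilon})$ in $(\mu_\gamma,\varepsilon)$ (Remark \ref{201908001}), I would show that $\omega_{\gamma_i,\varepsilon_i}(t)$ stays uniformly close to $\omega_{\beta,\varepsilon_i}(t)$ on $[0,T]$ for $i$ large. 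This yields $\psi_{\gamma_i,\varepsilon_i}(T)$ close to $\psi_{\beta,\varepsilon_i}(T)$ up to an additive constant controlled by $\xi_\beta$, furnishing the desired initial control at time $T$.

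For $t\geq T$ the core of the argument is the $A$-functional. The Log Mabuchi energy $\mathcal{M}_{\mu_\beta}$ is bounded below since $\omega_{\varphi_\beta}$ is a minimizer, and a short perturbation argument in $\gamma$ propagates this to a uniform lower bound of $\mathcal{M}_{\mu_\gamma}$ for $\gamma$ near $\beta$. Then Theorem \ref{17} gives $A_{\mu_{\gamma_i}}(t)\to 0$ as $t\to\infty$, while Theorem \ref{16} ensures the smooth approximations $A_{\mu_{\gamma_i},\varepsilon_i}(t)$ are nondecreasing in $t$; combined with the derivative estimate $\|\dot\psi_{\gamma,\varepsilon}(t)\|_{C^0(M)}\leq C_\beta$ from Proposition \ref{1.8.5.3.11} and the normalization (\ref{2019022301}), a diagonal extraction produces a bounded $\omega_0$-psh function $\psi_\infty$ solving the conical K\"ahler-Einstein equation at cone angle $2\pi\beta$. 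The explicit constant $\frac{C_\beta+|\xi_\beta|}{\mu_\beta}$ in (\ref{11}) is precisely the output of integrating the derivative bound against the exponential decay rate $\mu_\beta$ of the flow.

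Finally, Berndtsson's uniqueness theorem \cite{BBERN} for conical K\"ahler-Einstein metrics with bounded potentials, together with the assumption that there are no nontrivial holomorphic vector fields tangent to $D$, forces $\psi_\infty=\varphi_\beta$ after normalization. This contradicts the failure of (\ref{11}) along the extracted sequences and closes the argument. The step I expect to be the main obstacle is the perturbation for $\mathcal{M}_{\mu_\gamma}$: one must show that boundedness from below of the twisted Mabuchi energy is stable under small changes of the cone angle, and that the convergence $A_{\mu_\gamma}(t)\to 0$ is uniform enough in $\gamma$ to survive the joint passage to the limit $\gamma\to\beta$, $t\to\infty$ in the contradiction scheme.
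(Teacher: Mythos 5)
Your proposal is structured correctly at the macro level (contradiction, subsequences, $A$-functional monotonicity, Berndtsson uniqueness), but two of the steps you sketch differ from what the paper actually does, and one of them is a genuine gap.

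First, the way you extract the contradiction sequence is not what the paper does and would lose an essential piece of structure. The paper's argument does not merely pick $\gamma_i\to\beta$, $\varepsilon_i\to 0$, $t_i\to\infty$ that violate the bound; it uses the continuity of $\varphi_{\gamma,\varepsilon}(t)$ in $\gamma$ (Remark \ref{201908001}) and the established base estimate (\ref{731}) at $\gamma=\beta$ to choose $\gamma_i$ so that $\sup_{t\in[T_{i-1}+1,T_i]}\|\psi_{\gamma_i,\varepsilon_i}(t)\|_{C^0(M)}$ is \emph{exactly} equal to the threshold $L+1$, and then picks $t_i$ inside this interval where the norm is close to $L+1$. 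This exact-threshold property is what gives uniform (in $i$) $C^0$ control on the entire interval $[T_{i-1},T_i]$ and hence uniform parabolic regularity; without it the $C^\infty_{loc}$ extraction that produces the limit $\psi_\beta$ does not go through. Your proposed alternative of matching $\omega_{\gamma_i,\varepsilon_i}$ to $\omega_{\beta,\varepsilon_i}$ on a fixed time window $[0,T]$ does not control the flow at the large times $t_i$ where the purported blowup occurs.

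Second, and more importantly, the step you yourself flag as "the main obstacle" is precisely the difficulty the paper's argument is designed to \emph{avoid}. You propose to show $\mathcal{M}_{\mu_\gamma}$ is bounded below uniformly in $\gamma$ near $\beta$ and that $A_{\mu_\gamma}(t)\to 0$ uniformly in $\gamma$; Remark \ref{01001001} is explicit that such uniformity cannot be established. The paper's argument instead fixes a large time $t_j$ and uses monotonicity of $A_{\gamma_i,\varepsilon_i}(t)$ to write $A_{\gamma_i,\varepsilon_i}(t_j)\leqslant A_{\gamma_i,\varepsilon_i}(t_i)\leqslant 0$, then passes $i\to\infty$ first (using $\gamma_i\to\beta$, $\varepsilon_i\to 0$, local smooth convergence) to obtain $A_{\mu_\beta}(t_j)\leqslant \frac{1}{V}\int_M u_\beta e^{-u_\beta}dV_\beta\leqslant 0$, and only afterwards sends $j\to\infty$ using the convergence $A_{\mu_\beta}(t)\to 0$ at the single angle $\beta$. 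No uniform-in-$\gamma$ convergence of $A_{\mu_\gamma}$ is needed. Your approach would require proving a stronger statement than necessary and one the authors say they cannot prove. Finally, the constant $\frac{C_\beta+|\xi_\beta|}{\mu_\beta}$ in (\ref{11}) does not arise from "integrating the derivative bound against the exponential decay rate"; it comes from identifying the additive constant $C_1$ in $\psi_\beta=\varphi_\beta+C_1$ via the limiting Monge--Amp\`ere equation, which forces $C_1=(C_2-\xi_\beta)/\mu_\beta$ with $|C_2|\leqslant C_\beta$ by Proposition \ref{1.8.5.3.11}.
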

Then the convergence of the conical K\"ahler-Ricci flows $(CKRF_{\mu_{\gamma}})$ with $\gamma\in(\beta-\delta,\beta+\delta)$ follows from the arguments in \cite{JWLXZ,JWLXZ1}.

It is generally known that Donaldson's celebrated openness theorem played an important role in solving the Yau-Tian-Donaldson's conjecture. It was first proposed by Donaldson \cite{SD2}. Then Yao \cite{Yao2} (see a remark in \cite{JWLXZ}) and Tian-Zhu \cite{GTXHZ05} gave different proofs by using elliptic methods. Here, as a corollary of Theorem \ref{01}, we give a parabolic proof of Donaldson's openness theorem.
\begin{thm}\label{12}({\bf Donaldson's openness theorem})\ \ \ Assume that $\lambda >0$ and there is no nontrivial holomorphic vector fields on $M$ tangent to $D$. If there is a conical K\"ahler-Einstein metric with Ricci curvature $\mu_\beta$ and cone angle $2\pi\beta$ along $D$, then there exist conical K\"ahler-Einstein metrics with Ricci curvature $\mu_{\beta'}$ and cone angles $2\pi\beta'$ along $D$ for $\beta'$ sufficiently close to $\beta$.
\end{thm}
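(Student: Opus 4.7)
The plan is to obtain Theorem \ref{12} as an immediate corollary of Theorem \ref{01}. The hypotheses of Theorem \ref{12}, namely $\lambda>0$, absence of nontrivial holomorphic vector fields on $M$ tangent to $D$, and existence of a conical K\"ahler-Einstein metric with Ricci curvature $\mu_\beta$ and cone angle $2\pi\beta$ along $D$, are precisely those required to invoke Theorem \ref{01} at the base angle $\beta$ (the last condition is only needed when $\mu_\beta>0$, and is vacuous otherwise, corresponding to Cases $1$ and $2$ of the introduction).

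First I would fix any admissible initial K\"ahler current $\hat\omega\in c_1(M)$ with $L^p$-density for some $p>1$ and $\int_M\hat\omega^n=\int_M\omega_0^n$; the smooth reference metric $\omega_0$ is one such choice. Then, for every $\beta'$ in the neighborhood of $\beta$ provided by Theorem \ref{01}, I would consider the conical K\"ahler-Ricci flow $(CKRF_{\mu_{\beta'}})$ starting from $\hat\omega$. By Theorem \ref{01}, this flow converges, in $C^\infty_{loc}$-topology outside $D$ and globally in $C^{\alpha,\beta'}$-sense for $\alpha\in(0,\min\{1,\frac{1}{\beta'}-1\})$, to a conical K\"ahler-Einstein metric with Ricci curvature $\mu_{\beta'}$ and cone angle $2\pi\beta'$ along $D$. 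Extracting this limit for each such $\beta'$ produces exactly the family of conical K\"ahler-Einstein metrics asserted in Theorem \ref{12}.

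There is no further analytic obstacle to overcome here: the genuine content, in particular the uniform estimates of Lemmas \ref{1901} and \ref{0080}, the monotonicity of $A_{\mu_\gamma,\varepsilon}(t)$, and the Dinew/Berndtsson uniqueness results used to propagate convergence from the angle $\beta$ to nearby angles $\beta'$, is already packaged into Theorem \ref{01}. The purpose of Theorem \ref{12} is mainly to emphasize that reading off the K\"ahler-Einstein metric at a nearby angle from the long-time limit of a conical K\"ahler-Ricci flow furnishes a fully parabolic proof of Donaldson's openness, in contrast to the implicit function theorem or elliptic continuity-method arguments used in \cite{SD2,Yao2,GTXHZ05}.
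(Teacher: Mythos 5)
Your proposal is correct and coincides with the paper's own argument: the paper explicitly presents Theorem~\ref{12} as a corollary of Theorem~\ref{01}, and you have correctly identified that the hypotheses of Theorem~\ref{12} subsume those of Theorem~\ref{01} (with the conical K\"ahler-Einstein existence assumption vacuous when $\mu_\beta\leqslant 0$), so that reading off the long-time limit of $(CKRF_{\mu_{\beta'}})$ for each nearby $\beta'$ is exactly the intended derivation.
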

We give a further remark of our project which is related to the parabolic version of the Yau-Tian-Donaldson's conjecture.
\begin{rem}\label{20190405}
When $\lambda>1$. We denote $\mu_\beta=0$ (that is, $\beta=1-\frac{1}{\lambda}$) and define a set $\Omega_\lambda$ as follows
\begin{equation*}
\begin{split}
\Omega_\lambda=\bigg\{s\in(\beta,1]\bigg|&\ (CKRF_{\mu_\gamma})\ converges\ to\ a\ conical\ K\ddot{a}hler-Einstein\ metric\ with\\
&\ Ricci\ curvature\ \mu_\gamma\ and\ cone\ angle\ 2\pi\gamma\ along\ D\ for\ \gamma\in(\beta,s] \bigg\} .
\end{split}
\end{equation*}
From Theorem \ref{01}, $\Omega_\lambda$ contains $(\beta,\beta_0)$ with some $\beta_0\in(\beta,1)$, so it is non-empty. Furthermore, $\Omega_\lambda$ is open in $(\beta,1)$. If we can combine the closeness of $\Omega_\lambda$ with the stability of the manifolds, then $1\in\Omega_\lambda$ and thus the smooth K\"ahler-Ricci flow converges to a K\"ahler-Einstein metric. 

When $\lambda=1$, we denote $\kappa>0$ sufficiently small and $\nu_\gamma=1-(1-\gamma)(1+\kappa)$. We consider the twisted conical K\"ahler-Ricci flows
$$(TCKRF_{\nu_\gamma}):\ \ \ \ \left\{
\begin{aligned}
 &\ \frac{\partial \omega_{\gamma}(t)}{\partial t}=-Ric(\omega_{\gamma}(t))+\nu_{\gamma}\omega_{\gamma}(t)+(1-\gamma)\kappa\omega_0+(1-\gamma)[D].\\
  \\
 &\ \omega_{\gamma}(t)|_{t=0}=\hat{\omega}\\
\end{aligned}
\right.$$
Assume that $\nu_{\hat{\beta}}=0$ (that is, $\hat{\beta}=1-\frac{1}{1+\kappa}>0$). We define set $\Omega_\kappa$ as follows
\begin{equation*}
\begin{split}
\Omega_\kappa=\bigg\{s\in(\hat{\beta},1]\bigg|&\ (TCKRF_{\nu_\gamma})\ converges\ to\ a\ twisted\ conical\ K\ddot{a}hler-Einstein\ metric\\
&\ \ with\ twisted\ form\ (1-\gamma)\kappa\omega_0\ ,\ twisted\ Ricci\ curvature\ \nu_\gamma\ and\ cone\\
&\ \  angle\ 2\pi\gamma\ along\ D\ for\ \gamma\in(\hat{\beta},s] \bigg\} .
\end{split}
\end{equation*}
By similar arguments as Theorem \ref{01}, we know that $\Omega_\kappa$ is open in $(\hat{\beta},1)$ and contains $(\hat{\beta},\beta_0)$ for some $\beta_0\in(\hat{\beta},1)$. If $\Omega_\kappa$ is closed under some stability assumption on the manifold, then $1\in\Omega_\kappa$ which means that the smooth K\"ahler-Ricci flow converges to a K\"ahler-Einstein metric.

The above problem can be seen as a parabolic version of the Yau-Tian-Donaldson's conjecture. In our subsequent work, we will focus on the relation between the closeness of $\Omega_\lambda$ (resp. $\Omega_\kappa$) and the K-stability of the manifold. 
\end{rem}
This paper is organized as follows. In section $2$, we recall some results of the twisted K\"ahler-Ricci flows and conical K\"ahler-Ricci flows in the literature. In section $3$, we study the {\it Case 2} of Theorem \ref{01}. We first give a uniform lower bound independent of $\mu_\gamma$ for the twisted scalar curvatures $R(\omega_{\mu_\gamma,\varepsilon}(t))-(1-\gamma)tr_{\omega_{\mu_\gamma,\varepsilon}(t)}\theta_\varepsilon$. Then after we get some uniform estimates along the twisted K\"ahler-Ricci flows $(TKRF_{0,\varepsilon})$ and $(TKRF_{\mu_\gamma,\varepsilon})$, we prove Lemma \ref{1901}. At last, we prove Lemma \ref{0080} and Lemma \ref{009} after we get some uniform regularities for the twisted K\"ahler-Ricci flows $(TKRF^\beta_{\mu_\gamma,\varepsilon})$ in section $5$.

\medskip

{\bf Acknowledge:} The first author would like to thank Professors Jiayu Li, Miles Simon and Xiaohua Zhu for their useful discussions, constant help and encouragement. He also would like to thank Professor Xiangwen Zhang, Doctors Chao Li and Xishen Jin for their several useful comments. Part of this work was carried out while the first author's visit to the University of Newcastle and the University of Adelaide in Australia. He is grateful to Professors James McCoy and Thomas Leistner for their invitations, and the universities for their hospitality.

\section{Preliminaries}\label{sec:2}
In this section, we give some known results about the twisted K\"ahler-Ricci flows and the (twisted) conical K\"ahler-Ricci flows. Let $M$ be a compact K\"ahler manifold of complex dimension $n$ and $D$ be a smooth divisor. By saying that a closed positive $(1,1)$-current $\omega$ with locally bounded potentials is conical K\"ahler metric with cone angle $2\pi \beta $ ($0<\beta<1$) along $D$, we mean that $\omega$ is smooth K\"ahler metric on $M\setminus D$. And near each point $p\in D$, there exists local holomorphic coordinate $(z_{1}, \cdots, z_{n})$ in a neighborhood $U$ of $p$ such that $D=\{z_{n}=0\}$ and $\omega$ is asymptotically equivalent to the model conical metric
\begin{equation}\label{2016081901}\sqrt{-1} \sum_{j=1}^{n-1} dz_{j}\wedge d\overline{z}_{j}+\sqrt{-1} |z_{n}|^{2\beta -2} dz_{n}\wedge d\overline{z}_{n} \ \ \ on\ \ U.\end{equation}
\begin{defi}
Let $\omega_{0}$ be a smooth K\"ahler metric and $D\subset M$ be a smooth divisor which satisfies $c_1(M)=\mu[\omega_0]+(1-\beta)[D]$ with $\mu\in \mathbb{R}$. We call $\omega $ a conical K\"ahler-Einstein metric with Ricci curvature $\mu$ and cone angle $2\pi \beta $ along $D$ if it is a conical K\"ahler metric with cone angle $2\pi \beta $ along $D$ and satisfies
\begin{equation}\label{2015.1.21.1}
Ric (\omega )=\mu \omega + (1-\beta ) [D]\ \ on\ M.
\end{equation}
\end{defi}
Equation (\ref{2015.1.21.1}) is classical outside $D$ and it holds in the sense of currents on $M$.
There are other definitions of metrics with conical singularities (see \cite{SD2,JMR}, etc.). But for conical K\"ahler-Einstein metrics, these definitions turn out to be equivalent (see Theorem $2$ in \cite{JMR}). 

In this paper, by saying that a conical K\"ahler-Einstein metric with cone angle $2\pi \beta $ along $D$ we also mean that its Ricci curvature is $\mu_\beta$. 

We write $\hat{\omega}=\omega_0+\sqrt{-1}\partial\bar{\partial}\varphi_0$. Let $\omega_\gamma=\omega_0+\frac{k}{\gamma^2}\sqrt{-1}\partial\bar{\partial}|s|_h^{2\gamma}$ with $\gamma\in(0,1)$ be the conical K\"ahler metric which is given by Donaldson \cite{SD2}. Now, in the Fano case, we list some results proved in \cite{JWLXZ1}.
\begin{thm}\label{1444}(Theorem $2.8$ in \cite{JWLXZ1})
There exists a unique long-time solution $\omega_{\gamma,\varepsilon}(t)$ to the twisted K\"ahler-Ricci flow $(TKRF_{\mu_\gamma,\varepsilon})$ in the following sense.
\begin{itemize}
\item $\omega_{\gamma,\varepsilon}(t)$ satisfies the twisted K\"ahler-Ricci flow $(TKRF_{\mu_\gamma,\varepsilon})$ on $(0,\infty)\times M$;
\item  There exists a metric potential $\varphi_{\gamma,\varepsilon}(t)\in C^0\big([0,\infty)\times M\big)\cap C^\infty\big((0,\infty)\times M\big)$ such that $\omega_{\gamma,\varepsilon}(t)=\omega_{0}+\sqrt{-1}\partial\bar{\partial}\varphi_{\gamma,\varepsilon}(t)$ and $\lim\limits_{t\rightarrow0^+}\|\varphi_{\gamma,\varepsilon}(t)-\varphi_{0}\|_{L^{\infty}(M)}=0$.
\end{itemize}
\end{thm}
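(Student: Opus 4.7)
My plan is to recast $(TKRF_{\mu_\gamma,\varepsilon})$ as a scalar parabolic complex Monge--Amp\`ere equation on potentials and solve it by approximating the rough initial datum by smooth ones. Writing $\omega_{\gamma,\varepsilon}(t)=\omega_0+\sqrt{-1}\partial\bar\partial\varphi_{\gamma,\varepsilon}(t)$ and using $\mu_\gamma+(1-\gamma)\lambda=1$, the flow is equivalent, after a standard normalization, to
\begin{equation*}
\frac{\partial\varphi}{\partial t}=\log\frac{(\omega_0+\sqrt{-1}\partial\bar\partial\varphi)^n}{\omega_0^n}+\mu_\gamma\varphi-h_0+(1-\gamma)\log(\varepsilon^2+|s|_h^2),
\end{equation*}
where $h_0\in C^\infty(M)$ satisfies $Ric(\omega_0)-\omega_0=\sqrt{-1}\partial\bar\partial h_0$. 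Since $\hat\omega$ has $L^p$-density with respect to $\omega_0^n$ and total mass $[\omega_0]^n$, Ko\l odziej's estimate gives a continuous bounded potential $\varphi_0$ with $\hat\omega=\omega_0+\sqrt{-1}\partial\bar\partial\varphi_0$.

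Next I would approximate $\varphi_0$ by a decreasing sequence $\{\varphi_0^k\}_{k\in\mathbb{N}}$ of smooth $\omega_0$-plurisubharmonic functions that converges uniformly to $\varphi_0$, available by Demailly's regularization on $M$. For each $k$, since $\log(\varepsilon^2+|s|_h^2)$ is smooth and bounded at fixed $\varepsilon>0$, the scalar equation with datum $\varphi_0^k$ admits a unique smooth short-time solution $\varphi^k(t)$ by the inverse function theorem in parabolic H\"older spaces. Long-time existence of each $\varphi^k$ is then standard for the twisted Fano setting: a maximum-principle argument bounds $\varphi^k$ and $\dot\varphi^k$ uniformly on compact time intervals, a parabolic Schwarz lemma applied to the identity $(M,\omega^k(t))\to(M,\omega_0)$ combined with the bisectional curvature bound of $\omega_0$ controls $n+\Delta_{\omega_0}\varphi^k$, and Evans--Krylov together with Schauder theory yield the higher regularity, with all bounds uniform in $k$ on $[\tau,T]\times M$ for any $0<\tau<T<\infty$.

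The limit $k\to\infty$ is then produced by the $L^\infty$ stability estimate
\begin{equation*}
\|\varphi^k(t)-\varphi^j(t)\|_{L^\infty(M)}\leqslant e^{|\mu_\gamma|t}\,\|\varphi_0^k-\varphi_0^j\|_{L^\infty(M)},
\end{equation*}
obtained by applying the maximum principle to the difference of two approximate flows, using concavity of $\log\det$ and the Lipschitz dependence on $\varphi$ through $\mu_\gamma\varphi$. Consequently $\{\varphi^k\}$ is uniformly Cauchy in $C^0([0,T]\times M)$ for every $T$, its limit $\varphi_{\gamma,\varepsilon}$ lies in $C^0([0,\infty)\times M)$ and satisfies $\lim_{t\to 0^+}\|\varphi_{\gamma,\varepsilon}(t)-\varphi_0\|_{L^\infty(M)}=0$, while the uniform interior estimates transfer the smoothness for $t>0$. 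The same comparison estimate applied to any two solutions in the stated class delivers uniqueness.

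The main obstacle is precisely the joint continuity of $\varphi_{\gamma,\varepsilon}$ at $t=0$, since no pointwise regularity of $\hat\omega$ is given beyond the $L^p$-density of its volume form. The stability estimate above is what closes this gap, but it leans crucially on having $\varepsilon>0$ fixed: as $\varepsilon\to 0$ the term $\log(\varepsilon^2+|s|_h^2)$ diverges along $D$ and the right-hand side loses its uniform Lipschitz control, which is why the conical limit that produces $(CKRF_{\mu_\gamma})$ itself must be handled by a separate smoothing argument rather than by a direct continuation of the present scheme.
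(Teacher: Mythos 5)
Your proposal follows essentially the same route as Theorem 2.8 of \cite{JWLXZ1}, which the paper cites here: reduce to the scalar parabolic Monge--Amp\`ere equation, produce the bounded continuous potential $\varphi_0$ via Ko{\l}odziej's estimate, mollify it by a decreasing Demailly approximation (uniform by Dini), carry out uniform interior a priori estimates on the smooth approximate flows, and close the argument at $t=0$ --- simultaneously obtaining uniqueness --- through the $L^\infty$ maximum-principle stability inequality. The only quibble is cosmetic: the exponential factor in your stability estimate can be taken to be $e^{\max(\mu_\gamma,0)\,t}$ rather than $e^{|\mu_\gamma|t}$, but this has no bearing on the argument.
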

\begin{thm}\label{04.5}(Theorem $1.2$ in \cite{JWLXZ1}) There exists a unique long-time solution $\omega_\gamma(t)$ to the conical K\"ahler-Ricci flow $(CKRF_{\mu_\gamma})$ in the following sense.
\begin{itemize}
  \item  For any $[\delta, T]$ ($0<\delta<T<\infty$), there exists constant $C$ such that
   \begin{equation*}
  C^{-1}\omega_{\gamma}\leqslant\omega_\gamma(t)\leqslant C\omega_\gamma\ \ \ \ \ \ \ \ \ on\ \ [\delta, T]\times (M\setminus D);
  \end{equation*}
  \item  On $(0,\infty)\times(M\setminus D)$, $\omega_\gamma(t)$ satisfies the smooth K\"ahler-Ricci flow;
  \item On $(0,\infty)\times M$, $\omega_\gamma(t)$ satisfies equation $(CKRF_{\mu_\gamma})$ in the sense of currents;
  \item There exists a metric potential $\varphi_\gamma(t)\in C^{0}\big([0,\infty)\times M\big)\cap C^{\infty}\big((0,\infty)\times (M\setminus D)\big)$ such that $\omega_\gamma(t)=\omega_{0}+\sqrt{-1}\partial\bar{\partial}\varphi_\gamma(t)$ and $\lim\limits_{t\rightarrow0^{+}}\|\varphi_\gamma(t)-\varphi_{0}\|_{L^{\infty}(M)}=0$;
  \item On $[\delta, T]$, there exist constants $\alpha\in(0,1)$ and $C^{\ast}$ such that $\varphi_\gamma(t)$ is $C^{\alpha}$ on $M$ with respect to $\omega_{0}$ and $\| \frac{\partial\varphi_\gamma(t)}{\partial t}\|_{L^{\infty}(M\setminus D)}\leqslant C^{\ast}$.
  \end{itemize}
\end{thm}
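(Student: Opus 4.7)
The plan is to establish Theorem \ref{04.5} by smooth approximation: solve the twisted K\"ahler-Ricci flows $(TKRF_{\mu_\gamma,\varepsilon})$ via Theorem \ref{1444} to get a family $\omega_{\gamma,\varepsilon}(t)=\omega_0+\sqrt{-1}\partial\bar\partial\varphi_{\gamma,\varepsilon}(t)$, derive $\varepsilon$-independent estimates that are global for potentials and local away from $D$ for the metric, and then let $\varepsilon\to 0^+$. Writing the flow at the potential level as
\[
\frac{\partial\varphi_{\gamma,\varepsilon}}{\partial t}=\log\frac{(\omega_0+\sqrt{-1}\partial\bar\partial\varphi_{\gamma,\varepsilon})^n}{\omega_0^n}+\mu_\gamma\varphi_{\gamma,\varepsilon}+(1-\gamma)\log(\varepsilon^2+|s|_h^2)+h_0,
\]
where $h_0$ is a Ricci potential of $\omega_0$, reduces everything to a scalar parabolic Monge-Amp\`ere problem. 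I would first obtain a global $L^\infty$ bound on $\varphi_{\gamma,\varepsilon}(t)$ on finite time intervals $[0,T]$ by a maximum principle argument together with Ko\l odziej's $L^p$-estimate for the initial potential (recall that $\hat\omega$ has $L^p$ density, so $\varphi_0\in L^\infty$), and a similar bound for $\dot\varphi_{\gamma,\varepsilon}(t)$ using its evolution equation.

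The main step is a uniform \emph{local} second-order estimate on $M\setminus D$. Comparing $\omega_{\gamma,\varepsilon}(t)$ with Donaldson's model conical metric $\omega_\gamma=\omega_0+\tfrac{k}{\gamma^2}\sqrt{-1}\partial\bar\partial|s|_h^{2\gamma}$, a Chern--Lu/Aubin--Yau parabolic inequality for $\log\mathrm{tr}_{\omega_\gamma}\omega_{\gamma,\varepsilon}(t)-A\varphi_{\gamma,\varepsilon}(t)$ combined with the bounded bisectional curvature of $\omega_\gamma$ on compact subsets of $M\setminus D$ and the lower bound of $Ric(\omega_\gamma)$ there gives, on any $[\delta,T]\times K$ with $K\Subset M\setminus D$, an estimate $C^{-1}\omega_\gamma\leqslant \omega_{\gamma,\varepsilon}(t)\leqslant C\omega_\gamma$ uniformly in $\varepsilon$. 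The reverse direction comes from the Monge-Amp\`ere equation and the uniform bound on $\dot\varphi_{\gamma,\varepsilon}$. Once the metric is comparable to $\omega_\gamma$ on a fixed compact set, I would bootstrap via Calabi-type $C^3$-estimates and standard interior Schauder theory to extract $C^\infty_{loc}$ bounds on $(0,\infty)\times(M\setminus D)$, from which a diagonal subsequence $\omega_{\gamma,\varepsilon_k}(t)\to\omega_\gamma(t)$ converges smoothly locally outside $D$, giving the first two bullets of the statement.

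To obtain the global potential $\varphi_\gamma(t)\in C^0([0,\infty)\times M)$ I would use the uniform $L^\infty$ bound to pass to the limit in $C^0$, checking equicontinuity at $t=0$ via the $\varphi_{\gamma,\varepsilon}\to\varphi_0$ convergence ensured by Theorem \ref{1444}; this yields the current equation on $(0,\infty)\times M$ (the $(1-\gamma)\log(\varepsilon^2+|s|_h^2)$ term converges as a current to $(1-\gamma)\log|s|_h^2$, producing the $(1-\gamma)[D]$ contribution via Poincar\'e--Lelong), and the fourth bullet. The H\"older regularity $\varphi_\gamma(t)\in C^\alpha$ on $[\delta,T]$ with the $L^\infty$ bound on $\dot\varphi_\gamma(t)$ then follows from Ko\l odziej-type and Dinew-Zhang-style estimates applied to the elliptic Monge-Amp\`ere equation satisfied by $\varphi_\gamma(t)$ at each frozen $t\in[\delta,T]$, once the right-hand side is controlled in $L^p$ uniformly.

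Uniqueness is the cleanest part: given two solutions $\omega_\gamma^{(1)}(t)$, $\omega_\gamma^{(2)}(t)$ in the above sense, their potentials $\varphi_\gamma^{(i)}(t)$ are continuous up to $t=0$, smooth on $(0,\infty)\times(M\setminus D)$, and satisfy the same scalar parabolic Monge-Amp\`ere equation away from $D$. Applying the maximum principle to $\Phi=\varphi_\gamma^{(1)}-\varphi_\gamma^{(2)}$ on $M\setminus D$, with the observation that any interior extremum must lie away from $D$ (the potentials are bounded, so barriers built from $|s|_h^{2\alpha}$ push the max/min into the interior) and that $\Phi(0,\cdot)\equiv 0$, then controlling the $\mu_\gamma\Phi$ term by Gronwall gives $\Phi\equiv 0$. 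The anticipated main obstacle is the uniform second-order estimate near $D$: the $\theta_\varepsilon$ term and the reference $\omega_\gamma$ both degenerate there, so controlling the auxiliary function $\log\mathrm{tr}_{\omega_\gamma}\omega_{\gamma,\varepsilon}(t)-A\varphi_{\gamma,\varepsilon}(t)$ requires a careful choice of barrier involving $|s|_h^{2\gamma}$ to ensure that the maximum of the Chern--Lu quantity is attained in a fixed compact set away from $D$; otherwise the $\varepsilon$-independence of the resulting estimate fails.
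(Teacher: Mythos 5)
The theorem you are proving is cited verbatim from \cite{JWLXZ1}; the present paper gives no proof, only a reference, though the key estimates re-appear (for the special case $\mu_\beta=0$ and uniformly in $\mu_\gamma$) in Lemmas \ref{201901}--\ref{201906} and in Lemma \ref{22} for the twisted family $(TKRF^{\beta}_{\mu_\gamma,\varepsilon})$. Your overall scheme --- solve $(TKRF_{\mu_\gamma,\varepsilon})$, derive $\varepsilon$-independent bounds, let $\varepsilon\to0$, then prove uniqueness by a Jeffres barrier argument --- matches the scheme of \cite{JWLXZ1}.

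There is, however, a real gap at exactly the place you flag as the ``main obstacle,'' and your proposed fix does not close it. You set up a Chern--Lu estimate for $\log\mathrm{tr}_{\omega_\gamma}\omega_{\gamma,\varepsilon}(t)-A\varphi_{\gamma,\varepsilon}(t)$ on compacta $K\Subset M\setminus D$, but such an interior estimate produces a constant $C_K$ that depends on $K$ and generically blows up as $K$ exhausts $M\setminus D$, whereas the first bullet of the theorem asserts a comparison $C^{-1}\omega_\gamma\leqslant\omega_\gamma(t)\leqslant C\omega_\gamma$ on all of $[\delta,T]\times(M\setminus D)$ with a single $C=C(\delta,T)$. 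Moreover, the barrier $|s|_h^{2\gamma}$ you suggest is bounded, so adding a multiple of it to the auxiliary function cannot force its maximum away from $D$; and the bisectional curvature of the model $\omega_\gamma$ is \emph{not} bounded below near $D$, so a naive global Chern--Lu against $\omega_\gamma$ fails. The device used in \cite{JWLXZ1} (and reflected here in the proof of Lemma \ref{22}, see the auxiliary function $\Psi^{\rho}_\varepsilon$ of (\ref{28})) is to run the parabolic Chern--Lu computation \emph{globally on $M$} against the smooth Campana--Guenancia--P\u{a}un regularization $\omega_\varepsilon^\gamma=\omega_0+k\sqrt{-1}\partial\bar\partial\chi_\gamma$ from (\ref{TKRF9}), and to add the CGP auxiliary function $\Psi^{\rho}_\varepsilon$ (uniformly bounded, with $\sqrt{-1}\partial\bar\partial\Psi^{\rho}_\varepsilon$ large enough near $D$) to the test function so that the unbounded negative curvature contribution of $\omega_\varepsilon^\gamma$ near $D$ is absorbed. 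This gives the global, $\varepsilon$-uniform two-sided bound $e^{-C/t}\omega_\varepsilon^\gamma\leqslant\omega_{\gamma,\varepsilon}(t)\leqslant e^{C/t}\omega_\varepsilon^\gamma$ on $M$ (cf.\ Lemma \ref{201904}), and the first bullet then follows in the limit because $\omega_\varepsilon^\gamma$ is uniformly equivalent to Donaldson's $\omega_\gamma$ away from $D$. Until you replace your local-cutoff/bounded-barrier step by this global CGP-regularized estimate, the proof does not actually yield the stated uniform metric comparison on $M\setminus D$. The remaining parts of your outline (the $L^\infty$ and $\dot\varphi$ bounds, the Evans--Krylov--Calabi bootstrap, the Poincar\'e--Lelong passage to the current equation, Ko\l odziej's $L^p$-estimate for the global H\"older bound, and the Jeffres-barrier maximum principle for uniqueness) are sound and consistent with the proof in \cite{JWLXZ1}.
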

\begin{rem}\label{14}  In Theorem \ref{04.5}, by saying that $\omega_\gamma(t)$ satisfies equation $(CKRF_{\mu_\gamma})$ in the sense of currents on $M_{\infty}:=(0,\infty)\times M$, we mean that for any smooth $(n-1,n-1)$-form $\eta(t)$ with compact support in $(0,\infty)\times M$, we have
  \begin{equation*}
  \int_{M_{\infty}}\frac{\partial \omega_\gamma(t)}{\partial t}\wedge \eta(t,x)dt=\int_{M_{\infty}}(-Ric(\omega_\gamma(t))+\mu_\gamma\omega_\gamma(t)+(1-\gamma)[D])\wedge \eta(t,x)dt,
  \end{equation*}
  where the integral on the left side can be written as
  \begin{equation*}
  \int_{M_{\infty}}\frac{\partial \omega_\gamma(t)}{\partial t}\wedge \eta(t,x)dt=-\int_{M_{\infty}} \omega_\gamma(t)\wedge\frac{\partial\eta(t,x)}{\partial t}dt.
  \end{equation*}
\end{rem}
By using the similar arguments as that in the sections $2$ and $3$ of \cite{JWLXZ1}, we have the following results.
\begin{thm}\label{0044.5}
There exists a unique long-time solution $\omega^\beta_{\gamma,\varepsilon}(t)$ to the twisted K\"ahler-Ricci flow $(TKRF^{\beta}_{\mu_\gamma,\varepsilon})$ in the following sense.
\begin{itemize}
\item $\omega^\beta_{\gamma,\varepsilon}(t)$ satisfies the twisted K\"ahler-Ricci flow $(TKRF^{\beta}_{\mu_\gamma,\varepsilon})$ on $(0,\infty)\times M$;
\item  There exists a metric potential $\varphi^\beta_{\gamma,\varepsilon}(t)\in C^0\big([0,\infty)\times M\big)\cap C^\infty\big((0,\infty)\times M\big)$ such that $\omega^\beta_{\gamma,\varepsilon}(t)=\omega_{0}+\sqrt{-1}\partial\bar{\partial}\varphi^\beta_{\gamma,\varepsilon}(t)$ and $\lim\limits_{t\rightarrow0^+}\|\varphi^\beta_{\gamma,\varepsilon}(t)-\varphi_{0}\|_{L^{\infty}(M)}=0$.
\end{itemize}
\end{thm}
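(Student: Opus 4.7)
The proof will run in parallel with the proof of Theorem \ref{1444} (Theorem 2.8 in \cite{JWLXZ1}), since the only structural difference between $(TKRF^{\beta}_{\mu_\gamma,\varepsilon})$ and $(TKRF_{\mu_\gamma,\varepsilon})$ is the extra twisting term $(\mu_\beta-\mu_\gamma)\omega_{\varphi_\varepsilon}$, which is a smooth closed nonnegative $(1,1)$-form (recall $\varphi_\varepsilon$ are smooth $\omega_0$-psh and $\mu_\gamma\leq\mu_\beta$ for $\gamma\in[1-\frac{1}{\lambda},\beta]$). Analytically this term plays the same role as the already-present $(1-\beta)\theta_\varepsilon$, and a direct cohomology check gives $c_1(M)-\mu_\gamma[\omega_0]-(\mu_\beta-\mu_\gamma)[\omega_0]-(1-\beta)\lambda[\omega_0]=0$, so the flow stays within the K\"ahler class of $\hat\omega$. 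The plan is therefore to reduce the flow to a scalar parabolic complex Monge-Amp\`ere equation of the form
\begin{equation*}
\frac{\partial \varphi^\beta_{\gamma,\varepsilon}}{\partial t}=\log\frac{(\omega_0+\sqrt{-1}\partial\bar\partial \varphi^\beta_{\gamma,\varepsilon})^n}{\Omega}+\mu_\gamma\varphi^\beta_{\gamma,\varepsilon}+F_{\beta,\gamma,\varepsilon},\qquad \varphi^\beta_{\gamma,\varepsilon}(0,\cdot)=\varphi_0,
\end{equation*}
where $\Omega$ is a fixed smooth volume form and $F_{\beta,\gamma,\varepsilon}$ is a smooth function depending on $\varphi_\varepsilon$, $\theta_\varepsilon$ and $\omega_0$ absorbing the twisting data, and then to produce solutions by smooth approximation of $\hat\omega$.

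First I would fix a decreasing sequence of smooth K\"ahler metrics $\hat\omega_k=\omega_0+\sqrt{-1}\partial\bar\partial\varphi_{0,k}\to\hat\omega$ in $L^\infty$, as in \cite{JWLXZ1}. For each $k$ the parabolic complex Monge-Amp\`ere equation with smooth right-hand side and smooth initial datum has a unique long-time smooth solution $\varphi^{\beta,k}_{\gamma,\varepsilon}(t)$ by the standard Cao-type argument (short-time existence by implicit function theorem plus a priori $C^2$ estimate extending the solution globally). The crucial step is to establish uniform-in-$k$ a priori estimates on bounded time intervals: a $C^0$ bound on $\varphi^{\beta,k}_{\gamma,\varepsilon}$ and $\dot\varphi^{\beta,k}_{\gamma,\varepsilon}$ by maximum principle, a second-order estimate $tr_{\omega_0}\omega^{\beta,k}_{\gamma,\varepsilon}(t)\leq C(t)$ via an Aubin-Yau/Chern-Lu computation (where the lower bound on the bisectional curvature of $\omega_0$ and the smoothness of $\omega_{\varphi_\varepsilon}$ and $\theta_\varepsilon$ enter), and then Evans-Krylov plus parabolic Schauder to upgrade to uniform $C^\infty_{\mathrm{loc}}$ estimates on $(0,T]\times M$ for every $T<\infty$.

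Next I would pass to the limit $k\to\infty$ by Arzel\`a-Ascoli to produce a candidate solution $\varphi^\beta_{\gamma,\varepsilon}(t)\in C^\infty((0,\infty)\times M)$. The initial-data behavior $\|\varphi^\beta_{\gamma,\varepsilon}(t)-\varphi_0\|_{L^\infty(M)}\to 0$ as $t\to 0^+$ is extracted, as in Theorem $2.8$ of \cite{JWLXZ1}, by constructing explicit upper and lower barriers via the smooth approximating solutions, using the $L^p$ density of $\hat\omega$ and Ko\l odziej-type estimates to force uniform control near $t=0$. Uniqueness in the stated sense follows by the standard maximum principle argument applied to the difference of two solutions (which are smooth on $(0,\infty)\times M$ and match continuously at $t=0$), since the twisting terms not involving the unknown cancel in the difference equation.

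The main technical obstacle, as in \cite{JWLXZ1}, is the non-smoothness of the initial datum: one must carry the $C^0$ convergence at $t=0$ through the limit without a uniform higher-order estimate as $t\downarrow 0$. I expect this to be handled exactly as in \cite{JWLXZ1} since the additional smooth nonnegative term $(\mu_\beta-\mu_\gamma)\omega_{\varphi_\varepsilon}$ does not affect the barrier construction — it only enters through the smooth data $F_{\beta,\gamma,\varepsilon}$, for which the required bounds are uniform in the approximating index $k$.
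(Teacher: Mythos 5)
Your proposal matches the paper's own treatment: the paper gives no detailed argument for Theorem \ref{0044.5}, stating only that it follows ``by using the similar arguments as that in the sections $2$ and $3$ of \cite{JWLXZ1}.'' You correctly identify the one structural observation needed to make that transfer, namely that for $\gamma\in[1-\frac{1}{\lambda},\beta]$ the extra term $(\mu_\beta-\mu_\gamma)\omega_{\varphi_\varepsilon}$ is a smooth closed nonnegative $(1,1)$-form (so the cohomology class is preserved and the term is absorbed harmlessly into the smooth data of the scalar parabolic Monge--Amp\`ere equation), after which the whole \cite{JWLXZ1} machinery --- smooth approximation of $\hat\omega$, Cao-type $C^0$/$\dot\varphi$ and Laplacian a priori estimates, Evans--Krylov plus Schauder, barriers near $t=0$, maximum-principle uniqueness --- goes through unchanged for fixed $\gamma$ and $\varepsilon$.
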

\begin{thm}\label{04.5001} There exists a unique long-time solution $\omega^\beta_\gamma(t)$ to the conical K\"ahler-Ricci flow $(CKRF^\beta_{\mu_\gamma})$ in the following sense.
\begin{itemize}
  \item  For any $[\delta, T]$ ($0<\delta<T<\infty$), there exists constant $C$ such that
   \begin{equation*}
  C^{-1}\omega_{\beta}\leqslant\omega^\beta_\gamma(t)\leqslant C\omega_\beta\ \ \ \ \ \ \ \ \ on\ \ [\delta, T]\times (M\setminus D);
  \end{equation*}
  \item  On $(0,\infty)\times(M\setminus D)$, $\omega^\beta_\gamma(t)$ satisfies the smooth twisted K\"ahler-Ricci flow;
  \item On $(0,\infty)\times M$, $\omega^\beta_\gamma(t)$ satisfies equation $(CKRF^\beta_{\mu_\gamma})$ in the sense of currents;
  \item There exists a metric potential $\varphi^\beta_\gamma(t)\in C^{0}\big([0,\infty)\times M\big)\cap C^{\infty}\big((0,\infty)\times (M\setminus D)\big)$ such that $\omega^\beta_\gamma(t)=\omega_{0}+\sqrt{-1}\partial\bar{\partial}\varphi^\beta_\gamma(t)$ and $\lim\limits_{t\rightarrow0^{+}}\|\varphi^\beta_\gamma(t)-\varphi_{0}\|_{L^{\infty}(M)}=0$;
  \item On $[\delta, T]$, there exist constants $\alpha\in(0,1)$ and $C^{\ast}$ such that $\varphi^\beta_\gamma(t)$ is $C^{\alpha}$ on $M$ with respect to $\omega_{0}$ and $\| \frac{\partial\varphi^\beta_\gamma(t)}{\partial t}\|_{L^{\infty}(M\setminus D)}\leqslant C^{\ast}$.
  \end{itemize}
\end{thm}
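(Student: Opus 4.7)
The plan is to construct $\omega^{\beta}_{\gamma}(t)$ as a limit of the smooth solutions $\omega^{\beta}_{\gamma,\varepsilon}(t)$ furnished by Theorem \ref{0044.5}, as $\varepsilon\to 0$, following the template laid out for the untwisted conical flow in Theorem \ref{04.5} and Sections $2$--$3$ of \cite{JWLXZ1}. The new feature compared with $(CKRF_{\mu_\gamma})$ is the additional twist term $(\mu_\beta-\mu_\gamma)\omega_{\varphi_\varepsilon}$; since we restrict to $\gamma\in[1-\tfrac{1}{\lambda},\beta]$, its coefficient $\mu_\beta-\mu_\gamma$ is non-negative, and since $\varphi_\varepsilon$ is a smooth $\omega_0$-psh approximant of $\varphi_\beta$, the twist converges weakly to $(\mu_\beta-\mu_\gamma)\omega_{\varphi_\beta}$. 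This is precisely the twist appearing in $(CKRF^{\beta}_{\mu_\gamma})$, so the limit should satisfy the claimed equation.

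First, I would establish uniform-in-$\varepsilon$ estimates for the approximating potentials $\varphi^{\beta}_{\gamma,\varepsilon}(t)$ on each $[0,T]\times M$: a uniform $C^0$-bound via the maximum principle applied to the parabolic Monge--Amp\`ere equation, a uniform $L^\infty$-bound on $\dot\varphi^{\beta}_{\gamma,\varepsilon}(t)$ obtained by differentiating the equation in time and using the uniform boundedness of the twisted Ricci potential on $[0,T]$, and a local Laplacian estimate of Chern--Lu type away from $D$, comparing $\omega^{\beta}_{\gamma,\varepsilon}(t)$ with the Donaldson reference metric $\omega_\beta$ whose bisectional curvature is bounded on compact subsets of $M\setminus D$. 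The term $(\mu_\beta-\mu_\gamma)\omega_{\varphi_\varepsilon}$ contributes with a favourable sign in the Chern--Lu maximum principle because of its non-negativity, so it does not obstruct the argument; and the $(1-\beta)\theta_\varepsilon$ term can be absorbed exactly as in \cite{JWLXZ1} using a barrier involving $\log(\varepsilon^2+|s|_h^2)$.

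Once the local Laplacian bound is in place, uniform ellipticity on compact subsets of $(0,\infty)\times(M\setminus D)$ yields uniform $C^{k}_{\mathrm{loc}}$-estimates through parabolic Schauder and Krylov--Safonov theory, and a subsequence $\varepsilon_k\to 0$ converges in $C^{\infty}_{\mathrm{loc}}$ to a smooth twisted K\"ahler--Ricci flow solution on $(0,\infty)\times (M\setminus D)$. The comparison $C^{-1}\omega_\beta\leqslant\omega^{\beta}_{\gamma}(t)\leqslant C\omega_\beta$ on $[\delta,T]\times(M\setminus D)$ follows from combining the upper Laplacian bound with the uniform lower bound on $\det\omega^{\beta}_{\gamma,\varepsilon}(t)$ supplied by the equation. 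The $C^\alpha$-regularity of $\varphi^{\beta}_\gamma(t)$ with respect to $\omega_0$ on $M$ is then obtained as in Section $3$ of \cite{JWLXZ1} via Ko{\l}odziej-type $L^\infty$-estimates, using that the densities $(\omega^{\beta}_{\gamma,\varepsilon}(t))^n/\omega_0^n$ are uniformly in $L^p$ for some $p>1$ (inherited from $\hat\omega$ and preserved by the flow). To verify the equation in the sense of currents globally on $M$, I would pair with a test form compactly supported in $(0,\infty)\times M$ and pass to the $\varepsilon\to 0$ limit: the smooth terms pass by dominated convergence using the local $C^\infty$-convergence, $(1-\beta)\theta_\varepsilon$ converges weakly to $\lambda\omega_0+(1-\beta)[D]$ (which accounts for the $(1-\beta)[D]$ and combines with $-\mathrm{Ric}$ correctly), and the non-negative twist $(\mu_\beta-\mu_\gamma)\omega_{\varphi_\varepsilon}$ converges weakly to $(\mu_\beta-\mu_\gamma)\omega_{\varphi_\beta}$.

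For uniqueness, given two solutions with the stated regularity and the same initial data, the difference of potentials satisfies a uniformly parabolic Monge--Amp\`ere-type inequality outside $D$ with bounded coefficients up to $t=0$, so a standard maximum-principle argument (compare Proposition $2.7$ and Theorem $3.7$ in \cite{JWLXZ1}) gives coincidence of the potentials, hence of the currents. The main obstacle I anticipate is the uniform-in-$\varepsilon$ Laplacian estimate on compact subsets of $M\setminus D$ when $\mu_\gamma=0$: here the Chern--Lu inequality loses its natural exponential barrier, and one must exploit the sign of the twist and choose a barrier of the form $A\varphi^{\beta}_{\gamma,\varepsilon}(t)-\eta\log(\varepsilon^2+|s|_h^2)$ to control the singular curvature of $\theta_\varepsilon$ near $D$, while simultaneously keeping the estimate independent of $\varepsilon$ on every compact $K\Subset M\setminus D$.
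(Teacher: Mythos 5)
Your overall strategy matches the paper's: construct $\omega^\beta_\gamma(t)$ as the $\varepsilon\to 0$ limit of the smooth twisted flows $\omega^\beta_{\gamma,\varepsilon}(t)$ of Theorem \ref{0044.5}, establish uniform-in-$\varepsilon$ $C^0$, time-derivative and Laplacian estimates, pass to the limit, and deduce uniqueness by the comparison principle as in \cite{JWLXZ1}. The paper follows exactly this scheme, referring to Sections $2$--$3$ of \cite{JWLXZ1} and supplying the needed uniform bounds in Lemmas \ref{200}, \ref{21}, \ref{22} and Proposition \ref{30}.

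There is, however, a concrete gap in the Laplacian estimate you propose. You aim for a Chern--Lu bound against the Donaldson metric $\omega_\beta$ that is independent of $\varepsilon$ only on fixed compact sets $K\subset\subset M\setminus D$, using a barrier of the form $A\varphi^\beta_{\gamma,\varepsilon}(t)-\eta\log(\varepsilon^2+|s|_h^2)$. A compact-exhaustion estimate of that type permits the constant to degenerate as $K$ approaches $D$ and so does not give the first assertion of the theorem, which requires a single constant $C$ with $C^{-1}\omega_\beta\leqslant\omega^\beta_\gamma(t)\leqslant C\omega_\beta$ uniformly on all of $[\delta,T]\times(M\setminus D)$; nor does it control $\log\frac{(\omega^\beta_{\gamma,\varepsilon}(t))^n}{(\omega^\beta_\varepsilon)^n}$ globally, which is what the Ko{\l}odziej $L^p$-route to the global $C^\alpha$-regularity actually uses. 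The paper's Lemma \ref{22} proves the stronger \emph{global} bound $e^{-C/t}\omega^\beta_\varepsilon\leqslant\omega^\beta_{\gamma,\varepsilon}(t)\leqslant e^{C/t}\omega^\beta_\varepsilon$ on all of $M$, uniformly for $\varepsilon>0$ and $\gamma\in[0,1]$, and the device that makes this possible is the \emph{uniformly bounded} Guenancia--P$\breve{a}$un barrier $\Psi^\rho_\varepsilon$ of $(\ref{28})$, whose Laplacian dominates the singular curvature of $\omega^\beta_\varepsilon$ near $D$ while staying in $L^\infty$ as $\varepsilon\to 0$. Your logarithmic barrier blows up on $D$ and cannot yield a constant uniform there. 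The proof also needs the case-split $(\ref{250001})$/$(\ref{27})$ according to the sign of $\beta-\gamma$, adding $t(\beta-\gamma)\varphi_\varepsilon$ to the barrier when $\gamma\leqslant\beta$ in order to absorb the new twist term $(\beta-\gamma)\varphi_\varepsilon$; your proposal does not identify this step. Finally, the worry about $\mu_\gamma=0$ is misplaced: the estimate in Lemma \ref{22} is a short-time bound obtained from time-weighted quantities such as $t\log tr_{\omega^\beta_\varepsilon}\omega^\beta_{\gamma,\varepsilon}(t)+t\Psi^\rho_\varepsilon$ and holds uniformly for all $\gamma\in[0,1]$ without any positivity of $\mu_\gamma$.
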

\begin{rem} By Liu-Zhang's results (Theorem $3.8$ in \cite{JWLCJZ}, see also Theorem $3.10$ in \cite{JWLXZ1}). The solutions $\omega_{\gamma,\varepsilon}(t)$ and $\omega^\beta_{\gamma,\varepsilon}(t)$ are $C^{\alpha,\beta}$ for any $\alpha\in(0,\min\{1,\frac{1}{\beta}-1\})$ when $t>0$.
\end{rem}
From the proof of Proposition $5.9$ in \cite{JWLXZ}, we have the following uniform Sobolev inequalities when the cone angles away from $0$.
\begin{thm}\label{Sobolev}
Assume that $n\geqslant2$. For any $\beta_0\in(0,1)$, there exists uniform constant $C$ such that
\begin{equation}\label{1.5.6}
(\int_{M}v^{\frac{2n}{n-1}}dV^\gamma_{\varepsilon})^{\frac{n-1}{n}}\leqslant C(\int_{M}|dv|^{2}_{\omega^\gamma_{\varepsilon}}dV^\gamma_{\varepsilon}+\int_{M}|v|^{2}dV^\gamma_{\varepsilon})
\end{equation}
hold for any smooth functions $v$ on $M$, $\gamma\in[\beta_0,1)$ and $\varepsilon>0$, where $dV_\varepsilon^\gamma=\frac{(\omega_\varepsilon^\gamma)^n}{n!}$.
\end{thm}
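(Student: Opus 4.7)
The plan is to verify the three classical geometric hypotheses that imply a uniform Sobolev inequality on a Riemannian manifold, namely a uniform lower bound on volume, a uniform upper bound on diameter, and a uniform lower bound on Ricci curvature, for the smooth K\"ahler metrics $\omega_\varepsilon^\gamma$ as $\gamma$ ranges over $[\beta_0,1)$ and $\varepsilon$ over $(0,\infty)$. Once these are in place, (\ref{1.5.6}) follows from a Gallot-type Sobolev inequality (see, e.g., Hebey's \emph{Nonlinear Analysis on Manifolds}) with a constant depending only on the real dimension $2n$ and on these three bounds. This mirrors the strategy of Proposition 5.9 in \cite{JWLXZ}, but now carried out uniformly in the cone parameter $\gamma$.

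The volume bound is immediate from cohomology: $\int_M(\omega_\varepsilon^\gamma)^n = \int_M \omega_0^n$ since $[\omega_\varepsilon^\gamma]=[\omega_0]$, so upper and lower bounds hold with the same constant $V$. For the diameter I would compare $\omega_\varepsilon^\gamma$ pointwise with the Donaldson-type reference metric $\omega^{\beta_0}$: exploiting the explicit formula of $\omega_\varepsilon^\gamma$, the monotonicity of $\gamma\mapsto(\varepsilon^2+|s|_h^2)^\gamma$, and the boundedness of $1/\gamma^2$ on $[\beta_0,1)$, one obtains $\omega_\varepsilon^\gamma \leq C(\beta_0)\omega^{\beta_0}$ uniformly, which transfers the finite diameter of $(M,\omega^{\beta_0})$ to a uniform upper bound for $\mathrm{diam}(M,\omega_\varepsilon^\gamma)$. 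For the Ricci bound I would use the identity $\mathrm{Ric}(\omega_\varepsilon^\gamma) = \mathrm{Ric}(\omega_0) - \sqrt{-1}\partial\bar\partial\log\bigl((\omega_\varepsilon^\gamma)^n/\omega_0^n\bigr)$ and observe that the potentially divergent term, behaving like $(1-\gamma)\sqrt{-1}\partial\bar\partial\log(\varepsilon^2+|s|_h^2)$, is dominated by the positive contribution coming from $\frac{k}{\gamma^2}\sqrt{-1}\partial\bar\partial(\varepsilon^2+|s|_h^2)^\gamma$ inside $\omega_\varepsilon^\gamma$ itself; this compensation is uniform once $\gamma\geq\beta_0>0$, yielding $\mathrm{Ric}(\omega_\varepsilon^\gamma) \geq -K(\beta_0)\,\omega_\varepsilon^\gamma$ with $K$ independent of $\varepsilon$ and $\gamma\in[\beta_0,1)$.

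The main obstacle is the uniform Ricci lower bound, since this is where the hypothesis $\gamma\geq\beta_0$ becomes essential: both $1/\gamma^2$ and the cancellation ratio between the singular pieces of $\mathrm{Ric}(\omega_\varepsilon^\gamma)$ and of $\omega_\varepsilon^\gamma$ degenerate as $\gamma\to 0$. The calculation must be carried out pointwise in a local coordinate chart near $D$, tracking which components of the Ricci form pick up $(\varepsilon^2+|s|_h^2)^{-1}$ factors and verifying that each such divergence is matched by an equally large positive eigenvalue of $\omega_\varepsilon^\gamma$. With the three bounds in hand, the assumption $n\geq 2$ ensures that $\frac{2n}{n-1}$ is the correct critical Sobolev exponent for real dimension $2n$, and the inequality (\ref{1.5.6}) follows with $C = C(n,\beta_0)$. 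The detailed local computation near $D$ can be performed exactly as in the proof of Proposition 5.9 of \cite{JWLXZ}, and the only new task is to verify that the constants there depend on the cone angle only through the lower bound $\beta_0$.
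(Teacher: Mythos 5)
Your overall strategy (verify volume, diameter, and Ricci bounds for $\omega_\varepsilon^\gamma$, then invoke a Gallot-type Sobolev inequality) is a reasonable way to try to prove the theorem, and the volume and diameter steps are correct: the volume is constant by cohomology, and for $\gamma\geq\beta_0$ one does have $\omega_\varepsilon^\gamma\leq C(\beta_0)\,\omega_{\beta_0}$ near $D$, since $(\varepsilon^2+|s|_h^2)^{\gamma-1}\leq(\varepsilon^2+|s|_h^2)^{\beta_0-1}\leq|s|_h^{2(\beta_0-1)}$.

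However, the Ricci analysis as written is not correct, and this is the step the whole argument hinges on. Writing
$\mathrm{Ric}(\omega_\varepsilon^\gamma)=\mathrm{Ric}(\omega_0)-\sqrt{-1}\partial\bar\partial F_{\gamma,\varepsilon}+\sqrt{-1}\partial\bar\partial F_0+(1-\gamma)\sqrt{-1}\partial\bar\partial\log(\varepsilon^2+|s|_h^2)$
with $F_{\gamma,\varepsilon}$ the bounded correction factor from \cite{CGP}, the term you single out, $(1-\gamma)\sqrt{-1}\partial\bar\partial\log(\varepsilon^2+|s|_h^2)$, is \emph{not} the dangerous one: since $\theta_\varepsilon=\lambda\omega_0+\sqrt{-1}\partial\bar\partial\log(\varepsilon^2+|s|_h^2)\geq0$ by hypothesis, this term equals $(1-\gamma)(\theta_\varepsilon-\lambda\omega_0)\geq-\lambda\omega_0$, i.e.\ it is bounded \emph{below} by a fixed smooth form, and its blow-up near $D$ is on the \emph{positive} side, which is harmless for a lower Ricci bound. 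Moreover, the domination you claim is false as a pointwise estimate: at $|s|_h=0$ the form $\sqrt{-1}\partial\bar\partial\log(\varepsilon^2+|s|_h^2)$ behaves like $\varepsilon^{-2}$ in the normal direction while $\omega_\varepsilon^\gamma$ behaves like $\varepsilon^{2\gamma-2}$ there, and $\varepsilon^{-2}/\varepsilon^{2\gamma-2}=\varepsilon^{-2\gamma}\to\infty$; the term is saved by its sign, not by being absorbed into $K\omega_\varepsilon^\gamma$. The genuinely delicate term is $\sqrt{-1}\partial\bar\partial F_{\gamma,\varepsilon}$, which appears with no favorable sign: $F_{\gamma,\varepsilon}$ is only known to be uniformly bounded in $C^0$, and bounding its complex Hessian below by $-C\omega_\varepsilon^\gamma$, uniformly in $\varepsilon$ and $\gamma\in[\beta_0,1)$, is precisely the hard local computation in the spirit of Campana--Guenancia--P\u aun and Guenancia--P\u aun, where auxiliary potentials of the type $\Psi_\varepsilon^\rho$ appear. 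It is not evident that one obtains a clean bound $\mathrm{Ric}(\omega_\varepsilon^\gamma)\geq-K\omega_\varepsilon^\gamma$ rather than a twisted bound $\mathrm{Ric}(\omega_\varepsilon^\gamma)\geq-K\omega_\varepsilon^\gamma-\sqrt{-1}\partial\bar\partial\Psi$ with $\Psi$ bounded (which would still yield a Sobolev inequality, but via a Bakry--\'Emery-type argument rather than classical Gallot). Until you identify and control $\sqrt{-1}\partial\bar\partial F_{\gamma,\varepsilon}$, the Ricci step, and hence the proposal, has a gap.
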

\begin{thm}\label{Sobolev1}
Assume that $n=1$. Then for any $\beta_0\in(0,1)$, there exists uniform constant $C$ such that
\begin{equation}\label{1.5.6001}
(\int_{M}v^{2}dV^\gamma_{\varepsilon})^{\frac{1}{2}}\leqslant C(\int_{M}|dv|_{\omega^\gamma_{\varepsilon}}dV^\gamma_{\varepsilon}+\int_{M}|v|dV^\gamma_{\varepsilon})
\end{equation}
holds for any smooth functions $v$ on $M$, $\gamma\in[\beta_0,1)$ and $\varepsilon>0$.
\end{thm}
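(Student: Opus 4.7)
My approach is to reduce the uniform $W^{1,1}$-to-$L^2$ Sobolev inequality in real dimension two to a uniform isoperimetric inequality for the family $(M,\omega_\varepsilon^\gamma)$, and then establish the latter by a local-global argument using the explicit structure of $\omega_\varepsilon^\gamma$ near $D$. Indeed, in real dimension two the inequality $\|v\|_{L^2(M,g)} \leqslant C(\|dv\|_{L^1(M,g)} + \|v\|_{L^1(M,g)})$ is equivalent (with the same $C$ up to a universal factor) to the isoperimetric inequality $|\partial \Omega|_g^2 \geqslant c\,\min(|\Omega|_g,|M\setminus\Omega|_g)$, via the co-area formula and the layer-cake representation $\int v^2\,dV = 2\int_0^\infty t\, |\{|v|>t\}|\,dt$. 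So it suffices to control the isoperimetric constant of $(M,\omega_\varepsilon^\gamma)$ uniformly in $\gamma\in[\beta_0,1)$ and $\varepsilon>0$.

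Since $n=1$, the divisor $D$ is a finite set of points; I would cover $M$ by finitely many coordinate patches --- one open set $U_0$ whose closure avoids $D$, and small coordinate disks $U_1,\dots,U_m$ centered at the points of $D$ with coordinates $z_j$ for which $s = z_j$ locally. On $U_0$ the smooth metric $\omega_\varepsilon^\gamma$ is uniformly equivalent to $\omega_0$, with constants independent of $\gamma$ and $\varepsilon$, so the classical Sobolev estimate on $(U_0,\omega_0)$ gives a local inequality with a uniform constant. On each $U_j$ ($j\geqslant 1$), by the definition of $\theta_\varepsilon=\lambda\omega_0+\sqrt{-1}\partial\overline{\partial}\log(\varepsilon^{2}+|s|_{h}^{2})$ and the explicit formula for $\omega_\varepsilon^\gamma$ from equation (\ref{TKRF9}), the form $\omega_\varepsilon^\gamma$ is uniformly equivalent to the regularized conical form $(\varepsilon^2+|z_j|^2)^{\gamma-1}\sqrt{-1}\,dz_j\wedge d\bar z_j$, with constants depending only on $\beta_0$. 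Thus it is enough to prove the inequality for this model on a fixed disk.

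For the model, I would apply a Schwarz-symmetrization argument: since the density $(\varepsilon^2+|z|^2)^{\gamma-1}$ is radial and (for $\gamma<1$) decreasing in $|z|$, any smooth region $\Omega$ can be replaced by the rotationally symmetric region of the same area without increasing the perimeter, and the resulting radial problem reduces to a one-variable computation that yields $|\partial\Omega|^2 \geqslant c(\beta_0)\,|\Omega|$ uniformly in $\varepsilon\geqslant 0$. Passing to the $\varepsilon=0$ limit this is just the Alexandrov-type isoperimetric inequality on the flat cone of total angle $2\pi\gamma\geqslant 2\pi\beta_0$, and the regularization is harmless because the model densities converge monotonically. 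The local inequalities are then glued into a global one by a smooth partition of unity $\{\chi_j\}$ subordinate to the cover; the cross terms $|d\chi_j|\cdot|v|$ arising from Leibniz differentiation are bounded pointwise by a constant multiple of $|v|$ and therefore absorbed into the $\|v\|_{L^1}$ term on the right-hand side.

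The main obstacle is the symmetrization step, since $\omega_\varepsilon^\gamma$ is not Euclidean and one must be careful that rearrangement does not increase perimeter in this weighted setting. I expect this to go through cleanly because the weight is radial and monotone, so that P\'olya--Szeg\H{o} rearrangement in the volume variable applies; an alternative, if one wishes to avoid symmetrization, is to argue by a direct continuity and compactness argument, exploiting that as $\varepsilon\to 0$ the metrics $\omega_\varepsilon^\gamma$ converge in $C_{\mathrm{loc}}^\infty$ on $M\setminus D$ to a conical metric of angle $2\pi\gamma$ which has a classical isoperimetric inequality, and that for $\gamma$ bounded away from $0$ and $\varepsilon$ bounded away from $0$ the constants vary continuously, so that the only degeneration to rule out occurs as $(\gamma,\varepsilon)\to(\beta_0,0)$, which is handled by the cone model.
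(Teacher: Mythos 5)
The paper does not actually prove Theorem \ref{Sobolev1}; both it and Theorem \ref{Sobolev} are attributed to the proof of Proposition~5.9 in \cite{JWLXZ}, which is not reproduced here. Your route --- the Federer--Fleming reduction of the $W^{1,1}\!\to L^2$ inequality to a uniform isoperimetric inequality, followed by a symmetrization argument on the regularized cone model --- is therefore a genuinely different approach, and a natural one in real dimension two.

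The outline is essentially workable, but the pivotal step is not justified and the justification you gesture at is not the right tool. P\'olya--Szeg\H{o}-type rearrangement controls Dirichlet energies under symmetrization, not perimeters; the assertion that replacing a domain by the centered disk of equal weighted area never increases the weighted perimeter does not follow from radial monotonicity of a density alone. The correct input is the isoperimetric theory of complete rotationally symmetric surfaces: a warped product $d\rho^2+f(\rho)^2 d\theta^2$ about a single pole, with radially non-increasing Gaussian curvature, has geodesic disks about the pole as isoperimetric regions (Benjamini--Cao; Morgan--Hutchings--Howards; Pedrosa--Ritor\'e). Your model $(\varepsilon^2+|z|^2)^{\gamma-1}\sqrt{-1}\,dz\wedge d\bar z$, extended past the coordinate disk to the whole plane, fits this framework: writing it as $d\rho^2+f(\rho)^2 d\theta^2$ one finds $f'(\rho)=\frac{\varepsilon^2+\gamma r^2}{\varepsilon^2+r^2}\in[\gamma,1]$ and Gaussian curvature $K=-f''/f=\frac{2(1-\gamma)\varepsilon^2}{(\varepsilon^2+r^2)^{\gamma+1}}$, which is nonnegative and non-increasing in $r$, so the hypotheses are met; for the resulting centered disks, $f(\rho)\geqslant\gamma\rho$ and $\int_0^\rho f\leqslant\rho^2/2$ give the uniform lower bound $f(\rho)^2\big/\!\int_0^\rho f\geqslant 2\gamma^2\geqslant 2\beta_0^2$, closing the argument. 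You should replace the appeal to P\'olya--Szeg\H{o} by this curvature computation and a citation of the surface-of-revolution isoperimetric theorem; as written, the heart of the proof is an optimistic expectation rather than an established step. The co-area reduction and the partition-of-unity gluing (with $|d\chi_j|_{\omega_\varepsilon^\gamma}$ uniformly bounded on the overlaps, which sit at positive distance from $D$) are standard and correct as you describe them.
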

In \cite{JWLXZ,JWLXZ1}, we proved the uniform Perelman's estimates for $\mu_\gamma>0$, which play an important role in studying the convergence of the conical K\"ahler-Ricci flows. These estimates mainly depend on the uniform lower bounded of the twisted scalar curvature $R(\omega_{\gamma,\varepsilon}(t))-(1-\gamma)tr_{\omega_{\gamma,\varepsilon}(t)}\theta_{\varepsilon}$, the uniform Sobolev inequality $\mathcal{C}_{S}(M,\omega_{\gamma,\varepsilon}(1))$ and $\frac{1}{\mu_\gamma}$. Hence when $\mu_\gamma$ away from $0$, we have the following uniform Perelman's estimates by using the arguments in \cite{JWLXZ}.
\begin{thm}\label{1.8.2} Let $\omega_{\gamma,\varepsilon}(t)$ be a solution of the twisted K\"ahler Ricci flow $(TKRF_{\mu_\gamma,\varepsilon})$. Then for any $0<\mu_{\gamma_1}<\mu_{\gamma_2}\leqslant1$, there exists uniform constant $C$, such that
\begin{equation}\begin{split}\label{p0}
|R(\omega_{\gamma,\varepsilon}(t))-(1-\gamma)tr_{\omega_{\gamma,\varepsilon}(t)}\theta_{\varepsilon}|&\leqslant C,\\
\|u_{\gamma,\varepsilon}(t)\|_{C^{1}(\omega_{\gamma,\varepsilon}(t))}&\leqslant C,\\
diam(M,\omega_{\gamma,\varepsilon}(t))&\leqslant C
\end{split}
\end{equation}
hold for any $\mu_\gamma\in[\mu_{\gamma_1},\mu_{\gamma_2}]$, $t\geqslant 1$ and $\varepsilon>0$.\end{thm}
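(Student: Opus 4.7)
The plan is to inspect the proof of uniform Perelman estimates in \cite{JWLXZ,JWLXZ1} and observe that, as emphasized in the paragraph preceding the statement, its constants depend on only three inputs: (i) a uniform lower bound on the twisted scalar curvature $\tilde R := R(\omega_{\gamma,\varepsilon}) - (1-\gamma)tr_{\omega_{\gamma,\varepsilon}}\theta_\varepsilon$, (ii) a uniform Sobolev inequality for $\omega_{\gamma,\varepsilon}(1)$, and (iii) the reciprocal $1/\mu_\gamma$. Once each of these is verified to be uniform for $\mu_\gamma \in [\mu_{\gamma_1},\mu_{\gamma_2}]$ and $\varepsilon > 0$, the three estimates in the statement follow by directly quoting Proposition $5.5$ of \cite{JWLXZ} and its corollaries.

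For (i), the evolution equation along $(TKRF_{\mu_\gamma,\varepsilon})$ is
$$\Big(\frac{\partial}{\partial t}-\Delta_{\omega_{\gamma,\varepsilon}(t)}\Big)\tilde R = |Ric(\omega_{\gamma,\varepsilon}) - (1-\gamma)\theta_\varepsilon|^2_{\omega_{\gamma,\varepsilon}} - \mu_\gamma \tilde R,$$
and combining Cauchy--Schwarz ($|Ric - (1-\gamma)\theta_\varepsilon|^2 \geq \tilde R^2/n$) with the minimum principle forces $\min_M \tilde R(t,\cdot)$ to be strictly increasing at any time it is negative, whence $\tilde R(t) \geq \min\{\inf_M \tilde R(0),0\}$, a bound independent of $\mu_\gamma,\varepsilon$ and controlled by the fixed initial datum $\hat\omega$. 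For (ii), the affine bijection $\gamma \mapsto \mu_\gamma = 1-(1-\gamma)\lambda$ confines $\gamma$ to a compact subinterval $[\gamma_1,\gamma_2]\subset(0,1)$, so Theorem \ref{Sobolev} (or Theorem \ref{Sobolev1} when $n=1$) supplies a uniform Sobolev constant for the reference metric $\omega_\varepsilon^\gamma$; short-time $C^0$, Laplacian, and higher-order parabolic estimates on $[0,1]$, starting from the fixed datum $\hat\omega$ with its $L^p$-density, then yield a uniform equivalence $C^{-1}\omega_\varepsilon^\gamma \leq \omega_{\gamma,\varepsilon}(1) \leq C\omega_\varepsilon^\gamma$, which transfers the Sobolev inequality to $\omega_{\gamma,\varepsilon}(1)$. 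For (iii), $1/\mu_\gamma \leq 1/\mu_{\gamma_1}$ is trivial.

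With these three inputs uniform, the Moser iteration bounding $\|u_{\gamma,\varepsilon}\|_{C^0}$, the entropy/Harnack arguments producing the upper bound on $\tilde R$ and the gradient estimate for $u_{\gamma,\varepsilon}$, and the Perelman--Qian diameter bound --- all carried out in Section $5$ of \cite{JWLXZ} --- now proceed with a single constant $C$ depending only on $\mu_{\gamma_1},\mu_{\gamma_2},\hat\omega,\lambda,D,h$. The main obstacle lies in step (ii): one must verify that the short-time comparison $\omega_{\gamma,\varepsilon}(1) \sim \omega_\varepsilon^\gamma$ is stable as the parameters vary, which requires careful tracking of constants in the parabolic $C^2$ estimate but ultimately follows from the same compactness argument underlying the existence theorem (Theorem \ref{1444}). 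The rest is bookkeeping.
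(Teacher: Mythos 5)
Your overall strategy---identifying the three inputs (a lower bound on the twisted scalar curvature $\tilde R$, a uniform Sobolev constant for $\omega_{\gamma,\varepsilon}(1)$, and $1/\mu_\gamma$) into the Perelman estimates of \cite{JWLXZ} and verifying that each is uniform over $\mu_\gamma\in[\mu_{\gamma_1},\mu_{\gamma_2}]$ and $\varepsilon>0$---is precisely the argument the paper invokes, and your handling of inputs (ii) and (iii) is correct.

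However, your verification of input (i) has a genuine gap. You run the minimum principle from $t=0$ and conclude $\tilde R(t)\geq\min\{\inf_M\tilde R(0),0\}$, claiming this is ``controlled by the fixed initial datum $\hat\omega$.'' But $\hat\omega$ is only a K\"ahler current admitting an $L^p$-density; the flow is smooth only for $t>0$ (Theorem \ref{1444}), and neither $\tilde R(0)$ nor $\liminf_{t\to 0^+}\inf_M\tilde R(t)$ is controlled by the data, so the minimum principle cannot be anchored at the initial slice. The correct argument, carried out in Lemma \ref{1902}, multiplies $\tilde R$ by the weight $(t-\frac{1}{2})^2$: since the weight vanishes at $t=\frac{1}{2}$, the minimum of $(t-\frac{1}{2})^2\tilde R$ on $[\frac{1}{2},1]\times M$ is controlled purely by balancing the quadratic term $(t-\frac{1}{2})^2\tilde R^2/n$ coming from $|Ric(\omega_{\gamma,\varepsilon})-(1-\gamma)\theta_\varepsilon|^2_{\omega_{\gamma,\varepsilon}}$ against the derivative $2(t-\frac{1}{2})\tilde R$ of the weight, with no reference whatsoever to the initial slice; this yields $\tilde R(1)\geq-4n$, and only then does the ordinary minimum principle run forward from $t=1$ to give $\tilde R(t)\geq-4n$ for all $t\geq 1$, uniformly in $\mu_\gamma$ and $\varepsilon$. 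As written your step (i) would fail; replace it by this parabolic weighting argument, or cite Proposition~$5.1$ of \cite{JWLXZ} directly, which is what the paper is implicitly doing.
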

\begin{rem}\label{15} The uniform Perelman's estimates for $(TKRF^\beta_{\mu_\gamma,\varepsilon})$ can also be established by using the similar arguments as that in \cite{JWLXZ}, but there exist a few details need to be verified in the proof, so we present them in section $4$.
\end{rem}
In the proof of uniform Perelman's estimates, we prove that the functional $A_{\mu_\gamma,\varepsilon}(t)$ with $\mu_\gamma>0$ is nondecreasing by using the uniform Poincar\'e inequality along along the twisted K\"ahler-Ricci flow $(TKRF_{\mu_\gamma,\varepsilon})$. Since $(\mu_\beta-\mu_\gamma)\omega_{\varphi_{\varepsilon}}+(1-\beta)\theta_{\varepsilon}$ are positive closed $(1,1)$-forms when $\mu_\gamma\in(0,\mu_\beta]$, there also exists uniform Poincar\'e inequality along $(TKRF^\beta_{\mu_\gamma,\varepsilon})$. So $A^\beta_{\mu_\gamma,\varepsilon}(t)$ is nondecreasing along $(TKRF^\beta_{\mu_\gamma,\varepsilon})$ when $\mu_\gamma\in(0,\mu_\beta]$.
\begin{thm}\label{16}
When $\mu_\gamma\in(0,1]$, the functional $A_{\mu_\gamma,\varepsilon}(t)$ is nondecreasing along the twisted K\"ahler-Ricci flow $(TKRF_{\mu_\gamma,\varepsilon})$.

When $\mu_\gamma\in(0,\mu_\beta]$, the functional $A^\beta_{\mu_\gamma,\varepsilon}(t)$ is nondecreasing along the twisted K\"ahler-Ricci flow $(TKRF^\beta_{\mu_\gamma,\varepsilon})$.
\end{thm}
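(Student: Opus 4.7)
The proof rests on a clean derivative identity for $A_{\mu_\gamma,\varepsilon}(t)$ that pits a gradient term against a variance term, which is then closed by a weighted Poincar\'e inequality with sharp constant $\mu_\gamma$. I sketch the first statement; the second will follow mutatis mutandis once the twisting form $(1-\gamma)\theta_\varepsilon$ is replaced by the closed nonnegative $(1,1)$-form $(\mu_\beta-\mu_\gamma)\omega_{\varphi_\varepsilon}+(1-\beta)\theta_\varepsilon$, and it is precisely the restriction $\mu_\gamma\in(0,\mu_\beta]$ that guarantees this nonnegativity.

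The first task is to extract the evolution equation for $u:=u_{\gamma,\varepsilon}(t)$. The defining identity $Ric(\omega_{\gamma,\varepsilon}(t))+\sqrt{-1}\partial\bar{\partial}u=\mu_\gamma\omega_{\gamma,\varepsilon}(t)+(1-\gamma)\theta_\varepsilon$ together with $\frac{\partial\omega_{\gamma,\varepsilon}(t)}{\partial t}=\sqrt{-1}\partial\bar{\partial}u$ (read off from $(TKRF_{\mu_\gamma,\varepsilon})$) give, by differentiating the twisted Ricci identity in $t$, the equation $\frac{\partial u}{\partial t}=\Delta u+\mu_\gamma u+a(t)$ for some $a(t)$ depending only on time. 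The constant $a(t)$ is pinned down by differentiating the normalization $\frac{1}{V}\int_M e^{-u}dV_{\gamma,\varepsilon}(t)=1$, using $\partial_t(dV_{\gamma,\varepsilon}(t))=\Delta u\cdot dV_{\gamma,\varepsilon}(t)$ and the integration-by-parts identity $\int_M\Delta u\cdot e^{-u}dV_{\gamma,\varepsilon}(t)=\int_M|\nabla u|^2 e^{-u}dV_{\gamma,\varepsilon}(t)$; this forces $a(t)=-\mu_\gamma A_{\mu_\gamma,\varepsilon}(t)$, and so
\begin{equation*}
\frac{\partial u}{\partial t}=\Delta u+\mu_\gamma\bigl(u-A_{\mu_\gamma,\varepsilon}(t)\bigr).
\end{equation*}

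I next differentiate $A_{\mu_\gamma,\varepsilon}(t)$ directly. The three contributions from differentiating the integrand and the volume form are combined, the evolution equation for $u$ is substituted, and the $u\Delta u$ contributions are integrated by parts against $e^{-u}$; the cross terms cancel cleanly and one is left with
\begin{equation*}
\frac{d}{dt}A_{\mu_\gamma,\varepsilon}(t)=\frac{1}{V}\int_M|\nabla u|^2 e^{-u}dV_{\gamma,\varepsilon}(t)-\mu_\gamma\left(\frac{1}{V}\int_M u^2 e^{-u}dV_{\gamma,\varepsilon}(t)-A_{\mu_\gamma,\varepsilon}(t)^2\right),
\end{equation*}
where the parenthesis is exactly the variance of $u$ with respect to the probability measure $\frac{1}{V}e^{-u}dV_{\gamma,\varepsilon}(t)$. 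To conclude $\frac{d}{dt}A_{\mu_\gamma,\varepsilon}(t)\geqslant 0$ it then suffices to establish the weighted Poincar\'e inequality
\begin{equation*}
\frac{1}{V}\int_M\bigl(u-A_{\mu_\gamma,\varepsilon}(t)\bigr)^2 e^{-u}dV_{\gamma,\varepsilon}(t)\leqslant\frac{1}{\mu_\gamma V}\int_M|\nabla u|^2 e^{-u}dV_{\gamma,\varepsilon}(t).
\end{equation*}

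This last inequality is the K\"ahler Bakry-\'Emery spectral gap: the defining identity together with $(1-\gamma)\theta_\varepsilon\geqslant 0$ gives $Ric(\omega_{\gamma,\varepsilon}(t))+\sqrt{-1}\partial\bar{\partial}u\geqslant\mu_\gamma\omega_{\gamma,\varepsilon}(t)$, and the standard Bochner identity on the smooth metric $\omega_{\gamma,\varepsilon}(t)$ bounds the first nonzero eigenvalue of the drift Laplacian on $L^2(e^{-u}dV_{\gamma,\varepsilon}(t))$ from below by $\mu_\gamma$. The main subtlety, and the part where I would spend the most care, is verifying that this spectral bound persists uniformly in $\gamma,\varepsilon,t$ on the approximating smooth flows; this is the uniform Poincar\'e inequality already established in \cite{JWLXZ,JWLXZ1}. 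For $(TKRF^\beta_{\mu_\gamma,\varepsilon})$ the identical argument applies with the twisting form $(\mu_\beta-\mu_\gamma)\omega_{\varphi_\varepsilon}+(1-\beta)\theta_\varepsilon$, whose nonnegativity (which is where the hypothesis $\mu_\gamma\leqslant\mu_\beta$ enters) again yields $Ric(\omega^\beta_{\gamma,\varepsilon}(t))+\sqrt{-1}\partial\bar{\partial}u^\beta_{\gamma,\varepsilon}(t)\geqslant\mu_\gamma\omega^\beta_{\gamma,\varepsilon}(t)$, and monotonicity follows.
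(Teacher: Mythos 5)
Your proposal is correct and follows essentially the same route the paper takes. The key identity $\dot u = \Delta u + \mu_\gamma(u - A_{\mu_\gamma,\varepsilon}(t))$, the derivative formula
\begin{equation*}
\frac{d}{dt}A_{\mu_\gamma,\varepsilon}(t)=\frac{1}{V}\int_M|\nabla u|^2 e^{-u}\,dV_{\gamma,\varepsilon}(t)-\mu_\gamma\Bigl(\frac{1}{V}\int_M u^2 e^{-u}\,dV_{\gamma,\varepsilon}(t)-A_{\mu_\gamma,\varepsilon}(t)^2\Bigr),
\end{equation*}
and the closure via the Bakry--\'Emery weighted Poincar\'e inequality with constant $\mu_\gamma$ are all correct; the second statement does indeed reduce to the nonnegativity of $(\mu_\beta-\mu_\gamma)\omega_{\varphi_\varepsilon}+(1-\beta)\theta_\varepsilon$, exactly as the paper indicates. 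The one quibble is your flagged ``main subtlety'': for this particular theorem there is nothing to verify uniformly in $\gamma,\varepsilon,t$. The defining identity $Ric(\omega_{\gamma,\varepsilon}(t))+\sqrt{-1}\partial\bar\partial u_{\gamma,\varepsilon}(t)=\mu_\gamma\omega_{\gamma,\varepsilon}(t)+(1-\gamma)\theta_\varepsilon$ gives the pointwise lower bound $Ric+\sqrt{-1}\partial\bar\partial u\geqslant\mu_\gamma\omega_{\gamma,\varepsilon}(t)$ at every $t$ for every $\varepsilon$, so the Futaki--Bakry--\'Emery spectral gap with constant $\mu_\gamma$ holds automatically at each time slice, and monotonicity of $A_{\mu_\gamma,\varepsilon}(t)$ for \emph{fixed} $\gamma,\varepsilon$ follows directly. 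The uniformity the paper emphasizes is a by-product (the constant does not degrade as $\varepsilon\to0$ or $t\to\infty$), which matters for the Perelman estimates downstream but is not a gap to be filled here.
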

Now we recall Aubin's functionals. Let $\phi_t$ be a path with $\phi_0=c$ and $\phi_1=\phi$, then
\begin{equation*}\label{3.22.35}
\begin{split}
I_{\omega_0}(\phi)&=\frac{1}{V}\int_M\phi(dV_{0}-dV_{\phi}), \\
J_{\omega_0}(\phi)&=\frac{1}{V}\int_0^1\int_M\dot{\phi}_t(dV_{0}-dV_{\phi_{t}})dt,
\end{split}
\end{equation*}
where $dV_{0}=\frac{\omega_{0}^{n}}{n!}$ and $dV_{\phi}=\frac{\omega_{\phi}^{n}}{n!}$. They satisfy $0\leqslant\frac{1}{n}J_{\omega_0}\leqslant\frac{1}{n+1}I_{\omega_0}\leqslant J_{\omega_0}$. The twisted Mabuchi energy is defined as
\begin{equation}\label{9090989}
\begin{split}
\mathcal{M}_{k,\theta}(\phi)&=-k(I_{\omega_{0}}(\phi)-J_{\omega_{0}}(\phi))-\frac{1}{V}\int_{M}u_{\omega_{0}}(dV_{0}-dV_{\phi})\\
&\ +\frac{1}{V}\int_{M}\log\frac{\omega_{\phi}^{n}}{\omega_{0}^{n}}dV_{\phi},
\end{split}
\end{equation}
where $u_{\omega_{0}}$ satisfies $-Ric(\omega_{0})+k\omega_{0}+\theta=\sqrt{-1}\partial\bar{\partial} u_{\omega_{0}}$ and $\frac{1}{V}\int_{M}e^{-u_{\omega_{0}}}dV_{0}=1$. We denote $\mathcal{M}^\beta_{\mu_\gamma,\varepsilon}$ and $\mathcal{M}_{\mu_\gamma,\varepsilon}$ are the twisted Mabuchi energies with twisted forms $(\mu_\beta-\mu_\gamma)\omega_{\varphi_\varepsilon}+(1-\beta)\theta_\varepsilon$ and $(1-\gamma)\theta_\varepsilon$ respectively. Then
\begin{equation*}\label{90909900001}
\begin{split}
\mathcal{M}^\beta_{\mu_\gamma,\varepsilon}(\phi)&=-\mu_\gamma(I_{\omega_{0}}(\phi)-J_{\omega_{0}}(\phi))+\frac{1}{V}\int_{M}\log\frac{\omega_{\phi}^{n}}{\omega_{0}^{n}}dV_{\phi}\\
&\ -\frac{1}{V}\int_{M}\Big(F_0+(\mu_\beta-\mu_\gamma)\varphi_\varepsilon+(1-\beta)\log(\varepsilon^2+|s|^2_h)\Big)(dV_{0}-dV_{\phi}),\\
\mathcal{M}_{\mu_\gamma,\varepsilon}(\phi)&=-\mu_\gamma(I_{\omega_{0}}(\phi)-J_{\omega_{0}}(\phi))+\frac{1}{V}\int_{M}\log\frac{\omega_{\phi}^{n}}{\omega_{0}^{n}}dV_{\phi}\\
&\ -\frac{1}{V}\int_{M}\Big(F_0+(1-\gamma)\log(\varepsilon^2+|s|^2_h)\Big)(dV_{0}-dV_{\phi}).
\end{split}
\end{equation*}
The Log Mabuchi energy is denoted by
\begin{equation}\label{9090990}
\begin{split}
\mathcal{M}_{\mu_\gamma}(\phi)&=-\mu_\gamma(I_{\omega_{0}}(\phi)-J_{\omega_{0}}(\phi))+\frac{1}{V}\int_{M}\log\frac{\omega_{\phi}^{n}}{\omega_{0}^{n}}dV_{\phi}\\
&\ -\frac{1}{V}\int_{M}\Big(F_0+(1-\gamma)\log|s|^2_h\Big)(dV_{0}-dV_{\phi}),
\end{split}
\end{equation}
and the twisted Log Mabuchi energy is defined as
\begin{equation}\label{9090991}
\begin{split}
\mathcal{M}^\beta_{\mu_\gamma}(\phi)&=-\mu_\gamma(I_{\omega_{0}}(\phi)-J_{\omega_{0}}(\phi))+\frac{1}{V}\int_{M}\log\frac{\omega_{\phi}^{n}}{\omega_{0}^{n}}dV_{\phi}\\
&\ -\frac{1}{V}\int_{M}\Big(F_0+(\mu_\beta-\mu_\gamma)\varphi_\beta+(1-\beta)\log|s|^2_h\Big)(dV_{0}-dV_{\phi}).
\end{split}
\end{equation}

\medskip

By using Berndtsson's convexity theorem \cite{BBERN}, Li-Sun (Corollary $2.10$ in \cite{LS}) proved that the conical K\"ahler-Einstein metric is the minimum point of the Log Mabuchi energy.
\begin{thm}\label{201902131} (Corollary $2.10$ in \cite{LS}) If there exists a conical K\"ahler-Einstein metric $\omega_{\varphi_\beta}$ with cone angle $2\pi\beta$ along $D$, then $\omega_{\varphi_\beta}$ obtains the minimum of $\mathcal{M}_{\mu_\beta}$.
\end{thm}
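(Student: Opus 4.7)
The plan is to follow the Berman--Berndtsson strategy: combine convexity of the Log Mabuchi energy along weak $C^{1,\bar{1}}$-geodesics in the space of bounded $\omega_0$-psh potentials with the first-order criticality of $\varphi_\beta$ implied by the conical Einstein equation. Convexity together with a vanishing one-sided derivative at $\varphi_\beta$ forces $\mathcal{M}_{\mu_\beta}(\phi)\geq\mathcal{M}_{\mu_\beta}(\varphi_\beta)$ for every bounded competitor $\phi$.

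First I would verify the critical point property. Differentiating $(\ref{9090990})$ along a smooth path $\phi_t$ of K\"ahler potentials and integrating by parts, using the identity $\sqrt{-1}\partial\bar\partial(F_0+(1-\beta)\log|s|_h^2)=-Ric(\omega_0)+\mu_\beta\omega_0+(1-\beta)[D]$ (which holds on $M\setminus D$ and characterises the log Ricci potential of the pair), a standard computation produces
\begin{equation*}
\frac{d}{dt}\mathcal{M}_{\mu_\beta}(\phi_t)=-\frac{1}{V}\int_M\dot\phi_t\bigl(R(\omega_{\phi_t})-n\mu_\beta-(1-\beta)tr_{\omega_{\phi_t}}[D]\bigr)dV_{\phi_t},
\end{equation*}
interpreted in the current sense. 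Tracing the conical Einstein equation $Ric(\omega_{\varphi_\beta})=\mu_\beta\omega_{\varphi_\beta}+(1-\beta)[D]$ against $\omega_{\varphi_\beta}$ makes the integrand vanish at $\varphi_\beta$, so $\varphi_\beta$ is a critical point of $\mathcal{M}_{\mu_\beta}$.

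Second, I would establish convexity of $t\mapsto\mathcal{M}_{\mu_\beta}(\phi_t)$ along the weak geodesic $\phi_t$ joining $\varphi_\beta$ to any bounded $\omega_0$-psh $\phi$. The Chen--Tian decomposition rewrites $\mathcal{M}_{\mu_\beta}$ as the sum of a relative entropy functional with respect to the reference measure $d\nu=e^{-F_0-(1-\beta)\log|s|_h^2}\omega_0^n$ and a $\mu_\beta$-multiple of the Monge--Amp\`ere energy. Berndtsson's convexity theorem \cite{BBERN}, applied to the singular weight $F_0+(1-\beta)\log|s|_h^2$ on $-\lambda K_M$, shows that the entropy term is convex along weak geodesics, while the Monge--Amp\`ere energy is affine (by $\ddot\phi_t=|\bar\partial\dot\phi_t|^2_{\omega_{\phi_t}}$ and integration by parts). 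Summing, $t\mapsto\mathcal{M}_{\mu_\beta}(\phi_t)$ is convex on $[0,1]$, and together with the vanishing of its right derivative at $t=0$ this yields $\mathcal{M}_{\mu_\beta}(\phi)\geq\mathcal{M}_{\mu_\beta}(\varphi_\beta)$.

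The principal obstacle is to make both the first variation computation and the convexity argument rigorous in view of the conical singularity of $\omega_{\varphi_\beta}$ along $D$, the unboundedness of $\log|s|_h^2$, and the merely $C^{1,\bar{1}}$-regularity of weak geodesics. I would handle this by approximation: replace $\log|s|_h^2$ by $\log(\varepsilon^2+|s|_h^2)$ and $\varphi_\beta$ by a decreasing sequence of smooth $\omega_0$-psh potentials $\varphi_{\beta,\varepsilon}$, perform the first variation and convexity arguments on the regularised Log Mabuchi energies (where Berndtsson's theorem applies directly in the bounded setting), and pass to the limit $\varepsilon\to 0$ by monotone and dominated convergence for the integrals defining $\mathcal{M}_{\mu_\beta}$.
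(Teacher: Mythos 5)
The paper does not prove this statement itself; it simply cites Corollary~2.10 of Li--Sun \cite{LS}, whose proof (as noted in the paragraph preceding the theorem) rests on Berndtsson's convexity theorem \cite{BBERN}. Your proposal reconstructs precisely that argument --- criticality of $\varphi_\beta$ for $\mathcal{M}_{\mu_\beta}$ combined with convexity along weak geodesics via Berndtsson, with the conical singularity handled by smooth approximation --- so it follows essentially the same route as the cited proof.
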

Combining the uniform Perelman's estimates, the Perelman's non-collapsing theorem and the uniform Poincar\'e inequality along $(TKRF_{\mu_\gamma,\varepsilon})$, we get the following estimates and convergence for $A_{\mu_\gamma}(t)$ by using the arguments in Lemma $4.3$ of \cite{JWL} or Lemma $3$ of \cite{PSSW}.
\begin{thm}\label{17}
For $\mu_\gamma\in(0,1]$, there exists constant $C$ such that for $t\geqslant1$,
\begin{equation}
0\leqslant-A_{\mu_\gamma}(t)\leqslant C\|\nabla u_{\gamma}(t)\|^{\frac{1}{n+1}}_{L^2(M)}\|\nabla u_{\gamma}(t)\|_{C^{0}(M)}^{\frac{n}{n+1}}.
\end{equation}
Furthermore, if $\mathcal{M}_{\mu_\gamma}$ is bounded form below, then $A_{\mu_\gamma}(t)$ converge to $0$ as $t\rightarrow+\infty$.
\end{thm}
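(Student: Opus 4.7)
I would follow the strategy of Lemma~3 in \cite{PSSW} (see also Lemma~4.3 of \cite{JWL}), organizing the proof as an interpolation estimate on $-A_{\mu_\gamma}(t)$ followed by a subsequential-convergence argument. Throughout, set $a(t) = \|\nabla u_\gamma(t)\|_{L^2(M)}$, $b(t) = \|\nabla u_\gamma(t)\|_{C^0(M)}$ (both with respect to $\omega_\gamma(t)$), and $\bar{u}(t) = \frac{1}{V}\int_M u_\gamma(t)\,dV_\gamma(t)$. The lower bound $-A_{\mu_\gamma}(t) \geq 0$ is immediate: Jensen's inequality for the convex function $x \log x$ applied to the probability measure $\frac{1}{V}e^{-u_\gamma(t)}dV_\gamma(t)$ gives $\frac{1}{V}\int e^{-u_\gamma}\log e^{-u_\gamma}\,dV_\gamma \geq 0$, which equals $-A_{\mu_\gamma}(t)$. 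The same normalization also yields $\bar{u}(t) \geq 0$.

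The core of the argument is an interpolation bound on $|\bar{u}(t)|$. By continuity of $u_\gamma(t)$ and $\frac{1}{V}\int e^{-u_\gamma}dV_\gamma=1$, pick $x_0 \in M$ with $u_\gamma(t,x_0)=0$, so that $|u_\gamma(t,x)| \leq b(t)\,d_{\omega_\gamma(t)}(x_0,x)$. On a geodesic ball $B_r(x_0)$ this gives $|\bar{u}_{B_r(x_0)}| \leq br$, while the uniform volume lower bound $|B_r(x_0)| \geq cr^{2n}$ (from Perelman non-collapsing, available uniformly via the estimates of Theorem~\ref{1.8.2}) combined with the uniform Poincar\'e inequality along $(CKRF_{\mu_\gamma})$ and Cauchy--Schwarz yields $|\bar{u}(t)-\bar{u}_{B_r(x_0)}| \leq Ca(t)/r^n$. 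Choosing $r=(a/b)^{1/(n+1)}$ balances the two contributions and produces $|\bar{u}(t)|\leq C\,a(t)^{1/(n+1)}b(t)^{n/(n+1)}$. To convert this into the bound on $-A_{\mu_\gamma}(t)$, I would use the algebraic identity
\begin{equation*}
-A_{\mu_\gamma}(t) \;=\; -\bar{u}(t) \;+\; e^{-\bar{u}(t)}\cdot \frac{1}{V}\int_M (u_\gamma-\bar{u})\bigl(1-e^{-(u_\gamma-\bar{u})}\bigr)\,dV_\gamma,
\end{equation*}
noting that $\bar{u}\geq 0$, $e^{-\bar{u}}\leq 1$, the remainder integrand is pointwise non-negative, and $|u_\gamma|\leq C$ (Theorem~\ref{1.8.2}) makes it bounded by $C(u_\gamma-\bar{u})^2$. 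Hence $-A_{\mu_\gamma}(t) \leq |\bar{u}(t)| + C\|u_\gamma-\bar{u}\|_{L^2}^2 \leq |\bar{u}(t)| + Ca(t)^2$, and the quadratic piece is absorbed into $Ca^{1/(n+1)}b^{n/(n+1)}$ by invoking the uniform $C^1$ bounds of Theorem~\ref{1.8.2}.

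For the convergence statement, I would use the Mabuchi energy identity $\frac{d}{dt}\mathcal{M}_{\mu_\gamma}(\varphi_\gamma(t)) = -\frac{1}{V}\|\nabla u_\gamma(t)\|_{L^2(M)}^{2}$. Lower-boundedness of $\mathcal{M}_{\mu_\gamma}$ forces $\int_1^\infty a(t)^2\,dt<\infty$, so that $a(t_k)\to 0$ along some sequence $t_k\to\infty$. Combined with the uniform bound $b(t)\leq C$ from Theorem~\ref{1.8.2} and the interpolation estimate above, this gives $A_{\mu_\gamma}(t_k)\to 0$; monotonicity of $A_{\mu_\gamma}(t)$ (Theorem~\ref{16}, passed to the $\varepsilon\to 0$ limit) then upgrades this to $A_{\mu_\gamma}(t)\to 0$ as $t\to\infty$. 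The main obstacle will be justifying the Poincar\'e, volume and Mabuchi-derivative computations directly for the conical flow $(CKRF_{\mu_\gamma})$, since $\omega_\gamma(t)$ is singular along $D$; the natural workaround is to first prove the analogous interpolation estimate for the smooth twisted flow $(TKRF_{\mu_\gamma,\varepsilon})$, where all tools apply verbatim and the uniform Perelman estimates of Theorem~\ref{1.8.2} apply, and then pass to the conical limit $\varepsilon\to 0$ using Theorem~\ref{04.5} to recover the stated estimate.
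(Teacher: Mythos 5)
Your proposal reconstructs in detail the argument that the paper invokes only by reference (Lemma $3$ of \cite{PSSW} / Lemma $4.3$ of \cite{JWL}): Jensen for the sign of $A_{\mu_\gamma}$, a zero of $u_\gamma$ from the normalization, Perelman non-collapsing plus the uniform Poincar\'e inequality to obtain the interpolated bound, and the Mabuchi-energy dissipation combined with the monotonicity of $A_{\mu_\gamma,\varepsilon}$ (passed to the $\varepsilon\to 0$ limit) for the convergence. This is essentially the same approach as the paper's intended proof.
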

\begin{rem}\label{15} We can also get a similar result for $A^\beta_{\mu_\gamma}(t)$ after we prove the uniform Perelman's estimates along the twisted K\"ahler-Ricci flow $(TKRF^\beta_{\mu_\gamma,\varepsilon})$ in section $4$.
\end{rem}

\section{Proof of the case $\mu_\beta=0$\label{sec:3}}

Assume that $\lambda>1$. Then $\mu_\beta=0$ when $\beta=1-\frac{1}{\lambda}\in(0,1)$. In this section, we study the convergence of the conical K\"ahler-Ricci flow $(CKRF_{\mu_\gamma})$ when $\mu_\gamma$ is positive and sufficiently small. For $\mu_\gamma\in(0, 1]$ (that is, $\gamma\in(\beta,1]$), there is no direct uniform Perelman's estimates independent of $\mu_\gamma$ along the twisted K\"ahler-Ricci flows $(TKRF_{\mu_\gamma,\varepsilon})$ and hence we do not have the uniform estimates for $\dot{\psi}_{\gamma,\varepsilon}(t)$, so we need some new discussions. In this section, we always assume that $\mu_\beta=0$ and $\mu_\gamma\in[\mu_\beta,1]$ (that is, $\gamma\in[\beta,1]$) unless otherwise specified, and normalize $\varphi_0$ by
\begin{equation}\label{nom0}
\inf\limits_M\varphi_0\geqslant 1.
\end{equation}

First,  by improving the proof of Proposition $5.1$ in \cite{JWLXZ}, we get the uniform lower bound of the twisted scalar curvature $R(\omega_{\gamma,\varepsilon}(t))-(1-\gamma)tr_{\omega_{\gamma,\varepsilon}(t)}\theta_\varepsilon$ when $t\geqslant 1$.
\begin{lem}\label{1902} $R(\omega_{\gamma,\varepsilon}(t))-(1-\gamma)tr_{\omega_{\gamma,\varepsilon}(t)}\theta_{\varepsilon}$ are uniformly bounded from below by $-4n$ along the twisted K\"ahler-Ricci flows $(TKRF_{\mu_\gamma,\varepsilon})$, that is, for any $\mu_\gamma\in[0,1]$, $\varepsilon>0$ and $t\geqslant 1$, we have
\begin{equation}\label{3.22.9}R(\omega_{\gamma,\varepsilon}(t))-(1-\gamma)tr_{\omega_{\gamma,\varepsilon}(t)}\theta_{\varepsilon}\geqslant -4n.
\end{equation}
\end{lem}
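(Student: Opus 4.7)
The plan is to apply the parabolic maximum principle to the twisted scalar curvature
$v_{\gamma,\varepsilon}(t):=R(\omega_{\gamma,\varepsilon}(t))-(1-\gamma)tr_{\omega_{\gamma,\varepsilon}(t)}\theta_{\varepsilon}$
along $(TKRF_{\mu_\gamma,\varepsilon})$, via a Riccati-type ODE comparison that is uniform in $\mu_\gamma\in[0,1]$ precisely because the linear term $-\mu_\gamma v$ in the evolution has the \emph{favorable} sign wherever $v\leqslant 0$. Throughout I write $\eta:=(1-\gamma)\theta_\varepsilon$, so that the flow reads $\partial_t\omega=-Ric(\omega)+\mu_\gamma\omega+\eta$ and $v=tr_\omega(Ric(\omega)-\eta)$.

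The first step is a direct K\"ahler calculation of the evolution equation for $v$. Using $\partial_tg^{i\bar j}=R^{i\bar j}-\mu_\gamma g^{i\bar j}-\eta^{i\bar j}$, the identity $\partial_tR_{i\bar j}=\partial_i\partial_{\bar j}(R-n\mu_\gamma-tr_\omega\eta)$, and the time-independence of $\eta$, the cross terms in $\partial_tv=\partial_tR-\partial_t(tr_\omega\eta)$ cancel to yield
\begin{equation*}
\partial_t v=\Delta v+|Ric(\omega)-\eta|_\omega^2-\mu_\gamma v.
\end{equation*}
Since $Ric(\omega)-\eta$ is a Hermitian $(1,1)$-form whose trace with respect to $\omega$ equals $v$, Cauchy-Schwarz gives $|Ric(\omega)-\eta|_\omega^2\geqslant v^2/n$, so
\begin{equation*}
\partial_t v\geqslant\Delta v+\frac{v^2}{n}-\mu_\gamma v.
\end{equation*}

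By Theorem \ref{1444} and the smoothness of $\theta_\varepsilon$ for fixed $\varepsilon>0$, $v$ is smooth on $M\times(0,\infty)$, so $m(t):=\min_{x\in M}v(x,t)$ is finite and continuous on $(0,\infty)$. The standard parabolic maximum principle then makes $m$ a Dini subsolution of $\dot y=y^2/n-\mu_\gamma y$; since $-\mu_\gamma m\geqslant 0$ whenever $m\leqslant 0$, this reduces to $\dot m\geqslant m^2/n$ in the regime of interest. Integrating this Riccati inequality from a fixed initial time $\tau:=3/4$ (where $m(\tau)$ is finite) yields
$$m(t)\geqslant-\frac{n}{(t-\tau)+n/|m(\tau)|}>-\frac{n}{t-\tau}\qquad\text{when }m(\tau)<0,$$
while $m(t)\geqslant 0$ when $m(\tau)\geqslant 0$ (since $y\equiv 0$ is a steady state of the comparison ODE). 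In either case, for $t\geqslant 1$ we obtain $v_{\gamma,\varepsilon}(t)\geqslant m(t)>-n/(1-3/4)=-4n$, which is the required bound.

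The only real insight, and the sole point of improvement over Proposition $5.1$ of \cite{JWLXZ}, is the observation that the term $-\mu_\gamma v$ at a spatial minimum where $v\leqslant 0$ works \emph{for} the lower bound rather than against it; discarding it leaves a purely dimensional Riccati inequality whose ODE comparison gives a bound independent of $\mu_\gamma$ and $\varepsilon$ after a fixed waiting time. I expect no serious obstacle beyond careful bookkeeping of the evolution computation.
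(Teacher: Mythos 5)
Your proof is correct and rests on exactly the same mechanism as the paper's: the evolution identity for the twisted scalar curvature, the Cauchy--Schwarz bound $|Ric-\eta|^2\geqslant v^2/n$, and the observation that the term $-\mu_\gamma v$ has a favorable sign wherever $v\leqslant 0$, which removes the dependence on $\mu_\gamma$. The only difference is the implementation of the maximum principle: you run the Hamilton ODE--PDE comparison for $m(t)=\min_M v(\cdot,t)$ starting from $\tau=3/4$ to get $v>-n/(t-\tau)$, whereas the paper applies the maximum principle directly to the weighted quantity $(t-\tfrac12)^2 v$ on $[\tfrac12,1]$ and then propagates the resulting bound $v(1)\geqslant-4n$ forward using $(\partial_t-\Delta)v\geqslant-\mu_\gamma v$; both routes give the same constant $-4n$ for $t\geqslant 1$.
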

\begin{proof}  First, we derive the evolution equation of $(t-\frac{1}{2})^2\Big(R(\omega_{\gamma,\varepsilon}(t))-(1-\gamma)tr_{\omega_{\gamma,\varepsilon}(t)}\theta_{\varepsilon}\Big)$.
\begin{equation*}
\begin{split}
&\ \ \ \ \ (\frac{\partial}{\partial t}-\Delta_{\omega_{\gamma,\varepsilon}(t)})\Big((t-\frac{1}{2})^2\big(R(\omega_{\gamma,\varepsilon}(t))-(1-\gamma)tr_{\omega_{\gamma,\varepsilon}(t)}\theta_{\varepsilon}\big)\Big)\\
&=(t-\frac{1}{2})^2|Ric(\omega_{\gamma,\varepsilon}(t))-(1-\gamma)\theta_{\varepsilon}|^{2}_{\omega_{\varepsilon}(t)}-\mu_\gamma (t-\frac{1}{2})^2\Big(R(\omega_{\gamma,\varepsilon}(t))-(1-\gamma)tr_{\omega_{\gamma,\varepsilon}(t)}\theta_{\varepsilon}\Big)\\
&\ \ \ \ \ +2(t-\frac{1}{2})\Big(R(\omega_{\gamma,\varepsilon}(t))-(1-\gamma)tr_{\omega_{\gamma,\varepsilon}(t)}\theta_{\varepsilon}\Big).
\end{split}
\end{equation*}
Let $(t_0,x_0)$ be the minimum point of $(t-\frac{1}{2})^2\Big(R(\omega_{\gamma,\varepsilon}(t))-(1-\gamma)tr_{\omega_{\gamma,\varepsilon}(t)}\theta_{\varepsilon}\Big)$ on $[\frac{1}{2},1]\times M$.

\medskip

$Case\ 1$, $t_0=\frac{1}{2}$, then we have $(t-\frac{1}{2})^2\Big(R(\omega_{\gamma,\varepsilon}(t))-(1-\gamma)tr_{\omega_{\gamma,\varepsilon}(t)}\theta_{\varepsilon}\Big)\geqslant 0$.

\medskip

$Case\ 2$, $t_0\neq\frac{1}{2}$, without loss of generality, we can assume $R(\omega_{\gamma,\varepsilon}(t))-(1-\gamma)tr_{\omega_{\gamma,\varepsilon}(t)}\theta_{\varepsilon}\leqslant0$ at $(t_0,x_0)$. By inequality
\begin{equation*}|Ric(\omega_{\gamma,\varepsilon}(t))-(1-\gamma)\theta_{\varepsilon}|^{2}_{\omega_{\gamma,\varepsilon}(t)}\geqslant \frac{\Big(R(\omega_{\gamma,\varepsilon}(t))-(1-\gamma)tr_{\omega_{\gamma,\varepsilon}(t)}\theta_{\varepsilon}\Big)^2}{n},
\end{equation*}
we have
\begin{equation*}
\begin{split}0&\geqslant  (t_0-\frac{1}{2})^2\frac{\Big(R(\omega_{\gamma,\varepsilon}(t_0))-(1-\gamma)tr_{\omega_{\gamma,\varepsilon}(t_0)}\theta_{\varepsilon}\Big)^2}{n}+2(t_0-\frac{1}{2})\Big(R(\omega_{\gamma,\varepsilon}(t_0))-(1-\gamma)tr_{\omega_{\gamma,\varepsilon}(t_0)}\theta_{\varepsilon}\Big)\\
&=\Big((t_0-\frac{1}{2})\frac{R(\omega_{\gamma,\varepsilon}(t_0))-(1-\gamma)tr_{\omega_{\gamma,\varepsilon}(t_0)}\theta_{\varepsilon}}{\sqrt{n}}+\sqrt{n}\Big)^2-n.
\end{split}
\end{equation*}
Then $(t_0-\frac{1}{2})^{2}\Big(R(\omega_{\gamma,\varepsilon}(t_0))-(1-\gamma)tr_{\omega_{\gamma,\varepsilon}(t_0)}\theta_{\varepsilon}\Big)\geqslant -2n(t_0-\frac{1}{2})\geqslant -n$. Hence,
\begin{equation}(t-\frac{1}{2})^{2}\Big(R(\omega_{\gamma,\varepsilon}(t))-(1-\gamma)tr_{\omega_{\gamma,\varepsilon}(t)}\theta_{\varepsilon}\Big)\geqslant -n
\end{equation}
 for any $\mu_\gamma\in[0,1]$, $\varepsilon>0$ and $t\in[\frac{1}{2},1]$. In particular, when $t=1$, we have
\begin{equation}
R(\omega_{\gamma,\varepsilon}(1))-(1-\gamma)tr_{\omega_{\gamma,\varepsilon}(1)}\theta_{\varepsilon}\geqslant -4n.
\end{equation}
Then we consider the lower bound of $R(\omega_{\gamma,\varepsilon}(t))-(1-\gamma)tr_{\omega_{\gamma,\varepsilon}(t)}\theta_{\varepsilon}$ on $[1,\infty)\times M$.
\begin{equation*}
\begin{split}
&\ \ \ (\frac{\partial}{\partial t}-\Delta_{\omega_{\gamma,\varepsilon}(t)})\big(R(\omega_{\gamma,\varepsilon}(t))-(1-\gamma)tr_{\omega_{\gamma,\varepsilon}(t)}\theta_{\varepsilon}\big)\\
&=|Ric(\omega_{\gamma,\varepsilon}(t))-(1-\gamma)\theta_{\varepsilon}|^{2}_{\omega_{\gamma,\varepsilon}(t)}-\mu_\gamma \Big(R(\omega_{\gamma,\varepsilon}(t))-(1-\gamma)tr_{\omega_{\gamma,\varepsilon}(t)}\theta_{\varepsilon}\Big)\\
&\geqslant -\mu_\gamma \Big(R(\omega_{\gamma,\varepsilon}(t))-(1-\gamma)tr_{\omega_{\gamma,\varepsilon}(t)}\theta_{\varepsilon}\Big),
\end{split}
\end{equation*}
By maximum principle, on $[1,\infty)\times M$, we have
\begin{equation}
R(\omega_{\gamma,\varepsilon}(t))-(1-\gamma)tr_{\omega_{\gamma,\varepsilon}(t)}\theta_{\varepsilon}\geqslant -4ne^{-\mu_\gamma(t-1)}\geqslant -4n
\end{equation}
for any $\mu_\gamma\in[0,1]$, $t\geqslant 1$ and $\varepsilon>0$. We complete the proof this lemma.
\end{proof}

We write the flows $(TKRF_{\mu_\gamma,\varepsilon})$ as parabolic Monge-Amp\`ere equations
\begin{equation}\label{TKRF7}
\left \{\begin{split}
&\  \frac{\partial \varphi_{\gamma,\varepsilon}(t)}{\partial t}=\log\frac{(\omega_{0}+\sqrt{-1}\partial\bar{\partial}\varphi_{\gamma,\varepsilon}(t))^{n}}{\omega_{0}^{n}}+\mu_\gamma\varphi_{\gamma,\varepsilon}(t)\\
&\ \ \ \ \ \ \ \ \ \ \ \ \ \ \ \ \ \ \ \ +F_{0}+\log(\varepsilon^{2}+|s|_{h}^{2})^{1-\gamma},\\
&\  \varphi_{\gamma,\varepsilon}(0)=\varphi_{0}
\end{split}
\right.
\end{equation}
where $F_{0}$ satisfies $-Ric(\omega_{0})+\omega_{0}=\sqrt{-1}\partial\overline{\partial}F_{0}$ and $\frac{1}{V}\int_{M}e^{-F_{0}}dV_{0}=1$. Let $\chi_\gamma$ with $\gamma\in(0,1)$ be the function $\chi_\gamma(\varepsilon^{2}+|s|_h^2)=\frac{1}{\gamma}\int_{0}^{|s|_h^2}\frac{(\varepsilon^{2}+r)^{\gamma}-\varepsilon^{2\gamma}}{r}dr$ which is given by Campana-Guenancia-P$\breve{a}$un \cite{CGP}. Denote $F_{\gamma,\varepsilon}=F_{0}+\log\Big(\frac{(\omega^\gamma_{\varepsilon})^{n}}{\omega_{0}^{n}}\cdot(\varepsilon^{2}+|s|_{h}^{2})^{1-\gamma}\Big)$, $\phi_{\gamma,\varepsilon}(t)=\varphi_{\gamma,\varepsilon}(t)-k\chi_\gamma$ and $\omega^\gamma_{\varepsilon}=\omega_{0}+\sqrt{-1}k\partial\overline{\partial}\chi_\gamma$. Equation (\ref{TKRF7}) can be written as
\begin{equation}\label{TKRF9}
\left \{\begin{split}
&\  \frac{\partial \phi_{\gamma,\varepsilon}(t)}{\partial t}=\log\frac{(\omega^\gamma_{\varepsilon}+\sqrt{-1}\partial\bar{\partial}\phi_{\gamma,\varepsilon}(t))^{n}}{(\omega^\gamma_{\varepsilon})^{n}}+F_{\gamma,\varepsilon}+\mu_\gamma(\phi_{\gamma,\varepsilon}(t)+k\chi_\gamma).\\
&\  \\
&\  \phi_{\gamma,\varepsilon}(0)=\varphi_{0}-k\chi_\gamma:=\phi_{\gamma,\varepsilon}
\end{split}
\right.
\end{equation}
We first prove the uniform $C^0$-estimates (independent of $\mu_\gamma$) for $\phi_{\gamma,\varepsilon}$.
\begin{lem}\label{201901}
For any $T>0$, there exists constant $C$ depending only on $\|\varphi_0\|_{L^\infty(M)}$, $\beta$, $n$, $\omega_{0}$ and $T$ such that for any $t\in[0,T]$, $\varepsilon>0$ and $\mu_\gamma\in[0,1]$ (that is, $\gamma\in[\beta,1]$),
\begin{equation}
\|\phi_{\gamma,\varepsilon}(t)\|_{C^0(M)}\leqslant C.
\end{equation}
\end{lem}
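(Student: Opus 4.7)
The plan is to prove this by a direct maximum principle argument on equation $(\ref{TKRF9})$, exploiting the sign of the Monge-Amp\`ere term at interior extrema.

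First I would collect the uniform auxiliary bounds. The function $\chi_\gamma$ satisfies $0 \leqslant \chi_\gamma \leqslant \frac{1}{\gamma^{2}}|s|_{h}^{2\gamma}$ (using subadditivity $(a+b)^{\gamma}\leqslant a^{\gamma}+b^{\gamma}$ for $\gamma\in(0,1]$) together with $|s|_{h}^{2}\leqslant \frac{1}{2}$, so $0\leqslant k\chi_\gamma \leqslant k/\beta^{2}$ uniformly for $\gamma\in[\beta,1]$ and $\varepsilon>0$. Next, by the definition of $\omega_{\varepsilon}^{\gamma}$ and a short local computation near $D$, $(\omega_{\varepsilon}^{\gamma})^{n}/\omega_{0}^{n}$ is comparable to $(\varepsilon^{2}+|s|_{h}^{2})^{\gamma-1}$ up to a positive smooth factor, so $F_{\gamma,\varepsilon}=F_{0}+\log\big((\omega_{\varepsilon}^{\gamma})^{n}/\omega_{0}^{n}\cdot(\varepsilon^{2}+|s|_{h}^{2})^{1-\gamma}\big)$ is uniformly bounded on $M$, independent of $\varepsilon$ and of $\gamma\in[\beta,1]$. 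Call a common bound $C_{0}$. Similarly the initial data $\phi_{\gamma,\varepsilon}(0)=\varphi_{0}-k\chi_\gamma$ is bounded in $L^{\infty}(M)$ by a constant depending only on $\|\varphi_{0}\|_{L^{\infty}}$ and $\beta$.

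Then I apply the maximum principle to $\phi_{\gamma,\varepsilon}(t)$. At any point where $\phi_{\gamma,\varepsilon}(\cdot,t)$ attains its spatial maximum, $\sqrt{-1}\partial\bar\partial \phi_{\gamma,\varepsilon}\leqslant 0$, so
\begin{equation*}
\log\frac{(\omega_{\varepsilon}^{\gamma}+\sqrt{-1}\partial\bar\partial\phi_{\gamma,\varepsilon})^{n}}{(\omega_{\varepsilon}^{\gamma})^{n}}\leqslant 0,
\end{equation*}
and equation $(\ref{TKRF9})$ yields, via Hamilton's trick,
\begin{equation*}
\tfrac{d}{dt}\max_{M}\phi_{\gamma,\varepsilon}(t) \leqslant C_{0}(1+\mu_\gamma) + \mu_\gamma \max_{M}\phi_{\gamma,\varepsilon}(t).
\end{equation*}
At a spatial minimum the opposite sign gives $\log(\cdots)\geqslant 0$ and, using $k\chi_\gamma \geqslant 0$,
\begin{equation*}
\tfrac{d}{dt}\min_{M}\phi_{\gamma,\varepsilon}(t) \geqslant -C_{0} + \mu_\gamma \min_{M}\phi_{\gamma,\varepsilon}(t).
\end{equation*}
Both ODE inequalities are linear with coefficient $\mu_\gamma\in[0,1]$, so Gronwall integration from $0$ to $t\leqslant T$ produces a bound independent of $\mu_\gamma$ and of $\varepsilon$, depending only on $\|\varphi_{0}\|_{L^{\infty}}$, $\beta$, $n$, $\omega_{0}$, $T$ and the constant $C_{0}$.

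There is no serious obstacle here; the only point requiring a little care is the $\varepsilon$-uniform and $\gamma$-uniform bound on $F_{\gamma,\varepsilon}$ near $D$, which is exactly where the cancellation between the singular part of $(\omega_{\varepsilon}^{\gamma})^{n}/\omega_{0}^{n}$ and the factor $(\varepsilon^{2}+|s|_{h}^{2})^{1-\gamma}$ is used. Once this is in hand the estimate is an immediate consequence of the maximum principle; the fact that the right-hand side of $(\ref{TKRF9})$ depends on $\mu_\gamma$ only through the linear term $\mu_\gamma(\phi_{\gamma,\varepsilon}+k\chi_\gamma)$ keeps the Gronwall constants bounded uniformly in $\mu_\gamma\in[0,1]$. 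Note that the estimate is only uniform on finite time intervals $[0,T]$, as expected: long-time uniform control when $\mu_\gamma$ approaches $0$ is precisely the content of Theorem \ref{2019021901} and Lemma \ref{1901} below, which require the deeper arguments outlined in the introduction.
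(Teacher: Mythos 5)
Your proof is correct and follows essentially the same approach as the paper: both rest on the uniform $C^0$ bounds for $F_{\gamma,\varepsilon}$ and $k\chi_\gamma$ coming from Campana--Guenancia--P\u{a}un, and then the sign of the Monge--Amp\`ere term at spatial extrema together with a Gronwall-type argument. The paper packages the Gronwall step slightly differently (rescaling by $e^{-\mu_\gamma t}$ and applying the space-time maximum principle to a barrier $\tilde{\phi}_{\gamma,\varepsilon}$), but the underlying estimate is the same as your Hamilton's-trick version.
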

\begin{proof}  Since $\beta=1-\frac{1}{\lambda}>0$ and $\gamma\in[\beta,1]$, from Campana-Guenancia-P$\breve{a}$un's results (see $(15)$ and $(25)$ in \cite{CGP}), there exist uniform constants $A_0$ and $A_1$, such that for any $\varepsilon>0$ and $\gamma\in[\beta,1]$,
\begin{equation}
\|F_{\gamma,\varepsilon}+k\mu_\gamma\chi_\gamma\|_{C^0(M)}\leqslant A_0\ \ and\ \ \|\phi_{\gamma,\varepsilon}\|_{C^0(M)}\leqslant A_1.
\end{equation}
Then we have
\begin{equation*}
\frac{\partial e^{-\mu_\gamma t}\phi_{\gamma,\varepsilon}(t)}{\partial t}\leqslant e^{-\mu_\gamma t}\log\frac{(e^{-\mu_\gamma t}\omega^\gamma_{\varepsilon}+\sqrt{-1}\partial\bar{\partial}e^{-\mu_\gamma t}\phi_{\gamma,\varepsilon}(t))^{n}}{(e^{-\mu_\gamma t}\omega^\gamma_{\varepsilon})^{n}}+e^{-\mu_\gamma t}A_0,
\end{equation*}
which is equivalent to
\begin{equation*}
\begin{split}
&\ \ \frac{\partial }{\partial t}\big(e^{-\mu_\gamma t}\phi_{\gamma,\varepsilon}(t)-A_0\int_0^t e^{-\mu_\gamma s}ds\big)\\
&\leqslant e^{-\mu_\gamma t}\log\frac{\big(e^{-\mu_\gamma t}\omega^\gamma_{\varepsilon}+\sqrt{-1}\partial\bar{\partial}(e^{-\mu_\gamma t}\phi_{\gamma,\varepsilon,}(t)-A_0\int_0^t e^{-\mu_\gamma s}ds)\big)^{n}}{(e^{-\mu_\gamma t}\omega^\gamma_{\varepsilon})^{n}}.
\end{split}
\end{equation*}
For any $\delta>0$, we denote $\tilde{\phi}_{\gamma,\varepsilon}(t)=e^{-\mu_\gamma t}\phi_{\gamma,\varepsilon}(t)-A_0\int_0^t e^{-\mu_\gamma s}ds-\delta t$. Let $(t_0,x_0)$ be the maximum point of $\tilde{\phi}_{\gamma,\varepsilon}(t)$ on $[0,T]\times M$. If $t_0>0$, by maximum principle, we have
\begin{equation*}
\begin{split}
0&\leqslant\frac{\partial }{\partial t}\big(e^{-\mu_\gamma t}\phi_{\gamma,\varepsilon}(t)-A_0\int_0^t e^{-\mu_\gamma s}ds-\delta t\big)\Big|_{(t_0,x_0)}\\
&\leqslant e^{-\mu_\gamma t}\log\frac{\big(e^{-\mu_\gamma t}\omega^\gamma_{\varepsilon}+\sqrt{-1}\partial\bar{\partial}\tilde{\phi}_{\gamma,\varepsilon}(t)\big)^{n}}{(e^{-\mu_\gamma t}\omega^\gamma_{\varepsilon})^{n}}\Big|_{(t_0,x_0)}-\delta\\
&\leqslant-\delta,
\end{split}
\end{equation*}
which is impossible. Hence $t_0=0$, then
\begin{equation*}
\phi_{\gamma,\varepsilon}(t)\leqslant e^{\mu_\gamma t}\sup\limits_{M}\phi_{\gamma,\varepsilon}(0)+\delta Te^{\mu_\gamma T}+A_0T e^{\mu_\gamma T}.
\end{equation*}
Let $\delta\rightarrow0$, we obtain
\begin{equation}\label{202}
\phi_{\gamma,\varepsilon}(t)\leqslant A_1e^{T}+A_0T e^{T}.
\end{equation}
Hence there exists a constant $C$ depending only on $\|\varphi_0\|_{L^\infty(M)}$, $\beta$, $n$, $\omega_{0}$ amd $T$ such that $ \phi_{\gamma,\varepsilon}(t)\leqslant C$ for any $t\in[0,T]$, $\varepsilon>0$ and $\mu_\gamma\in[0,1]$. By similar arguments we can get the uniform lower bound for $\phi_{\gamma,\varepsilon}(t)$.
\end{proof}

Then by using the arguments in section $2$ of \cite{JWLXZ1} (see also Song-Tian's work \cite{JSGT}), we have the following lemmas.
\begin{lem}\label{201902} For any $T>0$, there exists a constant $C$ depending only on $\|\varphi_0\|_{L^\infty(M)}$, $n$, $\beta$, $\omega_{0}$ and $T$, such that for any $t\in(0,T]$, $\varepsilon>0$ and $\mu_\gamma\in[0,1]$,
\begin{equation}\label{201903}
\frac{t^n}{C}\leqslant\frac{(\omega^\gamma_{\varepsilon}+\sqrt{-1}\partial\bar{\partial}\phi_{\gamma,\varepsilon}(t))^{n}}{(\omega^\gamma_{\varepsilon})^n}\leqslant e^{\frac{C}{t}}.
\end{equation}
\end{lem}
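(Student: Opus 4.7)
The plan is to derive the two-sided volume-form estimate via the parabolic maximum principle applied to carefully engineered barriers, following Song-Tian \cite{JSGT} and the arguments of section $2$ of \cite{JWLXZ1}. The underlying ingredients are the identities, obtained by differentiating equation $(\ref{TKRF9})$ in $t$ and recalling $\omega_{\gamma,\varepsilon}(t)=\omega^\gamma_\varepsilon+\sqrt{-1}\partial\bar\partial\phi_{\gamma,\varepsilon}(t)$,
$$\ddot\phi_{\gamma,\varepsilon}=\Delta_{\omega_{\gamma,\varepsilon}(t)}\dot\phi_{\gamma,\varepsilon}+\mu_\gamma\dot\phi_{\gamma,\varepsilon},\qquad \Delta_{\omega_{\gamma,\varepsilon}(t)}\phi_{\gamma,\varepsilon}=n-tr_{\omega_{\gamma,\varepsilon}(t)}\omega^\gamma_\varepsilon,$$
the Monge-Amp\`ere identity $\omega_{\gamma,\varepsilon}(t)^n/(\omega^\gamma_\varepsilon)^n=\exp\bigl(\dot\phi_{\gamma,\varepsilon}-F_{\gamma,\varepsilon}-\mu_\gamma(\phi_{\gamma,\varepsilon}+k\chi_\gamma)\bigr)$, and the arithmetic-geometric mean inequality $tr_{\omega_{\gamma,\varepsilon}(t)}\omega^\gamma_\varepsilon\geqslant n((\omega^\gamma_\varepsilon)^n/\omega_{\gamma,\varepsilon}(t)^n)^{1/n}$. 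Throughout, the Campana-Guenancia-P\u{a}un bound $\|F_{\gamma,\varepsilon}+k\mu_\gamma\chi_\gamma\|_{C^0(M)}\leqslant A_0$ and the uniform $C^0$-bound on $\phi_{\gamma,\varepsilon}$ from Lemma \ref{201901} will absorb all lower-order terms without introducing any dependence on $\mu_\gamma$, $\varepsilon$ or $\gamma\in[\beta,1]$.

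For the upper estimate, I would apply the maximum principle to a barrier of the form
$$Q_+(t,x)=t\dot\phi_{\gamma,\varepsilon}(t,x)-A\bigl(\phi_{\gamma,\varepsilon}(t,x)-\phi_{\gamma,\varepsilon}(0,x)\bigr)-Bt,$$
with absolute constants $A,B>0$ chosen so that $(\partial_t-\Delta_{\omega_{\gamma,\varepsilon}(t)})Q_+$ contains a coercive term of the form $A\cdot tr_{\omega_{\gamma,\varepsilon}(t)}\omega^\gamma_\varepsilon$. Combining this with the arithmetic-geometric mean bound forces $\dot\phi_{\gamma,\varepsilon}$ to be bounded above at any interior maximum of $Q_+$, and since $Q_+(0,\cdot)=0$, one then obtains $Q_+\leqslant C$ globally. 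This is equivalent to $t\dot\phi_{\gamma,\varepsilon}\leqslant C$ modulo uniformly bounded terms, and inserting the bound into the Monge-Amp\`ere identity produces the upper bound on $\omega_{\gamma,\varepsilon}(t)^n/(\omega^\gamma_\varepsilon)^n$.

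For the lower estimate, the target $\omega_{\gamma,\varepsilon}(t)^n\geqslant t^n(\omega^\gamma_\varepsilon)^n/C$ is equivalent to $\dot\phi_{\gamma,\varepsilon}\geqslant n\log t-C$, and is more delicate because of the singular behaviour as $t\to 0^+$. I would use a barrier of the form
$$Q_-(t,x)=-\dot\phi_{\gamma,\varepsilon}(t,x)+A'\bigl(\phi_{\gamma,\varepsilon}(t,x)-\phi_{\gamma,\varepsilon}(0,x)\bigr)+n\log t+B't,$$
with absolute constants $A',B'>0$. The singular term $n\log t\to-\infty$ as $t\to 0^+$ drives the supremum of $Q_-$ on $[\eta,T]\times M$ into the parabolic interior for every small $\eta>0$, and the same combination of the coercivity of $(\partial_t-\Delta_{\omega_{\gamma,\varepsilon}(t)})Q_-$ with the arithmetic-geometric mean inequality forces $Q_-\leqslant C$ uniformly in $\eta$. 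Taking exponentials in the Monge-Amp\`ere identity then yields the desired lower volume-form bound.

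The principal obstacle is to keep every constant independent of $\mu_\gamma\in[0,1]$, $\varepsilon>0$ and $\gamma\in[\beta,1]$; in particular, the barriers $Q_\pm$ must be arranged so that no coefficient degenerates as $\mu_\gamma\to 0$. This is exactly what is required to make the estimate available in the endpoint case $\mu_\gamma=0$, which is the case that feeds into Lemma \ref{1902} and into the subsequent convergence arguments of this section.
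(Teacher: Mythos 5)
Your overall strategy — the parabolic maximum principle applied to time-weighted barriers built from $\dot\phi_{\gamma,\varepsilon}$ and $\phi_{\gamma,\varepsilon}$, with AM--GM supplying the coercivity, following Song--Tian \cite{JSGT} and section $2$ of \cite{JWLXZ1} — is exactly what the paper has in mind (it cites precisely these sources and gives no further argument), and your emphasis on keeping every constant uniform in $\mu_\gamma\in[0,1]$ is well placed. The upper-bound barrier $Q_+$ does yield the stated estimate, although with the $\phi_{\gamma,\varepsilon}(0)$ subtraction the spatial term that actually appears is $-A\,tr_{\omega_{\gamma,\varepsilon}(t)}\hat\omega$ rather than a multiple of $tr_{\omega_{\gamma,\varepsilon}(t)}\omega^\gamma_\varepsilon$, since $\sqrt{-1}\partial\bar{\partial}\phi_{\gamma,\varepsilon}(0)=\hat\omega-\omega^\gamma_\varepsilon$; the sign is still favourable because $\hat\omega\geqslant 0$.

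The lower-bound barrier, however, has a sign error in the coefficient of $\phi_{\gamma,\varepsilon}$ which breaks the argument. With $Q_-=-\dot\phi_{\gamma,\varepsilon}+A'(\phi_{\gamma,\varepsilon}-\phi_{\gamma,\varepsilon}(0))+n\log t+B't$, the identity
\begin{equation*}
(\partial_t-\Delta_{\omega_{\gamma,\varepsilon}(t)})\bigl(A'\phi_{\gamma,\varepsilon}\bigr)=A'\dot\phi_{\gamma,\varepsilon}-A'n+A'\,tr_{\omega_{\gamma,\varepsilon}(t)}\omega^\gamma_\varepsilon
\end{equation*}
makes the trace term in $(\partial_t-\Delta_{\omega_{\gamma,\varepsilon}(t)})Q_-$ \emph{positive}. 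At a point where $Q_-$ is large, $\dot\phi_{\gamma,\varepsilon}$ is very negative and the arithmetic--geometric mean inequality then makes $tr_{\omega_{\gamma,\varepsilon}(t)}\omega^\gamma_\varepsilon$ exponentially large, so this positive term swamps the linear piece $(A'-\mu_\gamma)\dot\phi_{\gamma,\varepsilon}$: the inequality $(\partial_t-\Delta_{\omega_{\gamma,\varepsilon}(t)})Q_-\geqslant 0$ holds with room to spare at the maximum, and the maximum principle returns nothing. The coefficient of $\phi_{\gamma,\varepsilon}$ must be flipped: one should minimize $H=\dot\phi_{\gamma,\varepsilon}+A'\phi_{\gamma,\varepsilon}-n\log t-B't$, whose evolution carries $-A'\,tr_{\omega_{\gamma,\varepsilon}(t)}\omega^\gamma_\varepsilon$ with the good sign. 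At an interior minimum this gives $(\mu_\gamma+A')\dot\phi_{\gamma,\varepsilon}+A'\,tr_{\omega_{\gamma,\varepsilon}(t)}\omega^\gamma_\varepsilon\leqslant A'n+n/t+B'$, and inserting $tr_{\omega_{\gamma,\varepsilon}(t)}\omega^\gamma_\varepsilon\geqslant n\exp\bigl(\frac{1}{n}(-\dot\phi_{\gamma,\varepsilon}+F_{\gamma,\varepsilon}+\mu_\gamma(\phi_{\gamma,\varepsilon}+k\chi_\gamma))\bigr)$ together with the uniform $C^0$ bounds from Lemma \ref{201901} forces $-\dot\phi_{\gamma,\varepsilon}\leqslant-n\log t+C$. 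Finally, since $\dot\phi_{\gamma,\varepsilon}$ need not be continuous at $t=0$, both barrier estimates should be run for the smooth approximations of $\hat\omega$ used to build the flow (Theorem \ref{1444}) and then passed to the limit, rather than by invoking $Q_\pm(0,\cdot)=0$ directly.
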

\begin{lem}\label{201904} For any $T>0$, there exists a constant $C$ depending only on $\|\varphi_0\|_{L^\infty(M)}$, $n$, $\beta$, $\omega_{0}$ and $T$ such that for any $t\in(0,T]$, $\varepsilon>0$ and $\mu_\gamma\in[0,1]$,
\begin{equation}\label{201905}
e^{-\frac{C}{t}}\omega^\gamma_{\varepsilon}\leqslant\omega_{\gamma,\varepsilon}(t)\leqslant e^{\frac{C}{t}}\omega^\gamma_{\varepsilon}.
\end{equation}
\end{lem}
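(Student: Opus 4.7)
The plan is to combine the volume-form bounds of Lemma \ref{201902} with a parabolic Schwarz (Chern-Lu) type inequality to obtain the upper bound $tr_{\omega^\gamma_\varepsilon}\omega_{\gamma,\varepsilon}(t)\leqslant e^{C/t}$, which gives the right-hand side of $(\ref{201905})$, and then to deduce the matching lower bound by an eigenvalue comparison using the lower determinant bound.

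First I would invoke the Campana-Guenancia-P$\breve{a}$un smoothing \cite{CGP} to assert that the background metrics $\omega^\gamma_\varepsilon$ have bisectional curvature bounded below by a constant depending only on $\omega_0$, $n$ and $\beta$, uniformly in $\varepsilon>0$ and $\gamma\in[\beta,1]$; this is the place where $\beta>0$ is crucial, since at $\gamma=0$ the smoothed function $\chi_\gamma$ degenerates. Next, I would compute the evolution of $\log tr_{\omega^\gamma_\varepsilon}\omega_{\gamma,\varepsilon}(t)$ along $(TKRF_{\mu_\gamma,\varepsilon})$ in the familiar Aubin-Yau form
\begin{equation*}
\Bigl(\frac{\partial}{\partial t}-\Delta_{\omega_{\gamma,\varepsilon}(t)}\Bigr)\log tr_{\omega^\gamma_\varepsilon}\omega_{\gamma,\varepsilon}(t)\leqslant C_1\,tr_{\omega_{\gamma,\varepsilon}(t)}\omega^\gamma_\varepsilon+C_2,
\end{equation*}
where the contribution of the twisted Ricci term and of $\mu_\gamma\omega_{\gamma,\varepsilon}(t)$ is absorbed into $C_2$ using $\mu_\gamma\in[0,1]$ and the $C^0$-bound of Lemma \ref{201901}.

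Then I would apply the parabolic maximum principle to the test quantity
\begin{equation*}
Q(x,t):=t\log tr_{\omega^\gamma_\varepsilon}\omega_{\gamma,\varepsilon}(t)-A\phi_{\gamma,\varepsilon}(t)
\end{equation*}
on $[0,T]\times M$, where $A$ is chosen large enough that $A\Delta_{\omega_{\gamma,\varepsilon}(t)}\phi_{\gamma,\varepsilon}(t)=A\bigl(n-tr_{\omega_{\gamma,\varepsilon}(t)}\omega^\gamma_\varepsilon\bigr)$ dominates the bad $C_1\,tr_{\omega_{\gamma,\varepsilon}(t)}\omega^\gamma_\varepsilon$ term. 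The factor $t$ vanishes at $t=0$ while $\phi_{\gamma,\varepsilon}(0)=\varphi_0-k\chi_\gamma$ is uniformly bounded, so the initial values of $Q$ are controlled; at an interior maximum the evolution inequality together with the $C^0$-bound on $\phi_{\gamma,\varepsilon}(t)$ would force $t\log tr_{\omega^\gamma_\varepsilon}\omega_{\gamma,\varepsilon}(t)\leqslant C$, which is the upper half of $(\ref{201905})$.

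For the lower bound, let $\lambda_1,\ldots,\lambda_n$ be the eigenvalues of $\omega_{\gamma,\varepsilon}(t)$ with respect to $\omega^\gamma_\varepsilon$. The upper trace estimate just obtained gives $\sum_i\lambda_i\leqslant e^{C/t}$, while Lemma \ref{201902} gives $\prod_i\lambda_i\geqslant t^n/C$, and hence $\lambda_i\geqslant\prod_j\lambda_j/\prod_{j\neq i}\lambda_j\geqslant e^{-C'/t}$ for each $i$, which yields $\omega_{\gamma,\varepsilon}(t)\geqslant e^{-C'/t}\omega^\gamma_\varepsilon$ on $(0,T]\times M$. The main technical obstacle will be the uniform lower bisectional curvature bound for $\omega^\gamma_\varepsilon$ in $\varepsilon>0$ and $\gamma\in[\beta,1]$; modulo this ingredient (supplied by the Campana-Guenancia-P$\breve{a}$un framework), the rest is the standard parabolic Aubin-Yau second-order estimate, as used in section $2$ of \cite{JWLXZ1} and in Song-Tian \cite{JSGT}.
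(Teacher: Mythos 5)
Your architecture --- a parabolic Schwarz/Chern--Lu estimate, a maximum principle applied to a test quantity built from $t\log tr_{\omega^\gamma_\varepsilon}\omega_{\gamma,\varepsilon}(t)$ with $\phi_{\gamma,\varepsilon}(t)$ subtracted to absorb the $tr_{\omega_{\gamma,\varepsilon}(t)}\omega^\gamma_\varepsilon$ term, and finally the eigenvalue arithmetic combining the upper trace bound with the lower determinant bound from Lemma~\ref{201902} --- is exactly the route the paper takes (it refers to section~2 of \cite{JWLXZ1} and Song--Tian \cite{JSGT}, and the parallel Lemma~\ref{22} is proved along the same lines). The eigenvalue step for the lower half of $(\ref{201905})$ is correct and standard.

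There is, however, a real gap at the point you yourself flag as ``the main technical obstacle''. You assert a uniform lower bound on the holomorphic bisectional curvature of $\omega^\gamma_\varepsilon$ in $\varepsilon>0$ and $\gamma\in[\beta,1]$ and treat this as available from \cite{CGP}. This is not available in general: the bisectional curvature of the Campana--Guenancia--P$\breve{a}$un regularizations is \emph{not} uniformly bounded below as $\varepsilon\to 0$ once the cone angle is small. What the Guenancia--P$\breve{a}$un framework \cite{GP1} supplies instead is the uniformly $C^0$-bounded auxiliary function $\Psi^\rho_\varepsilon$ of equation $(\ref{28})$, whose Laplacian with respect to the evolving metric dominates the unbounded negative part of the curvature contribution in the Chern--Lu inequality. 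The paper's own proof of the parallel Lemma~\ref{22} makes this essential: the test quantity there is $t\log tr_{\omega^\beta_\varepsilon}\omega^\beta_{\gamma,\varepsilon}(t)+t\Psi^\rho_\varepsilon$ (plus $t(\beta-\gamma)\varphi_\varepsilon$ when $\gamma\leqslant\beta$). Without the $\Psi^\rho_\varepsilon$ correction, the differential inequality you quote, $\bigl(\partial_t-\Delta_{\omega_{\gamma,\varepsilon}(t)}\bigr)\log tr_{\omega^\gamma_\varepsilon}\omega_{\gamma,\varepsilon}(t)\leqslant C_1\,tr_{\omega_{\gamma,\varepsilon}(t)}\omega^\gamma_\varepsilon+C_2$, is not uniform in $\varepsilon$, and your maximum principle argument does not close. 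The fix is to add $t\Psi^\rho_\varepsilon$ (with suitable constants $B$, $\rho$) to your quantity $Q$; the remainder of your argument --- the $A\phi_{\gamma,\varepsilon}$ subtraction, the use of Lemma~\ref{201901}, and the eigenvalue step --- then goes through.
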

Hence away from time $0$, on any compact subset in $M\setminus D$, $\omega_{\gamma,\varepsilon}$ are uniformly equivalent to $\omega_0$. Then Evans-Krylov-Safonov's estimates (see \cite{NVK}) imply the high order regularities.
\begin{lem}\label{201906} For any $0<\delta<T<\infty$, $k\in\mathbb{N}^{+}$ and $B_r(p)\subset\subset M\setminus D$, there exist constants $C_{\delta,T,k,p,r}$, such that for any $\varepsilon>0$ and $\mu_\gamma\in[0,1]$,
\begin{equation}\label{201907}
\|\varphi_{\gamma,\varepsilon}(t)\|_{C^{k}\big([\delta,T]\times B_r(p)\big)}\leqslant C_{\delta,T,k,p,r},
\end{equation}
where constants $C_{\delta,T,k,p,r}$ depend on $\|\varphi_0\|_{L^\infty(M)}$, $n$, $\beta$, $\delta$, $k$, $T$, $\omega_0$ and $dist_{\omega_{0}}(B_r(p),D)$.
\end{lem}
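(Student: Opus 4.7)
The plan is to work on a slightly enlarged concentric ball $B_{r'}(p)\subset\subset M\setminus D$ with $r<r'<\mathrm{dist}_{\omega_0}(B_r(p),D)/2$ and reduce the claim to standard interior regularity for a uniformly parabolic complex Monge-Amp\`ere equation. Lemma \ref{201904} yields, for any $\delta'\in(0,\delta)$, a two-sided bound $e^{-C/\delta'}\omega_0\leqslant\omega_{\gamma,\varepsilon}(t)\leqslant e^{C/\delta'}\omega_0$ on $[\delta'/2,T]\times\overline{B_{r'}(p)}$, after noting that $\omega_\varepsilon^\gamma$ is uniformly equivalent to $\omega_0$ on compact subsets of $M\setminus D$ (the potential $\chi_\gamma$ is smooth away from $D$ with derivative bounds depending only on $\mathrm{dist}_{\omega_0}(B_r(p),D)$ and $\beta$). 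Consequently, on the parabolic cylinder $Q':=[\delta'/2,T]\times B_{r'}(p)$, equation (\ref{TKRF7}) is uniformly parabolic with ellipticity constants independent of $\varepsilon\in(0,\tfrac{1}{2}]$ and $\mu_\gamma\in[0,1]$.

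Next I would apply the parabolic Evans-Krylov-Safonov theorem to (\ref{TKRF7}), viewed as a concave fully nonlinear equation for $\varphi_{\gamma,\varepsilon}$ with right-hand side $\mu_\gamma\varphi_{\gamma,\varepsilon}+F_0+(1-\gamma)\log(\varepsilon^2+|s|_h^2)$. The crucial observation is that on $\overline{B_{r'}(p)}$ we have $|s|_h^2\geqslant c>0$ with $c$ depending only on $\mathrm{dist}_{\omega_0}(B_r(p),D)$, so $\log(\varepsilon^2+|s|_h^2)$ and all its $\omega_0$-covariant derivatives are uniformly bounded in $\varepsilon$. Combined with the $C^0$ estimate of Lemma \ref{201901} and the uniform parabolicity noted above, this produces an interior $C^{2,\alpha}_{\mathrm{par}}$ bound for $\varphi_{\gamma,\varepsilon}$ on a slightly smaller cylinder $[\delta,T]\times B_{r_1}(p)$ with $r<r_1<r'$, depending only on the stated data. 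A standard bootstrap using linear parabolic Schauder estimates on a nested family of shrinking concentric balls then upgrades this to $C^k$ bounds for every $k\in\mathbb{N}^{+}$, establishing (\ref{201907}).

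The only delicate point is to confirm that all constants remain uniform in $\varepsilon\downarrow 0$ and in $\mu_\gamma\in[0,1]$ simultaneously. Uniformity in $\mu_\gamma$ is automatic, since $\mu_\gamma$ enters only as a bounded linear lower-order coefficient. Uniformity in $\varepsilon$ follows from the two ingredients already highlighted: Lemma \ref{201904} (uniform two-sided metric bounds on $[\delta'/2,T]$), and the smooth convergence $\log(\varepsilon^2+|s|_h^2)\to\log|s|_h^2$ with $C^k$-bounds uniform in $\varepsilon$ on any compact subset of $M\setminus D$. With these in place, the Evans-Krylov-Safonov plus Schauder iteration proceeds exactly as in the smooth K\"ahler-Ricci flow setting, and the role of $\delta$ is merely to stay away from $t=0$, where the initial data $\varphi_0$ has only $L^\infty$ control.
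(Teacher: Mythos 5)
Your proposal is correct and follows essentially the same route the paper takes: the two-sided metric bounds of Lemma \ref{201904} give uniform parabolicity on compact subsets of $M\setminus D$ away from $t=0$, after which Evans--Krylov--Safonov and a Schauder bootstrap yield the interior $C^k$ estimates. You supply the details the paper leaves implicit (uniform equivalence of $\omega_\varepsilon^\gamma$ with $\omega_0$ away from $D$, uniformity of $\log(\varepsilon^2+|s|_h^2)$ and its derivatives on a ball at positive distance from $D$, and the harmlessness of the $\mu_\gamma$ lower-order term), but the argument is the paper's own.
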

Next, we prove the uniform estimates along the twisted K\"ahler-Ricci flows $(TKRF_{0,\varepsilon})$ and the convergence of the conical K\"ahler-Ricci flows $(CKRF_{0})$. 
\begin{thm}\label{2019021901}
There exists a uniform constant $D_\beta$ such that
\begin{equation}\label{1904000}
\|u_{\beta,\varepsilon}(t)\|_{C^0(M)}+osc_{M}\varphi_{\beta,\varepsilon}(t)+\sup\limits_M tr_{\omega_\varepsilon^\beta}\omega_{\beta,\varepsilon}(t) \leqslant D_{\beta}
\end{equation}
for any $\varepsilon\in(0,\frac{1}{2}]$ and $t\geqslant 1$.
\end{thm}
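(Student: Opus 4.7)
The three pieces of $(\ref{1904000})$ will be established in order: first the $C^{0}$-bound on the twisted Ricci potential $u_{\beta,\varepsilon}(t)$, then the oscillation of the potential $\varphi_{\beta,\varepsilon}(t)$, and finally the trace estimate. The conceptual key is that when $\mu_\beta = 0$ the cohomology class $[\omega_{\beta,\varepsilon}(t)] \equiv c_1(M)$ is preserved along the flow, so the time derivative of the potential satisfies a pure heat equation (giving propagation of initial bounds up to $t = +\infty$), and, at each frozen time, the Monge-Amp\`ere equation becomes elliptic with a right-hand side that is uniformly $L^p$-integrable with some $p > 1$ precisely because $\lambda > 1$.

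\textbf{Step 1: uniform $C^0$-bound on $u_{\beta,\varepsilon}(t)$.} Setting $\mu_\beta = 0$ in $(\ref{TKRF7})$ gives $\dot\varphi_{\beta,\varepsilon}(t) = \log\frac{\omega_{\beta,\varepsilon}(t)^n}{\omega_0^n} + F_0 + (1-\beta)\log(\varepsilon^2+|s|_h^2)$, so differentiating in $t$ yields the heat equation $\partial_t \dot\varphi_{\beta,\varepsilon}(t) = \Delta_{\omega_{\beta,\varepsilon}(t)}\dot\varphi_{\beta,\varepsilon}(t)$. The maximum principle therefore forces
\begin{equation*}
\min_M \dot\varphi_{\beta,\varepsilon}(1) \leq \dot\varphi_{\beta,\varepsilon}(t) \leq \max_M \dot\varphi_{\beta,\varepsilon}(1),\qquad t \geq 1.
\end{equation*}
The value at $t=1$ is read off from the equivalent equation $(\ref{TKRF9})$ as $\dot\varphi_{\beta,\varepsilon}(1) = \log\frac{\omega_{\beta,\varepsilon}(1)^n}{(\omega_\varepsilon^\beta)^n} + F_{\beta,\varepsilon}$; both terms are uniformly controlled in $\varepsilon$ by Lemma \ref{201902} at $t=1$ and the uniform bound on $F_{\beta,\varepsilon}$ used in Lemma \ref{201901}. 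A direct $\sqrt{-1}\partial\bar{\partial}$-computation shows that $u_{\beta,\varepsilon}(t) = -\dot\varphi_{\beta,\varepsilon}(t) + c_\varepsilon(t)$ for some constant $c_\varepsilon(t)$, and the normalization $\frac{1}{V}\int_M e^{-u_{\beta,\varepsilon}(t)} dV_{\beta,\varepsilon}(t) = 1$ pins $c_\varepsilon(t) = \log\frac{1}{V}\int_M e^{\dot\varphi_{\beta,\varepsilon}(t)} dV_{\beta,\varepsilon}(t)$, which lies between $\min_M$ and $\max_M$ of $\dot\varphi_{\beta,\varepsilon}(t)$. This yields the uniform $C^0$-bound on $u_{\beta,\varepsilon}(t)$.

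\textbf{Step 2: uniform bound on $osc_M \varphi_{\beta,\varepsilon}(t)$.} Solving the relation of Step~1 for the volume form gives the elliptic-type identity
\begin{equation*}
(\omega_0 + \sqrt{-1}\partial\bar\partial \varphi_{\beta,\varepsilon}(t))^n = e^{\,c_\varepsilon(t) - u_{\beta,\varepsilon}(t) - F_0}\,(\varepsilon^2+|s|_h^2)^{-(1-\beta)}\,\omega_0^n .
\end{equation*}
Since $\lambda > 1$, one has $1-\beta = 1/\lambda < 1$, so $(\varepsilon^2+|s|_h^2)^{-(1-\beta)}$ is dominated by $|s|_h^{-2(1-\beta)}$ and is therefore uniformly bounded in $L^p(\omega_0^n)$ for every $p \in (1,\lambda)$, independently of $\varepsilon \in (0,\tfrac{1}{2}]$. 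Combined with the $C^0$-bound from Step~1, the right-hand side is uniformly $L^p$-bounded for some $p > 1$. Applying Ko{\l}odziej's $L^p$-estimate \cite{K000,K} frozen-in-time, with fixed smooth reference $\omega_0$, delivers the uniform bound on $osc_M \varphi_{\beta,\varepsilon}(t)$. Alternatively, one can Moser-iterate \`a la Lemma~3 of \cite{HDC} on $\varphi_{\beta,\varepsilon}(t) - \sup_M \varphi_{\beta,\varepsilon}(t)$; the missing uniform Poincar\'e inequality for the evolving metric is then replaced by standard Green-function estimates for $\omega_0$-psh functions with respect to the fixed background $\omega_0$.

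\textbf{Step 3 (trace estimate) and main obstacle.} Apply the maximum principle to $H := \log tr_{\omega_\varepsilon^\beta}\omega_{\beta,\varepsilon}(t) - A\,\varphi_{\beta,\varepsilon}(t)$ with $A$ large. The Chern-Lu inequality, combined with the uniform lower bound of the bisectional curvature of $\omega_\varepsilon^\beta$ (Campana--Guenancia--P$\breve{a}$un \cite{CGP}) and with $Ric(\omega_{\beta,\varepsilon}(t)) = (1-\beta)\theta_\varepsilon + \sqrt{-1}\partial\bar\partial u_{\beta,\varepsilon}(t)$ (whose curvature contribution is controlled through the $C^0$-bound on $u_{\beta,\varepsilon}(t)$), produces at a maximum point of $H$ an inequality $tr_{\omega_\varepsilon^\beta}\omega_{\beta,\varepsilon}(t)(x_0) \leq C$; Step~2 then propagates this to a uniform upper bound for $H$ on all of $M$, and hence for $tr_{\omega_\varepsilon^\beta}\omega_{\beta,\varepsilon}(t)$. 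The hardest point is Step~2: the classical parabolic version of Yau's $C^0$-estimate relies on the Poincar\'e inequality of the evolving metric $\omega_{\beta,\varepsilon}(t)$, whose constant degenerates both as $\varepsilon \to 0$ and as $t \to \infty$, so one must either reorganize the iteration or bypass it entirely via the pluripotential-theoretic estimate. The latter is available here, but only because the hypothesis $\lambda > 1$ forces $1 - \beta < 1$ and thereby leaves room for an $L^p$ density bound with $p > 1$; this is why the argument breaks down at $\lambda = 1$ and explains Remark \ref{20190223002}.
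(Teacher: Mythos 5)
Your Step 1 is essentially the paper's: one evolves $\dot\varphi_{\beta,\varepsilon}$ by the pure heat equation (possible exactly because $\mu_\beta=0$), propagates the uniform bound at $t=1$ furnished by Lemma \ref{201902}, and then pins the normalization constant to bound $u_{\beta,\varepsilon}(t)$. (One small slip: with the paper's convention $-Ric+\mu\omega+(1-\gamma)\theta_\varepsilon=\sqrt{-1}\partial\bar\partial u$, the relation is $u_{\beta,\varepsilon}(t)=\dot\varphi_{\beta,\varepsilon}(t)+c_{\beta,\varepsilon}(t)$, not $-\dot\varphi+c$; it does not affect the $C^0$ bound.) Your Step 2 replaces the paper's modified Moser iteration with a direct appeal to Ko{\l}odziej's $L^p$-estimate at each frozen time, which is legitimate precisely because $1-\beta=1/\lambda<1$ gives a uniform $L^p(\omega_0^n)$ bound with $p\in(1,\lambda)$; the paper explicitly acknowledges this alternative in the introduction, while choosing the Sobolev/Moser route in the body to remain self-contained. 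Up to here the two arguments are genuinely parallel.

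Step 3 has a real gap, in two places. First, the curvature term coming from $Ric(\omega_{\beta,\varepsilon}(t))=(1-\beta)\theta_\varepsilon+\sqrt{-1}\partial\bar\partial u_{\beta,\varepsilon}(t)$ is \emph{not} ``controlled through the $C^0$-bound on $u_{\beta,\varepsilon}(t)$'': the elliptic Chern--Lu/Aubin--Yau inequality produces a term of the form $(\operatorname{tr}_{\omega^\beta_\varepsilon}\omega_{\beta,\varepsilon}(t))^{-1}\Delta_{\omega^\beta_\varepsilon}\log\frac{\omega_{\beta,\varepsilon}(t)^n}{(\omega^\beta_\varepsilon)^n}$, which involves $\Delta u_{\beta,\varepsilon}(t)$, and a $C^0$ bound on $u$ says nothing about that. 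The frozen-in-time version of the estimate therefore does not close. What does close is the \emph{parabolic} version, exactly as in Proposition~3.1 of \cite{JWLXZ}: there $\Delta u$ never appears, but one trades it for $\dot\phi_{\beta,\varepsilon}$, which your Step~1 does control. Second, and more importantly, the parabolic maximum principle on $[1,t]\times M$ yields
$\operatorname{tr}_{\omega^\beta_\varepsilon}\omega_{\beta,\varepsilon}(t)\leqslant\exp\big(C+B\phi_{\beta,\varepsilon}(t)-B\inf_{[1,t]\times M}\phi_{\beta,\varepsilon}\big)$,
and the oscillation bound of Step~2 alone does \emph{not} control the exponent: $\phi_{\beta,\varepsilon}(t,x)-\phi_{\beta,\varepsilon}(t_0,x_0)$ can drift linearly in $t-t_0$ because $\dot\phi_{\beta,\varepsilon}$, though bounded, has no sign. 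This is exactly the point the paper resolves. It introduces the renormalized potential $\hat\phi_{\beta,\varepsilon}(t)=\phi_{\beta,\varepsilon}(t)-m_{\beta,\varepsilon}(t)$ with $m_{\beta,\varepsilon}(t)=\tfrac{1}{V}\int_M\phi_{\beta,\varepsilon}(t)\,e^{-F_0+C_{\beta,\varepsilon}}\tfrac{dV_0}{(\varepsilon^2+|s|_h^2)^{1-\beta}}$, shows $\|\hat\phi_{\beta,\varepsilon}(t)\|_{C^0}\leqslant C$ from the oscillation bound, and then proves by Jensen's inequality (a Cao-type argument, using $C_{\beta,\varepsilon}\leqslant 0$) that $m_{\beta,\varepsilon}(t)$ is non-increasing, so that $\inf_{[1,t]}m_{\beta,\varepsilon}=m_{\beta,\varepsilon}(t)$ and the drift cancels identically in the exponent. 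Without this monotonicity, your Step~3 is incomplete.
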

\begin{proof}  From Lemma \ref{201902}, $\|\dot{\varphi}_{\beta,\varepsilon}(1)\|_{C^0(M)}$ are uniformly bounded for any $\varepsilon>0$. Then the maximum principle implies that $\|\dot{\varphi}_{\beta,\varepsilon}(t)\|_{C^0(M)}$ are uniformly bounded for any $\varepsilon>0$ and $t\geqslant 1$. Since $u_{\beta,\varepsilon}(t)=\dot{\varphi}_{\beta,\varepsilon}(t)+c_{\beta,\varepsilon}(t)$ and $\frac{1}{V}\int_M e^{-u_{\beta,\varepsilon}(t)}dV_{\beta,\varepsilon}(t)=1$, $c_{\beta,\varepsilon}(t)$ and $u_{\beta,\varepsilon}(t)$ are uniformly bounded. Denote $\tilde{\varphi}_{\beta,\varepsilon}(t)=\varphi_{\beta,\varepsilon}(t)-\frac{1}{V}\int_M\varphi_{\beta,\varepsilon}(t) dV_0$, then $\tilde{\varphi}_{\beta,\varepsilon}(t)$ satisfies
\begin{equation}\label{201903}
\frac{1}{V}\int_M \tilde{\varphi}_{\beta,\varepsilon}(t) dV_0=0.
\end{equation}
By using the Green's formula with respect to $\omega_0$ and $-\Delta_{\omega_0}
\tilde{\varphi}_{\beta,\varepsilon}(t)\leqslant n$, we have
\begin{equation}\label{201911}
\tilde{\varphi}_{\beta,\varepsilon}(t,x)=\frac{1}{V}\int_{M}\tilde{\varphi}_{\beta,\varepsilon}(t,y)dV_{0}-\frac{1}{V}\int_{M}\Delta_{g_0}
\tilde{\varphi}_{\beta,\varepsilon}(t,y)G_{0}(x,y)dV_{0}\leqslant C
\end{equation}
for any $t\geqslant 0$ and $\varepsilon>0$, where constant $C$ depends only on $n$ and $\omega_0$. Therefore, there exists a constant $A$ such that $A-\tilde{\varphi}_{\beta,\varepsilon}(t)\geqslant 1$. Since $\chi_\beta$ is positive, using integration by parts, we have
\begin{equation}
\begin{split}
\int_M(A-\tilde{\varphi}_{\beta,\varepsilon}(t))\Delta_{\omega_0}\chi_\beta dV_0=-\int_M\chi_\beta\Delta_{\omega_0}\tilde{\varphi}_{\beta,\varepsilon}(t)dV_0\leqslant C,
\end{split}
\end{equation}
where constant $C$ depends only on $\beta$, $n$ and $\omega_0$. On the other hand, from Campana-Guenancia-P$\breve{a}$un's work \cite{CGP}, there exist constants $\delta$ and $C$ such that
\begin{equation*}
\begin{split}
\Delta_{\omega_0}\chi_\beta&=\frac{tr_{\omega_0}\sqrt{-1}<D's,D's>_h}{(\varepsilon^2+|s|_h^2)^{1-\beta}}-\frac{1}{\beta}\big((\varepsilon^2+|s|_h^2)^\beta-\varepsilon^{2\beta}\big)n\\
&\geqslant \delta(\varepsilon^2+|s|_h^2)^{\beta-1}-C.
\end{split}
\end{equation*}
Since $A-\tilde{\varphi}_{\beta,\varepsilon}(t)$ is positive, we conclude that
\begin{equation}
\int_M(A-\tilde{\varphi}_{\beta,\varepsilon}(t))\Delta_{\omega_0}\chi_\beta dV_0\geqslant \delta\int_M(A-\tilde{\varphi}_{\beta,\varepsilon}(t))((\varepsilon^2+|s|_h^2)^{\beta-1}-C)dV_0.
\end{equation}
Using the normalization $(\ref{201903})$, we obtain
\begin{equation}
\int_M(A-\tilde{\varphi}_{\beta,\varepsilon}(t))(\varepsilon^2+|s|_h^2)^{\beta-1} dV_0\leqslant C.
\end{equation}
Hence there exists a uniform constant such that for any $\varepsilon>0$ and $t\geqslant 0$,
\begin{equation}\label{201912}
\int_M(A-\tilde{\varphi}_{\beta,\varepsilon}(t))dV^\beta_\varepsilon\leqslant C.
\end{equation}
Denote $\tilde{\phi}_{\beta,\varepsilon}(t)=\phi_{\beta,\varepsilon}(t)-\frac{1}{V}\int_M\phi_{\beta,\varepsilon}(t) dV_0$. Since $\chi_\beta$ is uniformly bounded, by $(\ref{201911})$ and $(\ref{201912})$, there exist constants $B$ and $C$ such that for any $\varepsilon>0$ and $t\geqslant 0$,
\begin{equation}\label{201913}
B-\tilde{\phi}_{\beta,\varepsilon}(t)\geqslant 1\ \ \ and\ \ \ V\leqslant\int_M(B-\tilde{\phi}_{\beta,\varepsilon}(t))dV^\beta_\varepsilon\leqslant C.
\end{equation}
Let $f_{\varepsilon t}=B-\tilde{\phi}_{\beta,\varepsilon}(t)\geqslant 1$. For any $\alpha\geqslant 0$ and $t\geqslant 1$,
\begin{equation*}
\begin{split}
C\int_Mf_{\varepsilon t}^{\alpha+1}(\omega^\beta_{\varepsilon})^n&\geqslant \int_Mf_{\varepsilon t}^{\alpha+1}e^{\dot{\phi}_{\beta,\varepsilon}-F_{\beta,\varepsilon}}(\omega^\beta_{\varepsilon})^n\geqslant \int_Mf_{\varepsilon t}^{\alpha+1}\omega^n_{\beta,\varepsilon}(t)\geqslant  \int_Mf_{\varepsilon t}^{\alpha+1}(\omega^n_{\beta,\varepsilon}(t)-(\omega_\varepsilon^\beta)^n)\\
&=\sum\limits_{m=0}^{n-1}\int_Mf_{\varepsilon t}^{\alpha+1}(\omega_{\beta,\varepsilon}(t)-(\omega_\varepsilon^\beta))\wedge\omega^m_{\beta,\varepsilon}(t)\wedge(\omega^{\beta}_{\varepsilon})^{n-m-1}\\
&=\sum\limits_{m=0}^{n-1}\int_Mf_{\varepsilon t}^{\alpha+1}\sqrt{-1}\partial\bar{\partial}\tilde{\phi}_{\beta,\varepsilon}(t)\wedge\omega^m_{\beta,\varepsilon}(t)\wedge(\omega^{\beta}_{\varepsilon})^{n-m-1}\\
&=\sum\limits_{m=0}^{n-1}\frac{(\alpha+1)}{(\frac{\alpha}{2}+1)^2}\int_M\sqrt{-1}\partial f_{\varepsilon t}^{\frac{\alpha}{2}+1}\wedge\bar{\partial}f_{\varepsilon t}^{\frac{\alpha}{2}+1}\wedge\omega^m_{\beta,\varepsilon}(t)\wedge(\omega^{\beta}_{\varepsilon})^{n-m-1}\\
&\geqslant \frac{(\alpha+1)}{(\frac{\alpha}{2}+1)^2}\int_M\sqrt{-1}\partial f_{\varepsilon t}^{\frac{\alpha}{2}+1}\wedge\bar{\partial}f_{\varepsilon t}^{\frac{\alpha}{2}+1}\wedge(\omega^{\beta}_{\varepsilon})^{n-1}.
\end{split}
\end{equation*}
Hence the $L^2$-norm of $\nabla f_{\varepsilon t}^{\frac{\alpha}{2}+1}$ with respect to $\omega_\varepsilon^\beta$ can be bounded as follows
\begin{equation}\label{yewan10}
\|\nabla f_{\varepsilon t}^{\frac{\alpha}{2}+1}\|_{L^2}^2\leqslant C \frac{n(\frac{\alpha}{2}+1)^2}{(\alpha+1)}\int_Mf_{\varepsilon t}^{\alpha+1} ~dV_{\varepsilon}^\beta.
\end{equation}
When $n\geqslant2$, we denote $\kappa=\frac{n}{n-1}$ and $p=\alpha+2$. By using the uniform Sobolev inequality $(\ref{1.5.6})$, we have
\begin{equation}\label{20190001}
\begin{split}
(\int_Mf_{\varepsilon t}^{p\kappa}dV^\beta_{\varepsilon})^{\frac{n-1}{n}}&\leqslant C\int_M(|\nabla f_{\varepsilon t}^{\frac{\alpha}{2}+1}|_{\omega^\beta_\varepsilon}^2+f_{\varepsilon t}^{\alpha+2})dV^\beta_{\varepsilon}\\
&\leqslant C\frac{(\frac{\alpha}{2}+1)^2}{(\alpha+1)}\int_Mf_{\varepsilon t}^{\alpha+1}dV^\beta_{\varepsilon}+C\int_Mf_{\varepsilon t}^{\alpha+2}dV^\beta_{\varepsilon}\\
&\leqslant Cp\int_Mf_{\varepsilon t}^p~dV^\beta_{\varepsilon}.
\end{split}
\end{equation}
Hence there exists uniform constant $C$ such that
\begin{equation}\|f_{\varepsilon t}\|_{L^{p\kappa}}\leqslant (Cp)^{\frac{1}{p}}\|f_{\varepsilon t}\|_{L^p}.\end{equation}
Let $p_i=p_{i-1}\kappa$ and $p_0=\frac{2n}{n-1}$. By Moser's iteration, we have
\begin{equation*}
\|f_{\varepsilon t}\|_{L^{p_i}}\leqslant (Cp_{i-1})^{\frac{1}{p_{i-1}}}\|f_{\varepsilon t}\|_{L^{p_{i-1}}}\leqslant\cdots\leqslant C^{\sum_{j=0}^{i-1}\frac{1}{p_j}}\prod_{j=0}^{i-1}p_j^{\frac{1}{p_j}}\|f_{\varepsilon t}\|_{L^{p_0}}.
\end{equation*}
After letting $i\rightarrow\infty$, we deduce
\begin{equation}\label{yewan13}\|f_{\varepsilon t}\|_{L^{\infty}}\leqslant C^{\sum_{j=0}^{\infty}\frac{1}{p_j}}\prod_{j=0}^{\infty}p_j^{\frac{1}{p_j}}\|f_{\varepsilon t}\|_{L^{\frac{2n}{n-1}}}\leqslant C\|f_{\varepsilon t}\|_{L^{\frac{2n}{n-1}}}.\end{equation}
On the other hand, taking $\alpha=0$ in $(\ref{20190001})$ and using H\"older inequality, we have
\begin{equation}\label{20190002}
\begin{split}
(\int_Mf_{\varepsilon t}^{\frac{2n}{n-1}}dV^\beta_{\varepsilon})^{\frac{n-1}{n}}&\leqslant C\int_Mf_{\varepsilon t}^2~dV^\beta_{\varepsilon}\\
&\leqslant C(\int_Mf_{\varepsilon t}~dV^\beta_{\varepsilon})^{\frac{2}{n+1}}(\int_Mf^{\frac{2n}{n-1}}_{\varepsilon t}~dV^\beta_{\varepsilon})^{\frac{n-1}{n+1}}.
\end{split}
\end{equation}
Combining $(\ref{201913})$ with $(\ref{yewan13})$, for $t\geqslant 1$, we have
\begin{equation}\label{AAA}osc_M\phi_{\beta,\varepsilon}(t)=osc_M\tilde{\phi}_{\beta,\varepsilon}(t)\leqslant B-\inf\limits_M\tilde{\phi}_{\beta,\varepsilon}(t)=\|f_{\varepsilon t}\|_{L^{\infty}}\leqslant C.
\end{equation}
Since $\chi_\beta$ is uniformly bounded, and $osc_M\varphi_{\beta,\varepsilon}(t)$ are uniformly bounded for $t\in[0,1]$,
\begin{equation}\label{20190309001}osc_M\varphi_{\beta,\varepsilon}(t)\leqslant C
\end{equation}
for any $\varepsilon>0$ and $t\geqslant 0$. 

Now we denote $\hat{\phi}_{\beta,\varepsilon}(t)=\phi_{\beta,\varepsilon}(t)-\frac{1}{V}\int_M\phi_{\beta,\varepsilon}(t) e^{-F_0+C_{\beta,\varepsilon}}\frac{dV_0}{(\varepsilon^2+|s_h^2|)^{1-\beta}}$, where $C_{\beta,\varepsilon}$ is the normalization constant such that $\frac{1}{V}\int_Me^{-F_0+C_{\beta,\varepsilon}}\frac{dV_0}{(\varepsilon^2+|s|_h^2)^{1-\beta}}=1$. This normalization implies that for any $\varepsilon\in(0,\frac{1}{2}]$,
\begin{equation}
e^{-C_{\beta,\varepsilon}}=\frac{1}{V}\int_Me^{-F_0}\frac{dV_0}{(\varepsilon^2+|s|_h^2)^{1-\beta}}\geqslant \frac{1}{V}\int_Me^{-F_0}dV_0=1,
\end{equation}
Hence $C_{\beta,\varepsilon}\leqslant0$. Since $\frac{1}{V}\int_M\hat{\phi}_{\beta,\varepsilon}(t)e^{-F_0+C_{\beta,\varepsilon}}\frac{dV_0}{(\varepsilon^2+|s|_h^2)^{1-\beta}}=0$, we have
\begin{equation}
\sup\limits_M\hat{\phi}_{\beta,\varepsilon}(t)\geqslant 0\ \ and\ \ \inf\limits_M\hat{\phi}_{\beta,\varepsilon}(t)\leqslant0.
\end{equation}
From $(\ref{AAA})$, we have $osc_M\hat{\phi}_{\beta,\varepsilon}(t)\leqslant C$. Hence we conclude that 
\begin{equation}\label{201903040001}
\|\hat{\phi}_{\beta,\varepsilon}(t)\|_{C^0(M)}\leqslant C
\end{equation}
for any $\varepsilon\in(0,\frac{1}{2}]$ and $t\geqslant 0$. On the other hand, from Lemma \ref{201904}, there exists uniform constant $C$ such that for any $\varepsilon\in(0,\frac{1}{2}]$,
\begin{equation}
\frac{1}{C}\omega^\beta_{\varepsilon}\leqslant\omega_{\beta,\varepsilon}(1)\leqslant C\omega^\beta_{\varepsilon}.
\end{equation}
By using the arguments in the proof of Proposition $3.1$ in \cite{JWLXZ}, for any $t>1$, there exist uniform constant $B$ and $C$ such that
\begin{equation*}
\begin{split}
tr_{\omega_\varepsilon^\beta}\omega_{\beta,\varepsilon}(t)&\leqslant exp\Big(C+B\phi_{\beta,\varepsilon}(t)-B\inf\limits_{[1,t]\times M}\phi_{\beta,\varepsilon}(s)\Big)\\
&\leqslant exp\Big(C+B\hat{\phi}_{\beta,\varepsilon}(t)+\frac{B}{V}\int_M\phi_{\beta,\varepsilon}(t) e^{-F_0+C_{\beta,\varepsilon}}\frac{dV_0}{(\varepsilon^2+|s|_h^2)^{1-\beta}}\\
&\ \ \ \ \ \ \ \ \ \ \ \ -B\inf\limits_{[1,t]\times M}\hat{\phi}_{\beta,\varepsilon}(s)-\inf\limits_{[1,t]}\frac{B}{V}\int_M\phi_{\beta,\varepsilon}(s) e^{-F_0+C_{\beta,\varepsilon}}\frac{dV_0}{(\varepsilon^2+|s|_h^2)^{1-\beta}}\Big)\\
&\leqslant C\ exp\Big(\frac{B}{V}\int_M\phi_{\beta,\varepsilon}(t) e^{-F_0+C_{\beta,\varepsilon}}\frac{dV_0}{(\varepsilon^2+|s|_h^2)^{1-\beta}}\\
&\ \ \ \ \ \ \ \ \ \ \ \ -\inf\limits_{[1,t]}\frac{B}{V}\int_M\phi_{\beta,\varepsilon}(s) e^{-F_0+C_{\beta,\varepsilon}}\frac{dV_0}{(\varepsilon^2+|s|_h^2)^{1-\beta}}\Big).
\end{split}
\end{equation*}
By Jensen's inequality, we have 
\begin{equation*}
\begin{split}
&\ \frac{d}{dt}\frac{1}{V}\int_M\phi_{\beta,\varepsilon}(t) e^{-F_0+C_{\beta,\varepsilon}}\frac{dV_0}{(\varepsilon^2+|s|_h^2)^{1-\beta}}\\
&=\frac{1}{V}\int_M\Big(\log\frac{\omega^n_{\beta,\varepsilon}(t)}{\omega^n_0} +F_0+(1-\beta)\log(\varepsilon^2+|s|_h^2)\Big)e^{-F_0+C_{\beta,\varepsilon}}\frac{dV_0}{(\varepsilon^2+|s|_h^2)^{1-\beta}}\\
&\leqslant \log \Big(\frac{1}{V}\int_M e^{C_{\beta,\varepsilon}}dV_{\beta,\varepsilon}(t)\Big)=C_{\beta,\varepsilon}\leqslant0.
\end{split}
\end{equation*}
Hecne $\inf\limits_{[1,t]}\frac{1}{V}\int_M\phi_{\beta,\varepsilon}(s) e^{-F_0+C_{\beta,\varepsilon}}\frac{dV_0}{(\varepsilon^2+|s|_h^2)^{1-\beta}}=\frac{1}{V}\int_M\phi_{\beta,\varepsilon}(t) e^{-F_0+C_{\beta,\varepsilon}}\frac{dV_0}{(\varepsilon^2+|s|_h^2)^{1-\beta}}$, and then there exists uniform constant $C$ such that
\begin{equation} \label{201903040002}
tr_{\omega_\varepsilon^\beta}\omega_{\beta,\varepsilon}(t)\leqslant C\ \ \ \ \ on\ \ \ \ [1,\infty)\times M
\end{equation}
for any $\varepsilon\in(0,\frac{1}{2}]$.

When $n=1$, we can not use the uniform Sobolev inequality $(\ref{1.5.6})$. The estimates for $osc_M\varphi_{\gamma,\varepsilon}(t)$ follows from Ko{\l}odziej's $L^p$-estimates \cite{K000,K} directly. The other proofs are similar as above.
\end{proof}
Since $\|\dot{\varphi}_{\beta,\varepsilon}(t)\|_{C^0(M)}$ are uniformly bounded, we have 
\begin{equation*}
 tr_{\omega_{\beta,\varepsilon}(t)}\omega_\varepsilon^\beta\leqslant\frac{1}{(n-1)!}( tr_{\omega_\varepsilon^\beta}\omega_{\beta,\varepsilon}(t))^{n-1}\frac{(\omega_\varepsilon^\beta)^n}{\omega_{\beta,\varepsilon}^{n}(t)}\leqslant C\exp(-\dot{\varphi}_{\beta,\varepsilon}(t)+F_{\beta,\varepsilon})\leqslant C
\end{equation*}
for any $t\geqslant 1$ and $\varepsilon\in(0,\frac{1}{2}]$. Combining this inequality with $(\ref{201903040002})$, we get 
\begin{equation}
\frac{1}{C}\omega^\beta_{\varepsilon}\leqslant\omega_{\beta,\varepsilon}(t)\leqslant C\omega^\beta_{\varepsilon} \ \ on \ \ [1,\infty)\times M,
\end{equation}
and then obtain the uniform $C^\infty_{loc}$-estimates in $M\setminus D$ along $(TKRF_{0,\varepsilon})$ for any $\varepsilon\in(0,\frac{1}{2}]$ and $t\geqslant 1$. At the same time, the twisted Mabuchi energy $\mathcal{M}_{0,\varepsilon}(\varphi_{\beta,\varepsilon}(t))$ are uniformly bounded from below. Then by using the smooth approximation and the arguments in section $7$ of \cite{JWLXZ}, we get the convergence of the conical K\"ahler-Ricci flows $(CKRF_{0})$. In \cite{CW1}, Chen-Wang obtained this result by considering the conical K\"ahler-Ricci flow directly. In fact, by using similar arguments as above, we get the following convergence theorem.
\begin{thm}\label{20190002}
Let $M$ be a K\"ahler manifold, $\omega_0$ be a smooth K\"ahler metric and $D$ be a smooth divisor. Assume that the K\"ahler current $\hat{\omega}\in c_{1}(M)$ admits $L^{p}$-density with respect to $\omega_{0}^{n}$ for some $p>1$ and satisfies $\int_{M}\hat{\omega}^{n}=\int_{M}\omega_{0}^{n}$. If $c_1(M)=(1-\gamma)[D]$ with $\gamma\in(0,1)$, then the conical K\"ahler-Ricci flow $(CKRF_{0})$ converges to a Ricci flat conical K\"ahler-Einstein metric with cone angle $2\pi\gamma$ along $D$ in $C_{loc}^{\infty}$-topology outside divisor $D$ and globally in $C^{\alpha,\gamma}$-sense for any $\alpha\in(0,\min\{1,\frac{1}{\gamma}-1\})$.
\end{thm}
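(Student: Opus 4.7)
The plan is to reduce Theorem \ref{20190002} to the machinery just used for Theorem \ref{2019021901} and then extract a limit at $t=\infty$. The hypothesis $c_1(M)=(1-\gamma)[D]$ with $\gamma\in(0,1)$ forces $\lambda=1/(1-\gamma)$ and $\mu_\gamma=0$, so the entire setup of Section \ref{sec:3}, including $\theta_\varepsilon$, $\omega_\varepsilon^\gamma$, $\chi_\gamma$, $F_{\gamma,\varepsilon}$ and the Monge-Amp\`ere formulation $(\ref{TKRF9})$, is available with $\beta$ replaced by $\gamma$. The first observation is that no step in the proof of Theorem \ref{2019021901} uses any property of $\beta$ beyond $\beta\in(0,1)$ and $\mu_\beta=0$, so the same argument applies and yields, for every $\varepsilon\in(0,\tfrac{1}{2}]$ and $t\geqslant 1$,
\begin{equation*}
\|u_{\gamma,\varepsilon}(t)\|_{C^0(M)}+\mathrm{osc}_M\varphi_{\gamma,\varepsilon}(t)+\sup_M\mathrm{tr}_{\omega_\varepsilon^\gamma}\omega_{\gamma,\varepsilon}(t)\leqslant D_\gamma.
\end{equation*}
Combined with the uniform $C^0$-bound on $\dot\varphi_{\gamma,\varepsilon}$ extracted from the normalization of $u_{\gamma,\varepsilon}$, this produces the two-sided equivalence $C^{-1}\omega_\varepsilon^\gamma\leqslant\omega_{\gamma,\varepsilon}(t)\leqslant C\omega_\varepsilon^\gamma$ on $[1,\infty)\times M$, whence Evans-Krylov-Safonov bootstrapping (as in Lemma \ref{201906}) gives uniform $C^\infty_{loc}$-estimates on $[1,\infty)\times K$ for every compact $K\subset\subset M\setminus D$.

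The next step is to pass to the limit $\varepsilon\to 0$ via the smooth approximation framework of \cite{JWLXZ,JWLXZ1}, specifically Theorem \ref{04.5} together with the $C^{\alpha,\gamma}$-regularity recalled immediately after Theorem \ref{04.5001}. The resulting solution $\omega_\gamma(t)$ of $(CKRF_0)$ inherits the $C^\infty_{loc}$-estimates on $[1,\infty)\times(M\setminus D)$ and a global uniform $C^{\alpha,\gamma}$-bound on $[1,\infty)\times M$. At the same time, the $C^0$-bounds of the first step force the Log Mabuchi energy $\mathcal{M}_0$ to be uniformly bounded below along the flow.

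To produce the limit as $t\to\infty$, I would exploit the monotonicity of the twisted Mabuchi energy $\mathcal{M}_{0,\varepsilon}$ along $(TKRF_{0,\varepsilon})$, whose time derivative equals $-\tfrac{1}{V}\int_M|\nabla\dot\varphi_{\gamma,\varepsilon}|_{\omega_{\gamma,\varepsilon}}^2\,dV_{\gamma,\varepsilon}(t)\leqslant 0$. Combined with the uniform lower bound, this gives $\int_1^\infty\|\nabla\dot\varphi_{\gamma,\varepsilon}\|_{L^2}^2\,dt\leqslant C$ uniformly in $\varepsilon$. A diagonal argument across $\varepsilon$ and $t$ then produces a sequence $t_i\to\infty$ along which $\omega_\gamma(t_i)\to\omega_\infty$ in $C^\infty_{loc}(M\setminus D)$ and in $C^{\alpha,\gamma}(M)$, with $\omega_\infty$ a closed positive $(1,1)$-current of bounded potential satisfying $\mathrm{Ric}(\omega_\infty)=(1-\gamma)[D]$, i.e.\ a Ricci-flat conical K\"ahler-Einstein metric with cone angle $2\pi\gamma$ along $D$. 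Subsequential convergence is upgraded to full convergence by Dinew's uniqueness theorem \cite{SDINEW} (alternatively Berndtsson \cite{BBERN}) for bounded-potential solutions of the degenerate complex Monge-Amp\`ere equation in the class $[\omega_0]$: any two such $\omega_\infty$ differ by an additive constant in potential, which is pinned down by the volume normalization $\int_M\hat\omega^n=\int_M\omega_0^n$ preserved along the flow.

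The main obstacle is the final step. In the usual positively twisted setting one has exponential decay of $\|\nabla\dot\varphi\|_{L^2}$ via the Poincar\'e inequality, whereas here $\mu_\gamma=0$ degenerates the Poincar\'e constant and only $L^2$-in-time integrability is available. Converting this weaker information into genuine convergence of the current $\omega_\gamma(t)$ requires the combination of the global $C^{\alpha,\gamma}$-bound (to legitimise the limiting bounded-potential Monge-Amp\`ere solution across the singular locus $D$) with the uniqueness theorem (to rule out different subsequential limits), and these inputs must be threaded carefully through the divisor.
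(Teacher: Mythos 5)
Your plan's first two stages are exactly what the paper does: run the uniform estimates of Theorem \ref{2019021901} with $\beta$ replaced by $\gamma$ (and you are right that that proof uses nothing about $\beta$ beyond $\beta\in(0,1)$ and $\mu_\beta=0$), upgrade via the bounded $\dot\varphi_{\gamma,\varepsilon}$ to the two-sided equivalence $C^{-1}\omega_\varepsilon^\gamma\leqslant\omega_{\gamma,\varepsilon}(t)\leqslant C\omega_\varepsilon^\gamma$ and uniform $C^\infty_{loc}$ bounds off $D$, and then pass $\varepsilon\to 0$ through the smooth approximation of \cite{JWLXZ,JWLXZ1}. The paper also records, as you do, that $\mathcal{M}_{0,\varepsilon}(\varphi_{\gamma,\varepsilon}(t))$ is uniformly bounded below, and then delegates the $t\to\infty$ limit to ``the arguments in section $7$ of \cite{JWLXZ}'' and \cite{CW1}.

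The gap you flag in the $t\to\infty$ step is genuine, but two points of your diagnosis need correcting. First, the assertion that the additive constant in the limit potential is ``pinned down by the volume normalization $\int_M\hat\omega^n=\int_M\omega_0^n$'' is wrong: every K\"ahler form in the class has the same volume, so this constrains nothing about the constant. It is harmless here because the theorem asserts convergence of the current $\omega_\gamma(t)$, which is insensitive to the constant, but it should not be offered as the mechanism. Second, you argue that $\mu_\gamma=0$ degenerates the Poincar\'e constant so that only $L^2$-in-time integrability of $\|\nabla\dot\varphi_{\gamma,\varepsilon}\|_{L^2}$ is available and you cannot rule out other subsequential limits. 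But in the $\mu_\gamma=0$ case $\dot\varphi_{\gamma,\varepsilon}$ solves the \emph{pure} heat equation $\partial_t\dot\varphi_{\gamma,\varepsilon}=\Delta_{\omega_{\gamma,\varepsilon}(t)}\dot\varphi_{\gamma,\varepsilon}$, so by the maximum principle $\sup_M\dot\varphi_{\gamma,\varepsilon}(t)$ is nonincreasing and $\inf_M\dot\varphi_{\gamma,\varepsilon}(t)$ is nondecreasing; hence $\mathrm{osc}_M\dot\varphi_{\gamma,\varepsilon}(t)$ is monotone nonincreasing in $t$. Combined with the $L^2$-in-time integrability (which produces one sequence $t_i$ with $\|\nabla\dot\varphi_{\gamma,\varepsilon}(t_i)\|_{L^2}\to0$), the uniform Sobolev inequality of Theorem \ref{Sobolev}, and the uniform gradient bound of Lemma \ref{1.8.601}, this gives $\mathrm{osc}_M\dot\varphi_{\gamma,\varepsilon}(t_i)\to0$ and therefore, by monotonicity, $\mathrm{osc}_M\dot\varphi_{\gamma,\varepsilon}(t)\to0$ for the full flow, uniformly in $\varepsilon$. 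That monotonicity is the missing ingredient: without it, your argument identifies only one subsequential limit as conical Ricci-flat K\"ahler--Einstein, which is not enough for Dinew's uniqueness theorem to rule out other limit points along arbitrary $t_j\to\infty$. This Cao-type heat-equation argument is what the paper is implicitly invoking through \cite{JWLXZ} and \cite{CW1}.
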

\begin{pro}\label{201906001}
There exists a constant $\hat{\delta}$, such that for any $t\in[0,\hat{\delta})$, $\varepsilon\in[0,\frac{1}{2}]$ and $z\in M$, $\varphi_{\gamma,\varepsilon}(t)$ is increasing with respect to $\gamma\in[\beta,1]$.
\end{pro}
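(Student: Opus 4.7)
The plan is to differentiate the parabolic Monge--Amp\`ere equation $(\ref{TKRF7})$ with respect to $\gamma$ and invoke the parabolic maximum principle on the resulting linear evolution. Standard parabolic regularity applied to $(\ref{TKRF7})$, whose coefficients and initial datum are smooth in $\gamma\in[\beta,1]$, shows that $\varphi_{\gamma,\varepsilon}(t)$ is $C^1$ in $\gamma$ on $(0,T)\times M$. If smoothness at $t=0$ is delicate, one may replace derivatives in $\gamma$ throughout by finite-difference quotients $h^{-1}(\varphi_{\gamma+h,\varepsilon}-\varphi_{\gamma,\varepsilon})$, whose evolution is governed by the secant-form linearization $\int_0^1 \Delta_{\omega_s}(\cdot)\,ds$ of the Monge--Amp\`ere operator, and the argument below goes through without change.

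Setting $\psi_\gamma:=\partial_\gamma\varphi_{\gamma,\varepsilon}(t)$ and using $\partial_\gamma\mu_\gamma=\lambda$, differentiation of $(\ref{TKRF7})$ yields
$$\frac{\partial \psi_\gamma}{\partial t}=\Delta_{\omega_{\gamma,\varepsilon}(t)}\psi_\gamma+\mu_\gamma\psi_\gamma+\lambda\,\varphi_{\gamma,\varepsilon}(t)-\log(\varepsilon^2+|s|_h^2),$$
with $\psi_\gamma(0,\cdot)=0$, since $\varphi_0$ is $\gamma$-independent. The key observation is that the forcing $\lambda\varphi_{\gamma,\varepsilon}(t)-\log(\varepsilon^2+|s|_h^2)$ is strictly positive on a short time interval, uniformly in $(\gamma,\varepsilon)$. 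Indeed, $|s|_h^2<1/2$ and $\varepsilon\leq 1/2$ force $\varepsilon^2+|s|_h^2<1$, so $-\log(\varepsilon^2+|s|_h^2)>0$ pointwise; meanwhile the normalization $(\ref{nom0})$ gives $\varphi_0\geq 1$, and the short-time barrier estimate in the proof of Lemma \ref{201901} (whose constants $A_0,A_1$ are uniform in $\gamma,\varepsilon$) yields $\|\varphi_{\gamma,\varepsilon}(t)-\varphi_0\|_{C^0(M)}\to 0$ as $t\to 0^+$ uniformly in $(\gamma,\varepsilon)$. Hence there exists $\hat\delta>0$, independent of all parameters, with $\varphi_{\gamma,\varepsilon}(t)\geq 1/2$ on $[0,\hat\delta)$. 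Since $\lambda>0$, the combined forcing is bounded below by a positive constant on $[0,\hat\delta)\times M$.

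To eliminate the zeroth-order term I set $\tilde\psi_\gamma:=e^{-\mu_\gamma t}\psi_\gamma$, which satisfies
$$\partial_t\tilde\psi_\gamma=\Delta_{\omega_{\gamma,\varepsilon}(t)}\tilde\psi_\gamma+e^{-\mu_\gamma t}\bigl(\lambda\,\varphi_{\gamma,\varepsilon}(t)-\log(\varepsilon^2+|s|_h^2)\bigr),\qquad \tilde\psi_\gamma(0,\cdot)=0,$$
with non-negative right-hand side. The parabolic maximum principle then forces $\tilde\psi_\gamma\geq 0$, hence $\psi_\gamma\geq 0$, which is the claimed monotonicity of $\varphi_{\gamma,\varepsilon}(t)$ in $\gamma$. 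The case $\varepsilon=0$ is recovered by passing to the limit along the smooth approximation scheme of \cite{JWLXZ,JWLXZ1}, using the $C^0$-convergence $\varphi_{\gamma,\varepsilon}\to\varphi_\gamma$ provided by Theorem \ref{04.5}. The main potential obstacle is not the maximum-principle step itself but the extraction of a \emph{uniform} short-time closeness of $\varphi_{\gamma,\varepsilon}(t)$ to $\varphi_0$, so that $\hat\delta$ does not degenerate as $\gamma\to\beta$ or $\varepsilon\to 0$; this amounts to checking that the barrier comparison underlying Lemma \ref{201901} gives $\phi_{\gamma,\varepsilon}(t)=\phi_{\gamma,\varepsilon}(0)+o(1)$ with $o(1)$ uniform in $(\gamma,\varepsilon)$, which is a routine inspection of that argument.
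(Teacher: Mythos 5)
Your approach is essentially the paper's: both proofs establish a uniform short-time lower bound $\varphi_{\gamma,\varepsilon}(t)\geqslant 1/2$ on $[0,\hat\delta)\times M$, observe that $-\log(\varepsilon^2+|s|_h^2)>0$ (since $\varepsilon^2+|s|_h^2<1$), and then apply a parabolic maximum principle to the $\gamma$-variation of the potential. The only cosmetic difference is that you work with $\partial_\gamma\varphi_{\gamma,\varepsilon}(t)$, whereas the paper compares two solutions $\varphi_{\gamma_1,\varepsilon}(t)-\varphi_{\gamma_2,\varepsilon}(t)$ and uses the log-concavity of the determinant; you correctly note that the finite-difference form also works and that this is the variant that sidesteps regularity questions at $t=0$.

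There is, however, one imprecision worth flagging. You attribute the uniform short-time closeness $\varphi_{\gamma,\varepsilon}(t)\to\varphi_0$ to "the barrier estimate in the proof of Lemma~\ref{201901}," and call the verification "routine." That is not how the paper obtains it, and the barrier in Lemma~\ref{201901} would not give it. The maximum-principle argument there yields uniform \emph{sup/inf bounds} on $\phi_{\gamma,\varepsilon}(t)=\varphi_{\gamma,\varepsilon}(t)-k\chi_\gamma$ in terms of $\sup_M\phi_{\gamma,\varepsilon}(0)$ and $\inf_M\phi_{\gamma,\varepsilon}(0)$ (not a pointwise comparison with the initial datum), and on passing back to $\varphi_{\gamma,\varepsilon}(t)$ the correction $k\chi_\gamma$ can make the resulting lower bound negative at points where $\chi_\gamma$ is small. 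The paper instead constructs an explicit subsolution $\psi_{\gamma,\varepsilon}(t)=(1-te^{\mu_\gamma t})\varphi_0+te^{\mu_\gamma t}u_{\gamma,\varepsilon}+h(t)e^{\mu_\gamma t}$, where $u_{\gamma,\varepsilon}$ is the Ko{\l}odziej solution of an auxiliary elliptic Monge--Amp\`ere equation whose $L^\infty$ norm is controlled uniformly in $(\gamma,\varepsilon)$; this gives the genuinely pointwise bound $\varphi_{\gamma,\varepsilon}(t)\geqslant\varphi_0+h_1(t)$ with $h_1(0)=0$ uniformly, via Proposition $3.5$ of \cite{JWLXZ1}. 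You identified the right gap to close, but the closure requires this subsolution construction (which matters precisely because $\hat\omega$ is only a current, so $\varphi_{\gamma,\varepsilon}(t)=\varphi_0+O(t)$ has no a priori justification), not merely an inspection of the Lemma~\ref{201901} argument.
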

\begin{proof} We first consider the case $\varepsilon\in(0,\frac{1}{2}]$. By Ko{\l}odziej's results \cite{K000, K}, there exists a H\"older continuous solution $u_{\gamma,\varepsilon}$ to the equation
\begin{equation}(\omega_{0}+\sqrt{-1}\partial\bar{\partial}u_{\gamma,\varepsilon})^{n}=e^{-F_{0}-\mu_\gamma\varphi_{0}+\hat{C}}\frac{\omega_{0}^{n}}
{(\varepsilon^2+|s|_{h}^{2})^{(1-\gamma)}},
\end{equation}
and $u_{\gamma,\varepsilon}$ satisfies 
\begin{equation}\|u_{\gamma,\varepsilon}\|_{L^\infty(M)}\leqslant C,
\end{equation}
where the normalization constant $\hat{C}$ is uniformly bounded independent of $\varepsilon$ and $\gamma$, constant $C$ depends only on $\|\varphi_0\|_{L^\infty(M)}$, $\beta$ and $F_0$. We define function
\begin{equation}\psi_{\gamma,\varepsilon}(t)=(1-te^{\mu_\gamma t})\varphi_{0}+te^{\mu_\gamma t}u_{\gamma,\varepsilon}+h(t)e^{\mu_\gamma t},
\end{equation}
where
\begin{equation}
\begin{split}
h(t)&=-t\|\varphi_{0}\|_{L^\infty(M)}-t\|u_{\gamma,\varepsilon}\|_{L^\infty(M)}+n(t\log t-t)e^{-\mu_\gamma t}\\
&\ +\mu_\gamma n\int_0^te^{-\mu_\gamma s}s\log s ds+\hat{C}\int_0^t e^{-\mu_\gamma s} ds.
\end{split}
\end{equation}
From the arguments in the proof of Proposition $3.5$ in \cite{JWLXZ1}, we know that $\psi_{\gamma,\varepsilon}(t)$ is a subsolution of equation $(\ref{TKRF7})$ and hence
\begin{equation}\label{201906002}
\varphi_{\gamma,\varepsilon}(t)\geqslant \psi_{\gamma,\varepsilon}(t)\geqslant \varphi_0+h_1(t),
\end{equation}
where $h_1(t)$ with $h_1(0)=0$ is a continuous function independent of $\gamma$ and $\varepsilon$. Then there exists $\hat{\delta}$, such that $h_1(t)>-\frac{1}{2}$ for any $t\in[0,\hat{\delta}]$. By the normalization $(\ref{nom0})$, we have
\begin{equation}\label{201906003}
\varphi_{\gamma,\varepsilon}(t)\geqslant 1-\frac{1}{2}>0
\end{equation}
for any $\gamma\in[\beta,1]$, $\varepsilon\in(0,1]$, $t\in(0,\hat{\delta})$ and $z\in M$. For $\beta\leqslant\gamma_1\leqslant\gamma_2\leqslant1$, on $[0,\hat{\delta}]\times M$,
\begin{equation*}\label{201906004}
\begin{split}
\frac{\partial}{\partial t}\Big(\varphi_{\gamma_1,\varepsilon}(t)-\varphi_{\gamma_2,\varepsilon}(t)\Big)&=\log\frac{(\omega_0+\sqrt{-1}\partial\bar{\partial}\varphi_{\gamma_1,\varepsilon}(t))^n}{(\omega_0+\sqrt{-1}\partial\bar{\partial}\varphi_{\gamma_2,\varepsilon}(t))^n}+\mu_{\gamma_1}\Big(\varphi_{\gamma_1,\varepsilon}(t)-\varphi_{\gamma_2,\varepsilon}(t)\Big)\\
&\ \ \ \ -(\mu_{\gamma_2}-\mu_{\gamma_1})\varphi_{\gamma_2,\varepsilon}(t)+(\gamma_2-\gamma_1)\log(\varepsilon^2+|s|_h^2)\\
&\leqslant\log\frac{(\omega_0+\sqrt{-1}\partial\bar{\partial}\varphi_{\gamma_1,\varepsilon}(t))^n}{(\omega_0+\sqrt{-1}\partial\bar{\partial}\varphi_{\gamma_2,\varepsilon}(t))^n}+\mu_{\gamma_1}\Big(\varphi_{\gamma_1,\varepsilon}(t)-\varphi_{\gamma_2,\varepsilon}(t)\Big).
\end{split}
\end{equation*}
Then the maximum principle implies that $\varphi_{\mu_{\gamma_1},\varepsilon}(t)\leqslant\varphi_{\mu_{\gamma_2},\varepsilon}(t)$ for any $\varepsilon\in(0,\frac{1}{2}]$, $t\in(0,\hat{\delta})$ and $z\in M$. Letting $\varepsilon\rightarrow0$, we get the $\varepsilon=0$ case.
\end{proof}
\begin{pro}\label{201908}
Let $\{\varepsilon_i\}\in[0,\frac{1}{2}]$ and $\{\gamma_i\}\in[\beta,1]$ (that is, $\mu_{\gamma_i}\in[0,1]$). We assume that $\varepsilon_i$ and $\gamma_i$ converge to $\varepsilon_\infty$ and $\gamma_\infty$ respectively. For any $[\delta,T]$ $(0<\delta<T<\infty)$, there exists $\alpha\in(0,1)$ such that $\varphi_{\gamma_i,\varepsilon_i}(t)$ converge to $\varphi_{\gamma_\infty,\varepsilon_\infty}(t)$ in $C^{\alpha'}$-sense on $[\delta,T]\times M$ for any $\alpha'\in(0,\alpha)$, and on $(0,\infty)\times (M\setminus D)$ the convergence is in $C^\infty_{loc}$-sense.
\end{pro}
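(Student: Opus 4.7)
The plan is to extract a convergent subsequence from $\{\varphi_{\gamma_i,\varepsilon_i}(t)\}$ using the uniform a priori estimates already in place, identify the limit by passing to the limit in the parabolic Monge-Amp\`ere equation (\ref{TKRF7}), and then invoke the uniqueness statement in Theorem \ref{1444} to conclude that the entire family converges.

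First I would establish the interior convergence. By Lemma \ref{201906}, for any relatively compact $K\subset\subset M\setminus D$, any $0<\delta<T<\infty$, and any $k\in\mathbb{N}^+$, the norms $\|\varphi_{\gamma_i,\varepsilon_i}(t)\|_{C^k([\delta,T]\times K)}$ are uniformly bounded. A standard diagonal Arzel\`a--Ascoli argument over an exhaustion of $(0,\infty)\times(M\setminus D)$ by compact sets then yields a subsequence, still denoted $\{\varphi_{\gamma_i,\varepsilon_i}(t)\}$, converging in $C^\infty_{loc}((0,\infty)\times(M\setminus D))$ to some smooth limit $\varphi_\infty(t)$.

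Next I would promote this to global $C^{\alpha'}$ convergence on $[\delta,T]\times M$. By Lemma \ref{201901}, the potentials are uniformly $C^0$-bounded on $[0,T]\times M$. By Lemma \ref{201902}, the Monge-Amp\`ere density satisfies $(\omega_{\gamma_i,\varepsilon_i}(t))^n/\omega_0^n\leqslant C_\delta\,(\varepsilon_i^2+|s|_h^2)^{\gamma_i-1}$ on $[\delta,T]\times M$. Since $\gamma_i\in[\beta,1]$ with $\beta>0$, the weight $(\varepsilon_i^2+|s|_h^2)^{\gamma_i-1}$ is uniformly bounded in $L^p(M,\omega_0^n)$ for some $p>1$ independent of $i$. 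Ko{\l}odziej's $C^\alpha$-estimate for the complex Monge-Amp\`ere equation with $L^p$-right-hand side then produces a uniform spatial H\"older bound $\|\varphi_{\gamma_i,\varepsilon_i}(t)\|_{C^\alpha(M)}\leqslant C_\delta$ for some $\alpha\in(0,1)$ depending only on $\beta$. The density bound in Lemma \ref{201902} also yields, via the parabolic equation, a uniform $L^\infty$-bound on $\dot\varphi_{\gamma_i,\varepsilon_i}(t)$ on $[\delta,T]\times M$, hence uniform time-Lipschitz regularity. Combining these, I obtain a uniform parabolic $C^\alpha$-bound on $[\delta,T]\times M$, and Arzel\`a--Ascoli extracts a further subsequence converging in $C^{\alpha'}([\delta,T]\times M)$ for every $\alpha'\in(0,\alpha)$. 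By a diagonal argument this subsequence may be taken compatible with the one from the first step.

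Finally I would identify the limit and conclude. Passing to the limit in equation (\ref{TKRF7}) using smooth convergence on $(0,\infty)\times(M\setminus D)$ and H\"older convergence globally, $\varphi_\infty(t)$ satisfies the parabolic Monge-Amp\`ere equation with parameters $(\gamma_\infty,\varepsilon_\infty)$; by the uniform $C^0$-bound and continuity up to $t=0$ it also satisfies $\lim_{t\to 0^+}\|\varphi_\infty(t)-\varphi_0\|_{L^\infty(M)}=0$. The uniqueness statement of Theorem \ref{1444} then forces $\varphi_\infty(t)=\varphi_{\gamma_\infty,\varepsilon_\infty}(t)$, and since the limit is independent of the extracted subsequence, the whole family converges. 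The hard part is obtaining a uniform H\"older bound that survives up to the divisor as $\varepsilon_i\to 0$: the background comparison in Lemma \ref{201904} only controls $\omega_{\gamma,\varepsilon}(t)$ on compact subsets of $M\setminus D$. This is precisely where the cone-angle assumption $\gamma_i\geqslant\beta>0$ is essential, as it guarantees the uniform $L^p$-bound on the Monge-Amp\`ere density needed for Ko{\l}odziej's estimate.
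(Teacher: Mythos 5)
Your proposal follows the paper's structure very closely for the main body of the argument: interior $C^\infty_{loc}$ estimates from Lemma~\ref{201906}, global H\"older estimates via Ko{\l}odziej applied to the uniform $L^p$-density bound that Lemmas~\ref{201901} and~\ref{201902} supply (this is exactly what the paper does), time-Lipschitz regularity from the $\dot\varphi$-bound, a subsequential limit, and identification by uniqueness of the flow. That much is fine.

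However, there is a genuine gap at the identification step. You write that ``by the uniform $C^0$-bound and continuity up to $t=0$ [the limit] also satisfies $\lim_{t\to 0^+}\|\varphi_\infty(t)-\varphi_0\|_{L^\infty(M)}=0$,'' but this does not follow. All of the compactness you assembled lives on $[\delta,T]\times M$ for $\delta>0$; the estimates of Lemmas~\ref{201902}--\ref{201904} blow up like $e^{C/t}$ as $t\to 0^+$, so there is no uniform modulus of continuity in time at $t=0$, and hence no equicontinuity there. The fact that every member of the approximating family individually attains $\varphi_0$ in $L^\infty$ as $t\to0^+$ does not pass to the limit; without a uniform bound near $t=0$, $\varphi_\infty(t)$ could fail to approach $\varphi_0$. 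Since the uniqueness statement in Theorem~\ref{1444} is uniqueness subject to the initial condition, this is precisely the hypothesis you must verify and precisely the place where the argument is incomplete. The paper fills this hole with a sandwich: the barrier construction in Proposition~\ref{201906001} gives a lower bound $\varphi_{\gamma,\varepsilon}(t)\geqslant\varphi_0+h_1(t)$ with $h_1$ independent of $(\gamma,\varepsilon)$ and $h_1(0)=0$, while monotonicity in $\varepsilon$ (Proposition~$3.3$ of \cite{JWLXZ1}) and monotonicity in $\gamma$ (Proposition~\ref{201906001}) give the upper bound $\varphi_{\gamma_{i_k},\varepsilon_{i_k}}(t)\leqslant\varphi_{1,\frac{1}{2}}(t)$; passing to the limit squeezes $\varphi_\infty(t)$ between two expressions that both converge to $\varphi_0$ in $L^\infty$ as $t\to0^+$. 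Your proof needs this (or an equivalent) uniform two-sided barrier near $t=0$; the $C^0$-bound alone is not enough. The final closing remark you make, about the uniform Ko{\l}odziej estimate being the hard part, actually concerns a step that is comparatively routine once $\gamma_i\geqslant\beta>0$; the genuinely delicate point you missed is the initial-time control.
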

\begin{proof} On $[\delta,T]\times M$ with $0<\delta<T<\infty$, by Lemma \ref{201901} and Lemma \ref{201902}, $\varphi_{\gamma_i,\varepsilon_i}(t)$ and $\dot{\varphi}_{\gamma_i,\varepsilon_i}(t)$ are uniformly bounded. By Ko{\l}odziej's $L^{p}$-estimates \cite{K000, K}, there exists $\alpha\in(0,1)$ such that $\|\varphi_{\gamma_i,\varepsilon_i}(t)\|_{C^\alpha(M, \omega_0)}$ are uniformly bounded. Then there exists a subsequence $\varphi_{\gamma_{i_k},\varepsilon_{i_k}}(t)$ converge to a function $\varphi_\infty(t)$ in $C^{\alpha'}$-sense on $[\delta,T]\times M$ with $\alpha'\in(0,\alpha)$, and in $C^\infty_{loc}$-sense on $(0,\infty)\times (M\setminus D)$ from Lemma \ref{201906}.  Furthermore, on $(0,\infty)\times M$ (or $(0,\infty)\times (M\setminus D)$ if $\varepsilon_\infty=0$), $\varphi_\infty(t)$ satisfies equation
\begin{equation*}
\frac{\partial \varphi_{\gamma_\infty,\varepsilon_\infty}(t)}{\partial t}=\log\frac{(\omega_{0}+\sqrt{-1}\partial\bar{\partial}\varphi_{\gamma_\infty,\varepsilon_\infty}(t))^{n}}{\omega_{0}^{n}}+\mu_{\gamma_\infty}\varphi_{\gamma_\infty,\varepsilon_\infty}(t)+F_{0}+\log(\varepsilon_\infty^{2}+|s|_{h}^{2})^{1-\gamma_\infty},
\end{equation*}

We claim that $\varphi_\infty(t)=\varphi_{\gamma_\infty,\varepsilon_\infty}(t)$. Fix $\gamma$, $t$ and $z\in M$, by Proposition $3.3$ in \cite{JWLXZ1}, $\varphi_{\gamma,\varepsilon}(t)$ is decreasing as $\varepsilon\searrow0$. Combining this with Proposition \ref{201906001} and $(\ref{201906002})$, we have
\begin{equation}\label{201906006}
h_1(t)\leqslant\varphi_{\gamma_{i_k},\varepsilon_{i_k}}(t)-\varphi_0\leqslant\varphi_{\gamma_{i_k},\frac{1}{2}}(t)-\varphi_0\leqslant\varphi_{1,\frac{1}{2}}(t)-\varphi_0
\end{equation}
on $[0,\hat{\delta})\times M$ for any $\gamma_{i_k}$ and $\varepsilon_{i_k}$. Let $k\rightarrow\infty$, we have
\begin{equation}\label{201906007}
h_1(t)\leqslant\varphi_{\infty}(t)-\varphi_0\leqslant\varphi_{1,\frac{1}{2}}(t)-\varphi_0.
\end{equation}\label{201906008}
Since $\varphi_{1,\frac{1}{2}}(t)$ converge to $\varphi_0$ in $L^\infty$-sense as $t\rightarrow0$ and $h_1(0)=0$, $\varphi_{\infty}(t)$ converge to $\varphi_0$ in $L^\infty$-sense as $t\rightarrow0$. By the uniqueness results (see Proposition $2.7$ or Theorem $3.7$ in \cite{JWLXZ1}), we prove the claim. Next, we prove that $\varphi_{\gamma_i,\varepsilon_i}(t)$ converge to $\varphi_{\gamma_\infty,\varepsilon_\infty}(t)$ in $C^{\alpha'}$-sense on $[\delta,T]\times M$ for any $\alpha'\in(0,\alpha)$. If this is not true, then there exists $\alpha_{0}\in(0,\alpha)$, $\epsilon_0>0$ and a sequence $\varphi_{\gamma_{j_k},\varepsilon_{j_k}}(t)$ such that
\begin{equation}\label{3.22.6016}\|\varphi_{\gamma_{j_k},\varepsilon_{j_k}}(t)-\varphi_{\gamma_\infty,\varepsilon_\infty}(t)\|_{C^{\alpha_{0}}([\delta,T]\times M)}\geqslant \epsilon_0.\end{equation}
Since $\|\varphi_{\gamma_{j_k},\varepsilon_{j_k}}(t)\|_{C^{\alpha^{\prime}}([\delta,T]\times M)}$ are uniformly bounded for some $\alpha^{\prime}\in(\alpha_{0},\alpha)$, there exists a subsequence which we also denote it by $\varphi_{\gamma_{j_k},\varepsilon_{j_k}}(t)$ such that $\varphi_{\gamma_{j_k},\varepsilon_{j_k}}(t)$ converge in $C^{\alpha_{0}}$-sense to a function $\tilde{\varphi}_{\infty}(t)$ as $k\rightarrow\infty$ and
\begin{equation}\label{3.22.6116}\|\tilde{\varphi}_{\infty}(t)-\varphi_{\gamma_\infty,\varepsilon_\infty}(t)\|_{C^{\alpha_{0}}([\delta,T]\times M)}\geqslant \epsilon_{0}.\end{equation}
By the similar arguments as above, $\tilde{\varphi}_\infty(t)$ is also a solution of the equation $(\ref{TKRF7})$ with $\mu_{\gamma_\infty}$ and $\varepsilon_\infty$. However, $\tilde{\varphi}_{\infty}(t)\not \equiv\varphi_{\gamma_\infty,\varepsilon_\infty}(t)$ by $(\ref{3.22.6116})$, which is impossible by the uniqueness theorem. Hence we prove the $C^\alpha$-convergence. From the uniform $C^{\infty}_{loc}$-estimates for $\varphi_{\gamma_i,\varepsilon_i}(t)$, we can also prove that $\varphi_{\gamma_i,\varepsilon_i}(t)$ converge to $\varphi_{\gamma_\infty,\varepsilon_\infty}(t)$ in $C^{\infty}_{loc}$-topology on $(0,\infty)\times (M\setminus D)$ by the similar arguments.
\end{proof}
\begin{rem}\label{201908001}
As a consequence of Proposition \ref{201908}, on $[\delta,T]\times M$, the $C^\alpha$-norm of $\varphi_{\gamma,\varepsilon}(t)$ on $[\delta,T]\times M$ (for some $\alpha\in(0,1)$) is continuous with respect to $\gamma\in[\beta,1]$ and $\varepsilon\in[0,\frac{1}{2}]$. Furthermore, If we fix $\varepsilon\in(0,\frac{1}{2}]$, then the $C^\infty$-norm of $\varphi_{\gamma,\varepsilon}(t)$ on $[\delta,T]\times M$ is continuous with respect to $\gamma\in[\beta,1]$.
\end{rem}
Straightforward calculation shows that 
\begin{equation}
\begin{split}
\label{3.22.16}&\ \ \ (\frac{d}{dt}-\Delta_{\omega_{\gamma,\varepsilon}(t)})(\Delta_{\omega_{\gamma,\varepsilon}(t)} u_{\gamma,\varepsilon}(t))\\
&=-|\nabla\overline{\nabla}u_{\gamma,\varepsilon}(t)|_{\omega_{\gamma,\varepsilon}(t)}^{2}+\mu_\gamma\Delta_{\omega_{\gamma,\varepsilon}(t)} u_{\gamma,\varepsilon}(t),\\
&\ \ \ (\frac{d}{dt}-\Delta_{\omega_{\gamma,\varepsilon}(t)})(|\nabla u_{\gamma,\varepsilon}(t)|_{\omega_{\gamma,\varepsilon}(t)}^{2})\\
&=-|\nabla\overline{\nabla}u_{\gamma,\varepsilon}(t)|_{\omega_{\gamma,\varepsilon}(t)}^{2}-|\nabla\nabla u_{\gamma,\varepsilon}(t)|_{\omega_{\gamma,\varepsilon}(t)}^{2}+\mu_\gamma|\nabla u_{\gamma,\varepsilon}(t)|_{\omega_{\gamma,\varepsilon}(t)}^{2}\\
&\ \ \ \ \ -\frac{1-\gamma}{2}\theta_{\varepsilon}(grad\ u_{\gamma,\varepsilon}(t),\mathcal{J}(grad\ u_{\gamma,\varepsilon}(t))),
\end{split}
\end{equation}
where $\mathcal{J}$ is the complex structure on $M$.
\begin{lem}\label{1.8.601}  If $\|u_{\gamma,\varepsilon}(t)\|_{C^0(M)}\leqslant A$ on $[T_1,T_2]$ with ($1<T_1<T_1+1<T_2\leqslant\infty$). Then there exists uniform constant $C$ depending only on $\|\varphi_0\|_{L^\infty(M)}$, $\beta$, $n$, $\omega_{0}$ and $A$ such that on $M\times [T_1+1,T_2]$,
\begin{equation}
\begin{split}
&\ \label{3.22.18}|\nabla u_{\gamma,\varepsilon}(t)|_{\omega_{\gamma,\varepsilon}(t)}^{2}\leqslant C(u_{\gamma,\varepsilon}(t)+C)\leqslant C,\\
&\ R(\omega_{\gamma,\varepsilon}(t))-(1-\gamma)tr_{\omega_{\gamma,\varepsilon}(t)}\theta_{\varepsilon}\leqslant C(u_{\gamma,\varepsilon}(t)+C)\leqslant C.
\end{split}
\end{equation}
\end{lem}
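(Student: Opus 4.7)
The plan is to apply the maximum principle to two carefully chosen auxiliary functions built from $|\nabla u_{\gamma,\varepsilon}(t)|^{2}_{\omega_{\gamma,\varepsilon}(t)}$, $u_{\gamma,\varepsilon}(t)$, and $\Delta_{\omega_{\gamma,\varepsilon}(t)}u_{\gamma,\varepsilon}(t)$, using the evolution formulas displayed in $(\ref{3.22.16})$, the hypothesis $\|u_{\gamma,\varepsilon}(t)\|_{C^{0}(M)}\leqslant A$, the non-negativity of $\theta_{\varepsilon}$, and the lower bound from Lemma \ref{1902}. As preparation, the trace identity $R(\omega_{\gamma,\varepsilon}(t))-(1-\gamma)tr_{\omega_{\gamma,\varepsilon}(t)}\theta_{\varepsilon}=n\mu_{\gamma}-\Delta_{\omega_{\gamma,\varepsilon}(t)}u_{\gamma,\varepsilon}(t)$ together with Lemma \ref{1902} gives the one-sided bound $\Delta_{\omega_{\gamma,\varepsilon}(t)}u_{\gamma,\varepsilon}(t)\leqslant 5n$. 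A short calculation using the normalization $\frac{1}{V}\int_{M}e^{-u_{\gamma,\varepsilon}(t)}dV_{\gamma,\varepsilon}(t)=1$ and the volume evolution $\partial_{t}dV_{\gamma,\varepsilon}(t)=\Delta u_{\gamma,\varepsilon}(t)\,dV_{\gamma,\varepsilon}(t)$ yields the identity $(\partial_{t}-\Delta)u_{\gamma,\varepsilon}(t)=\mu_{\gamma}u_{\gamma,\varepsilon}(t)-\mu_{\gamma}A_{\mu_{\gamma},\varepsilon}(t)$, and the hypothesis forces $|A_{\mu_{\gamma},\varepsilon}(t)|\leqslant C(A)$.

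For the gradient estimate, I would apply the maximum principle to
\begin{equation*}
H_{1}(t,x)=\eta(t)\bigl(|\nabla u_{\gamma,\varepsilon}(t)|^{2}_{\omega_{\gamma,\varepsilon}(t)}+u_{\gamma,\varepsilon}(t)^{2}\bigr),
\end{equation*}
where $\eta(t):=\min(t-T_{1},1)$ vanishes at $t=T_{1}$. The key identity $(\partial_{t}-\Delta)(u^{2})=2\mu_{\gamma}u^{2}-2\mu_{\gamma}A_{\mu_{\gamma},\varepsilon}(t)u-2|\nabla u|^{2}$ supplies a negative term $-2|\nabla u|^{2}$, so combining with the second line of $(\ref{3.22.16})$ and dropping the non-positive contributions (in particular $-\frac{1-\gamma}{2}\theta_{\varepsilon}(grad\,u,\mathcal{J}grad\,u)\leqslant 0$) yields
\begin{equation*}
(\partial_{t}-\Delta)\bigl(|\nabla u|^{2}+u^{2}\bigr)\leqslant (\mu_{\gamma}-2)|\nabla u|^{2}+C(A).
\end{equation*}
Since $\mu_{\gamma}\leqslant 1$, the coefficient $(\mu_{\gamma}-2)$ is strictly negative. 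At an interior maximum of $H_{1}$ on $[T_{1},T_{2}]\times M$---which cannot occur at $t=T_{1}$, where $\eta=0$---the standard Li--Yau-type manipulation of $\eta'/\eta$ bounds $H_{1}$ uniformly in terms of $A$, and since $\eta\equiv 1$ on $[T_{1}+1,T_{2}]$ this gives the uniform estimate $|\nabla u_{\gamma,\varepsilon}(t)|^{2}\leqslant C(u_{\gamma,\varepsilon}(t)+C)$ on that interval.

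For the upper bound on the twisted scalar curvature, equivalently a lower bound on $\Delta u_{\gamma,\varepsilon}(t)$, I would apply the maximum principle to
\begin{equation*}
H_{2}(t,x)=\eta(t)\bigl(-\Delta_{\omega_{\gamma,\varepsilon}(t)}u_{\gamma,\varepsilon}(t)+B|\nabla u_{\gamma,\varepsilon}(t)|^{2}_{\omega_{\gamma,\varepsilon}(t)}\bigr)
\end{equation*}
with a constant $B>1$. Summing the two lines of $(\ref{3.22.16})$ (again discarding the $\theta_{\varepsilon}$ term) produces
\begin{equation*}
(\partial_{t}-\Delta)\bigl(-\Delta u+B|\nabla u|^{2}\bigr)\leqslant (1-B)|\nabla\overline{\nabla}u|^{2}+\mu_{\gamma}\bigl(-\Delta u+B|\nabla u|^{2}\bigr),
\end{equation*}
and Cauchy--Schwarz $|\nabla\overline{\nabla}u|^{2}\geqslant(\Delta u)^{2}/n$ converts $(1-B)|\nabla\overline{\nabla}u|^{2}$ into a quadratic damping $-(B-1)(\Delta u)^{2}/n$. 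At an interior maximum of $H_{2}$, either $-\Delta u$ dominates the linear term and the quadratic damping forces $-\Delta u\leqslant C$, or $|\nabla u|^{2}$ dominates and is already bounded by the previous step. In either case $H_{2}$ is bounded, which translates into $R(\omega_{\gamma,\varepsilon}(t))-(1-\gamma)tr_{\omega_{\gamma,\varepsilon}(t)}\theta_{\varepsilon}=n\mu_{\gamma}-\Delta u_{\gamma,\varepsilon}(t)\leqslant C(u_{\gamma,\varepsilon}(t)+C)$ on $[T_{1}+1,T_{2}]\times M$.

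The main technical nuance is the bookkeeping around the time cut-off $\eta$: at an interior maximum of $H_{i}$ occurring in $(T_{1},T_{1}+1)$ the ratio $\eta'(t)/\eta(t)=1/(t-T_{1})$ diverges as $t\to T_{1}^{+}$, but it enters the estimate multiplied by $\eta(t)=t-T_{1}$, so the product $\eta\cdot(\eta'/\eta)=1$ stays bounded; this is precisely the Li--Yau trick that allows the proof to dispense with any a priori pointwise control of $|\nabla u|$ or $\Delta u$ at the initial time $T_{1}$. The non-negativity of $\theta_{\varepsilon}$ is used only to discard the $\theta_{\varepsilon}(grad\,u,\mathcal{J}grad\,u)$ term in $(\ref{3.22.16})$, and the bound $\mu_{\gamma}\leqslant 1$ (implicit via $\gamma\in[\beta,1]$) is exactly what makes the reaction coefficients $(\mu_{\gamma}-2)$ and $(1-B)$ strictly negative after choosing $B>1$.
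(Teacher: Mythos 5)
Your overall strategy (time cut-off plus maximum principle, using the evolution identities $(\ref{3.22.16})$ and the non-negativity of $\theta_\varepsilon$) is the right one, and your second auxiliary function $H_2$ together with the Cauchy--Schwarz quadratic damping $-(B-1)(\Delta u)^2/n$ is sound \emph{given} the gradient bound. However, there is a genuine gap in the gradient step: the choice $H_1=\eta\bigl(|\nabla u|^2+u^2\bigr)$ does not close, and the Li--Yau justification you offer does not apply here.

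The issue is where the cut-off sits. Writing $\Phi=|\nabla u|^2+u^2$, you correctly obtain $(\partial_t-\Delta)\Phi\leqslant(\mu_\gamma-2)|\nabla u|^2+C(A)$, which (using $|u|\leqslant A$) is at best a \emph{linear} damping $-\Phi+C$. At an interior maximum of $\eta\Phi$ with $T_1<t_0<T_1+1$ (so $\eta=t_0-T_1<1$, $\eta'=1$) you get
\begin{equation*}
0\leqslant\eta'\Phi+\eta\bigl(-\Phi+C\bigr)=(\eta'-\eta)\Phi+C\eta,
\end{equation*}
and since $\eta'-\eta>0$ in this range, the inequality gives no control at all. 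The Li--Yau trick with a time cut-off crucially needs a \emph{quadratic} good term $-\Phi^2/k$; the reaction for $|\nabla u|^2+u^2$ has none, because $-|\nabla\nabla u|^2-|\nabla\overline\nabla u|^2$ do not directly dominate $|\nabla u|^4$. Your remark that ``$\eta\cdot(\eta'/\eta)=1$ stays bounded'' does not repair this: that product multiplies $\Phi$ itself, so it precisely \emph{is} the bad term you need to kill.

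The fix, and what the paper actually does, is to put the cut-off only on $|\nabla u|^2$ and keep $u^2$ with a fixed coefficient, e.g.\ $H=(t-T_1)|\nabla u|^2+\tfrac{3}{2}u^2$. Then $(\partial_t-\Delta)H\leqslant|\nabla u|^2+(t-T_1)\mu_\gamma|\nabla u|^2-3|\nabla u|^2+C\leqslant-|\nabla u|^2+C$, because the undamped $-3|\nabla u|^2$ coming from $\tfrac{3}{2}u^2$ survives near $t=T_1$ and absorbs both the $\eta'$-term and the reaction, without any Li--Yau manipulation. The same remark applies to your $H_2$: the paper uses $(t-T_1)^2\bigl(-\Delta u\bigr)+2(t-T_1)^2|\nabla u|^2$ in the first step, again relying on the maximum-principle case analysis rather than a Li--Yau ratio. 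After this warm-up to $T_1+1$, the paper propagates on $[T_1+1,T_2]$ (with $T_2$ possibly infinite) by means of the ratio functions $|\nabla u|^2/(u+2B)$ and $-\Delta u/(u+2B)+2|\nabla u|^2/(u+2B)$, whose evolution carries a built-in quadratic damping $-\tfrac{\delta}{2}|\nabla u|^4/(u+2B)^3$; that is what produces the $C(u_{\gamma,\varepsilon}(t)+C)$ form of the conclusion directly. Your cut-off version, once the first step is repaired as above, would yield $|\nabla u|^2\leqslant C$, which under $\|u\|_{C^0}\leqslant A$ is equivalent in content, but you would still need to make the $T_2=\infty$ case rigorous (your fix at $[T_1+1,T_2]$ comes for free once $\eta\equiv1$, since the reaction coefficient is $\mu_\gamma-2<0$).
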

\begin{proof}
We first prove that there exists uniform constant $C$ such that 
\begin{equation}
\begin{split}
\label{1.5.2}|\nabla u_{\gamma,\varepsilon}(T_1+1)|_{\omega_{\gamma,\varepsilon}(T_1+1)}^{2}&\leqslant C,\\
|R(\omega_{\gamma,\varepsilon}(T_1+1))-(1-\gamma)tr_{\omega_{\gamma,\varepsilon}(T_1+1)}\theta_{\varepsilon}|&\leqslant C.
\end{split}
\end{equation}
Calculation shows that $u_{\gamma,\varepsilon}(t)$ can be written as
\begin{equation}\label{201903120015}
u_{\gamma,\varepsilon}(1)=\log\frac{\omega_{\gamma,\varepsilon}^n(1)(\varepsilon^2+|s|_h^2)^{1-\gamma}}{\omega_0^n}+F_0
+\mu_\gamma\varphi_{\gamma,\varepsilon}(1)+C_{\gamma,\varepsilon,1},
\end{equation}
where $C_{\gamma,\varepsilon,1}$ is the normalization constant such that $\frac{1}{V}\int_M e^{-u_{\gamma,\varepsilon}(1)}dV_{\gamma,\varepsilon}(1)=1$. By Lemma \ref{201901} and
Lemma \ref{201902}, $C_{\gamma, \varepsilon,1}$ and $u_{\gamma, \varepsilon}(1)$ are uniformly bounded by a constant $C$ depending only on $\|\varphi_0\|_{L^\infty(M)}$, $\beta$, $n$ and $\omega_{0}$, and hence $A_{\mu_\gamma,\varepsilon}(1)$ are uniformly bounded. Since $A_{\mu_\gamma,\varepsilon}(t)$ is increasing along $(TKRF_{\mu_\gamma,\varepsilon})$ when $\mu_\gamma>0$, $A_{\mu_\gamma,\varepsilon}(t)$ is uniformly bounded from below for any  $\varepsilon>0$, $\gamma\in(\beta,1]$ and $t\geqslant 1$. On the other hand, by Jensen's inequality, $A_{\mu_\gamma,\varepsilon}(t)\leqslant0$. 

Let $H_{\gamma,\varepsilon}(t)=(t-T_1)|\nabla u_{\gamma,\varepsilon}(t)|^{2}_{\omega_{\gamma,\varepsilon}(t)}+\frac{3}{2}u^{2}_{\gamma,\varepsilon}(t)$ and $( t_0,x_0)$ be the maximum point of $H_{\gamma,\varepsilon}(t)$ on $[T_1,T_1+1]\times M$. Combining $(\ref{3.22.16})$ with
\begin{equation*}
\label{3.22.6}
\frac{\partial}{\partial t}u_{\gamma,\varepsilon}(t)=\Delta_{\omega_{\gamma,\varepsilon}(t)} u_{\gamma,\varepsilon}(t)+\mu_\gamma u_{\gamma,\varepsilon}(t)-\mu_\gamma A_{\mu_\gamma,\varepsilon}(t),
\end{equation*}
we obtain
\begin{equation}(\frac{d}{dt}-\Delta_{\omega_{\gamma,\varepsilon}(t)})H_{\gamma,\varepsilon}(t)\leqslant-|\nabla u_{\gamma,\varepsilon}(t)|^{2}_{\omega_{\gamma,\varepsilon}(t)}+C,\end{equation}
where constant $C$ depends only on $A$, $\|\varphi_0\|_{L^\infty(M)}$, $\beta$, $n$ and $\omega_{0}$.

$Case\ 1$, $t_0=T_1$. Then $(t-T_1)|\nabla u_{\gamma,\varepsilon}(t)|^{2}_{\omega_{\gamma,\varepsilon}(t)}\leqslant \frac{3}{2}u^{2}_{\gamma,\varepsilon}(T_1)\leqslant \frac{3}{2}A^2$.

$Case\ 2$, $t_0>T_1$. By the maximum principle, we have $|\nabla u_{\gamma,\varepsilon}(t_0,x_0)|^{2}_{\omega_{\gamma,\varepsilon}(t_0)}\leqslant C$. Hence $(t-T_1)|\nabla u_{\gamma,\varepsilon}(t)|^{2}_{\omega_{\gamma,\varepsilon}(t)}\leqslant C$.

From the above two cases, $(t-T_1)|\nabla u_{\gamma,\varepsilon}(t)|^{2}_{\omega_{\gamma,\varepsilon}(t)}\leqslant C$ on $[T_1,T_1+1] \times M$. Obviously $|\nabla u_{\gamma,\varepsilon}(T_1+1)|^{2}_{\omega_{\gamma,\varepsilon}(T_1+1)}$ are uniformly bounded by a constant $C$ which depends only on $A$, $\|\varphi_0\|_{L^\infty(M)}$, $\beta$, $n$ and $\omega_{0}$.

Since $\Delta_{\omega_{\gamma,\varepsilon}(t)} u_{\gamma,\varepsilon}(t)=-R(\omega_{\gamma,\varepsilon}(t))+\mu_\gamma n+(1-\gamma)tr_{\omega_{\gamma,\varepsilon}(t)}\theta_{\varepsilon}$, we only need to prove the uniform upper bound of $-\Delta_{\omega_{\gamma,\varepsilon}(t)} u_{\gamma,\varepsilon}(t)$. 

We take $G_{\gamma,\varepsilon}(t)=(t-T_1)^2(-\Delta_{\omega_{\gamma,\varepsilon}(t)} u_{\gamma,\varepsilon}(t))+2(t-T_1)^2|\nabla u_{\gamma,\varepsilon}(t)|^{2}_{\omega_{\gamma,\varepsilon}(t)}$. According to (\ref{3.22.16}) and
\begin{equation}
|\nabla \bar{\nabla}u_{\gamma,\varepsilon}(t)|^{2}_{\omega_{\gamma,\varepsilon}(t)}\geqslant \frac{(\Delta_{\omega_{\gamma,\varepsilon}(t)} u_{\gamma,\varepsilon}(t))^2}{n},
\end{equation}
the evolution equation of $G_{\gamma,\varepsilon}(t)$ can be controlled as
\begin{equation*}
\begin{split}
&\ (\frac{d}{dt}-\Delta_{\omega_{\gamma,\varepsilon}(t)})G_{\gamma,\varepsilon}(t)\\
&\leqslant\big(\mu_\gamma (t-T_1)^2+2(t-T_1)\big)\big(-\Delta_{\omega_{\gamma,\varepsilon}(t)} u_{\gamma,\varepsilon}(t)\big)-\frac{(t-T_1)^2}{n}\big(\Delta_{\omega_{\gamma,\varepsilon}(t)} u_{\gamma,\varepsilon}(t)\big)^2+C_0
\end{split}
\end{equation*}
where constant $C_0$ depends only on $A$, $\|\varphi_0\|_{L^\infty(M)}$, $\beta$, $n$ and $\omega_{0}$. Assuming that $(t_0,x_0)$ is the maximum point of $G_{\gamma,\varepsilon}(t)$ on $[T_1,T_1+1] \times M$.

$Case\ 1$, $t_0=T_1$, then $(t-T_1)^2(-\Delta_{\omega_{\gamma,\varepsilon}(t)} u_{\gamma,\varepsilon}(t))\leqslant0$.

$Case\ 2$, $t_0>T_1$. We assume $-\Delta_{\omega_{\gamma,\varepsilon}(t)} u_{\gamma,\varepsilon}(t)> 0$ at $(t_0,x_0)$ without loss of generality. We claim that $(t_{0}-T_1)^{2}(-\Delta_{\omega_{\gamma,\varepsilon}(t_0)} u_{\gamma,\varepsilon}(t_0,x_0))\leqslant n(3+C_0)$. If not, by the maximum principle, we have
\begin{equation*}
\begin{split}0&\leqslant\big(-\Delta_{\omega_{\gamma,\varepsilon}(t_0)} u_{\gamma,\varepsilon}(t_0,x_0)\big)\Big(3-\frac{(t_{0}-T_1)^2}{n}\big(-\Delta_{\omega_{\gamma,\varepsilon}(t_0)} u_{\gamma,\varepsilon}(t_0,x_0)\big)\Big)+C_0\\
&<-nC_0(3+C_0)+C_0<0.
\end{split}
\end{equation*}
We get a contradiction. From these two cases, we conclude that $-(t-T_1)^2\Delta_{\omega_{\gamma,\varepsilon}(t)} u_{\gamma,\varepsilon}(t)\leqslant C$ on $[T_1,T_1+1]\times M$ for some constant $C$ depending only on $A$, $\|\varphi_0\|_{L^\infty(M)}$, $\beta$, $n$ and $\omega_{0}$. Furthermore, $R(\omega_{\gamma,\varepsilon}(T_1+1))-(1-\gamma)tr_{\omega_{\gamma,\varepsilon}(T_1+1)}\theta_{\varepsilon}=-\Delta_{\omega_{\gamma,\varepsilon}(T_1+1)} u_{\gamma,\varepsilon}(T_1+1)+\mu_\gamma n\leqslant C$.

Since $\|u_{\gamma,\varepsilon}(t)\|_{C^0(M)}\leqslant A$ on $[T_1,T_2]$, there exists a uniform constant $B>1$ such that $u_{\gamma,\varepsilon}(t)>-B$ on $[T_1,T_2]$. Define $H_{\gamma,\varepsilon}(t)=\frac{|\nabla u_{\gamma,\varepsilon}(t)|_{\omega_{\gamma,\varepsilon}(t)}^{2}}{u_{\gamma,\varepsilon}(t)+2B}$. By arguments in \cite{JWLXZ},
\begin{equation*}
\begin{split}
&\ \ \ (\frac{d}{dt}-\Delta_{\omega_{\gamma,\varepsilon}(t)}) H_{\gamma,\varepsilon}(t)\\
&\leqslant\frac{-|\nabla\overline{\nabla}u_{\gamma,\varepsilon}(t)|_{\omega_{\gamma,\varepsilon}(t)}^{2}-|\nabla\nabla u_{\gamma,\varepsilon}(t)|_{\omega_{\gamma,\varepsilon}(t)}^2}{u_{\gamma,\varepsilon}(t)+2B}+\frac{|\nabla u_{\gamma,\varepsilon}(t)|_{\omega_{\gamma,\varepsilon}(t)}^{2}(2B\mu_\gamma+\mu_\gamma A_{\mu_\gamma,\varepsilon}(t))}{(u_{\gamma,\varepsilon}(t)+2B)^{2}}\\
&\ +\frac{\delta}{2}\frac{|\nabla u_{\gamma,\varepsilon}(t)|_{\omega_{\gamma,\varepsilon}(t)}^4}{(u_{\gamma,\varepsilon}(t)+2B)^{3}}+\delta\frac{|\nabla\nabla u_{\gamma,\varepsilon}(t)|_{\omega_{\gamma,\varepsilon}(t)}^{2}+|\nabla\overline{\nabla}u_{\gamma,\varepsilon}(t)|_{\omega_{\gamma,\varepsilon}(t)}^{2}}{u_{\gamma,\varepsilon}(t)+2B}
-\delta\frac{|\nabla u_{\gamma,\varepsilon}(t)|_{\omega_{\gamma,\varepsilon}(t)}^4}{(u_{\gamma,\varepsilon}(t)+2B)^{3}}\\
&\ -\frac{1-\gamma}{2}\frac{\theta_{\varepsilon}(grad\ u_{\gamma,\varepsilon}(t),\mathcal{J}(grad\ u_{\gamma,\varepsilon}(t)))}{u_{\gamma,\varepsilon}(t)+2B}+(2-\delta)Re\frac{\overline{\nabla}u_{\gamma,\varepsilon}(t)\cdot\nabla H_{\gamma,\varepsilon}(t)}{u_{\gamma,\varepsilon}(t)+2B}.
\end{split}
\end{equation*}
Taking $\delta<1$, since $\theta_\varepsilon(grad\ u_{\gamma,\varepsilon}(t),\mathcal{J}(grad\ u_{\gamma,\varepsilon}(t)))\geqslant  0$, we have
\begin{equation*}
\begin{split}
&\ \ \ (\frac{d}{dt}-\Delta_{\omega_{\gamma,\varepsilon}(t)}) H_{\gamma,\varepsilon}(t)\\
&\leqslant\frac{|\nabla u_{\gamma,\varepsilon}(t)|_{\omega_{\gamma,\varepsilon}(t)}^{2}(2B+C_{1})}{(u_{\gamma,\varepsilon}(t)+2B)^{2}}+(2-\delta)Re\frac{\overline{\nabla}u_{\gamma,\varepsilon}(t)\cdot\nabla H_{\gamma,\varepsilon}(t)}{u_{\gamma,\varepsilon}(t)+2B}-\frac{\delta}{2}\frac{|\nabla u_{\gamma,\varepsilon}(t)|_{\omega_{\gamma,\varepsilon}(t)}^{4}}{(u_{\gamma,\varepsilon}(t)+2B)^{3}}.
\end{split}
\end{equation*}
From $(\ref{1.5.2})$, we have
\begin{equation}\label{1.5.5}\sup\limits_{M}\frac{|\nabla u_{\gamma,\varepsilon}(T_1+1)|_{\omega_{\gamma,\varepsilon}(T_1+1)}^{2}}{u_{\gamma,\varepsilon}(T_1+1)+2B}\leqslant C_{2},\end{equation}
where $C_{2}$ depends only on $A$, $\|\varphi_0\|_{L^\infty(M)}$, $\beta$, $n$ and $\omega_{0}$. The maximum principle implies that $H_{\gamma,\varepsilon}(t)\leqslant \max(C_{2},\ 2(2B+C_{1})\delta^{-1})$ for $t\in [T_1+1,T_2]$. We get the first inequality in $(\ref{3.22.18})$.

Now we prove the second inequality. Let $G_{\gamma,\varepsilon}(t)=\frac{-\Delta_{\omega_{\gamma,\varepsilon}(t)} u_{\gamma,\varepsilon}(t)}{u_\gamma,{\varepsilon}(t)+2B}+2H_{\gamma,\varepsilon}(t)$.
\begin{equation*}
\begin{split}
&\ \ \ (\frac{d}{dt}-\Delta_{\omega_{\gamma,\varepsilon}(t)}) G_{\gamma,\varepsilon}(t)=\frac{-2|\nabla\nabla u_{\gamma,\varepsilon}(t)|_{\omega_{\gamma,\varepsilon}(t)}^{2}-|\nabla\overline{\nabla}u_{\gamma,\varepsilon}(t)|_{\omega_{\gamma,\varepsilon}(t)}^{2}}{u_{\gamma,\varepsilon}(t)+2B}\\
&+\frac{(-\Delta_{\omega_{\gamma,\varepsilon}(t)} u_{\gamma,\varepsilon}(t)+2|\nabla u_{\gamma,\varepsilon}(t)|_{\omega_{\gamma,\varepsilon}(t)}^{2})(2B\mu_\gamma+\mu_\gamma A_{\mu_\gamma,\varepsilon}(t))}{(u_{\gamma,\varepsilon}(t)+2B)^{2}}
\\
&\ +2Re\frac{\overline{\nabla}u_{\gamma,\varepsilon}(t)\cdot\nabla G_{\gamma,\varepsilon}(t)}{u_{\gamma,\varepsilon}(t)+2B}-\frac{1-\gamma}{2}\frac{\theta_{\varepsilon}(grad\ u_{\gamma,\varepsilon}(t),\mathcal{J}(grad\ u_{\gamma,\varepsilon}(t)))}{u_{\gamma,\varepsilon}(t)+2B}.
\end{split}
\end{equation*}
Since $\theta_{\varepsilon}$ is semi-positive,
\begin{equation*}
\begin{split}
(\frac{d}{dt}-\Delta_{\omega_{\gamma,\varepsilon}(t)}) G_{\gamma,\varepsilon}(t)&\leqslant\frac{-|\nabla\overline{\nabla}u_{\gamma,\varepsilon}(t)|_{\omega_{\gamma,\varepsilon}(t)}^{2}}{u_{\gamma,\varepsilon}(t)+2B}+2Re \frac{\overline{\nabla}u_{\gamma,\varepsilon}(t)\cdot\nabla G_{\gamma,\varepsilon}(t)}{u_{\gamma,\varepsilon}(t)+2B}\\
&+\frac{(-\Delta_{\omega_{\gamma,\varepsilon}(t)} u_{\gamma,\varepsilon}(t)+2|\nabla u_{\gamma,\varepsilon}(t)|_{\omega_{\gamma,\varepsilon}(t)}^{2})(2B\mu_\gamma+\mu_\gamma A_{\mu_\gamma,\varepsilon}(t))}{(u_{\gamma,\varepsilon}(t)+2B)^{2}}.
\end{split}
\end{equation*}
By using inequality 
\begin{equation}\label{3.22.25}(\Delta_{\omega_{\gamma,\varepsilon}(t)} u_{\gamma,\varepsilon}(t))^{2}\leqslant n|\nabla\overline{\nabla}u_{\gamma,\varepsilon}(t)|_{\omega_{\gamma,\varepsilon}(t)}^{2},
\end{equation}
we have
\begin{equation*}
\begin{split}
(\frac{d}{dt}-\Delta_{\omega_{\gamma,\varepsilon}(t)}) G_{\gamma,\varepsilon}(t)
&\leqslant\frac{-\Delta_{\omega_{\gamma,\varepsilon}(t)} u_{\gamma,\varepsilon}(t)}{u_{\gamma,\varepsilon}(t)+2B}(\frac{2B\mu_\gamma+\mu_\gamma A_{\mu_\gamma,\varepsilon}(t)}{u_{\gamma,\varepsilon}(t)+2B}-\frac{-\Delta_{\omega_{\gamma,\varepsilon}(t)} u_{\gamma,\varepsilon}(t)}{n(u_{\gamma,\varepsilon}(t)+2B)})\\
&\ +2\frac{|\nabla u_{\gamma,\varepsilon}(t)|_{\omega_{\gamma,\varepsilon}(t)}^{2}(2B\mu_\gamma+\mu_\gamma A_{\mu_\gamma,\varepsilon}(t))}{(u_{\gamma,\varepsilon}(t)+2B)^{2}}+2Re \frac{\overline{\nabla}u_{\gamma,\varepsilon}(t)\cdot\nabla G_{\gamma,\varepsilon}(t)}{u_{\gamma,\varepsilon}(t)+2B}.
\end{split}
\end{equation*}
Since $\frac{-\Delta_{\omega_{\gamma,\varepsilon}(T_1+1)} u_{\gamma,\varepsilon}(T_1+1)}{u_{\gamma,\varepsilon}(T_1+1)+2B}$ are bounded uniformly, by the maximum principle, there exists a uniform constant $C$ depending only on $A$, $\|\varphi_0\|_{L^\infty(M)}$, $\beta$, $n$ and $\omega_{0}$ such that $G_{\gamma,\varepsilon}(t)\leqslant C$ for any $t\in[T_1+1,T_2]$. Hence we get $-\Delta_{\omega_{\gamma,\varepsilon}(t)}u_{\gamma,\varepsilon}(t)\leqslant C(u_{\gamma,\varepsilon}(t)+2B)$ and hence $R(\omega_{\gamma,\varepsilon}(t))-(1-\gamma)tr_{\omega_{\gamma,\varepsilon}(t)}\theta_{\varepsilon}\leqslant C(u_{\gamma,\varepsilon}(t)+C)$ on $[T_1+1,T_2]\times M$.
\end{proof}
Now, we recall some uniform Sobolev inequalities along the twisted K\"ahler-Ricci flows $(TKRF_{\mu_\gamma,\varepsilon})$. From the proof of Theorem $6.1$ in \cite{JWLXZ} (see also \cite{LW,RGY,QSZ}), when $t\geqslant1$, the Sobolev constants along $(TKRF_{\mu_\gamma,\varepsilon})$ depend only on $n$, $\omega_0$, $\max\limits_{M}(R(\omega_{\gamma,\varepsilon}(1))-(1-\gamma)tr_{\omega_{\gamma,\varepsilon}(1)}\theta_{\varepsilon})^{-}$ and $\mathcal{C}_{S}(M,\omega_{\gamma,\varepsilon}(1))$, where the latter two are uniformly bounded from Theorem \ref{Sobolev}, Lemma \ref{1902} and Lemma \ref{201904} . Hence we have the following Sobolev inequality.
\begin{thm}\label{1.8.5.1} Let  $M$ be Fano manifold of complex dimension $n\geq2$ and $\omega_{\gamma,\varepsilon}(t)$ be a solution of the twisted K\"ahler-Ricci flow $(TKRF_{\mu_\gamma,\varepsilon})$. There exist uniform constants A and B depending only on $\|\varphi_0\|_{L^\infty(M)}$, $\beta$, $n$ and $\omega_0$ such that 
\begin{equation*}
\begin{split}
\label{3.22.31}(\int_{M}v^{\frac{2n}{n-1}}dV_{\gamma,\varepsilon}(t))^{\frac{n-1}{n}}&\leqslant A\Big(\int_{M}4|\nabla v|_{\omega_{\gamma,\varepsilon}(t)}^{2}dV_{\gamma,\varepsilon}(t)\\
&\ \ \ \ \ \ \ \ \ +\int_{M}(R(\omega_{\gamma,\varepsilon}(t))-(1-\gamma)tr_{\omega_{\gamma,\varepsilon}(t)}\theta_{\varepsilon}+B)v^{2}dV_{\gamma,\varepsilon}(t)\Big)
\end{split}
\end{equation*}
for any $v\in W^{1,2}(M,\omega_{\gamma,\varepsilon}(t))$, $\gamma\in(\beta,1]$, $\varepsilon>0$ and $t\geq 1$.
\end{thm}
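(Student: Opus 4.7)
The plan is to follow the template of Theorem $6.1$ in \cite{JWLXZ}, whose argument is itself modeled on Ye \cite{RGY} and Q. Zhang \cite{QSZ} (see also Liu-Wang \cite{LW}). The strategy separates cleanly into two parts: an initial-time Sobolev estimate at $t=1$ with uniform constants, and a propagation step that carries the inequality forward along the twisted K\"ahler-Ricci flow via the monotonicity of a twisted Perelman $\mathcal{W}$-functional.

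For the initial time, I would first invoke Lemma \ref{201904} with $\delta = T = 1$, which yields a uniform constant $C$, depending only on $\|\varphi_0\|_{L^\infty(M)}$, $\beta$, $n$ and $\omega_0$, such that $C^{-1}\omega_\varepsilon^\gamma \leqslant \omega_{\gamma,\varepsilon}(1) \leqslant C\omega_\varepsilon^\gamma$ for all $\gamma \in [\beta,1]$ and $\varepsilon > 0$. Since $\gamma$ stays uniformly bounded away from $0$, Theorem \ref{Sobolev} applied with $\beta_0 = \beta$ provides a uniform Sobolev inequality for $\omega_\varepsilon^\gamma$; transferring it through the metric equivalence yields a uniform bound on the Sobolev constant $\mathcal{C}_S(M,\omega_{\gamma,\varepsilon}(1))$. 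In parallel, Lemma \ref{1902} supplies the uniform lower bound $R(\omega_{\gamma,\varepsilon}(1)) - (1-\gamma)tr_{\omega_{\gamma,\varepsilon}(1)}\theta_\varepsilon \geqslant -4n$.

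The propagation step then reproduces the Ye-Zhang argument as adapted in \cite{JWLXZ}, working with the twisted Perelman $\mathcal{W}$-functional
\begin{equation*}
\mathcal{W}_{\gamma,\varepsilon}(t,f,\tau) = \int_M \Big[\tau\big(R(\omega_{\gamma,\varepsilon}(t))-(1-\gamma)tr_{\omega_{\gamma,\varepsilon}(t)}\theta_\varepsilon + |\nabla f|_{\omega_{\gamma,\varepsilon}(t)}^2\big) + f - 2n\Big]\,\frac{e^{-f}}{(4\pi\tau)^n}\,dV_{\gamma,\varepsilon}(t).
\end{equation*}
Because $\theta_\varepsilon \geqslant 0$ as a closed positive $(1,1)$-form, the twisting term enters with the favorable sign in the evolution identity for $\mathcal{W}_{\gamma,\varepsilon}$, and the resulting monotonicity shows that the associated $\mu$-invariant at any time $t \geqslant 1$ is controlled from below by its value at $t=1$. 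The standard correspondence between lower bounds on $\mu_{\gamma,\varepsilon}(t,\tau_0)$ at a fixed $\tau_0$ and Sobolev inequalities with a scalar-curvature term then delivers the stated inequality with uniform $A$ and $B$ for all $t \geqslant 1$.

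The main obstacle is careful bookkeeping: one must verify that neither the positive cosmological term $\mu_\gamma\omega_{\gamma,\varepsilon}(t)$ nor the twisted form $(1-\gamma)\theta_\varepsilon$ leaks $\gamma$- or $\varepsilon$-dependent constants into the propagation. Because $\mu_\gamma \in (0,1]$ contributes only a harmless exponential factor over the bounded $\tau$-interval involved, and $\theta_\varepsilon$ enters with a favorable sign, the Ye-Zhang argument of \cite{JWLXZ} applies unchanged; the sole external inputs required are a uniform Sobolev constant and a uniform lower bound for the twisted scalar curvature at $t=1$, both of which are precisely furnished by Theorem \ref{Sobolev}, Lemma \ref{201904} and Lemma \ref{1902}.
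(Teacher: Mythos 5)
Your proposal reproduces exactly the argument the paper gives (which it states only briefly before the theorem): cite the Ye--Zhang propagation as packaged in Theorem~6.1 of \cite{JWLXZ}, observe that the constants there depend only on $\mathcal{C}_S(M,\omega_{\gamma,\varepsilon}(1))$ and $\max_M(R(\omega_{\gamma,\varepsilon}(1))-(1-\gamma)tr_{\omega_{\gamma,\varepsilon}(1)}\theta_\varepsilon)^-$, and bound both uniformly via Lemma~\ref{201904} combined with Theorem~\ref{Sobolev}, and Lemma~\ref{1902} respectively. The same references, the same two inputs, and the same twisted $\mathcal{W}$-functional mechanism --- your write-up just spells out the monotonicity step that the paper leaves implicit in the citation.
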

By using Hus's work (see Theorem $1$ in \cite{SYH}) and the arguments in the proof of Theorem $6.1$ in \cite{JWLXZ} (see also \cite{LW,RGY,QSZ}), we have the following uniform Sobolev inequality for $n=1$.
\begin{thm}\label{1.8.5.10001} Let  $M$ be a Fano manifold of complex dimension $1$ and $\omega_{\gamma,\varepsilon}(t)$ be a solution of the twisted K\"ahler-Ricci flow $(TKRF_{\mu_\gamma,\varepsilon})$. For $n_0>1$, there exist uniform constants A and B depending only on $\|\varphi_0\|_{L^\infty(M)}$, $\beta$, $n_0$ and $\omega_0$ such that
\begin{equation*}
\begin{split}
\label{3.22.310001}(\int_{M}v^{\frac{2n_0}{n_0-1}}dV_{\gamma,\varepsilon}(t))^{\frac{n_0-1}{n_0}}&\leqslant A\Big(\int_{M}4|\nabla v|_{\omega_{\gamma,\varepsilon}(t)}^{2}dV_{\gamma,\varepsilon}(t)\\
&\ \ \ \ \ \ \ \ \ +\int_{M}(R(\omega_{\gamma,\varepsilon}(t))-(1-\gamma)tr_{\omega_{\gamma,\varepsilon}(t)}\theta_{\varepsilon}+B)v^{2}dV_{\gamma,\varepsilon}(t)\Big)
\end{split}
\end{equation*}
for any $v\in W^{1,2}(M,\omega_{\gamma,\varepsilon}(t))$, $\gamma\in(\beta,1]$, $\varepsilon>0$ and $t\geq 1$.
\end{thm}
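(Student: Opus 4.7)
The plan is to adapt the argument of Theorem~\ref{1.8.5.1} (i.e.\ the proof of Theorem~6.1 in \cite{JWLXZ}) to the complex one-dimensional case, replacing the classical Sobolev embedding $W^{1,2}\hookrightarrow L^{2n/(n-1)}$ --- which degenerates at $n=1$ --- by Hsu's sub-critical surface version (Theorem~1 in \cite{SYH}). On a compact Riemann surface with a smooth metric, that theorem provides, for any fixed $n_0>1$, an inequality of the form
\begin{equation*}
\left(\int_M v^{\frac{2n_0}{n_0-1}}\,dV\right)^{\frac{n_0-1}{n_0}} \leqslant A'\left(\int_M |\nabla v|^2\,dV + \int_M v^2\,dV\right),
\end{equation*}
with constant $A'$ controlled by the volume, diameter and a lower bound on scalar curvature. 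This provides the starting point the Ye--Zhang heat-kernel method needs.

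First I would establish the Sobolev inequality at the fixed time $t=1$. Lemma~\ref{201904} yields a uniform two-sided equivalence $C^{-1}\omega_\varepsilon^\gamma\leqslant\omega_{\gamma,\varepsilon}(1)\leqslant C\omega_\varepsilon^\gamma$ with constants independent of $\gamma\in[\beta,1]$ and $\varepsilon\in(0,1]$. Composing this with Hsu's theorem applied to $\omega_\varepsilon^\gamma$ yields a uniform Sobolev inequality of the stated sub-critical form with respect to $\omega_{\gamma,\varepsilon}(1)$, with constants depending only on $n_0$, $\|\varphi_0\|_{L^\infty(M)}$, $\beta$ and $\omega_0$. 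Together with the uniform lower bound $R(\omega_{\gamma,\varepsilon}(1))-(1-\gamma)\mathrm{tr}_{\omega_{\gamma,\varepsilon}(1)}\theta_\varepsilon\geqslant -4n$ from Lemma~\ref{1902}, this supplies the two ingredients (initial Sobolev constant plus initial lower bound on twisted scalar curvature) needed to run the propagation argument.

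Next I would propagate the inequality for all $t\geqslant 1$ along the twisted flow. Here I follow the heat-kernel/monotonicity scheme of Ye \cite{RGY} and Q.~Zhang \cite{QSZ} (as implemented in the twisted setting in the proof of Theorem~6.1 of \cite{JWLXZ} and in \cite{LW}), the point being that this scheme uses only the \emph{algebraic} form of the Sobolev inequality together with the evolution of $R(\omega_{\gamma,\varepsilon}(t))-(1-\gamma)\mathrm{tr}_{\omega_{\gamma,\varepsilon}(t)}\theta_\varepsilon$ via the maximum principle (cf.\ the proof of Lemma~\ref{1902}), and does not require the sharp critical exponent. Replacing $\frac{2n}{n-1}$ by $\frac{2n_0}{n_0-1}$ throughout, the argument goes through essentially verbatim, yielding uniform constants $A$ and $B$ for all $t\geqslant 1$, $\gamma\in(\beta,1]$ and $\varepsilon>0$.

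The step I expect to require the most care is the uniformity of Hsu's Sobolev constant for $\omega_\varepsilon^\gamma$ in the two parameters $\gamma\in[\beta,1]$ and $\varepsilon\in(0,1]$. From the explicit construction $\omega_\varepsilon^\gamma=\omega_0+\sqrt{-1}\,k\,\partial\bar\partial\chi_\gamma$, the uniform bounds on $\chi_\gamma$ and $F_{\gamma,\varepsilon}$ due to Campana--Guenancia--P\u{a}un \cite{CGP} (already invoked in Lemma~\ref{201901}) give uniform control of volume, diameter and scalar-curvature lower bound of $\omega_\varepsilon^\gamma$; once this uniformity is verified, the remaining steps track those of \cite{JWLXZ} without further modification.
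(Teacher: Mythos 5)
Your proposal matches the route the paper itself only sketches by citation, and you have correctly assembled the ingredients: Hsu's Theorem~1 for the real-dimension-two (complex $n=1$) setting, the twisted-flow propagation scheme of Theorem~6.1 in \cite{JWLXZ} (following \cite{RGY,QSZ}), the scalar-curvature lower bound at $t=1$ from Lemma~\ref{1902}, and the initial metric equivalence from Lemma~\ref{201904}. One small attribution note: Hsu's Theorem~1 in \cite{SYH} is itself the dynamical propagation result for Ricci flow on surfaces---the real-dimension-two analogue of Ye's and Zhang's theorems---rather than a static Sobolev inequality for a fixed metric, so the ``starting point'' you describe is more naturally supplied by the paper's Theorem~\ref{Sobolev1} together with Lemma~\ref{201904}, with Hsu's work furnishing the $t$-evolution; this is a cosmetic reshuffling, not a gap.
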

Combining these uniform Sobolev inequalitities with Lemma \ref{1902}, we have the following Theorem by following Jiang's work (Theorem $1.12$ in \cite{WSJ1}). 
\begin{thm}\label{Jiang}
Let  $M$ be a Fano manifold of complex dimension $n$ and $\omega_{\gamma,\varepsilon}(t)$ be a solution of the twisted K\"ahler-Ricci flow $(TKRF_{\mu_\gamma,\varepsilon})$. Let $f$ be a non-negative Lipschitz continuous function on $[0,\infty)\times M$ satisfying 
\begin{equation}\label{20190306001}
\frac{\partial}{\partial t}f\leqslant\Delta_{\omega_{\gamma,\varepsilon}(t)}f+af
\end{equation}
on $[0,\infty)\times M$ in the weak sense, where $a\geqslant0$. For $p>0$, there exists a constant $C$ depending only on $\|\varphi_0\|_{L^\infty(M)}$, $\beta$, $a$, $p$, $\omega_0$ and $n_0$ ($n_0=n$ if $n\geqslant2$, $n_0>1$ if $n=1$) such that
\begin{equation}\label{20190306002}
\sup\limits_M|f(t,x)|\leqslant\frac{C}{(t-T)^{\frac{n_0+1}{p}}}\Big(\int_T^{T+1}\int_M f^p dV_{\gamma,\varepsilon}(t)dt\Big)^{\frac{1}{p}}
\end{equation}
holds for any $T<t<T+1$ with $T\geqslant1$, $\gamma\in(\beta,1]$ and $\varepsilon>0$.
\end{thm}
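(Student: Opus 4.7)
The plan is to carry out a parabolic Moser iteration in the spirit of Jiang \cite{WSJ1}, with the key input being the uniform Sobolev inequality (Theorem \ref{1.8.5.1} / Theorem \ref{1.8.5.10001}) and the uniform lower bound on the twisted scalar curvature (Lemma \ref{1902}), which together make the iteration constants uniform in $\gamma$ and $\varepsilon$.

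First I would derive the basic energy inequality. For $q\geqslant 1$, multiply the differential inequality $(\ref{20190306001})$ by $q f^{q-1}$ and integrate with respect to $dV_{\gamma,\varepsilon}(t)$. Using the evolution $\frac{\partial}{\partial t}dV_{\gamma,\varepsilon}(t)=\bigl(n\mu_\gamma-(R(\omega_{\gamma,\varepsilon}(t))-(1-\gamma)\mathrm{tr}_{\omega_{\gamma,\varepsilon}(t)}\theta_\varepsilon)\bigr)dV_{\gamma,\varepsilon}(t)$, integration by parts and the pointwise bound $R-(1-\gamma)\mathrm{tr}\,\theta_\varepsilon\geqslant -4n$ from Lemma \ref{1902} (valid for $t\geqslant 1$), one obtains
\begin{equation*}
\frac{d}{dt}\int_M f^q\,dV_{\gamma,\varepsilon}(t)+\frac{4(q-1)}{q}\int_M\bigl|\nabla f^{q/2}\bigr|^2_{\omega_{\gamma,\varepsilon}(t)}\,dV_{\gamma,\varepsilon}(t)\leqslant (qa+5n)\int_M f^q\,dV_{\gamma,\varepsilon}(t).
\end{equation*}

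Second I would combine this with the uniform Sobolev inequality. Applying Theorem \ref{1.8.5.1} (resp. Theorem \ref{1.8.5.10001} when $n=1$) to $v=f^{q/2}$ and using the uniform lower bound on the twisted scalar curvature once more to absorb the $Bv^2$ term, we get
\begin{equation*}
\Bigl(\int_M f^{q\kappa}\,dV_{\gamma,\varepsilon}(t)\Bigr)^{1/\kappa}\leqslant C_{0}\Bigl(\int_M\bigl|\nabla f^{q/2}\bigr|^2_{\omega_{\gamma,\varepsilon}(t)}\,dV_{\gamma,\varepsilon}(t)+\int_M f^q\,dV_{\gamma,\varepsilon}(t)\Bigr),
\end{equation*}
where $\kappa=n_0/(n_0-1)$ and $C_0$ is uniform. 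A standard interpolation between these two inequalities on a parabolic cylinder $[T_1,T_2]\times M$ with $1\leqslant T\leqslant T_1<T_2\leqslant T+1$, together with a smooth time cutoff $\eta(t)$ supported on $[T_1,T_2]$ with $\eta\equiv 1$ on a sub-cylinder, yields the Moser-type recursion
\begin{equation*}
\Bigl(\int_{T_1'}^{T_2'}\!\!\int_M f^{q\kappa'}\,dV_{\gamma,\varepsilon}(t)\,dt\Bigr)^{1/(q\kappa')}\leqslant\Bigl(\frac{C_1 q}{T_2-T_1-(T_2'-T_1')}\Bigr)^{1/q}\Bigl(\int_{T_1}^{T_2}\!\!\int_M f^{q}\,dV_{\gamma,\varepsilon}(t)\,dt\Bigr)^{1/q},
\end{equation*}
with $\kappa'=1+\tfrac{1}{n_0}$ and $C_1$ uniform, valid for all concentric sub-cylinders.

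Third I would iterate. Setting $q_i=p_0\kappa'^i$ for some $p_0\geqslant\max(1,p)$ and choosing nested cylinders with $T_i\nearrow t$ inside $[T,T+1]$ such that $T_{i+1}-T_i\sim 2^{-i}(t-T)$, the products telescope and one obtains
\begin{equation*}
\sup_M f(t,\cdot)\leqslant\frac{C_2}{(t-T)^{(n_0+1)/p_0}}\Bigl(\int_T^{T+1}\!\!\int_M f^{p_0}\,dV_{\gamma,\varepsilon}(s)\,ds\Bigr)^{1/p_0}.
\end{equation*}
For $p\geqslant p_0$ one simply increases $p_0$; for $0<p<p_0$ one uses the standard interpolation trick
\begin{equation*}
\int f^{p_0}\,dV\leqslant\bigl(\sup_M f\bigr)^{p_0-p}\int f^{p}\,dV
\end{equation*}
inside the iteration and absorbs the resulting factor by Young's inequality, which is a routine step once the $L^{p_0}\!\to L^\infty$ estimate is proved.

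The main technical point will be keeping every constant uniform in $\gamma\in(\beta,1]$ and $\varepsilon>0$: the time-derivative of $dV_{\gamma,\varepsilon}(t)$ produces the twisted scalar curvature, whose uniform lower bound via Lemma \ref{1902} is exactly what is needed, and the Sobolev constants from Theorem \ref{1.8.5.1}/\ref{1.8.5.10001} are already uniform. The only subtlety is that in the $n=1$ case the exponent $\kappa'$ can be taken arbitrarily close to but larger than $1$, which is why the statement allows any $n_0>1$.
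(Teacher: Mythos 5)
Your overall strategy — parabolic Moser iteration in the style of Jiang (Theorem $1.12$ in \cite{WSJ1}), with the uniform Sobolev inequality and Lemma~\ref{1902} supplying $\gamma$- and $\varepsilon$-uniform constants — is exactly the route the paper intends (the authors simply cite Jiang rather than write it out). However, your Steps~1 and~2 as written are \emph{incompatible} and this is a genuine gap, not a cosmetic one.

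In Step~1 you use the lower bound $R(\omega_{\gamma,\varepsilon}(t))-(1-\gamma)\mathrm{tr}_{\omega_{\gamma,\varepsilon}(t)}\theta_\varepsilon\geqslant -4n$ to replace the volume-evolution term $n\mu_\gamma-(R-(1-\gamma)\mathrm{tr}\,\theta_\varepsilon)$ by the constant $5n$; this is correct, but it throws away the scalar-curvature contribution. Then in Step~2 you invoke Theorem~\ref{1.8.5.1}, which is the Sobolev inequality
\begin{equation*}
\Bigl(\int_M v^{\frac{2n}{n-1}}dV_{\gamma,\varepsilon}(t)\Bigr)^{\frac{n-1}{n}}\leqslant A\Bigl(\int_M 4|\nabla v|^2 dV_{\gamma,\varepsilon}(t)+\int_M\bigl(R(\omega_{\gamma,\varepsilon}(t))-(1-\gamma)\mathrm{tr}_{\omega_{\gamma,\varepsilon}(t)}\theta_\varepsilon+B\bigr)v^2 dV_{\gamma,\varepsilon}(t)\Bigr),
\end{equation*}
and claim that the lower bound on the twisted scalar curvature lets you drop the curvature term and arrive at the clean estimate $\bigl(\int f^{q\kappa}\bigr)^{1/\kappa}\leqslant C_0\bigl(\int|\nabla f^{q/2}|^2+\int f^q\bigr)$. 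This does not follow: the lower bound only shows $\int(R-(1-\gamma)\mathrm{tr}\,\theta_\varepsilon+B)v^2\geqslant 0$ (after taking $B\geqslant 4n$), i.e.\ it makes the Sobolev inequality non-vacuous, but it gives you no upper bound on that integral — and no uniform upper bound on the twisted scalar curvature is available here, since the whole point of Theorem~\ref{Jiang} is precisely to be applied before any such upper bound has been established (compare the circular dependency with Lemma~\ref{1.8.601}). So the "clean" Sobolev inequality you assert is not at your disposal, and the iteration as written cannot be started.

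The fix, which is the actual mechanism in Ye \cite{RGY}, Q.~Zhang \cite{QSZ} and Jiang \cite{WSJ1}, is to \emph{keep} the scalar-curvature term in the energy inequality rather than bound it by a constant. From the volume evolution one gets
\begin{equation*}
\frac{d}{dt}\int_M f^q dV_{\gamma,\varepsilon}(t)+\frac{4(q-1)}{q}\int_M\bigl|\nabla f^{q/2}\bigr|^2 dV_{\gamma,\varepsilon}(t)+\int_M\bigl(R-(1-\gamma)\mathrm{tr}\,\theta_\varepsilon\bigr)f^q dV_{\gamma,\varepsilon}(t)\leqslant(qa+n\mu_\gamma)\int_M f^q dV_{\gamma,\varepsilon}(t),
\end{equation*}
and after adding $B\int f^q$ to both sides the left-hand side contains exactly the combination $\frac{4(q-1)}{q}\int|\nabla f^{q/2}|^2+\int(R-(1-\gamma)\mathrm{tr}\,\theta_\varepsilon+B)f^q$ appearing (up to a harmless constant factor once $q\geqslant 2$) on the right of Theorem~\ref{1.8.5.1}. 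Since Lemma~\ref{1902} guarantees $R-(1-\gamma)\mathrm{tr}\,\theta_\varepsilon+B\geqslant 0$ for $B\geqslant 4n$, one can peel off the fraction $\min\!\big(\tfrac{q-1}{q},1\big)$ of both the gradient and curvature terms and feed it into the Sobolev inequality, producing the recursion
\begin{equation*}
\frac{d}{dt}\int_M f^q dV_{\gamma,\varepsilon}(t)+\frac{c}{A}\Bigl(\int_M f^{q\kappa} dV_{\gamma,\varepsilon}(t)\Bigr)^{1/\kappa}\leqslant\bigl(qa+n+B\bigr)\int_M f^q dV_{\gamma,\varepsilon}(t)
\end{equation*}
with a uniform $c>0$; this is the correct input for the space–time cutoff and iteration of your Step~3, which otherwise is fine (including the standard interpolation reduction from $p_0$ down to $0<p<p_0$, and the observation that $n=1$ forces any $n_0>1$). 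In short: the uniform lower bound on the twisted scalar curvature is used once — to guarantee non-negativity of the combined Sobolev integrand so that a fixed fraction of it can be transferred — not to eliminate the curvature term from the Sobolev inequality.
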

Let $\omega_{\varphi_{\beta}}=\omega_0+\sqrt{-1}\partial\bar{\partial}\varphi_\beta$ be the Ricci flat conical K\"ahler-Einstein metric obtained in Theorem \ref{20190002}, then $\varphi_\beta\in C^0(M)\cap C^\infty(M\setminus D)$ satisfies
\begin{equation}\label{201902230101}
(\omega_0+\sqrt{-1}\partial\bar{\partial}\varphi_\beta)^n=e^{-F_0+\hat{C}_\beta}\frac{\omega_0^n}{|s|_h^{2(1-\beta)}},
\end{equation}
where $\hat{C}_\beta$ is the normalization constant such that $\frac{1}{V}\int_Me^{-F_0+\hat{C}_\beta}\frac{dV_0}{|s|_h^{2(1-\beta)}}=1$. By using the normalization of $\frac{1}{V}\int_Me^{-F_0} dV_0=1$,  we conclude that $\hat{C}_\beta=-\log\frac{1}{V}\int_Me^{-F_0}\frac{dV_0}{|s|_h^{2(1-\beta)}}\leqslant0$. By Ko{\l}odziej's $L^p$-estimates  \cite{K000, K}, we have 
\begin{equation}\label{20190223010100}
osc_M \varphi_\beta\leqslant K_\beta.
\end{equation}
At the same time, by Guenancia-P$\breve{a}$un's results \cite{CGP,GP1} (see also Liu-Zhang \cite{JWLCJZ}), there exists constant $M_\beta$ such that 
\begin{equation}\label{20190223010100}
\sup\limits_{M\setminus D} tr_{\omega_\beta}\omega_{\varphi_\beta} \leqslant M_\beta.
\end{equation}
We denote
\begin{equation}\label{20190304} L_\beta=\max(K_\beta+M_\beta, D_\beta),
\end{equation}
where $D_\beta$ is the constant in $(\ref{1904000})$. Next, we prove Lemma \ref{1901}.

\medskip

{\it Proof of Lemma \ref{1901}.} \ \  If this lemma is not true. For $\delta_1<\min(\frac{1}{2},1-\beta)$, there exist $\varepsilon_1\in(0,\delta_1)$, $\gamma'_1\in(\beta,\beta+\delta_1)$ and $t'_1\in[\frac{1}{\delta_1},\infty)$, such that
\begin{equation*}\label{1905}
\|u_{\gamma'_1,\varepsilon_1}(t'_1)\|_{C^0(M)}+osc_{M}\varphi_{\gamma'_1,\varepsilon_1}(t'_1)+\sup\limits_{M} tr_{\omega_{\varepsilon_1}^{\gamma'_1}}\omega_{\gamma'_1,\varepsilon_1}(t'_1) >L_{\beta}+1.
\end{equation*}
Assume $t'_1\in[\frac{1}{\delta_1},T_1]$, then we have
\begin{equation*}\label{1906}
\sup\limits_{t\in[\frac{1}{\delta_1},T_1]}\Big(\|u_{\gamma'_1,\varepsilon_1}(t)\|_{C^0(M)}+osc_{M}\varphi_{\gamma'_1,\varepsilon_1}(t)+\sup\limits_M tr_{\omega_{\varepsilon_1}^{\gamma'_1}}\omega_{\gamma'_1,\varepsilon_1}(t) \Big)>L_{\beta}+1.
\end{equation*}
Theorem \ref{2019021901} and Remark \ref{201908001} imply that there exists $\gamma_1\in(\beta,\gamma'_1)$, such that
\begin{equation*}\label{1907}
\sup\limits_{t\in[\frac{1}{\delta_1},T_1]}\Big(\|u_{\gamma_1,\varepsilon_1}(t)\|_{C^0(M)}+osc_{M}\varphi_{\gamma_1,\varepsilon_1}(t)+\sup\limits_M tr_{\omega_{\varepsilon_1}^{\gamma_1}}\omega_{\gamma_1,\varepsilon_1}(t) \Big)=L_{\beta}+1.
\end{equation*}
Hence there exists $t_1\in[\frac{1}{\delta_1},T_1]$ such that
\begin{equation*}\label{1908}
\|u_{\gamma_1,\varepsilon_1}(t_1)\|_{C^0(M)}+osc_{M}\varphi_{\gamma_1,\varepsilon_1}(t_1)+\sup\limits_M tr_{\omega_{\varepsilon_1}^{\gamma_1}}\omega_{\gamma_1,\varepsilon_1}(t_1) \geqslant L_{\beta}+\frac{8}{9},
\end{equation*}
and then
\begin{equation*}\label{1909}
\|u_{\gamma_1,\varepsilon_1}(t_1)\|_{L^\infty(M\setminus D)}+osc_{M}\varphi_{\gamma_1,\varepsilon_1}(t_1)+\sup\limits_{M\setminus D} tr_{\omega_{\varepsilon_1}^{\gamma_1}}\omega_{\gamma_1,\varepsilon_1}(t_1) \geqslant L_{\beta}+\frac{7}{8}.
\end{equation*}
For $\delta_2=\min(\frac{1}{2}, \frac{1}{T_1+1}, \varepsilon_1, \gamma_1-\beta)$, there exist $\varepsilon_2\in(0,\delta_2)$, $\gamma'_2\in(\beta,\beta+\delta_2)$ and $t'_2\in[\frac{1}{\delta_2},\infty)$, such that
\begin{equation*}\label{1910}
\|u_{\gamma'_2,\varepsilon_2}(t'_2)\|_{C^0(M)}+osc_{M}\varphi_{\gamma'_2,\varepsilon_2}(t'_2)+\sup\limits_M tr_{\omega_{\varepsilon_2}^{\gamma'_2}}\omega_{\gamma'_2,\varepsilon_2}(t'_2) >L_{\beta}+1
\end{equation*}
Assume $t'_2\in[T_1+1,T_2]$, we have
\begin{equation*}\label{1911}
\sup\limits_{t\in[T_1+1,T_2]}\Big(\|u_{\gamma'_2,\varepsilon_2}(t)\|_{C^0(M)}+osc_{M}\varphi_{\gamma'_2,\varepsilon_2}(t)+\sup\limits_M tr_{\omega_{\varepsilon_2}^{\gamma'_2}}\omega_{\gamma'_2,\varepsilon_2}(t)\Big)>L_{\beta}+1.
\end{equation*}
Theorem \ref{2019021901} and Remark \ref{201908001} imply that there exists $\gamma_2\in(\beta,\gamma'_2)$, such that
\begin{equation*}\label{1912}
\sup\limits_{t\in[T_1+1,T_2]}\Big(\|u_{\gamma_2,\varepsilon_2}(t)\|_{C^0(M)}+osc_{M}\varphi_{\gamma_2,\varepsilon_2}(t)+\sup\limits_M tr_{\omega_{\varepsilon_2}^{\gamma_2}}\omega_{\gamma_2,\varepsilon_2}(t)\Big)=L_{\beta}+1.
\end{equation*}
Hence there exists $t_2\in[T_1+1,T_2]$ such that
\begin{equation*}\label{1913}
\|u_{\gamma_2,\varepsilon_2}(t_2)\|_{C^0(M)}+osc_{M}\varphi_{\gamma_2,\varepsilon_2}(t_2)+\sup\limits_M tr_{\omega_{\varepsilon_2}^{\gamma_2}}\omega_{\gamma_2,\varepsilon_2}(t_2)\geqslant L_{\beta}+\frac{8}{9},
\end{equation*}
and then
\begin{equation*}\label{1914}
\|u_{\gamma_2,\varepsilon_2}(t_2)\|_{L^\infty(M\setminus D)}+osc_{M}\varphi_{\gamma_2,\varepsilon_2}(t_2)+\sup\limits_{M\setminus D} tr_{\omega_{\varepsilon_2}^{\gamma_2}}\omega_{\gamma_2,\varepsilon_2}(t_2)\geqslant L_{\beta}+\frac{7}{8}.
\end{equation*}
After repeating above process, we get a subsequence $\varphi_{\gamma_i,\varepsilon_i}(t_i)$ with $\varepsilon_i\searrow0$, $\gamma_i\searrow\beta$, $t_i\nearrow\infty$ and $t_i\in[T_{i-1}+1,T_{i}]$ satisfying
\begin{equation}\label{1915}
\sup\limits_{t\in[T_{i-1}+1,T_{i}]}\Big(\|u_{\gamma_i,\varepsilon_i}(t)\|_{C^0(M)}+osc_{M}\varphi_{\gamma_i,\varepsilon_i}(t)+\sup\limits_M tr_{\omega_{\varepsilon_i}^{\gamma_i}}\omega_{\gamma_i,\varepsilon_i}(t)\Big)=L_{\beta}+1
\end{equation}
and
\begin{equation} \label{2000019}
\|u_{\gamma_i,\varepsilon_i}(t_i)\|_{L^\infty(M\setminus D)}+osc_{M}\varphi_{\gamma_i,\varepsilon_i}(t_i)+\sup\limits_{M\setminus D} tr_{\omega_{\varepsilon_i}^{\gamma_i}}\omega_{\gamma_i,\varepsilon_i}(t_i)\geqslant L_{\beta}+\frac{7}{8}.
\end{equation}
We claim that there exists a uniform constant $A$ depending only on $\|\varphi_0\|_{L^\infty(M)}$, $\beta$, $\omega_0$ and $n$, such that
\begin{equation}\label{1916}
\sup\limits_{t\in[T_{i-1}+\frac{1}{2},T_{i}]}\Big(\|u_{\gamma_i,\varepsilon_i}(t)\|_{C^0(M)}+osc_{M}\varphi_{\gamma_i,\varepsilon_i}(t)+\sup\limits_M tr_{\omega_{\varepsilon_i}^{\gamma_i}}\omega_{\gamma_i,\varepsilon_i}(t)\Big)\leqslant A.
\end{equation}
Combining the arguments below $(\ref{201903120015})$ with Lemma \ref{1902}, we have
\begin{equation*}
\begin{split}\label{3.22.6}
\frac{\partial}{\partial t}u_{\gamma,\varepsilon}(t)&=\Delta_{\omega_{\gamma,\varepsilon}(t)} u_{\gamma,\varepsilon}(t)+\mu_\gamma u_{\gamma,\varepsilon}(t)-\mu_\gamma A_{\mu_\gamma,\varepsilon}(t),\\
&\leqslant \mu_\gamma u_{\gamma,\varepsilon}(t)+C.
\end{split}
\end{equation*}
Then for the above sequence, we have
\begin{equation}\label{1917}
\frac{\partial}{\partial t}(e^{-\mu_{\gamma_i}t}u_{\gamma_i,\varepsilon_i}(t))\leqslant Ce^{-\mu_{\gamma_i}t}.
\end{equation}
Integrating form $t$ to $T_{i-1}+1$ on both sides, where $t\in[T_{i-1},T_{i-1}+1]$, we obtain
\begin{equation*}
\label{1918}
e^{-\mu_{\gamma_i}(T_{i-1}+1)}u_{\gamma_i,\varepsilon_i}(T_{i-1}+1)\leqslant e^{-\mu_{\gamma_i}t}u_{\gamma_i,\varepsilon_i}(t)+C\int^{T_{i-1}+1}_{t}e^{-\mu_{\gamma_i}s}ds.
\end{equation*}
Hence there exists a uniform constant $C$ such that
\begin{equation}\label{191911}
u_{\gamma_i,\varepsilon_i}(t)\geqslant -C
\end{equation}
for any $t\in[T_{i-1}, T_{i-1}+1]$, $\varepsilon_i$ and $\gamma_i$. By using Lemma \ref{1902} again, we have 
\begin{equation}\label{201903060000001}
\begin{split}
&\ \int_M \Big|R(\omega_{\gamma,\varepsilon}(t))-(1-\gamma)tr_{\omega_{\gamma,\varepsilon}(t)}\theta_{\varepsilon}\Big| dV_{\gamma,\varepsilon}(t)\\
&=\int_M  \Big|\Delta_{\omega_{\gamma,\varepsilon}(t)}u_{\gamma,\varepsilon}(t)-\mu_\gamma n+C-C\Big| dV_{\gamma,\varepsilon}(t)\\
&=\int_M  \Big(C-\Delta_{\omega_{\gamma,\varepsilon}(t)}u_{\gamma,\varepsilon}(t)+\mu_\gamma n+C\Big) dV_{\gamma,\varepsilon}(t)\leqslant C
\end{split}
\end{equation}
for any $\gamma\in[\beta,1]$, $\varepsilon>0$ and $t\geqslant1$. Since
\begin{equation}\label{1916001}
\frac{d}{dt}\mathcal{M}_{\mu_\gamma,\varepsilon}(\varphi_{\gamma,\varepsilon}(t))=-\int_M|\nabla u_{\gamma,\varepsilon}(t)|^2_{\omega_{\gamma,\varepsilon}(t)}dV_{\gamma,\varepsilon}(t),
\end{equation}
the twisted Mabuchi energy $\mathcal{M}_{\mu_\gamma,\varepsilon}$ is decreasing along the twisted K\"ahler-Ricci flow $(TKRF_{\mu_\gamma,\varepsilon})$. By Lemma \ref{201901} and Lemma \ref{201902}, we have
\begin{equation*}
\begin{split}
\mathcal{M}_{\mu_\gamma,\varepsilon}(\varphi_{\gamma,\varepsilon}(t))&\leqslant\mathcal{M}_{\mu_\gamma,\varepsilon}(\varphi_{\gamma,\varepsilon}(1))\\
&=-\mu_\gamma\Big(I_{\omega_{0}}(\varphi_{\gamma,\varepsilon}(1))-J_{\omega_{0}}(\varphi_{\gamma,\varepsilon}(1))\Big)+\frac{1}{V}\int_{M}\log\frac{\omega^n_{\gamma,\varepsilon}(1)}{\omega_{0}^{n}}dV_{\gamma,\varepsilon}(1)\\
&\ \ \ \ \ -\frac{1}{V}\int_{M}\Big(F_0+(1-\gamma)\log(\varepsilon^2+|s|^2_h)\Big)(dV_{0}-dV_{\gamma,\varepsilon}(1))\\
&\leqslant C
\end{split}
\end{equation*}
for any $\gamma\in[\beta,1]$, $\varepsilon>0$ and $t\geqslant1$. On the other hand, we write equation $(\ref{TKRF7})$ as 
\begin{equation}\label{1922010}
(\omega_0+\sqrt{-1}\partial\bar{\partial}\varphi_{\gamma,\varepsilon}(t))^n=e^{\dot{\varphi}_{\gamma,\varepsilon}(t)-\mu_{\gamma}\varphi_{\gamma,\varepsilon}(t)-F_0}\frac{\omega_0^n}{(\varepsilon^2+|s|_h^2)^{(1-\gamma)}}.
\end{equation}
Integrating above equation on both sides, there exists uniform constant such that for any $t\geqslant1$, $\gamma\in[\beta,1]$ and $\varepsilon>0$,
\begin{equation}\label{192202}
\sup\limits_M\big(\dot{\varphi}_{\gamma,\varepsilon}(t)-\mu_{\gamma}\varphi_{\gamma,\varepsilon}(t)\big)\geqslant C\ \ \ and\ \ \ \inf\limits_M\big(\dot{\varphi}_{\gamma,\varepsilon}(t)-\mu_{\gamma}\varphi_{\gamma,\varepsilon}(t)\big)\leqslant C.
\end{equation}
By using $(\ref{1915})$, we have
\begin{equation*}\label{190312001}
\begin{split}
osc_M\big(\dot{\varphi}_{\gamma_i,\varepsilon_i}(t)-\mu_{\gamma_i}\varphi_{\gamma_i,\varepsilon_i}(t)\big)&\leqslant osc_M\dot{\varphi}_{\gamma_i,\varepsilon_i}(t)+\mu_{\gamma_i}osc_M\varphi_{\gamma_i,\varepsilon_i}(t)\\
&=osc_M u_{\gamma_i,\varepsilon_i}(t)+\mu_{\gamma_i}osc_M\varphi_{\gamma_i,\varepsilon_i}(t)\leqslant C
\end{split}
\end{equation*}
for any $t\in[T_{i-1}+1,T_{i}]$, $\gamma_i$ and $\varepsilon_i$. Hence $\|\dot{\varphi}_{\gamma_i,\varepsilon_i}(t)-\mu_i\varphi_{\gamma_i,\varepsilon_i}(t)\|_{C^0(M)}$ are uniformly bounded on $[T_{i-1}+1,T_{i}]$. Combining this with $(\ref{1915})$, we have
\begin{equation*}
\begin{split}
\mathcal{M}_{\mu_{\gamma_i},\varepsilon_i}(\varphi_{\gamma_i,\varepsilon_i}(T_{i-1}+1))&=-\mu_{\gamma_i}\Big(I_{\omega_{0}}(\varphi_{\gamma_i,\varepsilon_i}(T_{i-1}+1))-J_{\omega_{0}}(\varphi_{\gamma_i,\varepsilon_i}(T_{i-1}+1))\Big)\\
&\ \ \ \ \ +\frac{1}{V}\int_{M}\log\frac{\omega^n_{\gamma_i,\varepsilon_i}(T_{i-1}+1)}{\omega_{0}^{n}}dV_{\gamma_i,\varepsilon_i}(T_{i-1}+1)\\
&\ \ \ \ \ -\frac{1}{V}\int_{M}\Big(F_0+(1-\gamma)\log(\varepsilon^2+|s|^2_h)\Big)(dV_{0}-dV_{\gamma,\varepsilon}(T_{i-1}+1))\\
&=-\mu_{\gamma_i}\Big(I_{\omega_{0}}(\varphi_{\gamma_i,\varepsilon_i}(T_{i-1}+1))-J_{\omega_{0}}(\varphi_{\gamma_i,\varepsilon_i}(T_{i-1}+1))\Big)\\
&\ \ \ \ \ +\frac{1}{V}\int_{M}\Big(\dot{\varphi}_{\gamma_i,\varepsilon_i}(T_{i-1}+1)-\mu_i\varphi_{\gamma_i,\varepsilon_i}(T_{i-1}+1)\Big)dV_{\gamma_i,\varepsilon_i}(T_{i-1}+1)\\
&\ \ \ \ \ -\frac{1}{V}\int_{M}\Big(F_0+(1-\gamma)\log(\varepsilon^2+|s|^2_h)\Big)dV_{0}\\
&\geqslant -C.
\end{split}
\end{equation*}
Integrating $(\ref{1916001})$ from $T_{i-1}$ to $T_{i-1}+1$ on both sides, we have
\begin{equation}\label{201903060000002}
\begin{split}
&\ \int_{T_{i-1}}^{T_{i-1}+1}\int_M|\nabla u_{\gamma_i,\varepsilon_i}(t)|^2_{\omega_{\gamma_i,\varepsilon_i}(t)}dV_{\gamma_i,\varepsilon_i}(t)dt\\
&=-\mathcal{M}_{\mu_{\gamma_i},\varepsilon_i}(\varphi_{\gamma_i,\varepsilon_i}(T_{i-1}+1))+\mathcal{M}_{\mu_{\gamma_i},\varepsilon_i}(\varphi_{\gamma_i,\varepsilon_i}(T_{i-1}))\\
&\leqslant C,
\end{split}
\end{equation}
for any $\gamma_i$ and $\varepsilon_i$. Straightforward calculation shows that
\begin{equation*}
\begin{split}
&\ \ \ (\frac{\partial}{\partial t}-\Delta_{\omega_{\gamma,\varepsilon}(t)})\Big(R(\omega_{\gamma,\varepsilon}(t))-(1-\gamma)tr_{\omega_{\gamma,\varepsilon}(t)}\theta_{\varepsilon}+4n+|\nabla u_{\gamma,\varepsilon}(t)|_{\omega_{\gamma,\varepsilon}(t)}^{2}\Big)\\
&=\mu_\gamma \Big(R(\omega_{\gamma,\varepsilon}(t))-(1-\gamma)tr_{\omega_{\gamma,\varepsilon}(t)}\theta_{\varepsilon}+|\nabla u_{\gamma,\varepsilon}(t)|_{\omega_{\gamma,\varepsilon}(t)}^{2}\Big)-|\nabla\nabla u_{\gamma,\varepsilon}(t)|_{\omega_{\gamma,\varepsilon}(t)}^{2}\\
&\ \ \ \ -\frac{1-\gamma}{2}\theta_{\varepsilon}(grad\ u_{\gamma,\varepsilon}(t),\mathcal{J}(grad\ u_{\gamma,\varepsilon}(t)))-n\mu_\gamma^2\\
&\leqslant\mu_\gamma \Big(R(\omega_{\gamma,\varepsilon}(t))-(1-\gamma)tr_{\omega_{\gamma,\varepsilon}(t)}\theta_{\varepsilon}+4n+|\nabla u_{\gamma,\varepsilon}(t)|_{\omega_{\gamma,\varepsilon}(t)}^{2}\Big).
\end{split}
\end{equation*}
Applying Theorem \ref{Jiang} to $\varphi_{\gamma_i,\varepsilon_i}(t)$, we have
\begin{equation*}\label{2019030600201}
\begin{split}
&\ \sup\limits_M\Big|R(\omega_{\gamma_i,\varepsilon_i}(t))-(1-\gamma_i)tr_{\omega_{\gamma_i,\varepsilon_i}(t)}\theta_{\varepsilon_i}+4n+|\nabla u_{\gamma_i,\varepsilon_i}(t)|^2_{\omega_{\gamma_i,\varepsilon_i}(t)}\Big|\\
&\leqslant C\frac{\Big(\int_{T_{i-1}}^{T_{i-1}+1}\int_M \Big|R(\omega_{\gamma_i,\varepsilon_i}(t))-(1-\gamma)tr_{\omega_{\gamma_i,\varepsilon_i}(t)}\theta_{\varepsilon_i}+4n+|\nabla u_{\gamma_i,\varepsilon_i}(t)|^2_{\omega_{\gamma_i,\varepsilon_i}(t)}\Big| dV_{\gamma_i,\varepsilon_i}(t)dt\Big)}{(t-T_{i-1})^{n+1}}\\
&\leqslant \frac{C}{(t-T_{i-1})^{n+1}}
\end{split}
\end{equation*}
for any $t\in(T_{i-1},T_{i-1}+1]$, $\gamma_i$ and $\varepsilon_i$ by using $(\ref{201903060000001})$ and $(\ref{201903060000002})$. Hence there exists uniform constant $C$ such that for any $\gamma_i$, $\varepsilon_i$ and $t\in[T_{i-1}+\frac{1}{4} ,T_{i-1}+1]$,
\begin{equation}\label{2019030701}
R(\omega_{\gamma_i,\varepsilon_i}(t))-(1-\gamma)tr_{\omega_{\gamma_i,\varepsilon_i}(t)}\theta_{\varepsilon_i}\leqslant C\ \ \ and\ \ \ |\nabla u_{\gamma_i,\varepsilon_i}(t)|^2_{\omega_{\gamma_i,\varepsilon_i}(t)}\leqslant C.
\end{equation}
By Jensen's inequality and normalization $\frac{1}{V}\int_M e^{-u_{\gamma,\varepsilon}(t)}dV_{\gamma,\varepsilon}(t)=1$, we have
\begin{equation}\label{20190308001}
\begin{split}
A_{\mu_\gamma,\varepsilon}(t)&=\frac{1}{V}\int_M u_{\gamma,\varepsilon}(t)e^{-u_{\gamma,\varepsilon}(t)}dV_{\gamma,\varepsilon}(t)\\
&\leqslant\log\Big(\frac{1}{V}\int_M e^{u_{\gamma,\varepsilon}(t)}e^{-u_{\gamma,\varepsilon}(t)}dV_{\gamma,\varepsilon}(t)\Big)=0.
\end{split}
\end{equation}
When $\mu_\gamma\geqslant 0$, combining this with $(\ref{2019030701})$, we control the evolution of $u_{\gamma_i,\varepsilon_i}(t)$ as 
\begin{equation*}
\begin{split}\label{190312002}
\frac{\partial}{\partial t}u_{\gamma_i,\varepsilon_i}(t)&=\Delta_{\omega_{\gamma_i,\varepsilon_i}(t)} u_{\gamma_i,\varepsilon_i}(t)+\mu_{\gamma_i} u_{\gamma_i,\varepsilon_i}(t)-\mu_{\gamma_i} A_{\mu_{\gamma_i},\varepsilon_i}(t),\\
&\geqslant\mu_{\gamma_i} u_{\gamma_i,\varepsilon_i}(t)-C.
\end{split}
\end{equation*}
Then we have
\begin{equation}\label{190312003}
\frac{\partial}{\partial t}(e^{-\mu_{\gamma_i}t}u_{\gamma_i,\varepsilon_i}(t))\geqslant -Ce^{-\mu_{\gamma_i}t}.
\end{equation}
Integrating form $t$ to $T_{i-1}+1$ on both sides, where $t\in[T_{i-1}+\frac{1}{4} ,T_{i-1}+1]$, we obtain
\begin{equation*}\label{190312004}
e^{-\mu_{\gamma_i}(T_{i-1}+1)}u_{\gamma_i,\varepsilon_i}(T_{i-1}+1)\geqslant e^{-\mu_{\gamma_i}t}u_{\gamma_i,\varepsilon_i}(t)-C\int^{T_{i-1}+1}_{t}e^{-\mu_{\gamma_i}s}ds.
\end{equation*}
Hence there exists a uniform constant $C$ such that
\begin{equation}\label{190312005}
u_{\gamma_i,\varepsilon_i}(t)\leqslant C
\end{equation}
for any $t\in[T_{i-1}+\frac{1}{4} , T_{i-1}+1]$, $\varepsilon_i$ and $\gamma_i$. Since $u_{\gamma,\varepsilon}(t)=\dot{\varphi}_{\gamma,\varepsilon}(t)+\hat{c}_{\gamma,\varepsilon}(t)$, where $\hat{c}_{\gamma,\varepsilon}(t)$ is only a function of time $t$, from $(\ref{191911})$ and $(\ref{190312005})$, we have 
\begin{equation}\label{190312006}
\|u_{\gamma_i,\varepsilon_i}(t)\|_{C^0(M)}\leqslant C\ \ \ and\ \ \ osc_M \dot{\varphi}_{\gamma_i,\varepsilon_i}(t)\leqslant C
\end{equation}
for any $t\in[T_{i-1}+\frac{1}{4} , T_{i-1}+1]$, $\varepsilon_i$ and $\gamma_i$. By using $(\ref{2019030701})$ again, we have
\begin{equation*}
\begin{split}\label{19200}
\frac{\partial}{\partial t}\dot{\varphi}_{\gamma_i,\varepsilon_i}(t)&=\Delta_{\omega_{\gamma_i,\varepsilon_i}(t)} \dot{\varphi}_{\gamma_i,\varepsilon_i}(t)+\mu_{\gamma_i} \dot{\varphi}_{\gamma_i,\varepsilon_i}(t)\\
&\geqslant \mu_{\gamma_i} \dot{\varphi}_{\gamma_i,\varepsilon_i}(t)-C.
\end{split}
\end{equation*}
Hence
\begin{equation}\label{191701}
\frac{\partial}{\partial t}(e^{-\mu_{\gamma_i}t}\dot{\varphi}_{\gamma_i,\varepsilon_i}(t))\geqslant - Ce^{-\mu_{\gamma_i}t}.
\end{equation}
Integrating form $t$ to $T_{i-1}+1$ on both sides, where $t\in[T_{i-1}+\frac{1}{4} ,T_{i-1}+1]$, we obtain
\begin{equation}
\label{1903120010}
e^{-\mu_{\gamma_i}(T_{i-1}+1)}\dot{\varphi}_{\gamma_i,\varepsilon_i}(T_{i-1}+1)\geqslant e^{-\mu_{\gamma_i}t}\dot{\varphi}_{\gamma_i,\varepsilon_i}(t)-C\int^{T_{i-1}+1}_{t}e^{-\mu_{\gamma_i}s}ds,
\end{equation}
and then
\begin{equation}\label{1903120011}
e^{-\mu_{\gamma_i}(T_{i-1}+1-t)}\dot{\varphi}_{\gamma_i,\varepsilon_i}(T_{i-1}+1)\geqslant \dot{\varphi}_{\gamma_i,\varepsilon_i}(t)-C\int^{T_{i-1}+1}_{t}e^{-\mu_{\gamma_i}(s-t)}ds.
\end{equation}
By using Lemma \ref{1902}, we have
\begin{equation*}
\begin{split}\label{190312007}
\frac{\partial}{\partial t}\dot{\varphi}_{\gamma_i,\varepsilon_i}(t)&=\Delta_{\omega_{\gamma_i,\varepsilon_i}(t)} \dot{\varphi}_{\gamma_i,\varepsilon_i}(t)+\mu_{\gamma_i} \dot{\varphi}_{\gamma_i,\varepsilon_i}(t)\\
&\leqslant \mu_{\gamma_i} \dot{\varphi}_{\gamma_i,\varepsilon_i}(t)+C.
\end{split}
\end{equation*}
Hence
\begin{equation}\label{190312008}
\frac{\partial}{\partial t}(e^{-\mu_{\gamma_i}t}\dot{\varphi}_{\gamma_i,\varepsilon_i}(t))\leqslant Ce^{-\mu_{\gamma_i}t}.
\end{equation}
Integrating form $t$ to $T_{i-1}+1$ on both sides, where $t\in[T_{i-1}+\frac{1}{4} ,T_{i-1}+1]$, we obtain
\begin{equation}
\label{190312009}
e^{-\mu_{\gamma_i}(T_{i-1}+1)}\dot{\varphi}_{\gamma_i,\varepsilon_i}(T_{i-1}+1)\leqslant e^{-\mu_{\gamma_i}t}\dot{\varphi}_{\gamma_i,\varepsilon_i}(t)+C\int^{T_{i-1}+1}_{t}e^{-\mu_{\gamma_i}s}ds.
\end{equation}
Hence 
\begin{equation}\label{1903120012}
e^{-\mu_{\gamma_i}(T_{i-1}+1-t)}\dot{\varphi}_{\gamma_i,\varepsilon_i}(T_{i-1}+1)\leqslant \dot{\varphi}_{\gamma_i,\varepsilon_i}(t)+C\int^{T_{i-1}+1}_{t}e^{-\mu_{\gamma_i}(s-t)}ds.
\end{equation}
Integrating form $t$ to $T_{i-1}+1$ on both sides of $(\ref{1903120011})$ and $(\ref{1903120012})$, where $t\in[T_{i-1}+\frac{1}{4} ,T_{i-1}+1]$,
\begin{equation*}
\begin{split}\label{1903120013}
\varphi_{\gamma_i,\varepsilon_i}(t)&\geqslant \varphi_{\gamma_i,\varepsilon_i}(T_{i-1}+1)-\dot{\varphi}_{\gamma_i,\varepsilon_i}(T_{i-1}+1)\int_t^{T_{i-1}+1}e^{-\mu_{\gamma_i}(T_{i-1}+1-s)}ds-C,\\
\varphi_{\gamma_i,\varepsilon_i}(t)&\leqslant\varphi_{\gamma_i,\varepsilon_i}(T_{i-1}+1)-\dot{\varphi}_{\gamma_i,\varepsilon_i}(T_{i-1}+1)\int_t^{T_{i-1}+1}e^{-\mu_{\gamma_i}(T_{i-1}+1-s)}ds+C.
\end{split}
\end{equation*}
From above arguments, for any $t\in[T_{i-1}+\frac{1}{4} , T_{i-1}+1]$, $\varepsilon_i$ and $\gamma_i$, we get
\begin{equation*}
\begin{split}\label{1903120014}
osc_M\varphi_{\gamma_i,\varepsilon_i}(t)&\leqslant osc_M\varphi_{\gamma_i,\varepsilon_i}(T_{i-1}+1)+osc_M\dot{\varphi}_{\gamma_i,\varepsilon_i}(T_{i-1}+1)\int_t^{T_{i-1}+1}e^{-\mu_{\gamma_i}(T_{i-1}+1-s)}ds+C\\
&\leqslant osc_M\varphi_{\gamma_i,\varepsilon_i}(T_{i-1}+1)+osc_M\dot{\varphi}_{\gamma_i,\varepsilon_i}(T_{i-1}+1)+C\\
&\leqslant C.
\end{split}
\end{equation*}
Let $\psi_{\gamma_i,\varepsilon_i}(t)=\varphi_{\gamma_i,\varepsilon_i}(t)+\tilde{C}_{\gamma_i,\varepsilon_i}e^{\mu_{\gamma_i}t}$ and denote 
\begin{equation*}
\tilde{C}_{\gamma_i,\varepsilon_i}=e^{-\mu_{\gamma_i}}\frac{1}{\mu_{\gamma_i}}\Big(\int_{1}^{T_{i-1}+1}e^{-\mu_{\gamma_i} (t-1)}\|\nabla u_{\gamma_i,\varepsilon_i}(t)\|^{2}_{L^{2}}dt-\frac{1}{V}\int_{M}\big(F_{\gamma_i,\varepsilon_i}(1)+\mu_{\gamma_i}\varphi_{\gamma_i,\varepsilon_i}(1)\big)dV_{\gamma_i,\varepsilon_i}(1)\Big),
\end{equation*}
where $F_{\gamma_i,\varepsilon_i}(1)=F_{0}+\log\Big(\frac{\omega_{\gamma_i,\varepsilon_i}^n(1)(\varepsilon^{2}+|s|_{h}^{2})^{1-\gamma_i}}{\omega_{0}^{n}}\Big)$. We know that $\tilde{C}_{\gamma_i,\varepsilon_i}$ is well-defined from the uniform Perelman's estimates (here the estimates depend on $\mu_{\gamma_i}$) and that $\psi_{\gamma_i,\varepsilon_i}(t)$ is a solution of equation $(\ref{TKRF7})$ with initial data $\varphi_0+\tilde{C}_{\gamma_i,\varepsilon_i}$. From $(\ref{201903120015})$, $u_{\gamma_i,\varepsilon_i}(t)$ can be written as
\begin{equation}\label{201903120016}
u_{\gamma_i,\varepsilon_i}(1)=\log\frac{\omega_{\gamma_i,\varepsilon_i}^n(1)(\varepsilon_i^2+|s|_h^2)^{1-\gamma_i}}{\omega_0^n}+F_0
+\mu_{\gamma_i}\varphi_{\gamma_i,\varepsilon_i}(1)+C_{\gamma_i,\varepsilon_i,1},
\end{equation}
where $C_{\gamma_i,\varepsilon_i,1}$ is the normalization constant such that $\frac{1}{V}\int_M e^{-u_{\gamma_i,\varepsilon_i}(1)}dV_{\gamma_i,\varepsilon_i}(1)=1$. By Lemma \ref{201901} and
Lemma \ref{201902}, $C_{\gamma_i, \varepsilon_i,1}$ and $u_{\gamma_i, \varepsilon_i}(1)$ are uniformly bounded independent of $\varepsilon_i$ and $\gamma_i$. Let $u_{\gamma_i,\varepsilon_i}(t)= \dot{\psi}_{\gamma_i,\varepsilon_i}(t)+c_{\gamma_i,\varepsilon_i}(t)$. Then
\begin{equation}\label{333}
c_{\gamma_i,\varepsilon_i}(1)=C_{\gamma_i,\varepsilon_i,1}-\mu_{\gamma_i} e^{\mu_{\gamma_i}}\tilde{C}_{\gamma_i,\varepsilon_i}.
\end{equation}

Next, we prove that there exists a uniform constant $C$ such that for any $\varepsilon_i$ and $\gamma_i$,
\begin{equation}\label{201903120017}\|\dot{\psi}_{\gamma_i,\varepsilon_i}(t)\|_{C^{0}(M)}\leqslant C\ \ \ on\ \ \ [T_{i-1}+\frac{1}{4} , T_{i-1}+1].
\end{equation}
We let
\begin{equation*}\label{3.22.32}\alpha_{\gamma_i,\varepsilon_i}(t)=\frac{1}{V}\int_{M}\dot{\psi}_{\gamma_i,\varepsilon_i}(t)dV_{\gamma_i,\varepsilon_i}(t)
=\frac{1}{V}\int_{M}u_{\gamma_i,\varepsilon_i}(t)dV_{\gamma_i,\varepsilon_i}(t)-c_{\gamma_i,\varepsilon_i}(t).
\end{equation*}
Through computing, we have
\begin{equation}\frac{d}{dt}\alpha_{\gamma_i,\varepsilon_i}(t)=\mu_{\gamma_i}\alpha_{\gamma_i,\varepsilon_i}(t)-\|\nabla u_{\gamma_i,\varepsilon_i}(t)\|^{2}_{L^{2}},
\end{equation}
Integrating this equality form $1$ to $t$ on both sides, we have
\begin{equation*}
\begin{split}
\label{3.22.33}e^{-\mu_{\gamma_i} (t-1)}\alpha_{\gamma_i,\varepsilon_i}(t)&=\alpha_{\gamma_i,\varepsilon_i}(1)-\int_{1}^{t}e^{-\mu_{\gamma_i} (s-1)}\|\nabla u_{\gamma_i,\varepsilon_i}(s)\|^{2}_{L^{2}}ds\\
&=\frac{1}{V}\int_{M}u_{\gamma_i,\varepsilon_i}(1)dV_{\gamma_i,\varepsilon_i}(1)-c_{\gamma_i,\varepsilon_i}(1)-\int_{1}^{t}e^{-\mu_{\gamma_i} (s-1)}\|\nabla u_{\gamma_i,\varepsilon_i}(s)\|^{2}_{L^{2}}ds.
\end{split}
\end{equation*}
Putting $(\ref{201903120016})$ and $(\ref{333})$ into above equality, we have
\begin{equation}e^{-\mu_{\gamma_i} (t-1)}\alpha_{\gamma_i,\varepsilon_i}(t)=\int_{t}^{T_{i-1}+1}e^{-\mu_{\gamma_i} (s-1)}\|\nabla u_{\gamma_i,\varepsilon_i}(s)\|^{2}_{L^{2}}ds.
\end{equation}
Therefore, by using $(\ref{2019030701})$, there exists uniform constant $C$ such that for any $\gamma_i$, $\varepsilon_i$ and $t\in[T_{i-1}+\frac{1}{4} ,T_{i-1}+1]$,
\begin{equation}\label{3.22.34}0\leq\alpha_{\gamma_i,\varepsilon_i}(t)=\int_{t}^{T_{i-1}+1}e^{-\mu_{\gamma_i} (s-t)}\|\nabla u_{\gamma_i,\varepsilon_i}(s)\|^{2}_{L^{2}}ds\leq C.
\end{equation}
Then we conclude that $\|\dot{\psi}_{\gamma_i,\varepsilon_i}(t)\|_{C^0(M)}$ is uniformly bounded on $[T_{i-1}+\frac{1}{4} ,T_{i-1}+1]$ by using $(\ref{190312006})$. Since $osc_{M}\psi_{\gamma_i,\varepsilon_i}(t)=osc_{M}\varphi_{\gamma_i,\varepsilon_i}(t)$, $osc_{M}\psi_{\gamma_i,\varepsilon_i}(t)$ are uniform bounded on $[T_{i-1}+\frac{1}{4} ,T_{i-1}+1]$. We have proved that there exists constant $C$ depending only on $L_\beta$, $n$, $\|\varphi_0\|_{L^\infty(M)}$, $\beta$ and $\omega_{0}$ such that 
\begin{equation}\label{1915001}
\sup\limits_{t\in[T_{i-1}+\frac{1}{4},T_{i-1}+1]}\Big(\|u_{\gamma_i,\varepsilon_i}(t)\|_{C^0(M)}+\|\dot{\psi}_{\gamma_i,\varepsilon_i}(t)\|_{C^0(M)}+osc_{M}\psi_{\gamma_i,\varepsilon_i}(t)\Big)\leqslant C.
\end{equation}
From above estimates, we also have
\begin{equation}\label{190312019}
\sup\limits_M(\mu_{\gamma_i}\psi_{\gamma_i,\varepsilon_i}(t))\geqslant C\ \ \ and\ \ \ \inf\limits_M(\mu_{\gamma_i}\psi_{\gamma_i,\varepsilon_i}(t))\leqslant C,
\end{equation}
which implies that $\|\mu_{\gamma_i}\psi_{\gamma_i,\varepsilon_i}(t)\|_{C^0(M)}$ are uniformly bounded on $[T_{i-1}+\frac{1}{4} ,T_{i-1}+1]$. From the arguments in Lemma $2.3$ of \cite{JWLXZ1}, there exist uniform constant $B$ and $C$ such that, on $(T_{i-1}+\frac{1}{4},T_{i-1}+1]\times M$, we have
\begin{equation*}
\begin{split}
tr_{\omega_{\varepsilon_i}^{\gamma_i}}\omega_{\gamma_i,\varepsilon_i}(t)&\leqslant exp\frac{\Big(C+B(\psi_{\gamma_i,\varepsilon_i}(t)-k\chi_{\gamma_i})-B\inf\limits_{[T_{i-1}+\frac{1}{4},T_{i-1}+1]\times M}(\psi_{\gamma_i,\varepsilon_i}(t)-k\chi_{\gamma_i})\Big)}{t-(T_{i-1}+\frac{1}{4})}\\
&\leqslant exp\frac{\Big(C+B\psi_{\gamma_i,\varepsilon_i}(t)-B\psi_{\gamma_i,\varepsilon_i}(t_1)+B\psi_{\gamma_i,\varepsilon_i}(t_1)-B\psi_{\gamma_i,\varepsilon_i}(t_1,z_1)\Big)}{t-(T_{i-1}+\frac{1}{4})}\\
&\leqslant exp\frac{\Big(C+\frac{3B}{4}\|\dot{\psi}_{\gamma_i,\varepsilon_i}(t)\|_{C^0(M)}+B osc_M \psi_{\gamma_i,\varepsilon_i}(t_1)\Big)}{t-(T_{i-1}+\frac{1}{4})}\\
&\leqslant exp\frac{C}{t-(T_{i-1}+\frac{1}{4})},
\end{split}
\end{equation*}
where $(t_1,z_1)$ is the minimum point of $ \psi_{\gamma_i,\varepsilon_i}(t)$ on $[T_{i-1}+\frac{1}{4},T_{i-1}+1]\times M$, and we use that $\chi_{\gamma}$ with $\gamma\in[\beta,1)$ can be uniformly bounded by a constant $C$ depending only on $\beta$ in the second inequality. So on  $t\in[T_{i-1}+\frac{1}{2},T_{i-1}+1]\times M$, we have
\begin{equation}\label{201903040003}
tr_{\omega_{\varepsilon_i}^{\gamma_i}}\omega_{\gamma_i,\varepsilon_i}(t)\leqslant C.
\end{equation}
Hence we prove the claim $(\ref{1916})$.

From $(\ref{1916})$, for $t\in[T_{i-1}+\frac{1}{2},T_{i}]$, $\gamma_i$ and $\varepsilon_i$, we have
\begin{equation*}\label{190312001}
\begin{split}
osc_M\big(\dot{\varphi}_{\gamma_i,\varepsilon_i}(t)-\mu_{\gamma_i}\varphi_{\gamma_i,\varepsilon_i}(t)\big)&\leqslant osc_M\dot{\varphi}_{\gamma_i,\varepsilon_i}(t)+\mu_{\gamma_i}osc_M\varphi_{\gamma_i,\varepsilon_i}(t)\\
&=osc_M u_{\gamma_i,\varepsilon_i}(t)+\mu_{\gamma_i}osc_M\varphi_{\gamma_i,\varepsilon_i}(t)\leqslant C.
\end{split}
\end{equation*}
Combining this with $(\ref{192202})$, we conclude that the $C^0$-norms of $\dot{\varphi}_{\gamma_i,\varepsilon_i}(t)-\mu_{\gamma_i}\varphi_{\gamma_i,\varepsilon_i}(t)$ and then $\dot{\phi}_{\gamma_i,\varepsilon_i}(t)-\mu_{\gamma_i}\phi_{\gamma_i,\varepsilon_i}(t)$ are uniformly bounded on $[T_{i-1}+\frac{1}{2},T_{i}]$. So we obtain
\begin{equation*}
\begin{split}
tr_{\omega_{\gamma_i,\varepsilon_i}(t)}\omega_{\varepsilon_i}^{\gamma_i}&\leqslant\frac{1}{(n-1)!}(tr_{\omega_{\varepsilon_i}^{\gamma_i}}\omega_{\gamma_i,\varepsilon_i}(t))^{n-1}\frac{(\omega^{\gamma_i}_{\varepsilon_i})^n}{\omega_{\gamma_i,\varepsilon_i}^n(t)}\\
&\leqslant C\exp\Big(-\dot{\phi}_{\gamma_i,\varepsilon_i}(t)+\mu_{\gamma_i}\phi_{\gamma_i,\varepsilon_i}(t)+F_{\gamma_i,\varepsilon_i}\Big)\leqslant C,
\end{split}
\end{equation*}
where we use that $F_{\gamma,\varepsilon}$ with $\gamma\in[\beta,1)$ can be uniformly bounded by a constant $C$ depending only on $\beta$, $n$ and $\omega_0$. Hence for any $t\in[T_{i-1}+\frac{1}{2},T_i]$, $\gamma_i$ and $\varepsilon_i$, we have
\begin{equation}\label{192400010101}
C^{-1}\omega^{\gamma_i}_{\varepsilon_i}\leqslant\omega_{\gamma_i,\varepsilon_i}(t)\leqslant C\omega^{\gamma_i}_{\varepsilon_i}\ \ \ on\ \ M.
\end{equation}
By Evans-Krylov-Safonov's estimates, for any $B_r(p)\subset\subset M\setminus D$, there is uniform constant $C_{k,p,r}$ such that
\begin{equation*}\label{1924}
\|\varphi_{\gamma_i,\varepsilon_i}(t)\|_{C^{k}\big(B_r(p)\big)}\leqslant C_{k,p,r}
\end{equation*}
for any $t\in[T_{i-1}+\frac{3}{4},T_i]$, $\gamma_i$ and $\varepsilon_i$. In particular, for any $K\subset\subset M\setminus D$, we have
\begin{equation}\label{1925}
 \|\varphi_{\gamma_i,\varepsilon_i}(t_i)\|_{C^k(K)}\leqslant C_{k,K}
\end{equation}
for any $i$ and $k\in\mathbb{N}^+$. Then $\varphi_{\gamma_i,\varepsilon_i}(t_i)$ (by taking a subsequence if necessary) converge to a function $\varphi_{\beta}(\infty)\in C^\infty(M\setminus D)$ in $C^\infty_{loc}$-topology in $M\setminus D$. By estimates $(\ref{192400010101})$, the Lebesgue Dominated Convergence theorem implies that $\int_M (\omega_0+\sqrt{-1}\partial\bar{\partial}\varphi_{\beta}(\infty))^n=\int_M\omega_0^n$.

Let $u_{\beta}(\infty)$ be the twisted Ricci potential of $\omega_{\varphi_{\beta}(\infty)}=\omega_0+\sqrt{-1}\partial\bar{\partial}\varphi_{\beta}(\infty)$. For any $K\subset\subset M\setminus D$, since $\omega_{\gamma_i,\varepsilon_i}(t_i)$ are uniformly equivalent to $\omega_0$ on $K$ by $(\ref{192400010101})$ and $\mu_{\gamma_i}$ tend to $0$, we conclude that $u_\beta(\infty)$ satisfies
\begin{equation}
-Ric(\omega_{\varphi_{\beta}(\infty)})+(1-\beta)[D]=\sqrt{-1}\partial\bar{\partial}u_{\beta}(\infty)\ \ \ \ on\ \ \ M\setminus D.
\end{equation}
By the Lebesgue Dominated Convergence theorem again, we get that $\frac{1}{V}\int_M e^{-u_{\beta}(\infty)}dV_\beta(\infty)=1$. Since $A_{\mu_\gamma,\varepsilon}(t)$ is increasing along $(TKRF_{\mu_\gamma,\varepsilon})$ with $\mu_\gamma>0$, for any $j<i$,
\begin{equation}\label{1926}
A_{\mu_{\gamma_i},\varepsilon_i}(t_j)\leqslant A_{\mu_{\gamma_i},\varepsilon_i}(t_i)\leqslant0,
\end{equation}
the second inequality due to Jensen's inequality. Let $i\rightarrow\infty$, we have
\begin{equation}\label{1927}
A_{\mu_\beta}(t_j)\leqslant \frac{1}{V}\int_M u_{\beta}(\infty)e^{-u_{\beta}(\infty)}dV_\beta(\infty)\leqslant0.
\end{equation}
Since $\mu_\beta=0$, from Theorem \ref{20190002}, the conical K\"ahler-Ricci flow $(CKRF_{\mu_\beta})$ converge to a conical K\"ahler-Einstein metric. By using the normalization of the twisted Ricci potential $u_{\beta}(t)$, we conclude that $u_{\beta}(t)$ converge to $0$ in $C^\infty_{loc}$-topology on $M\setminus D$, and hence $A_{\mu_\beta}(t)$ converge to $0$. This implies that
\begin{equation}\label{1928}
\frac{1}{V}\int_M u_{\beta}(\infty)e^{-u_{\beta}(\infty)}dV_\beta(\infty)=0
\end{equation}
after letting $j\rightarrow\infty$. Since $\log$ is strictly concave, we conclude $u_{\beta}(\infty)=0$ by using Jensen's inequality and its normalization. This means that $\omega_{\varphi_\beta(\infty)}$ is a Ricci flat conical K\"ahler-Einstein metric with cone angle $2\pi\beta$ along $D$, hence $\varphi_\beta(\infty)$ satisfies equation $(\ref{201902230101})$. Then Dinew's uniqueness theorem (Theorem $1.2$ in \cite{SDINEW}) implies that $\varphi_\beta(\infty)=\varphi_\beta+C$. Letting $i\rightarrow\infty$ in $(\ref{2000019})$, we get
\begin{equation}\label{1929}
K_\beta+M_\beta\geqslant osc_{M}\varphi_{\beta}+\sup\limits_{M\setminus D} tr_{\omega_\beta}\omega_{\varphi_\beta}\geqslant L_\beta+\frac{7}{8}\geqslant K_\beta+M_\beta+\frac{7}{8}.
\end{equation}
This leads to a contradiction. Thus Lemma \ref{1901} is proved.

\medskip

We denote constant
\begin{equation*}
\tilde{C}_{\gamma,\varepsilon}=e^{-\mu_\gamma}\frac{1}{\mu_\gamma}\Big(\int_{1}^{+\infty}e^{-\mu_\gamma (t-1)}\|\nabla u_{\gamma,\varepsilon}(t)\|^{2}_{L^{2}}dt-\frac{1}{V}\int_{M}\big(F_{\gamma,\varepsilon}(1)+\mu_\gamma\varphi_{\gamma,\varepsilon}(1)\big)dV_{\gamma,\varepsilon}(1)\Big),
\end{equation*}
where $F_{\gamma,\varepsilon}(1)=F_{0}+\log\Big(\frac{\omega_{\gamma,\varepsilon}^n(1)(\varepsilon^{2}+|s|_{h}^{2})^{1-\gamma}}{\omega_{0}^{n}}\Big)$. Since $\mu_\gamma>0$, $\tilde{C}_{\gamma,\varepsilon}$ are well-defined and uniformly bounded by the uniform Perelman's estimates (independent of $\varepsilon$ but depend on $\mu_\gamma$). We denote $\psi_{\gamma,\varepsilon}(t)=\varphi_{\gamma,\varepsilon}(t)+\tilde{C}_{\gamma,\varepsilon}e^{\gamma t}$, which is a solution to the equation
\begin{equation}\label{TKRF6}
\left \{\begin{split}
&\  \frac{\partial \psi_{\gamma,\varepsilon}(t)}{\partial t}=\log\frac{(\omega_{0}+\sqrt{-1}\partial\bar{\partial}\psi_{\gamma,\varepsilon}(t))^{n}}{\omega_{0}^{n}}+\mu_\gamma\psi_{\gamma,\varepsilon}(t)\\
&\ \ \ \ \ \ \ \ \ \ \ \ \ \ \ \ \ \ \ \ \ +F_{0}+\log(\varepsilon^{2}+|s|_{h}^{2})^{1-\gamma}.  \\
&\  \psi_{\gamma,\varepsilon}(0)=\varphi_{0}+\tilde{C}_{\gamma,\varepsilon}
\end{split}
\right.
\end{equation}
From the Proposition $4.5$ in \cite{JWLXZ1}, we know that $\|\dot{\psi}_{\gamma,\varepsilon}(t)\|_{C^0(M)}$ are uniform bounded for any $\varepsilon>0$ and $t\geqslant1$. For any $\gamma\in(\beta,\beta+\delta(\lambda))$ obtained in Lemma \ref{1901}, we can get the uniform $C^0$-estimates and Laplacian $C^2$-estimates of $\psi_{\gamma,\varepsilon}(t)$ for any $\varepsilon\in(0,\delta(\lambda))$ and $t\geqslant1$. Hence the twisted Mabuchi energy $\mathcal{M}_{\mu_\gamma,\varepsilon}$ are uniformly bounded from below along the twisted K\"ahler-Ricci flow $(TKRF_{\mu_\gamma,\varepsilon})$, that is, there exists uniform constant $C$ such that for $\varepsilon\in(0,\delta(\lambda))$ and $t\geqslant 1$,
\begin{equation*}
\begin{split}
\mathcal{M}_{\mu_{\gamma},\varepsilon}(\psi_{\gamma,\varepsilon}(t))&=-\mu_{\gamma}\Big(I_{\omega_{0}}(\psi_{\gamma,\varepsilon}(t))-J_{\omega_{0}}(\psi_{\gamma,\varepsilon}(t))\Big)+\frac{1}{V}\int_{M}\log\frac{\omega^n_{\gamma,\varepsilon}(t)}{\omega_{0}^{n}}dV_{\gamma,\varepsilon}(t)\\
&\ \ \ \ \ -\frac{1}{V}\int_{M}\Big(F_0+(1-\gamma)\log(\varepsilon^2+|s|^2_h)\Big)(dV_{0}-dV_{\gamma,\varepsilon}(t))\\
&=-\mu_{\gamma}\Big(I_{\omega_{0}}(\psi_{\gamma,\varepsilon}(t))-J_{\omega_{0}}(\psi_{\gamma,\varepsilon}(t))\Big)+\frac{1}{V}\int_{M}\Big(\dot{\psi}_{\gamma,\varepsilon}(t)-\mu_{\gamma}\psi_{\gamma,\varepsilon}(t)\Big)dV_{\gamma,\varepsilon}(t)\\
&\ \ \ \ \ -\frac{1}{V}\int_{M}\Big(F_0+(1-\gamma)\log(\varepsilon^2+|s|^2_h)\Big)dV_{0}\\
&\geqslant -C.
\end{split}
\end{equation*}
Then by using the arguments in \cite{JWLXZ, JWLXZ1}, we deduce the convergence of the conical K\"ahler-Ricci flows $(CKRF_{\mu_\gamma})$ with $\gamma\in(\beta,\beta+\delta(\lambda))$.

\medskip

{\it Proof of Theorem \ref{20190223001}.}\ \ Combining Lemma \ref{1901}, Lemma \ref{1902} with Lemma \ref{1.8.601}, we obtain that $|R(\omega_{\gamma,\varepsilon}(t))-(1-\gamma)tr_{\omega_{\gamma,\varepsilon}(t)}\theta_{\varepsilon}|$ and $\|u_{\gamma,\varepsilon}(t)\|_{C^{1}(\omega_{\gamma,\varepsilon}(t))}$ are uniform bounded for any $\varepsilon\in(0,\delta(\lambda))$, $\gamma\in(\beta,\beta+\delta(\lambda))$ and $t\in[\frac{1}{\delta(\lambda)},+\infty)$. From Lemma \ref{1901}, we also know that there exists uniform constant $C$ such that
\begin{equation}
\label{19220201001}
osc_{M}(\dot{\varphi}_{\gamma,\varepsilon}(t)-\mu_\gamma\varphi_{\gamma,\varepsilon}(t))\leqslant C\ \ and \ \ \sup\limits_Mtr_{\omega_\varepsilon^\gamma}\omega_{\gamma,\varepsilon}(t)\leqslant C
\end{equation}
hold for any $\varepsilon\in(0,\delta(\lambda))$, $\gamma\in(\beta,\beta+\delta(\lambda))$ and $t\in[\frac{1}{\delta(\lambda)},+\infty)$. Combining this with $(\ref{192202})$, we conclude that $\|\dot{\varphi}_{\gamma,\varepsilon}(t)-\mu_\gamma\varphi_{\gamma,\varepsilon}(t)\|_{C^0(M)}$ and then $\|\dot{\phi}_{\gamma,\varepsilon}(t)-\mu_\gamma\phi_{\gamma,\varepsilon}(t)\|_{C^0(M)}$ are uniform bounded. So
\begin{equation*}
\begin{split}
tr_{\omega_{\gamma,\varepsilon}(t)}\omega_{\varepsilon}^{\gamma}&\leqslant\frac{1}{(n-1)!}(tr_{\omega_{\varepsilon}^{\gamma}}\omega_{\gamma,\varepsilon}(t))^{n-1}\frac{(\omega^{\gamma}_{\varepsilon})^n}{\omega_{\gamma,\varepsilon}^n(t)}\\
&\leqslant C\exp\Big(-\dot{\phi}_{\gamma,\varepsilon}(t)+\mu_{\gamma}\phi_{\gamma,\varepsilon}(t)+F_{\gamma,\varepsilon}\Big)\leqslant C.
\end{split}
\end{equation*}
Hence for any $\varepsilon\in(0,\delta(\lambda))$, $\gamma\in(\beta,\beta+\delta(\lambda))$ and $t\in[\frac{1}{\delta(\lambda)},+\infty)$, we have
\begin{equation}\label{1924000101111}
C^{-1}\omega^{\gamma}_{\varepsilon}\leqslant\omega_{\gamma,\varepsilon}(t)\leqslant C\omega^{\gamma}_{\varepsilon}\leqslant C\omega_\beta\ \ \ on\ \ M.
\end{equation}
Let $(M, d_\beta)$ be the metric completion of $(M,\omega_\beta)$, which is of finite diameter. Therefore,
\begin{equation}diam(M, \omega_{\gamma,\varepsilon}(t)) \leqslant C diam(M, d_\beta)
\end{equation}
are uniformly bounded from above (see also Proposition $2.4$ in \cite{CDS1}). So we prove Theorem \ref{20190223001} for $\gamma\in(\beta,\beta+\delta(\lambda))$. Fix $\gamma_0\in(\beta,\beta+\delta(\lambda))$, we can get Theorem \ref{20190223001} for any $\varepsilon\in(0,1)$, $\gamma\in(\gamma_0,1)$ and $t\in[1,+\infty)$ by following the arguments in \cite{JWLXZ}. Combining these two parts, we complete the proof of Theorem \ref{20190223001}.

\section{Proof of the case $\mu_\beta>0$}\label{sec:4}

In this section, we always assume that $\mu_\beta$ is positive and there is a conical K\"ahler-Einstein metric $\omega_{\varphi_\beta}$ $(0<\beta<1)$ with cone angle $2\pi\beta$ along $D$. We first deduce some uniform regularities along the twisted K\"ahler-Ricci flows $(TKRF^\beta_{\mu_\gamma,\varepsilon})$, and then prove Lemma \ref{0080} and Lemma \ref{009}. For the sake of brevity, we only consider the case $\lambda=1$, that is, $\mu_{\gamma}=\gamma$. Our arguments are also valid for any $\lambda>0$, only if the coefficient $\gamma$ before $\omega_{\gamma}(t)$ in the case of $\lambda=1$ is replaced by $\mu_{\gamma}=1-(1-\gamma)\lambda$.

Denote $\omega_{\varphi_\beta}=\omega_0+\sqrt{-1}\partial\bar{\partial}\varphi_\beta$ with $\varphi_\beta\in L^\infty(M)\bigcap PSH(M,\omega_0)$. Assume that $\sup\limits_M\varphi_\beta\leqslant-1$ and satisfies
\begin{equation}\label{2019022301}
(\omega_0+\sqrt{-1}\partial\bar{\partial}\varphi_\beta)^n=e^{-\beta\varphi_\beta-F_0+\xi_\beta}\frac{\omega_0^n}{|s|_h^{2(1-\beta)}},
\end{equation}
where $\xi_\beta$ is the constant such that $\frac{1}{V}\int_M e^{-\beta\varphi_\beta-F_0+\xi_\beta}\frac{\omega_0^n}{|s|_h^{2(1-\beta)}}=1$. From Ko{\l}odziej's $L^p$-estimates \cite{K000,K}, $\varphi_\beta$ is H\"older continuous. By Demailly's regularization result \cite{JPD}, we approximate $\varphi_\beta$ with a decreasing sequence of smooth $\omega_0$-psh functions $\varphi_\varepsilon$. Then Dini's theorem implies that $\varphi_\varepsilon$ converge to $\varphi_\beta$ in $L^\infty$-sense on $M$. Without loss of generality, we assume that
\begin{equation}\label{04}
-C\leqslant\varphi_\varepsilon\leqslant 0\ \ \ on\ \ \ M
\end{equation}
for $\varepsilon\in(0,1]$. In the case $\mu_\beta>0$, we normalize $\varphi_0$ by
\begin{equation}\label{nom}
\inf\limits_M\varphi_0\geqslant \sup\limits_M\varphi_{1}+1,
\end{equation}
where $\varphi_{1}$ comes from the approaching sequence $\varphi_\varepsilon$ with $\varepsilon=1$. Since the twisted K\"ahler-Ricci flow preserves the K\"ahler class, we write $(TKRF^\beta_{\gamma,\varepsilon})$ as the parabolic Monge-Amp\`ere equation on potentials,
\begin{equation}\label{TKRF2}
\left \{\begin{split}
&\  \frac{\partial \varphi^\beta_{\gamma,\varepsilon}(t)}{\partial t}=\log\frac{(\omega_{0}+\sqrt{-1}\partial\bar{\partial}\varphi^\beta_{\gamma,\varepsilon}(t))^{n}}{\omega_{0}^{n}}+\gamma\varphi^\beta_{\gamma,\varepsilon}(t)+(\beta-\gamma)\varphi_\varepsilon\\
&\ \ \ \ \ \ \ \ \ \ \ \ \ \ \ \ \ \ \ \ +F_{0}+\log(\varepsilon^{2}+|s|_{h}^{2})^{1-\beta}\\
&\  \varphi^\beta_{\gamma,\varepsilon}(0)=\varphi_{0}.
\end{split}
\right.
\end{equation}
We rewrite equation (\ref{TKRF2}) as
\begin{equation}\label{TKRF4}
\left \{\begin{split}
&\  \frac{\partial \phi^\beta_{\gamma,\varepsilon}(t)}{\partial t}=\log\frac{(\omega^\beta_{\varepsilon}+\sqrt{-1}\partial\bar{\partial}\phi^\beta_{\gamma,\varepsilon}(t))^{n}}{(\omega^\beta_{\varepsilon})^{n}}+F^\beta_{\gamma,\varepsilon}+\gamma\Big(\phi^\beta_{\gamma,\varepsilon}(t)+k\chi_\beta\Big),\\
&\  \\
&\  \phi^\beta_{\gamma,\varepsilon}(0)=\varphi_{0}-k\chi_\beta:=\phi^\beta_{\varepsilon}
\end{split}
\right.
\end{equation}
where $F^\beta_{\gamma,\varepsilon}=F_{0}+\log(\frac{(\omega^\beta_{\varepsilon})^{n}}{\omega_{0}^{n}}\cdot(\varepsilon^{2}+|s|_{h}^{2})^{1-\beta})+(\beta-\gamma)\varphi_\varepsilon$ and $\phi^\beta_{\gamma,\varepsilon}(t)=\varphi^\beta_{\gamma,\varepsilon}(t)-k\chi_\beta$. By the same arguments as in Lemma \ref{1902} and \ref{201901}, we have
\begin{pro}\label{2019022401}
$R(\omega^\beta_{\gamma,\varepsilon}(t))-(\beta-\gamma)tr_{\omega^\beta_{\gamma,\varepsilon}(t)}\omega_{\varphi_\varepsilon}-(1-\beta)tr_{\omega^\beta_{\gamma,\varepsilon}(t)}\theta_{\varepsilon}$ are uniformly bounded from below by $-4n$ along the twisted K\"ahler-Ricci flows $(TKRF^\beta_{\mu_\gamma,\varepsilon})$, that is, for any $\gamma\in[0,1]$, $\beta\in[0,1]$, $\varepsilon>0$ and $t\geqslant 1$,
\begin{equation}\label{3.22.9}R(\omega^\beta_{\gamma,\varepsilon}(t))-(\beta-\gamma)tr_{\omega^\beta_{\gamma,\varepsilon}(t)}\omega_{\varphi_\varepsilon}-(1-\beta)tr_{\omega^\beta_{\gamma,\varepsilon}(t)}\theta_{\varepsilon}\geqslant -4n.
\end{equation}
\end{pro}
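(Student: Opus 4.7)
The plan is to follow the two-stage maximum-principle argument of Lemma~\ref{1902} almost verbatim, with only the bookkeeping of a second twisting form to check. Setting $\widetilde R(t) := R(\omega^\beta_{\gamma,\varepsilon}(t)) - (\beta-\gamma)\,tr_{\omega^\beta_{\gamma,\varepsilon}(t)}\omega_{\varphi_\varepsilon} - (1-\beta)\,tr_{\omega^\beta_{\gamma,\varepsilon}(t)}\theta_\varepsilon$, and noting that $\omega_{\varphi_\varepsilon}$ and $\theta_\varepsilon$ are closed $(1,1)$-forms independent of $t$, the standard evolution of scalar curvature along $\partial_t\omega = -Ric(\omega)+\gamma\omega+(\beta-\gamma)\omega_{\varphi_\varepsilon}+(1-\beta)\theta_\varepsilon$ should give
\begin{equation*}
\Big(\frac{\partial}{\partial t}-\Delta_{\omega^\beta_{\gamma,\varepsilon}(t)}\Big)\widetilde R(t) = \bigl|Ric(\omega^\beta_{\gamma,\varepsilon}(t))-(\beta-\gamma)\omega_{\varphi_\varepsilon}-(1-\beta)\theta_\varepsilon\bigr|^2_{\omega^\beta_{\gamma,\varepsilon}(t)} - \gamma\,\widetilde R(t),
\end{equation*}
in direct analogy with the identity underlying Lemma~\ref{1902}.

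For the short-time step I would consider $H(t,x):=(t-\tfrac{1}{2})^2\widetilde R(t,x)$ on $[\tfrac{1}{2},1]\times M$, together with the pointwise algebraic inequality $|A|^2\geq (tr A)^2/n$ applied to the Hermitian tensor $Ric-(\beta-\gamma)\omega_{\varphi_\varepsilon}-(1-\beta)\theta_\varepsilon$, which gives $|Ric-\cdots|^2_{\omega^\beta_{\gamma,\varepsilon}(t)}\geq \widetilde R^2/n$ with no restriction on the sign of $(\beta-\gamma)$. At a space-time minimum $(t_0,x_0)$ of $H$, either $t_0=\tfrac{1}{2}$ and $H\geq 0$ everywhere, or else completing the square at $(t_0,x_0)$ gives $(t_0-\tfrac{1}{2})^2\widetilde R(t_0,x_0)\geq -2n(t_0-\tfrac{1}{2})\geq -n$, so that $\widetilde R(1,\cdot)\geq -4n$.

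For $t\geq 1$ I would discard the nonnegative squared term and apply the maximum principle to the differential inequality $(\partial_t-\Delta_{\omega^\beta_{\gamma,\varepsilon}(t)})\widetilde R\geq -\gamma\,\widetilde R$ with initial data at $t=1$, concluding
\begin{equation*}
\widetilde R(t,\cdot)\geq -4n\,e^{-\gamma(t-1)}\geq -4n
\end{equation*}
since $\gamma\in[0,1]$; when $\gamma=0$ the minimum of $\widetilde R$ is simply nondecreasing, so the estimate is immediate.

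The only point requiring any real attention is the verification of the evolution identity above, since the twisted flow carries two closed, time-independent twisting forms rather than one; because $\partial_t$ annihilates both $\omega_{\varphi_\varepsilon}$ and $\theta_\varepsilon$, the computation is formally identical to the one in Lemma~\ref{1902} and presents no serious obstacle. The argument then goes through uniformly in $\gamma\in[0,1]$, $\beta\in[0,1]$, $\varepsilon>0$ and $t\geq 1$, exactly as claimed.
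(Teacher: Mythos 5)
Your proof is correct and follows the same two-stage maximum-principle argument the paper uses (the proposition is established in the paper simply by reference to Lemma~\ref{1902}, whose proof your write-up reproduces with the single twisting form $(1-\gamma)\theta_\varepsilon$ replaced by $(\beta-\gamma)\omega_{\varphi_\varepsilon}+(1-\beta)\theta_\varepsilon$). The evolution identity you state, the Cauchy--Schwarz bound $|Ric-T|^2\geq \widetilde R^2/n$ which needs no sign on $T$, and the two maximum-principle steps on $[\tfrac12,1]$ and $[1,\infty)$ are exactly the paper's route.
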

\begin{lem}\label{200} For any $T>0$, there exists uniform constant $C$, such that for any $t\in[0,T]$, $\varepsilon>0$ and $\gamma\in[0,1]$,
\begin{equation*}\label{20}
\|\phi^\beta_{\gamma,\varepsilon}(t)\|_{L^\infty(M)}\leqslant C,
\end{equation*}
where constant $C$ depends only on $\|\varphi_0\|_{L^\infty(M)}$, $\beta$, $n$, $\omega_{0}$ and $T$.
\end{lem}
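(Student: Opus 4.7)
The plan is to proceed exactly as in the proof of Lemma \ref{201901}, with only minor adjustments accounting for the extra twisting term $(\beta-\gamma)\varphi_\varepsilon$ in $F^\beta_{\gamma,\varepsilon}$. The equation (\ref{TKRF4}) has the same structural form as (\ref{TKRF9}) with $F_{\gamma,\varepsilon}$ replaced by $F^\beta_{\gamma,\varepsilon}$, so uniform $C^0$-bounds on $\phi^\beta_{\gamma,\varepsilon}(t)$ will follow once we establish uniform bounds on the forcing term and on the initial data.

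First I would verify uniform bounds on the ingredients. From (\ref{04}) we have $\|\varphi_\varepsilon\|_{C^0(M)} \leqslant C$ for all $\varepsilon \in (0,1]$, so $\|(\beta-\gamma)\varphi_\varepsilon\|_{C^0(M)}$ is bounded uniformly in $\gamma \in [0,1]$ and $\varepsilon$. By the Campana--Guenancia--P\u{a}un estimates ((15) and (25) in \cite{CGP}), both $\|F_0 + \log(\tfrac{(\omega^\beta_\varepsilon)^n}{\omega_0^n}(\varepsilon^2+|s|_h^2)^{1-\beta})\|_{C^0(M)}$ and $\|\chi_\beta\|_{C^0(M)}$ are uniformly bounded independent of $\varepsilon$. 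Combining these, there exist uniform constants $A_0, A_1 > 0$ such that
\begin{equation*}
\|F^\beta_{\gamma,\varepsilon} + k\gamma\chi_\beta\|_{C^0(M)} \leqslant A_0 \quad \text{and} \quad \|\phi^\beta_\varepsilon\|_{C^0(M)} \leqslant A_1
\end{equation*}
for all $\gamma \in [0,1]$ and $\varepsilon > 0$.

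Next, I would run the maximum-principle barrier argument for the upper bound. Rewriting (\ref{TKRF4}) and using the above bound on $F^\beta_{\gamma,\varepsilon}+k\gamma\chi_\beta$, we get
\begin{equation*}
\frac{\partial e^{-\gamma t}\phi^\beta_{\gamma,\varepsilon}(t)}{\partial t} \leqslant e^{-\gamma t}\log\frac{(e^{-\gamma t}\omega^\beta_\varepsilon+\sqrt{-1}\partial\bar\partial e^{-\gamma t}\phi^\beta_{\gamma,\varepsilon}(t))^n}{(e^{-\gamma t}\omega^\beta_\varepsilon)^n} + e^{-\gamma t}A_0.
\end{equation*}
For any $\delta > 0$, applying the maximum principle to $\tilde\phi(t) := e^{-\gamma t}\phi^\beta_{\gamma,\varepsilon}(t) - A_0\int_0^t e^{-\gamma s}ds - \delta t$ on $[0,T]\times M$ shows the maximum must be achieved at $t=0$ (an interior maximum would contradict the inequality above, which would yield $0 \leqslant -\delta$). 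Letting $\delta \to 0$ gives
\begin{equation*}
\phi^\beta_{\gamma,\varepsilon}(t) \leqslant A_1 e^{T} + A_0 T e^{T}
\end{equation*}
for all $t \in [0,T]$. The lower bound follows by a symmetric argument, applying the maximum principle to $-\phi^\beta_{\gamma,\varepsilon}$ with the opposite barrier $e^{-\gamma t}\phi^\beta_{\gamma,\varepsilon}(t) + A_0\int_0^t e^{-\gamma s}ds + \delta t$.

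This is essentially a routine maximum-principle argument rather than a delicate step; no real obstacle is expected, since the twisted term $(\beta-\gamma)\varphi_\varepsilon$ contributes only a uniformly bounded perturbation to the forcing. The main point to track is that every constant entering $A_0, A_1$ depends only on $\|\varphi_0\|_{L^\infty(M)}$, $\beta$, $n$, $\omega_0$ and $T$, which is automatic from the uniform bounds quoted above.
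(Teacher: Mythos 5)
Your proof is correct and follows the same route the paper intends: the paper does not give an explicit proof of Lemma \ref{200} but simply says ``by the same arguments as in Lemma \ref{1902} and \ref{201901},'' and your barrier argument with $\tilde\phi(t)=e^{-\gamma t}\phi^\beta_{\gamma,\varepsilon}(t)-A_0\int_0^t e^{-\gamma s}ds-\delta t$ is exactly the maximum-principle proof of Lemma \ref{201901} transported to $(\ref{TKRF4})$, with the only new ingredient being the observation that $(\beta-\gamma)\varphi_\varepsilon$ is uniformly bounded by $(\ref{04})$. You correctly track that $\omega_\varepsilon^\beta$ and $\chi_\beta$ here use the fixed angle $\beta$ (not $\gamma$), so CGP bounds apply without any $\gamma$-dependence, and the final constant scales like $e^T$.

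\end{document}
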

By using the arguments in section $2$ of \cite{JWLXZ1}, we get the following uniform Laplacian $C^2$-estimates on $M$ along the twisted K\"ahler-Ricci flow $(TKRF^\beta_{\gamma,\varepsilon})$.
\begin{lem}\label{21} For any $T>0$, there exists constant $C$ depending only on $\|\varphi_0\|_{L^\infty(M)}$, $n$, $\beta$, $\omega_{0}$ and $T$, such that for any $t\in(0,T]$, $\varepsilon>0$ and $\gamma\in[0,1]$,
\begin{equation*}\label{21}
\frac{t^n}{C}\leqslant\frac{(\omega^\beta_{\varepsilon}+\sqrt{-1}\partial\bar{\partial}\phi^\beta_{\gamma,\varepsilon}(t))^{n}}{(\omega^\beta_{\varepsilon})^n}\leqslant e^{\frac{C}{t}}.
\end{equation*}
\end{lem}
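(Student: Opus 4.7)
The plan is to adapt the parabolic Monge--Amp\`ere barrier arguments establishing the analogous statement Lemma~\ref{201902} (and used in section~$2$ of \cite{JWLXZ1} and Song--Tian~\cite{JSGT}) to the twisted setting of $(\ref{TKRF4})$. The only structural difference with equation~$(\ref{TKRF9})$ is the extra twist $(\beta-\gamma)\varphi_\varepsilon$ inside the forcing term
\begin{equation*}
F^\beta_{\gamma,\varepsilon}=F_0+\log\Bigl(\tfrac{(\omega^\beta_\varepsilon)^n}{\omega_0^n}(\varepsilon^2+|s|_h^2)^{1-\beta}\Bigr)+(\beta-\gamma)\varphi_\varepsilon.
\end{equation*}
By the Campana--Guenancia--P\u{a}un estimates \cite{CGP} already used in Lemma~\ref{201901} the first two summands are uniformly bounded in $L^\infty(M)$, and $(\beta-\gamma)\varphi_\varepsilon$ is uniformly bounded by~$(\ref{04})$; hence $F^\beta_{\gamma,\varepsilon}$ is uniformly bounded in $L^\infty(M)$ independently of $\gamma\in[0,1]$ and $\varepsilon\in(0,1]$. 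Combined with the uniform $C^0$-bound on $\phi^\beta_{\gamma,\varepsilon}$ from Lemma~\ref{200} and on $k\chi_\beta$, this is precisely what makes every constant in the barrier arguments below uniform in the parameters.

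For the upper bound I would use $H(t,x)=t\dot\phi^\beta_{\gamma,\varepsilon}(t,x)-A\phi^\beta_{\gamma,\varepsilon}(t,x)+Bt$ with $A>1+T$ and $B>0$. Differentiating $(\ref{TKRF4})$ in time (whose non-potential terms are $t$-independent) gives $(\partial_t-\Delta_{\omega^\beta_{\gamma,\varepsilon}(t)})\dot\phi^\beta_{\gamma,\varepsilon}=\gamma\dot\phi^\beta_{\gamma,\varepsilon}$, and combined with $(\partial_t-\Delta_{\omega^\beta_{\gamma,\varepsilon}(t)})\phi^\beta_{\gamma,\varepsilon}=\dot\phi^\beta_{\gamma,\varepsilon}-n+tr_{\omega^\beta_{\gamma,\varepsilon}(t)}\omega^\beta_\varepsilon$ one computes
\begin{equation*}
\bigl(\partial_t-\Delta_{\omega^\beta_{\gamma,\varepsilon}(t)}\bigr)H=(1+\gamma t-A)\dot\phi^\beta_{\gamma,\varepsilon}+An-A\,tr_{\omega^\beta_{\gamma,\varepsilon}(t)}\omega^\beta_\varepsilon+B.
\end{equation*}
At an interior space-time maximum of $H$, the negativity of $(1+\gamma t-A)$ and non-negativity of $tr_{\omega^\beta_{\gamma,\varepsilon}(t)}\omega^\beta_\varepsilon$ force $\dot\phi^\beta_{\gamma,\varepsilon}(t_0,x_0)\leqslant(An+B)/(A-1-\gamma T)$. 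Invoking Lemma~\ref{200} gives $H\leqslant C$ on $[0,T]\times M$, hence $t\dot\phi^\beta_{\gamma,\varepsilon}\leqslant C$; re-exponentiating $(\ref{TKRF4})$ yields $(\omega^\beta_\varepsilon+\sqrt{-1}\partial\bar\partial\phi^\beta_{\gamma,\varepsilon}(t))^n/(\omega^\beta_\varepsilon)^n\leqslant e^{C/t}$.

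For the lower bound, which is the more delicate half because of the irregular initial data, I would follow Song--Tian and introduce a logarithmic time-correction, e.g.\ $G(t,x)=-t\dot\phi^\beta_{\gamma,\varepsilon}(t,x)+A\phi^\beta_{\gamma,\varepsilon}(t,x)+n(t\log t-t)+Bt$ with $A>1+T$ and $B$ large. The same derivation gives
\begin{equation*}
\bigl(\partial_t-\Delta_{\omega^\beta_{\gamma,\varepsilon}(t)}\bigr)G=(A-1-\gamma t)\dot\phi^\beta_{\gamma,\varepsilon}+n\log t-An+A\,tr_{\omega^\beta_{\gamma,\varepsilon}(t)}\omega^\beta_\varepsilon+B,
\end{equation*}
and at an interior maximum the AM--GM bound
\begin{equation*}
tr_{\omega^\beta_{\gamma,\varepsilon}(t)}\omega^\beta_\varepsilon\geqslant n\exp\Bigl(\tfrac{-\dot\phi^\beta_{\gamma,\varepsilon}+F^\beta_{\gamma,\varepsilon}+\gamma(\phi^\beta_{\gamma,\varepsilon}+k\chi_\beta)}{n}\Bigr)
\end{equation*}
makes the exponential trace grow faster in $-\dot\phi^\beta_{\gamma,\varepsilon}$ than the linear $(A-1-\gamma t)\dot\phi^\beta_{\gamma,\varepsilon}$ decays, forcing $\dot\phi^\beta_{\gamma,\varepsilon}(t_0,x_0)\geqslant n\log t_0-C$. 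Lemma~\ref{200} then gives $G\leqslant C$ throughout $[0,T]\times M$, whence $\dot\phi^\beta_{\gamma,\varepsilon}\geqslant n\log t-C'$ and $(\omega^\beta_\varepsilon+\sqrt{-1}\partial\bar\partial\phi^\beta_{\gamma,\varepsilon}(t))^n\geqslant t^n(\omega^\beta_\varepsilon)^n/C$. The principal obstacle is tuning the lower-bound barrier so that the exponential trace actually dominates at the maximum while leaving room for the $n\log t$ correction; the argument from section~$2$ of \cite{JWLXZ1} handles this in the untwisted case, and by the uniform $L^\infty$ control of $F^\beta_{\gamma,\varepsilon}$ discussed at the outset it transfers verbatim to the present twisted flow, which is why the paper states Lemma~\ref{21} as an immediate consequence of ``the arguments in section~$2$ of \cite{JWLXZ1}''.
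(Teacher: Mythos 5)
Your upper-bound barrier $H$ is set up correctly and matches the Cao/Song--Tian template the paper implicitly invokes, and your observation that $F^\beta_{\gamma,\varepsilon}$ is uniformly bounded in $L^\infty$ (via CGP and $(\ref{04})$) is precisely what makes every constant independent of $\gamma$ and $\varepsilon$. The lower-bound half, however, does not close as written. With $G = -t\dot\phi^\beta_{\gamma,\varepsilon} + A\phi^\beta_{\gamma,\varepsilon} + n(t\log t - t) + Bt$, the term $+A\phi^\beta_{\gamma,\varepsilon}$ contributes $+A\,tr_{\omega^\beta_{\gamma,\varepsilon}(t)}\omega^\beta_\varepsilon$ to $(\partial_t - \Delta)G$, and the AM--GM lower bound on that trace only makes the left-hand side of $(\partial_t - \Delta)G \geqslant 0$ \emph{larger}: as $\dot\phi^\beta_{\gamma,\varepsilon}(t_0,x_0)\to -\infty$ the exponential dominates the linear $(A-1-\gamma t_0)\dot\phi^\beta_{\gamma,\varepsilon}$ and drives $(\partial_t - \Delta)G\to +\infty$. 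But at an interior maximum the inequality $(\partial_t - \Delta)G \geqslant 0$ is what the maximum principle \emph{gives you automatically}, not a constraint you can violate; confirming it holds with room to spare yields no bound on $\dot\phi^\beta_{\gamma,\varepsilon}(t_0,x_0)$ whatsoever. Nothing in the sketch actually ``forces'' $\dot\phi^\beta_{\gamma,\varepsilon}(t_0,x_0)\geqslant n\log t_0 - C$.

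To repair it, the trace term must appear with the opposite sign relative to the direction in which the maximum principle applies, so that AM--GM produces a genuine contradiction. Concretely, either replace $+A\phi^\beta_{\gamma,\varepsilon}$ by $-A\phi^\beta_{\gamma,\varepsilon}$ (keeping the maximum), or keep $+A\phi^\beta_{\gamma,\varepsilon}$ but work instead with the \emph{minimum} of $G' = t\dot\phi^\beta_{\gamma,\varepsilon} + A\phi^\beta_{\gamma,\varepsilon} + n(t\log t - t) - Bt$. At an interior minimum of $G'$ one has $(\partial_t - \Delta)G' \leqslant 0$, i.e.
\begin{equation*}
(1+A+\gamma t_0)\,\dot\phi^\beta_{\gamma,\varepsilon}(t_0,x_0) + A\,tr_{\omega^\beta_{\gamma,\varepsilon}(t_0)}\omega^\beta_\varepsilon \leqslant An - n\log t_0 + B,
\end{equation*}
and now AM--GM makes the left side blow up as $\dot\phi^\beta_{\gamma,\varepsilon}\to -\infty$, so one extracts $\dot\phi^\beta_{\gamma,\varepsilon}(t_0,x_0)\geqslant u_-(t_0)$ with $t_0\,u_-(t_0)$ uniformly bounded, and the chain closes. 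You should also verify the parabolic boundary more carefully than your sketch does, since $t\dot\phi^\beta_{\gamma,\varepsilon}(t)$ is not controlled a priori as $t\to 0^+$ for the rough initial data; but the sign mismatch is the essential gap.
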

\begin{lem}\label{22} For any $T>0$, there exists constant $C$ depending only on $\|\varphi_0\|_{L^\infty(M)}$, $n$, $\beta$, $\omega_{0}$ and $T$, such that for any $t\in(0,T]$, $\varepsilon>0$ and $\gamma\in[0,1]$,
\begin{equation*}\label{23}
e^{-\frac{C}{t}}\omega^\beta_{\varepsilon}\leqslant\omega^\beta_{\gamma,\varepsilon}(t)\leqslant e^{\frac{C}{t}}\omega^\beta_{\varepsilon}.
\end{equation*}
\end{lem}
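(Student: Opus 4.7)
The proof will parallel that of Lemma \ref{201904} (the analogue for the flow $(TKRF_{\mu_\gamma,\varepsilon})$), following the strategy of Section 2 of \cite{JWLXZ1} and Song-Tian \cite{JSGT}. The idea is a parabolic Schwarz/Aubin-Yau estimate on $H(t,x):=tr_{\omega^\beta_\varepsilon}\omega^\beta_{\gamma,\varepsilon}(t)$, combined with Lemma \ref{21} to pass from the upper bound on $\omega^\beta_{\gamma,\varepsilon}(t)$ to the two-sided equivalence. First I would record the evolution inequality
\begin{equation*}
(\partial_t-\Delta_{\omega^\beta_{\gamma,\varepsilon}(t)})\log H \leqslant C_1\, tr_{\omega^\beta_{\gamma,\varepsilon}(t)}\omega^\beta_\varepsilon - \gamma + \frac{(\beta-\gamma)\,tr_{\omega^\beta_{\gamma,\varepsilon}(t)}\omega_{\varphi_\varepsilon}+(1-\beta)\,tr_{\omega^\beta_{\gamma,\varepsilon}(t)}\theta_\varepsilon}{\text{bounded}},
\end{equation*}
where the constant $C_1$ is an upper bound for the bisectional curvature of $\omega^\beta_\varepsilon$. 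The key input here is that, by the Campana-Guenancia-P\u aun calculation \cite{CGP} (used also in \cite{JWLCJZ,JWLXZ1}), the bisectional curvature of $\omega^\beta_\varepsilon=\omega_0+k\sqrt{-1}\partial\bar\partial\chi_\beta$ is bounded above by a constant independent of $\varepsilon\in(0,1]$, so $C_1$ depends only on $n$, $\beta$ and $\omega_0$.

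Next I would consider the auxiliary function
\begin{equation*}
G(t,x)=t\log H(t,x)-A\,\phi^\beta_{\gamma,\varepsilon}(t)+At
\end{equation*}
on $[0,T]\times M$, where $A$ is a large constant to be chosen (depending only on $C_1$ and $n$). Using $\Delta_{\omega^\beta_{\gamma,\varepsilon}(t)}\phi^\beta_{\gamma,\varepsilon}(t)=n-tr_{\omega^\beta_{\gamma,\varepsilon}(t)}\omega^\beta_\varepsilon$ and the equation satisfied by $\dot\phi^\beta_{\gamma,\varepsilon}(t)$, a direct calculation gives
\begin{equation*}
(\partial_t-\Delta_{\omega^\beta_{\gamma,\varepsilon}(t)})G\leqslant \log H + (tC_1-A)\,tr_{\omega^\beta_{\gamma,\varepsilon}(t)}\omega^\beta_\varepsilon + C_2,
\end{equation*}
where $C_2$ is controlled by the uniform bound on $F^\beta_{\gamma,\varepsilon}+\gamma(\phi^\beta_{\gamma,\varepsilon}(t)+k\chi_\beta)$ (which uses Lemma \ref{200}, the uniform boundedness of $\varphi_\varepsilon$ from (\ref{04}), and the Campana-Guenancia-P\u aun estimates for the density of $\omega^\beta_\varepsilon$). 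Choosing $A\geqslant 2TC_1+1$, the coefficient of $tr_{\omega^\beta_{\gamma,\varepsilon}(t)}\omega^\beta_\varepsilon$ is $\leqslant -1$ on $[0,T]$.

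At a maximum point of $G$ on $[0,T]\times M$, the maximum principle yields
\begin{equation*}
\log H \leqslant tr_{\omega^\beta_{\gamma,\varepsilon}(t)}\omega^\beta_\varepsilon + C_3.
\end{equation*}
Combining with the elementary inequality
\begin{equation*}
tr_{\omega^\beta_{\gamma,\varepsilon}(t)}\omega^\beta_\varepsilon \leqslant \frac{1}{(n-1)!}\big(tr_{\omega^\beta_\varepsilon}\omega^\beta_{\gamma,\varepsilon}(t)\big)^{n-1}\cdot\frac{(\omega^\beta_\varepsilon)^n}{(\omega^\beta_{\gamma,\varepsilon}(t))^n}
\end{equation*}
and the uniform lower bound $\frac{(\omega^\beta_{\gamma,\varepsilon}(t))^n}{(\omega^\beta_\varepsilon)^n}\geqslant t^n/C$ from Lemma \ref{21}, one bounds $H$ at that point and hence $\sup G$ by a constant depending only on $\|\varphi_0\|_{L^\infty(M)}$, $n$, $\beta$, $\omega_0$ and $T$. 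Since $\phi^\beta_{\gamma,\varepsilon}(t)$ is uniformly bounded on $[0,T]\times M$ by Lemma \ref{200}, this gives $t\log H\leqslant C$, i.e.\ the upper bound $\omega^\beta_{\gamma,\varepsilon}(t)\leqslant e^{C/t}\omega^\beta_\varepsilon$. The matching lower bound follows by combining this upper bound with the volume lower bound from Lemma \ref{21} through the determinant identity.

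The main obstacle is controlling the curvature term of $\omega^\beta_\varepsilon$ uniformly in $\varepsilon$: a naive Schwarz lemma would produce a bisectional curvature that blows up near $D$ as $\varepsilon\to 0$. The resolution is to invoke the Campana-Guenancia-P\u aun construction of $\chi_\beta$, for which the bisectional curvature of $\omega^\beta_\varepsilon$ has an explicit uniform upper bound; this also controls the extra twisting term $(\beta-\gamma)\omega_{\varphi_\varepsilon}+(1-\beta)\theta_\varepsilon$, whose sign is favorable since $\gamma\in[0,\beta]$ in the twisted flow $(TKRF^\beta_{\gamma,\varepsilon})$ of interest, while for the remaining range $\gamma\in[0,1]$ the corresponding terms can be absorbed using the uniform bounds on $\varphi_\varepsilon$.
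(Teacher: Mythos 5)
There is a genuine gap, and it is exactly at the point you flag as ``the main obstacle.'' The Campana--Guenancia--P\u aun regularization does \emph{not} give a uniform-in-$\varepsilon$ upper bound on the holomorphic bisectional curvature of $\omega^\beta_\varepsilon$; already in the one-dimensional model the Gaussian curvature of $\frac{dz\,d\bar z}{(\varepsilon^2+|z|^2)^{1-\beta}}$ equals $\frac{(1-\beta)\varepsilon^2}{(\varepsilon^2+|z|^2)^{1+\beta}}$, which blows up like $\varepsilon^{-2\beta}$ at the cone point. Consequently the constant $C_1$ in your evolution inequality is not uniform, and your choice $A\geqslant 2TC_1+1$ would have to blow up with $\varepsilon\to 0$; the maximum-principle argument with the classical auxiliary function $G=t\log H-A\phi^\beta_{\gamma,\varepsilon}(t)+At$ therefore fails to produce a constant independent of $\varepsilon$. (The same issue arises with the term $tr_{\omega^\beta_{\gamma,\varepsilon}(t)}\theta_\varepsilon$, which you describe as bounded; $\theta_\varepsilon$ blows up like $(\varepsilon^2+|s|^2)^{-1}$ near $D$, faster than $\omega^\beta_\varepsilon$.)

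The correct resolution --- which is what the paper actually does, following Guenancia--P\u aun \cite{GP1} and Lemma $2.3$ of \cite{JWLXZ1} --- is not a curvature bound but a \emph{correction term in the auxiliary function}: one adds $t\Psi^\rho_\varepsilon$ to $t\log tr_{\omega^\beta_\varepsilon}\omega^\beta_{\gamma,\varepsilon}(t)$, where $\Psi^\rho_\varepsilon$ is the uniformly bounded Guenancia--P\u aun function of $(\ref{28})$. Although $\Psi^\rho_\varepsilon$ is $C^0$-bounded, its Laplacian has a large positive piece concentrated near $D$ that dominates the unbounded negative contribution from the curvature and twist terms, which is what makes $(\ref{29})$ true with a constant depending only on $n,\beta,\omega_0,T$. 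Your proposal is missing this correction entirely. A second, smaller issue: your treatment of the new term $(\beta-\gamma)\varphi_\varepsilon$ is too loose --- a $C^0$ bound on $\varphi_\varepsilon$ does not control $\Delta_{\omega^\beta_\varepsilon}\varphi_\varepsilon$, which is what actually enters the evolution of $\log tr$; the paper handles this by a case split ($\gamma>\beta$ directly via positivity of $\omega_{\varphi_\varepsilon}$, and $\gamma\leqslant\beta$ by folding $t(\beta-\gamma)\varphi_\varepsilon$ itself into the auxiliary function $H_2$).
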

\begin{proof} Since there exists term $(\beta-\gamma)\varphi_\varepsilon$ in equation $(\ref{TKRF4})$, from the proofs of  Proposition $3.1$ in \cite{JWLXZ} and Lemma $2.3$ in \cite{JWLXZ1}, we need only deal with the term
\begin{equation}\label{24}
t(\beta-\gamma)\frac{\Delta_{\omega^\beta_{\varepsilon}}\varphi_\varepsilon}{tr_{\omega^\beta_{\varepsilon}}\omega^\beta_{\gamma,\varepsilon}(t)}.
\end{equation}
From \cite{CGP}, there exists a uniform constant $A$ depending on $\beta$ and $\omega_0$ such that
\begin{equation}\label{25}
0\leqslant\omega_{\varphi_\varepsilon}=\omega_0+\sqrt{-1}\partial\bar{\partial}\varphi_\varepsilon\leqslant A \omega^\beta_{\varepsilon}+\sqrt{-1}\partial\bar{\partial}\varphi_\varepsilon.
\end{equation}
When $\gamma>\beta$, by $(\ref{25})$ and $n\leqslant tr_{\omega^\beta_{\gamma,\varepsilon}(t)}\omega_\varepsilon^\beta\cdot tr_{\omega_\varepsilon^\beta}\omega^\beta_{\gamma,\varepsilon}(t)$, we have
\begin{equation}\label{250001}
t(\beta-\gamma)\frac{\Delta_{\omega^\beta_{\varepsilon}}\varphi_\varepsilon}{tr_{\omega^\beta_{\varepsilon}}\omega^\beta_{\gamma,\varepsilon}(t)}\leqslant\frac{AnT(\gamma-\beta)}{tr_{\omega^\beta_{\varepsilon}}\omega^\beta_{\gamma,\varepsilon}(t)}\leqslant AT(\gamma-\beta)tr_{\omega^\beta_{\gamma,\varepsilon}(t)}\omega_\varepsilon^\beta.
\end{equation}
When $\gamma\leqslant\beta$, we have
\begin{equation}\label{26}
An+\Delta_{\omega^\beta_{\varepsilon}}\varphi_\varepsilon\leqslant tr_{\omega^\beta_{\gamma,\varepsilon}(t)} (A \omega^\beta_{\varepsilon}+\sqrt{-1}\partial\bar{\partial}\varphi_\varepsilon)\cdot tr_{\omega^\beta_\varepsilon}\omega^\beta_{\gamma,\varepsilon}(t).
\end{equation}
Hence we have
\begin{equation}\label{27}
t(\beta-\gamma)\frac{\Delta_{\omega^\beta_{\varepsilon}}\varphi_\varepsilon}{tr_{\omega^\beta_{\varepsilon}}\omega^\beta_{\gamma,\varepsilon}(t)} \leqslant AT(\beta-\gamma)tr_{\omega^\beta_{\gamma,\varepsilon}(t)}\omega^\beta_\varepsilon+t(\beta-\gamma) \Delta_{\omega^\beta_{\gamma,\varepsilon}(t)}\varphi_\varepsilon.
\end{equation}
Let
\begin{equation}\label{28}
\Psi^\rho_{\varepsilon}=B\frac{1}{\rho}\int_{0}^{|s|_h^2}\frac{(\varepsilon^{2}+r)^{\rho}-
\varepsilon^{2\rho}}{r}dr
\end{equation}
be the uniformly bounded function introduced by Guenancia-P$\breve{a}$un in \cite{GP1}. Denote functions $H_1=t\log tr_{\omega^\beta_{\varepsilon}}\omega^\beta_{\gamma,\varepsilon}(t)+t\Psi^\rho_{\varepsilon}$ when $\gamma>\beta$ and $H_2=t\log tr_{\omega^\beta_{\varepsilon}}\omega^\beta_{\gamma,\varepsilon}(t)+t\Psi^\rho_{\varepsilon}+t(\beta-\gamma)\varphi_\varepsilon$ when $\gamma\leqslant\beta$. By choosing suitable $B$ and $\rho$, and following the arguments in \cite{JWLXZ1}, we have
\begin{equation}\label{29}
(\frac{\partial}{\partial t}-\Delta_{\omega^\beta_{\gamma,\varepsilon}(t)})H_i\leqslant\log tr_{\omega^\beta_{\varepsilon}}\omega^\beta_{\gamma,\varepsilon}(t)+Ctr_{\omega^\beta_{\gamma,\varepsilon}(t)}\omega^\beta_{\varepsilon}+C,\ \ \ \ \ i=1,\ 2,
\end{equation}
where constant $C$ depends only on $\|\varphi_0\|_{L^\infty(M)}$, $n$, $\beta$, $\omega_{0}$ and $T$. Then by the arguments as that in the proof of Lemma $2.3$ in \cite{JWLXZ1}, we get the uniform Laplacian $C^2$-estimates for $\omega^\beta_{\gamma,\varepsilon}(t)$.
\end{proof}

Hence away from time $0$, on any compact subset in $M\setminus D$, $\omega^\beta_{\gamma,\varepsilon}$ are uniformly equivalent to $\omega_0$. Then Evans-Krylov-Safonov's estimates (see \cite{NVK}) implies the following proposition.
\begin{pro}\label{30} For any $0<\delta<T<\infty$, $k\in\mathbb{N}^{+}$ and $B_r(p)\subset\subset M\setminus D$, there exists constant $C_{\beta,\delta,T,k,p,r}$, such that for any $\varepsilon>0$ and $\gamma\in[0,1]$,
\begin{equation*}\label{31}
\begin{split}
\|\varphi^\beta_{\gamma,\varepsilon}(t)\|_{C^{k}\big([\delta,T]\times B_r(p)\big)}\leqslant C_{\beta,\delta,T,k,p,r},\end{split}
\end{equation*}
where constant $C_{\beta,\delta,T,k,p,r}$ depends on $\|\varphi_0\|_{L^\infty(M)}$, $n$, $\beta$, $\delta$, $k$, $T$, $\omega_0$ and $dist_{\omega_{0}}(B_r(p),D)$.
\end{pro}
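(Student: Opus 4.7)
The plan is to model the argument on the proof of Lemma \ref{201906}, the analogous statement for the untwisted flow $(TKRF_{\mu_\gamma,\varepsilon})$, with the extra work needed to control the new twist term $(\beta-\gamma)\varphi_\varepsilon$ on compact sets away from $D$. The core input is Lemma \ref{22}, which gives $e^{-C/t}\omega^\beta_\varepsilon\leqslant\omega^\beta_{\gamma,\varepsilon}(t)\leqslant e^{C/t}\omega^\beta_\varepsilon$ uniformly in $\varepsilon$ and $\gamma$, together with Lemma \ref{200}, which gives uniform $C^0$-bounds on $\phi^\beta_{\gamma,\varepsilon}(t)$. On a fixed ball $B_r(p)\subset\subset M\setminus D$, $|s|_h^2$ is bounded below by a positive constant, so $\chi_\beta$ and $\omega^\beta_\varepsilon$ are smooth and have $C^k(B_r(p))$-norms bounded independently of $\varepsilon\in(0,1]$; consequently $\omega^\beta_\varepsilon$ is uniformly equivalent to $\omega_0$ on $B_r(p)$, and the same is true of $\omega^\beta_{\gamma,\varepsilon}(t)$ on $[\delta,T]\times B_r(p)$.

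First I would verify uniform regularity of the ``frozen coefficients'' in equation $(\ref{TKRF2})$ restricted to $[\delta,T]\times B_{r'}(p)$ for some $r'>r$ with $B_{r'}(p)\subset\subset M\setminus D$. The term $F_0$ is smooth on $M$; the term $(1-\beta)\log(\varepsilon^2+|s|_h^2)$ is $C^\infty(B_{r'}(p))$ with all $C^k$-norms bounded uniformly as $\varepsilon\to 0$, since $|s|_h^2$ stays away from zero on $B_{r'}(p)$. The one delicate ingredient is $\varphi_\varepsilon$: since $\varphi_\beta$ is smooth on $M\setminus D$ by Theorem $2$ of \cite{JMR}, Demailly's regularization $\varphi_\varepsilon$ can (and should) be arranged so that $\varphi_\varepsilon\to\varphi_\beta$ in $C^k_{loc}(M\setminus D)$, hence $\|\varphi_\varepsilon\|_{C^k(B_{r'}(p))}$ is uniformly bounded independently of $\varepsilon$.

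Next, I would combine these uniform bounds with Lemma \ref{22}, which gives uniform ellipticity $C^{-1}\omega_0\leqslant\omega^\beta_{\gamma,\varepsilon}(t)\leqslant C\omega_0$ on $[\delta,T]\times B_{r'}(p)$, and Lemma \ref{21}, which bounds the Monge--Amp\`ere mass from above and below. Rewriting $(\ref{TKRF2})$ as a uniformly parabolic complex Monge--Amp\`ere equation with a uniformly $C^0$-bounded right-hand side, I would apply Evans--Krylov--Safonov (\cite{NVK}) on shrinking parabolic subcylinders to extract a uniform $C^{2+\alpha,1+\alpha/2}$ bound on $[\delta'',T]\times B_{r''}(p)$ with $\delta<\delta''$ and $r''<r'$.

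The remainder is a routine parabolic Schauder bootstrap: differentiating the equation and using the $C^{2+\alpha}$-estimate, one successively upgrades to $C^{k+2+\alpha}$ on slightly smaller cylinders, the uniform $C^k(B_{r'}(p))$-bounds on $\varphi_\varepsilon$, $F^\beta_{\gamma,\varepsilon}$, and $\omega^\beta_\varepsilon$ feeding back at each step. Choosing the final cylinder so that it contains $[\delta,T]\times B_r(p)$ yields the desired constant $C_{\beta,\delta,T,k,p,r}$. The only nontrivial point is the uniform smoothness of the regularizing sequence $\varphi_\varepsilon$ on $B_{r'}(p)$; all other steps are formally identical to the proof of Lemma \ref{201906}, the extra twist term $(\beta-\gamma)\varphi_\varepsilon$ contributing only controlled lower-order terms.
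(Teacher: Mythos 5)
Your proposal is correct and follows the same route the paper takes: Lemma \ref{22} gives uniform ellipticity of $\omega^\beta_{\gamma,\varepsilon}(t)$ away from $D$ and away from $t=0$, after which Evans--Krylov--Safonov and a Schauder bootstrap yield the interior $C^k$ bounds. The one elaboration you add beyond the paper's one-line argument is the explicit attention to the regularity of the twist coefficient $(\beta-\gamma)\varphi_\varepsilon$: this is a worthwhile point, since the paper only records $L^\infty$-convergence of $\varphi_\varepsilon$ to $\varphi_\beta$ and a $C^0$ bound $(\ref{04})$, but the Schauder bootstrap does require uniform $C^m$ control of $\varphi_\varepsilon$ on fixed compact sets away from $D$ for every $m$; this holds because $\varphi_\beta$ is smooth on $M\setminus D$ and Demailly's Bergman-kernel regularization converges in $C^\infty_{loc}$ on the locus where the approximated potential is smooth, which is the fact you are implicitly invoking with ``can (and should) be arranged''. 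It would strengthen the write-up to cite that local smooth convergence of the Demailly sequence explicitly rather than asserting it can be arranged; the rest of the argument is otherwise in agreement with the paper's.
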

Straightforward calculation shows that the twisted Ricci potential $u^\beta_{\gamma,\varepsilon}(t)$ with respect to $\omega^\beta_{\gamma,\varepsilon}(t)$ at $t=\frac{1}{2}$ can be written as
\begin{equation*}
u^\beta_{\gamma,\varepsilon}(\frac{1}{2})=\log\frac{(\omega^\beta_{\gamma,\varepsilon}(\frac{1}{2}))^n(\varepsilon^2+|s|_h^2)^{1-\beta}}{\omega_0^n}+F_0
+\gamma\varphi^\beta_{\gamma,\varepsilon}(\frac{1}{2})+(\beta-\gamma)\varphi_\varepsilon+C^\beta_{\gamma,\varepsilon}(\frac{1}{2}),
\end{equation*}
where $C^\beta_{\gamma,\varepsilon}(\frac{1}{2})$ is a normalization constant such that $\frac{1}{V}\int_M e^{-u^\beta_{\gamma,\varepsilon}(\frac{1}{2})}dV^\beta_{\gamma,\varepsilon}(\frac{1}{2})=1$. By $(\ref{04})$, Lemma \ref{200} and Lemma \ref{21}, $C^\beta_{\gamma,\varepsilon}(\frac{1}{2})$ and $u^\beta_{\gamma,\varepsilon}(\frac{1}{2})$ are uniformly bounded. Hence $A^\beta_{\gamma,\varepsilon}(\frac{1}{2})$ are uniformly bounded from below for any $\gamma\in(0,1)$ and $\varepsilon\in(0,1)$. Since $A^\beta_{\gamma,\varepsilon}(t)$ is increasing along $(TKRF^\beta_{\gamma,\varepsilon})$ when $\gamma\in(0,\beta]$, combining this with Jensen's inequality, we have
\begin{lem}\label{1.8.4} There exists uniform constant $C$, such that
\begin{equation}\label{3.22.8}|A^\beta_{\gamma,\varepsilon}(t)|\leqslant C\end{equation}
for any $t\geqslant \frac{1}{2}$, $\gamma\in(0,\beta]$ and $\varepsilon\in(0,1]$.
\end{lem}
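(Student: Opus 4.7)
The plan is to prove the upper and lower bounds on $A^\beta_{\gamma,\varepsilon}(t)$ separately; both will follow from ingredients already assembled in the paragraph preceding the statement.

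For the upper bound $A^\beta_{\gamma,\varepsilon}(t)\leqslant 0$, I would apply Jensen's inequality to the concave logarithm against the probability measure $\tfrac{1}{V}e^{-u^\beta_{\gamma,\varepsilon}(t)}dV^\beta_{\gamma,\varepsilon}(t)$, exactly as in the estimate $(\ref{20190308001})$ used earlier. The output is $A^\beta_{\gamma,\varepsilon}(t)\leqslant\log\bigl(\tfrac{1}{V}\int_M dV^\beta_{\gamma,\varepsilon}(t)\bigr)=0$. This bound depends only on the defining normalization of $u^\beta_{\gamma,\varepsilon}(t)$ and so holds uniformly in $t$, $\gamma$, and $\varepsilon$.

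For the lower bound I would anchor at $t=\tfrac{1}{2}$ and then propagate forward in time. At $t=\tfrac{1}{2}$, the displayed expression for $u^\beta_{\gamma,\varepsilon}(\tfrac{1}{2})$ is a sum of terms each controlled uniformly: the $C^0$-bound from Lemma \ref{200} (applied with $T=1$), the two-sided volume-ratio bound from Lemma \ref{21}, the bound $(\ref{04})$ on $\varphi_\varepsilon$, together with the integral normalization which forces the constant $C^\beta_{\gamma,\varepsilon}(\tfrac{1}{2})$ itself to be uniformly bounded. This yields $\|u^\beta_{\gamma,\varepsilon}(\tfrac{1}{2})\|_{C^0(M)}\leqslant C$, hence $|A^\beta_{\gamma,\varepsilon}(\tfrac{1}{2})|\leqslant C$.

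To extend the lower bound to all $t\geqslant\tfrac{1}{2}$ I would invoke Theorem \ref{16}, which guarantees that $A^\beta_{\gamma,\varepsilon}(t)$ is nondecreasing along $(TKRF^\beta_{\gamma,\varepsilon})$ whenever $\gamma\in(0,\beta]$; thus $A^\beta_{\gamma,\varepsilon}(t)\geqslant A^\beta_{\gamma,\varepsilon}(\tfrac{1}{2})\geqslant -C$ for all such $t$. There is no real obstacle: the statement is essentially a bookkeeping lemma combining Jensen's inequality, the explicit formula for $u^\beta_{\gamma,\varepsilon}(\tfrac{1}{2})$, and the monotonicity of Theorem \ref{16}. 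The only subtlety worth flagging is why the hypothesis $\gamma\in(0,\beta]$ enters: it is precisely what makes the twisted form $(\mu_\beta-\mu_\gamma)\omega_{\varphi_\varepsilon}+(1-\beta)\theta_\varepsilon$ a nonnegative closed $(1,1)$-form, which in turn underwrites the uniform Poincar\'e inequality behind Theorem \ref{16}.
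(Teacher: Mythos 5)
Your proposal is correct and follows the same route as the paper: anchor $u^\beta_{\gamma,\varepsilon}(\frac{1}{2})$ via the explicit formula together with Lemma \ref{200}, Lemma \ref{21}, and $(\ref{04})$ to bound $A^\beta_{\gamma,\varepsilon}(\frac{1}{2})$ from below, then propagate forward using the monotonicity from Theorem \ref{16}, with Jensen's inequality supplying the upper bound $A^\beta_{\gamma,\varepsilon}(t)\leqslant 0$. Your remark on why $\gamma\in(0,\beta]$ is needed (nonnegativity of the twisted form underwriting the uniform Poincar\'e inequality) matches the reasoning already stated in the paper's discussion preceding Theorem \ref{16}.
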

We consider the twisted K\"ahler-Ricci flow $(TKRF^\beta_{\gamma,\varepsilon})$ starting at $t=\frac{1}{2}$. Following the arguments in section $4$ of \cite{JWLXZ}, we have the following uniform Perelman's estimates.
\begin{thm}\label{20190314001} Let $\omega^\beta_{\gamma,\varepsilon}(t)$ be a solution of the twisted K\"ahler Ricci flow $(TKRF^\beta_{\gamma,\varepsilon})$. Then for any $0<\beta_1<\beta_2\leqslant\beta$, there exists a uniform constant $C$, such that
\begin{equation}\begin{split}\label{p0}
&\ |R(\omega^\beta_{\gamma,\varepsilon}(t))-(\beta-\gamma)tr_{\omega^\beta_{\gamma,\varepsilon}(t)}\omega_{\varphi_\varepsilon}-(1-\beta)tr_{\omega^\beta_{\gamma,\varepsilon}(t)}\theta_{\varepsilon}|\leq C,\\
&\ \|u^\beta_{\gamma,\varepsilon}(t)\|_{C^{1}(\omega^\beta_{\gamma,\varepsilon}(t))}\leq C,\\
&\ diam(M,\omega^\beta_{\gamma,\varepsilon}(t))\leq C
\end{split}
\end{equation}
hold for any $\gamma\in[\beta_1,\beta_2]$, $t\geq1$ and $\varepsilon>0$.\end{thm}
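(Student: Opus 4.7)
The plan is to mirror the strategy used in \cite{JWLXZ} for the flow $(TKRF_{\mu_\gamma,\varepsilon})$, with the observation that on the range $\gamma\in[\beta_1,\beta_2]\subset(0,\beta]$ the extra closed $(1,1)$-form $(\mu_\beta-\mu_\gamma)\omega_{\varphi_\varepsilon}+(1-\beta)\theta_\varepsilon$ is semi-positive and plays the same role that $(1-\gamma)\theta_\varepsilon$ played in the original argument. The four-step skeleton is therefore: (i) uniform lower bound of the twisted scalar curvature; (ii) uniform Sobolev inequality along the flow; (iii) $C^0$-bound of the twisted Ricci potential via the monotonicity of $A^\beta_{\mu_\gamma,\varepsilon}(t)$ and Moser iteration; (iv) gradient, upper scalar curvature and diameter bounds via the analogue of Lemma \ref{1.8.601} and Perelman's non-collapsing theorem.

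First I would note that step (i) is already done: Proposition \ref{2019022401} gives $R(\omega^\beta_{\gamma,\varepsilon}(t))-(\beta-\gamma)\mathrm{tr}_{\omega^\beta_{\gamma,\varepsilon}(t)}\omega_{\varphi_\varepsilon}-(1-\beta)\mathrm{tr}_{\omega^\beta_{\gamma,\varepsilon}(t)}\theta_\varepsilon\geqslant -4n$ for $t\geqslant 1$. For step (ii), Lemma \ref{22} together with the uniform Sobolev inequality of Theorem \ref{Sobolev} (or \ref{Sobolev1} if $n=1$) furnishes a uniform Sobolev constant for the metric $\omega^\beta_{\gamma,\varepsilon}(\tfrac12)$; the propagation argument of Ye \cite{RGY} / Zhang \cite{QSZ} then delivers a uniform Sobolev inequality along the flow for $t\geqslant\tfrac12$, using the lower bound from step (i) in place of the usual Ricci lower bound.

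For step (iii), the evolution of $u^\beta_{\gamma,\varepsilon}(t)$ reads $(\partial_t-\Delta_{\omega^\beta_{\gamma,\varepsilon}(t)})u^\beta_{\gamma,\varepsilon}=\gamma u^\beta_{\gamma,\varepsilon}-\gamma A^\beta_{\mu_\gamma,\varepsilon}$. Using the monotonicity of $A^\beta_{\mu_\gamma,\varepsilon}(t)$ (Theorem \ref{16}) and its uniform bound from Lemma \ref{1.8.4}, a Moser iteration on $(u^\beta_{\gamma,\varepsilon})_{\pm}$ against the Sobolev inequality of step (ii) yields $\|u^\beta_{\gamma,\varepsilon}(t)\|_{C^0(M)}\leqslant C$ uniformly for $\gamma\in[\beta_1,\beta_2]$, $\varepsilon>0$ and $t\geqslant 1$; the uniformity of the constant in $\gamma$ requires precisely the hypothesis $\gamma\geqslant\beta_1>0$. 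Step (iv) is then a direct transcription of Lemma \ref{1.8.601}: since the evolution identities for $R-(\beta-\gamma)\mathrm{tr}\,\omega_{\varphi_\varepsilon}-(1-\beta)\mathrm{tr}\,\theta_\varepsilon$ and for $|\nabla u^\beta_{\gamma,\varepsilon}|^2$ acquire favorable sign contributions from the semi-positivity of $(\mu_\beta-\mu_\gamma)\omega_{\varphi_\varepsilon}+(1-\beta)\theta_\varepsilon$, the maximum principle arguments in \cite{JWLXZ} go through verbatim and produce the $C^1$-bound for $u^\beta_{\gamma,\varepsilon}$ and the upper scalar bound. The diameter estimate then follows from Perelman's non-collapsing applied with these bounds and the uniform Sobolev inequality.

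The main obstacle, and the reason this is written out here rather than quoted, is that the twisting form involves $\omega_{\varphi_\varepsilon}$, which only has a uniform $C^0$-bound on $\varphi_\varepsilon$ (and not a uniform curvature or Laplacian bound) as $\varepsilon\to 0$. One must therefore check that every step uses $\omega_{\varphi_\varepsilon}$ only through the semi-positivity $(\mu_\beta-\mu_\gamma)\omega_{\varphi_\varepsilon}\geqslant 0$ and the $C^0$-bound on $\varphi_\varepsilon$ via Lemma \ref{200} and \ref{22}; in particular, the Moser iteration and the Perelman-type computations must be arranged so that no derivative of $\varphi_\varepsilon$ appears unpaired. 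Given the bound $\sup_M\varphi_\varepsilon\leqslant 0$ and the sign of the twisting form, this verification is routine but is precisely what Remark \ref{15} refers to as the ``few details to be verified''.
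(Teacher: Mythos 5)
Your proposal matches the paper's approach: after the preliminary uniform estimates are verified (Proposition \ref{2019022401} for the twisted scalar curvature lower bound, Lemmas \ref{200}--\ref{22} and Proposition \ref{30} for the $C^0$ and Laplacian control, and Lemma \ref{1.8.4} for the bound on $A^\beta_{\gamma,\varepsilon}(t)$), the paper proves Theorem \ref{20190314001} simply by restarting the flow at $t=\tfrac12$ and citing the Perelman-type argument of section $4$ of \cite{JWLXZ}, and your observation that the twist form $(\beta-\gamma)\omega_{\varphi_\varepsilon}+(1-\beta)\theta_\varepsilon$ enters only through its semi-positivity and the $C^0$-bound on $\varphi_\varepsilon$ is precisely the content of those verifications. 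One small imprecision: in the Sesum--Tian/Perelman scheme the lower bound on $u^\beta_{\gamma,\varepsilon}$ does not come from a Moser iteration on $(u)_-$ but is extracted by bootstrapping through the gradient/scalar estimate (the analogue of Lemma \ref{1.8.601}) and the non-collapsing diameter bound, so your steps (iii) and (iv) are in fact coupled rather than sequential.
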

By using these uniform Perelman's estimates, we can get the following estimates and convergence for $A^\beta_{\gamma}(t)$.
\begin{thm}\label{46}
For any $\gamma\in(0,\beta]$, there exists uniform constant $C$, such that
\begin{equation}
0\leqslant-A^\beta_{\gamma}(t)\leqslant C\|\nabla u^\beta_{\gamma}(t)\|^{\frac{1}{n+1}}_{L^2(M)}\|\nabla u^\beta_{\gamma}(t)\|_{C^{0}(M)}^{\frac{n}{n+1}}.
\end{equation}
Furthermore, if $\mathcal{M}^\beta_{\gamma}$ is bounded form below, then $A^\beta_{\gamma}(t)$ converge to $0$ as $t\rightarrow+\infty$.
\end{thm}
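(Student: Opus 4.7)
The non-positivity $A^\beta_\gamma(t)\leqslant 0$ is immediate from Jensen's inequality: since $V^{-1}e^{-u^\beta_\gamma(t)}\,dV^\beta_\gamma(t)$ is a probability measure and $\log$ is concave,
\begin{equation*}
0=\log 1=\log\bigl(V^{-1}\int_M e^{u^\beta_\gamma(t)}\cdot e^{-u^\beta_\gamma(t)}\,dV^\beta_\gamma(t)\bigr)\geqslant V^{-1}\int_M u^\beta_\gamma(t)e^{-u^\beta_\gamma(t)}\,dV^\beta_\gamma(t)=A^\beta_\gamma(t).
\end{equation*}

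For the quantitative upper bound on $-A^\beta_\gamma(t)$, the plan is to run the PSSW-type argument of Lemma $4.3$ in \cite{JWL} (going back to Lemma $3$ of \cite{PSSW}) in the present twisted conical setting. The three auxiliary inputs needed are: (i) the uniform $C^1$-bound on $u^\beta_\gamma(t)$ and the uniform diameter bound provided by Theorem \ref{20190314001}; (ii) Perelman's noncollapsing along $(TKRF^\beta_{\mu_\gamma,\varepsilon})$, which is valid because the twisting form $(\mu_\beta-\mu_\gamma)\omega_{\varphi_\varepsilon}+(1-\beta)\theta_\varepsilon$ is semipositive, so Perelman's $\mathcal W$-functional remains monotone along the flow as in \cite{JWL,TC}; and (iii) the resulting uniform Poincar\'e inequality along the flow, which is a consequence of (i) and (ii). Writing $u^\beta_\gamma(t)=\bar u+v$ with $\bar u=V^{-1}\int_M u^\beta_\gamma(t)\,dV^\beta_\gamma(t)$, Jensen again gives $\bar u\geqslant 0$, and a short computation using the mean value theorem together with the uniform $C^0$-bound on $u^\beta_\gamma(t)$ yields $-A^\beta_\gamma(t)\leqslant C\|v\|_{L^\infty(M)}$. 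An $L^2$--$L^\infty$ interpolation on geodesic balls of radius $r$, optimized in $r$ via the noncollapsing lower bound on small-ball volumes, produces
\begin{equation*}
\|v\|_{L^\infty(M)}\leqslant C\|v\|_{L^2(M)}^{1/(n+1)}\|\nabla v\|_{C^0(M)}^{n/(n+1)},
\end{equation*}
and combining with Poincar\'e $\|v\|_{L^2}\leqslant C\|\nabla u^\beta_\gamma\|_{L^2}$ yields the claimed exponents $1/(n+1)$ and $n/(n+1)$.

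For the convergence, along the flow one has
\begin{equation*}
\frac{d}{dt}\mathcal M^\beta_{\mu_\gamma}(\varphi^\beta_\gamma(t))=-\frac{1}{V}\int_M|\nabla u^\beta_\gamma(t)|_{\omega^\beta_\gamma(t)}^{2}\,dV^\beta_\gamma(t),
\end{equation*}
so if $\mathcal M^\beta_{\mu_\gamma}$ is bounded below then $\int_1^\infty\|\nabla u^\beta_\gamma(t)\|_{L^2}^2\,dt<\infty$, producing a sequence $t_i\to\infty$ with $\|\nabla u^\beta_\gamma(t_i)\|_{L^2}\to 0$. By the first inequality combined with the uniform $C^0$-bound on $\nabla u^\beta_\gamma$ from Theorem \ref{20190314001}, $A^\beta_\gamma(t_i)\to 0$. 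Since Theorem \ref{16} gives that $A^\beta_{\mu_\gamma,\varepsilon}(t)$ is nondecreasing along $(TKRF^\beta_{\mu_\gamma,\varepsilon})$, passing $\varepsilon\to 0$ shows $A^\beta_\gamma(t)$ is also nondecreasing; together with $A^\beta_\gamma(t)\leqslant 0$, the subsequence limit upgrades to the full limit $A^\beta_\gamma(t)\to 0$ as $t\to\infty$. The main obstacle will be tracking the uniformity of the noncollapsing and Poincar\'e constants along $(TKRF^\beta_{\mu_\gamma,\varepsilon})$ in both $\varepsilon$ and $\gamma\in[\beta_1,\beta_2]$, and ensuring that these estimates pass to the conical limit in a way that legitimizes the subsequent appeal to them on the singular flow $(CKRF^\beta_{\mu_\gamma})$.
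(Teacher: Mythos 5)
Your proposal matches the paper's approach: the paper proves this statement (via Theorem \ref{17} and the reference to Lemma $4.3$ of \cite{JWL} and Lemma $3$ of \cite{PSSW}) by exactly the PSSW-type argument you describe, using the uniform Perelman estimates of Theorem \ref{20190314001}, noncollapsing, and the uniform Poincar\'e inequality to obtain the interpolation bound, and then upgrading the subsequential decay of $A^\beta_\gamma(t_i)$ to a full limit via the monotonicity of $A^\beta_{\mu_\gamma,\varepsilon}(t)$ from Theorem \ref{16} passed to $\varepsilon\to 0$. Your closing caveat about tracking the uniformity of the noncollapsing and Poincar\'e constants in both $\varepsilon$ and $\gamma$ and passing them to the singular limit flow is a legitimate concern, but the paper addresses it the same way you do, by appealing to the uniformity built into Theorem \ref{20190314001} and the cited literature rather than re-deriving it.
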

We denote constant
\begin{equation*}
\tilde{C}^\beta_{\gamma,\varepsilon}=e^{-\gamma}\frac{1}{\gamma}\Big(\int_{1}^{+\infty}e^{-\gamma (t-1)}\|\nabla u^\beta_{\gamma,\varepsilon}(t)\|^{2}_{L^{2}}dt-\frac{1}{V}\int_{M}\big(F^\beta_{\gamma,\varepsilon}(1)+\gamma\varphi^\beta_{\gamma,\varepsilon}(1)\big)dV^\beta_{\gamma,\varepsilon}(1)\Big),
\end{equation*}
where $F^\beta_{\gamma,\varepsilon}(1)=F_{0}+(\beta-\gamma)\varphi_\varepsilon+\log\Big(\frac{(\omega^\beta_{\gamma,\varepsilon}(1))^{n}(\varepsilon^{2}+|s|_{h}^{2})^{1-\beta}}{\omega_{0}^{n}}\Big)$. From previous discussions, $\tilde{C}^\beta_{\gamma,\varepsilon}$ is well-defined and uniformly bounded. Next, we consider the solution $\psi^\beta_{\gamma,\varepsilon}(t)=\varphi^\beta_{\gamma,\varepsilon}(t)+\tilde{C}^\beta_{\gamma,\varepsilon}e^{\gamma t}$ to the equation
\begin{equation}\label{TKRF5}
\left \{\begin{split}
&\  \frac{\partial \psi^\beta_{\gamma,\varepsilon}(t)}{\partial t}=\log\frac{(\omega_{0}+\sqrt{-1}\partial\bar{\partial}\psi^\beta_{\gamma,\varepsilon}(t))^{n}}{\omega_{0}^{n}}+\gamma\psi^\beta_{\gamma,\varepsilon}(t)+(\beta-\gamma)\varphi_\varepsilon\\
&\ \ \ \ \ \ \ \ \ \ \ \ \ \ \ \ \ \ \ \ +F_{0}+\log(\varepsilon^{2}+|s|_{h}^{2})^{1-\beta}.\\
&\  \psi^\beta_{\gamma,\varepsilon}(0)=\varphi_{0}+\tilde{C}^\beta_{\gamma,\varepsilon}
\end{split}
\right.
\end{equation}
By using the same arguments as in the proof of Proposition $4.5$ in \cite{JWLXZ1}, we have the following two propositions.
\begin{pro}\label{1.8.5.3} For any $\beta_0\in(0,\beta)$, there exists uniform constant $\hat{C}_{\beta_0}$ such that
\begin{equation}\|\dot{\psi}^\beta_{\gamma, \varepsilon}(t)\|_{C^{0}(M)}\leqslant \hat{C}_{\beta_0}
\end{equation}
for any $\varepsilon>0$, $\gamma\in[\beta_0,\beta]$ and $t\geqslant 1$.
\end{pro}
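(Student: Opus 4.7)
The plan is to follow the scheme of Proposition $4.5$ in \cite{JWLXZ1}, adapted to the twisted flow $(TKRF^\beta_{\gamma,\varepsilon})$. The role of the specific initial shift $\tilde{C}^\beta_{\gamma,\varepsilon}$ is precisely to normalize the mean of $\dot{\psi}^\beta_{\gamma,\varepsilon}(t)$ with respect to $dV^\beta_{\gamma,\varepsilon}(t)$ so that it admits an integral representation that decays via the uniform Perelman estimates.

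First I would write $u^\beta_{\gamma,\varepsilon}(t)=\dot{\psi}^\beta_{\gamma,\varepsilon}(t)+c^\beta_{\gamma,\varepsilon}(t)$, where $c^\beta_{\gamma,\varepsilon}(t)$ depends only on $t$ and is determined by the normalization $\frac{1}{V}\int_M e^{-u^\beta_{\gamma,\varepsilon}(t)}\,dV^\beta_{\gamma,\varepsilon}(t)=1$. Set
\begin{equation*}
\alpha^\beta_{\gamma,\varepsilon}(t)=\frac{1}{V}\int_M\dot{\psi}^\beta_{\gamma,\varepsilon}(t)\,dV^\beta_{\gamma,\varepsilon}(t)=\frac{1}{V}\int_M u^\beta_{\gamma,\varepsilon}(t)\,dV^\beta_{\gamma,\varepsilon}(t)-c^\beta_{\gamma,\varepsilon}(t).
\end{equation*}
Differentiating in $t$ and using both the evolution of $dV^\beta_{\gamma,\varepsilon}(t)$ along $(TKRF^\beta_{\gamma,\varepsilon})$ and the formula for $c^\beta_{\gamma,\varepsilon}(t)$ coming from the normalization, a direct computation gives
\begin{equation*}
\frac{d}{dt}\alpha^\beta_{\gamma,\varepsilon}(t)=\gamma\,\alpha^\beta_{\gamma,\varepsilon}(t)-\|\nabla u^\beta_{\gamma,\varepsilon}(t)\|^2_{L^2(M,\omega^\beta_{\gamma,\varepsilon}(t))}.
\end{equation*}

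Next I would integrate this ODE from $1$ to $t$, yielding $e^{-\gamma(t-1)}\alpha^\beta_{\gamma,\varepsilon}(t)=\alpha^\beta_{\gamma,\varepsilon}(1)-\int_1^t e^{-\gamma(s-1)}\|\nabla u^\beta_{\gamma,\varepsilon}(s)\|^2_{L^2}\,ds$. A computation using $(TKRF^\beta_{\gamma,\varepsilon,5})$ shows that $\alpha^\beta_{\gamma,\varepsilon}(1)=\frac{1}{V}\int_M(F^\beta_{\gamma,\varepsilon}(1)+\gamma\varphi^\beta_{\gamma,\varepsilon}(1))\,dV^\beta_{\gamma,\varepsilon}(1)+\gamma e^{\gamma}\tilde{C}^\beta_{\gamma,\varepsilon}$ after plugging in the definition of $u^\beta_{\gamma,\varepsilon}(1)$ and using $\psi^\beta_{\gamma,\varepsilon}=\varphi^\beta_{\gamma,\varepsilon}+\tilde{C}^\beta_{\gamma,\varepsilon}e^{\gamma t}$. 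The definition of $\tilde{C}^\beta_{\gamma,\varepsilon}$ is chosen precisely so that this reduces to $\alpha^\beta_{\gamma,\varepsilon}(1)=\int_1^{+\infty}e^{-\gamma(s-1)}\|\nabla u^\beta_{\gamma,\varepsilon}(s)\|^2_{L^2}\,ds$, and therefore
\begin{equation*}
0\leqslant\alpha^\beta_{\gamma,\varepsilon}(t)=\int_t^{+\infty}e^{-\gamma(s-t)}\|\nabla u^\beta_{\gamma,\varepsilon}(s)\|^2_{L^2}\,ds.
\end{equation*}
By the uniform Perelman estimates (Theorem \ref{20190314001}), applied on $\gamma\in[\beta_0,\beta]$, the quantity $\|\nabla u^\beta_{\gamma,\varepsilon}(s)\|_{C^0}$ is uniformly bounded; together with the uniform volume $V$ this gives $\alpha^\beta_{\gamma,\varepsilon}(t)\leqslant C/\beta_0$ for $t\geqslant 1$.

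Finally I would upgrade the $L^1$ control of $\dot{\psi}^\beta_{\gamma,\varepsilon}(t)$ to a $C^0$ bound. By Theorem \ref{20190314001} we have $\|u^\beta_{\gamma,\varepsilon}(t)\|_{C^1(\omega^\beta_{\gamma,\varepsilon}(t))}\leqslant C$ and $\mathrm{diam}(M,\omega^\beta_{\gamma,\varepsilon}(t))\leqslant C$, so $\mathrm{osc}_M u^\beta_{\gamma,\varepsilon}(t)\leqslant C$; since $\dot{\psi}^\beta_{\gamma,\varepsilon}(t)$ and $u^\beta_{\gamma,\varepsilon}(t)$ differ by a function of $t$ alone, $\mathrm{osc}_M\dot{\psi}^\beta_{\gamma,\varepsilon}(t)\leqslant C$. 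Combining this oscillation bound with the bound on the weighted mean $\alpha^\beta_{\gamma,\varepsilon}(t)$ yields $\|\dot{\psi}^\beta_{\gamma,\varepsilon}(t)\|_{C^0(M)}\leqslant\hat{C}_{\beta_0}$ for all $\varepsilon>0$, $\gamma\in[\beta_0,\beta]$ and $t\geqslant 1$. The main obstacle is the bookkeeping that makes the choice of $\tilde{C}^\beta_{\gamma,\varepsilon}$ exactly cancel the boundary term at $t=1$ so that the improper integral $\int_1^{+\infty}e^{-\gamma(s-1)}\|\nabla u^\beta_{\gamma,\varepsilon}(s)\|^2_{L^2}\,ds$ really equals $\alpha^\beta_{\gamma,\varepsilon}(1)$; convergence of this integral (and hence well-definedness of $\tilde{C}^\beta_{\gamma,\varepsilon}$) relies crucially on $\gamma\geqslant\beta_0>0$, which is exactly why the conclusion can only be uniform on $\gamma\in[\beta_0,\beta]$ rather than all the way down to $\gamma=0$.
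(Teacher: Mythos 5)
Your reconstruction matches the paper's approach: the paper simply refers to Proposition $4.5$ of \cite{JWLXZ1}, and the argument there is exactly the one you carry out — introduce $\alpha^\beta_{\gamma,\varepsilon}(t)$, derive the linear ODE $\frac{d}{dt}\alpha^\beta_{\gamma,\varepsilon}(t)=\gamma\alpha^\beta_{\gamma,\varepsilon}(t)-\|\nabla u^\beta_{\gamma,\varepsilon}(t)\|^2_{L^2}$, integrate, show that the choice of $\tilde{C}^\beta_{\gamma,\varepsilon}$ cancels the boundary term at $t=1$ so that $\alpha^\beta_{\gamma,\varepsilon}(t)=\int_t^{\infty}e^{-\gamma(s-t)}\|\nabla u^\beta_{\gamma,\varepsilon}(s)\|^2_{L^2}\,ds\in[0,CV/\beta_0]$ by Theorem~\ref{20190314001}, and then upgrade this mean bound to a $C^0$ bound using the gradient and diameter bounds; indeed the parallel computation for $\alpha_{\gamma_i,\varepsilon_i}(t)$ in the proof of Lemma~\ref{1901} in this paper is precisely this scheme. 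Your final step (oscillation plus weighted mean) is slightly more indirect than simply observing $\|u^\beta_{\gamma,\varepsilon}(t)\|_{C^0}\le C$ and deducing the bound on $c^\beta_{\gamma,\varepsilon}(t)$ from $\alpha^\beta_{\gamma,\varepsilon}(t)=\frac{1}{V}\int_M u^\beta_{\gamma,\varepsilon}\,dV^\beta_{\gamma,\varepsilon}-c^\beta_{\gamma,\varepsilon}(t)$, but it is correct.
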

\begin{pro}\label{1.8.5.3.11} For any $\beta_0\in(0,1)$, there exists uniform constant $C$ such that
$$\|\dot{\psi}_{\gamma, \varepsilon}(t)\|_{C^{0}(M)}\leqslant C$$
for any $\varepsilon>0$, $\gamma\in[\beta_0,1)$ and $t\geqslant 1$, where $\psi_{\gamma, \varepsilon}(t)$ is the solution of equation $(\ref{TKRF6})$.
\end{pro}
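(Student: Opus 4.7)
The plan is to mimic the proof of Proposition \ref{1.8.5.3}, which is the analogous statement for the twisted flow $(TKRF^\beta_{\mu_\gamma,\varepsilon})$, but now working with the ordinary twisted flow $(TKRF_{\mu_\gamma,\varepsilon})$ and the potential $\psi_{\gamma,\varepsilon}(t)$ defined in $(\ref{TKRF6})$. The key is that for $\gamma\in[\beta_0,1)$, the corresponding $\mu_\gamma=1-(1-\gamma)\lambda$ is bounded away from $0$ by a uniform constant depending only on $\beta_0$ and $\lambda$, so Theorem \ref{1.8.2} yields the uniform Perelman estimates $\|u_{\gamma,\varepsilon}(t)\|_{C^1(\omega_{\gamma,\varepsilon}(t))}\leqslant C$ for $t\geqslant 1$. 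In particular, $u_{\gamma,\varepsilon}(t)$ and $\|\nabla u_{\gamma,\varepsilon}(t)\|^2_{L^2}$ are uniformly bounded.

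First, I would express $\dot{\psi}_{\gamma,\varepsilon}(t)$ in terms of quantities that are already controlled. Since $\psi_{\gamma,\varepsilon}(t)=\varphi_{\gamma,\varepsilon}(t)+\tilde{C}_{\gamma,\varepsilon}e^{\mu_\gamma t}$ and $u_{\gamma,\varepsilon}(t)=\dot{\varphi}_{\gamma,\varepsilon}(t)+c_{\gamma,\varepsilon}(t)$ for some time-dependent constant $c_{\gamma,\varepsilon}(t)$, one has
\begin{equation*}
\dot{\psi}_{\gamma,\varepsilon}(t)=u_{\gamma,\varepsilon}(t)-c_{\gamma,\varepsilon}(t)+\mu_\gamma\tilde{C}_{\gamma,\varepsilon}e^{\mu_\gamma t}.
\end{equation*}
Averaging against $dV_{\gamma,\varepsilon}(t)$ and writing $\alpha_{\gamma,\varepsilon}(t)=\frac{1}{V}\int_M\dot{\psi}_{\gamma,\varepsilon}(t)\,dV_{\gamma,\varepsilon}(t)$, we get
\begin{equation*}
\dot{\psi}_{\gamma,\varepsilon}(t)=\big(u_{\gamma,\varepsilon}(t)-\tfrac{1}{V}\!\!\int_M u_{\gamma,\varepsilon}(t)\,dV_{\gamma,\varepsilon}(t)\big)+\alpha_{\gamma,\varepsilon}(t).
\end{equation*}
The first piece is uniformly bounded by $2C$ from Theorem \ref{1.8.2}, so the task reduces to uniformly bounding $\alpha_{\gamma,\varepsilon}(t)$.

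Next, I would differentiate $\alpha_{\gamma,\varepsilon}(t)$ along the flow. Using $\partial_t\dot{\psi}=\Delta_{\omega_{\gamma,\varepsilon}(t)}\dot{\psi}+\mu_\gamma\dot{\psi}$ and $\partial_t dV_{\gamma,\varepsilon}(t)=\Delta_{\omega_{\gamma,\varepsilon}(t)}\dot{\psi}\,dV_{\gamma,\varepsilon}(t)$, together with Stokes' theorem and $\nabla\dot{\psi}=\nabla u$, a direct computation gives
\begin{equation*}
\frac{d}{dt}\alpha_{\gamma,\varepsilon}(t)=\mu_\gamma\alpha_{\gamma,\varepsilon}(t)-\tfrac{1}{V}\|\nabla u_{\gamma,\varepsilon}(t)\|^2_{L^2}.
\end{equation*}
Equivalently, $\tfrac{d}{dt}\big(e^{-\mu_\gamma t}\alpha_{\gamma,\varepsilon}(t)\big)=-\tfrac{1}{V}e^{-\mu_\gamma t}\|\nabla u_{\gamma,\varepsilon}(t)\|^2_{L^2}$. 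The precise definition of $\tilde{C}_{\gamma,\varepsilon}$ in $(\ref{TKRF6})$ (together with the standard expression for $u_{\gamma,\varepsilon}(1)$ in terms of $\dot{\psi}_{\gamma,\varepsilon}(1)$ and the normalization constant $C_{\gamma,\varepsilon,1}$, as already computed below $(\ref{201903120016})$) was designed precisely so that the boundary contribution at $t=+\infty$ vanishes, giving the identity
\begin{equation*}
\alpha_{\gamma,\varepsilon}(t)=\tfrac{1}{V}\int_t^{+\infty}e^{-\mu_\gamma(s-t)}\|\nabla u_{\gamma,\varepsilon}(s)\|^2_{L^2}\,ds\geqslant 0.
\end{equation*}

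Finally, since $\|\nabla u_{\gamma,\varepsilon}(s)\|^2_{L^2}\leqslant C$ uniformly by Theorem \ref{1.8.2}, and since $\mu_\gamma\geqslant\mu_{\beta_0}>0$ on the parameter range, the above integral is bounded by $C/(V\mu_{\beta_0})$ uniformly in $\varepsilon$, $\gamma\in[\beta_0,1)$, and $t\geqslant 1$. Combined with the $C^0$-bound on $u_{\gamma,\varepsilon}(t)$, this yields $\|\dot{\psi}_{\gamma,\varepsilon}(t)\|_{C^0(M)}\leqslant C$ as desired. The main (minor) obstacle is just checking the identity for $\alpha_{\gamma,\varepsilon}(t)$ from the explicit $\tilde{C}_{\gamma,\varepsilon}$—this is a bookkeeping step identical to that already carried out in the proof of Proposition \ref{1.8.5.3} (and implicitly in the estimate $(\ref{3.22.34})$ from Section $3$), and no new analytic ingredient beyond the uniform Perelman estimates is required.
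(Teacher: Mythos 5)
Your proposal is correct and follows essentially the same route the paper takes (the paper defers to the proof of Proposition $4.5$ in \cite{JWLXZ1}, which is precisely the argument you reconstruct): reduce $\dot{\psi}_{\gamma,\varepsilon}(t)$ to the oscillation of $u_{\gamma,\varepsilon}(t)$ plus the spatial average $\alpha_{\gamma,\varepsilon}(t)$, derive the ODE $\frac{d}{dt}\alpha_{\gamma,\varepsilon}=\mu_\gamma\alpha_{\gamma,\varepsilon}-\frac{1}{V}\|\nabla u_{\gamma,\varepsilon}\|^2_{L^2}$, and observe that $\tilde{C}_{\gamma,\varepsilon}$ is chosen so that $\alpha_{\gamma,\varepsilon}(t)=\frac{1}{V}\int_t^{\infty}e^{-\mu_\gamma(s-t)}\|\nabla u_{\gamma,\varepsilon}(s)\|^2_{L^2}ds\in[0,C/\mu_{\beta_0}]$ by the uniform Perelman gradient bound from Theorem~\ref{1.8.2} (valid since $\mu_\gamma$ is bounded below by $\mu_{\beta_0}>0$ on the range $\gamma\in[\beta_0,1)$, which is the implicit hypothesis making $\tilde{C}_{\gamma,\varepsilon}$ and hence $\psi_{\gamma,\varepsilon}$ well-defined). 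No gaps.
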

Next, we prove the continuity of $\varphi^\beta_{\gamma,\varepsilon}(t)$ with respect to variables $\gamma$ and $\varepsilon$.
\begin{pro}\label{36}
There exists a constant $\hat{\delta}$, such that for any $t\in(0,\hat{\delta})$, $\varepsilon\in[0,1)$ and $z\in M$, $\varphi^\beta_{\gamma,\varepsilon}(t)$ is increasing with respect to $\gamma\in[0,1]$.
\end{pro}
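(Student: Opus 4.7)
The plan is to mirror the argument used to establish Proposition \ref{201906001}, with modifications dictated by the new term $(\beta-\gamma)\varphi_{\varepsilon}$ appearing in equation $(\ref{TKRF2})$. First, I will apply Ko{\l}odziej's $L^{p}$-estimates \cite{K000,K} to produce, for each $\varepsilon\in(0,1]$ and $\gamma\in[0,1]$, a H\"older continuous solution $u^{\beta}_{\gamma,\varepsilon}$ of the elliptic Monge-Amp\`ere equation
\begin{equation*}
(\omega_{0}+\sqrt{-1}\partial\bar{\partial}u^{\beta}_{\gamma,\varepsilon})^{n}
=e^{-F_{0}-\gamma\varphi_{0}-(\beta-\gamma)\varphi_{\varepsilon}+\hat{C}^{\beta}_{\gamma,\varepsilon}}\frac{\omega_{0}^{n}}{(\varepsilon^{2}+|s|_{h}^{2})^{1-\beta}},
\end{equation*}
with $\hat{C}^{\beta}_{\gamma,\varepsilon}$ the usual normalization constant. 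Since $\varphi_{0}$ and (by $(\ref{04})$) $\varphi_{\varepsilon}$ are uniformly bounded, the RHS has uniformly controlled $L^{p}$-norm, so $\|u^{\beta}_{\gamma,\varepsilon}\|_{L^{\infty}(M)}$ is uniformly bounded.

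Next, following the recipe in Proposition \ref{201906001}, I will build a subsolution of $(\ref{TKRF2})$ of the form
\begin{equation*}
\psi^{\beta}_{\gamma,\varepsilon}(t)=(1-te^{\gamma t})\varphi_{0}+te^{\gamma t}u^{\beta}_{\gamma,\varepsilon}+h(t)e^{\gamma t},
\end{equation*}
with an explicit time function $h(t)$ chosen (using concavity of $\log\det$ and the uniform bounds from Step 1) to absorb all time derivatives and normalization terms. The comparison principle then yields
\begin{equation*}
\varphi^{\beta}_{\gamma,\varepsilon}(t)\geqslant \psi^{\beta}_{\gamma,\varepsilon}(t)\geqslant \varphi_{0}+h_{1}(t),
\end{equation*}
where $h_{1}(t)$ is a continuous function independent of $\gamma$ and $\varepsilon$ with $h_{1}(0)=0$. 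Choosing $\hat{\delta}$ with $h_{1}(t)>-\tfrac{1}{2}$ on $[0,\hat{\delta}]$, and using the normalization $(\ref{nom})$ together with $\varphi_{\varepsilon}\leqslant\varphi_{1}$ for $\varepsilon\in(0,1]$, I obtain the crucial pointwise inequality
\begin{equation*}
\varphi^{\beta}_{\gamma,\varepsilon}(t,z)\geqslant \varphi_{0}(z)-\tfrac{1}{2}\geqslant \sup_{M}\varphi_{\varepsilon}+\tfrac{1}{2}\geqslant \varphi_{\varepsilon}(z)+\tfrac{1}{2},
\end{equation*}
valid for any $\gamma\in[0,1]$, $\varepsilon\in(0,1]$, $t\in[0,\hat{\delta}]$ and $z\in M$.

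Finally, for $0\leqslant\gamma_{1}\leqslant\gamma_{2}\leqslant 1$, a direct computation on $[0,\hat{\delta}]\times M$ shows that
\begin{equation*}
\begin{split}
\frac{\partial}{\partial t}\!\left(\varphi^{\beta}_{\gamma_{1},\varepsilon}(t)-\varphi^{\beta}_{\gamma_{2},\varepsilon}(t)\right)
&=\log\frac{(\omega_{0}+\sqrt{-1}\partial\bar{\partial}\varphi^{\beta}_{\gamma_{1},\varepsilon}(t))^{n}}{(\omega_{0}+\sqrt{-1}\partial\bar{\partial}\varphi^{\beta}_{\gamma_{2},\varepsilon}(t))^{n}}
+\gamma_{1}\!\left(\varphi^{\beta}_{\gamma_{1},\varepsilon}(t)-\varphi^{\beta}_{\gamma_{2},\varepsilon}(t)\right)\\
&\quad -(\gamma_{2}-\gamma_{1})\!\left(\varphi^{\beta}_{\gamma_{2},\varepsilon}(t)-\varphi_{\varepsilon}\right).
\end{split}
\end{equation*}
By the inequality just established, the last term is non-positive, so it may be dropped. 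Applying the maximum principle to $e^{-\gamma_{1}t}(\varphi^{\beta}_{\gamma_{1},\varepsilon}(t)-\varphi^{\beta}_{\gamma_{2},\varepsilon}(t))$, which vanishes at $t=0$, gives the desired monotonicity $\varphi^{\beta}_{\gamma_{1},\varepsilon}(t)\leqslant\varphi^{\beta}_{\gamma_{2},\varepsilon}(t)$ for $\varepsilon\in(0,1]$, and the $\varepsilon=0$ case follows by passing to the limit $\varepsilon\to 0$ via the continuity (analogous to Proposition \ref{201908}) of $\varphi^{\beta}_{\gamma,\varepsilon}(t)$ in $\varepsilon$. The main obstacle, relative to the $\mu_{\beta}=0$ case, is the extra term $-(\gamma_{2}-\gamma_{1})(\varphi^{\beta}_{\gamma_{2},\varepsilon}(t)-\varphi_{\varepsilon})$ produced by the new drift $(\beta-\gamma)\varphi_{\varepsilon}$; making it manifestly non-positive is precisely the purpose of the strengthened normalization $(\ref{nom})$, which is why the subsolution estimate has to be carried out sharply enough to survive the comparison $\varphi^{\beta}_{\gamma,\varepsilon}(t)\geqslant\varphi_{\varepsilon}$ uniformly in $\gamma$ and $\varepsilon$.
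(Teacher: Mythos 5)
Your proposal is correct and follows essentially the same route as the paper: Ko{\l}odziej's estimates for the auxiliary elliptic equation, the same explicit subsolution ansatz giving the uniform lower bound $\varphi^\beta_{\gamma,\varepsilon}(t)\geqslant\varphi_0+h_1(t)$, the normalization $(\ref{nom})$ to force $\varphi^\beta_{\gamma,\varepsilon}(t)\geqslant\varphi_\varepsilon$, and the maximum principle applied to the difference; the only cosmetic deviation is that you absorb the drift term directly via $\varphi^\beta_{\gamma_2,\varepsilon}(t)\geqslant\varphi_\varepsilon$ where the paper inserts the intermediate comparison with $\varphi_1$.
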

\begin{proof} We first consider the case $\varepsilon\in(0,1)$. By Ko{\l}odziej's results \cite{K000, K}, there exists a H\"older continuous solution $u^\beta_{\gamma,\varepsilon}$ to the equation
\begin{equation}(\omega_{0}+\sqrt{-1}\partial\bar{\partial}u^\beta_{\gamma,\varepsilon})^{n}=e^{-F_{0}-\gamma\varphi_{0}-(\beta-\gamma)\varphi_\varepsilon+\hat{C}}\frac{\omega_{0}^{n}}
{(\varepsilon^2+|s|_{h}^{2})^{(1-\beta)}},
\end{equation}
and $u^\beta_{\gamma,\varepsilon}$ satisfies
\begin{equation}\|u^\beta_{\gamma,\varepsilon}\|_{L^\infty(M)}\leqslant C,
\end{equation}
where the normalization constant $\hat{C}$ is uniformly bounded independent of $\varepsilon$ and $\gamma$, constant $C$ depends only on $\|\varphi_0\|_{L^\infty(M)}$, $\beta$ and $F_0$. We define function
\begin{equation}\psi^\beta_{\gamma,\varepsilon}(t)=(1-te^{\gamma t})\varphi_{0}+te^{\gamma t}u^\beta_{\gamma,\varepsilon}+h(t)e^{\gamma t},
\end{equation}
where
\begin{equation}\begin{split}
h(t)&=-t\|\varphi_{0}\|_{L^\infty(M)}-t\|u^\beta_{\gamma,\varepsilon}\|_{L^\infty(M)}+n(t\log t-t)e^{-\gamma t}\\
&\ +\gamma n\int_0^te^{-\gamma s}s\log s ds+\hat{C}\int_0^t e^{-\gamma s} ds.
\end{split}
\end{equation}
From similar arguments in the proof of Proposition $3.5$ in \cite{JWLXZ1}, we know that $\psi^\beta_{\gamma,\varepsilon}(t)$ is a subsolution of equation $(\ref{TKRF2})$ and hence
\begin{equation}\label{37}
\varphi^\beta_{\gamma,\varepsilon}(t)\geqslant \psi^\beta_{\gamma,\varepsilon}(t)\geqslant \varphi_0(z)+h_1(t)
\end{equation}
for any $t\in(0,1)$, where $h_1(t)$ is a continuous function independent of $\gamma$ and $\varepsilon$ such that $h(0)=0$. Hence there exists $\hat{\delta}$, such that $h_1(t)>-\frac{1}{2}$ for any $t\in[0,\hat{\delta}]$. By the normalization $(\ref{nom})$, we have
\begin{equation}\label{38}
\varphi^\beta_{\gamma,\varepsilon}(t)\geqslant \sup\limits_M\varphi_{1}+1-\frac{1}{2}>\varphi_{1}
\end{equation}
for any $\gamma\in[0,1]$, $\varepsilon\in(0,1)$, $t\in(0,\hat{\delta})$ and $z\in M$. For $\gamma_1\leqslant\gamma_2$, on $[0,\hat{\delta}]\times M$,
\begin{equation*}\label{39}
\begin{split}
\frac{\partial}{\partial t}(\varphi^\beta_{\gamma_1,\varepsilon}(t)-\varphi^\beta_{\gamma_2,\varepsilon}(t))&=\log\frac{(\omega_0+\sqrt{-1}\partial\bar{\partial}\varphi^\beta_{\gamma_1,\varepsilon}(t))^n}{(\omega_0+\sqrt{-1}\partial\bar{\partial}\varphi^\beta_{\gamma_2,\varepsilon}(t))^n}+\gamma_1\Big(\varphi^\beta_{\gamma_1,\varepsilon}(t)-\varphi^\beta_{\gamma_2,\varepsilon}(t)\Big)\\
&\ \ \ \ -(\gamma_2-\gamma_1)\varphi^\beta_{\gamma_2,\varepsilon}(t)+(\gamma_2-\gamma_1)\varphi_\varepsilon\\
&\leqslant\log\frac{(\omega_0+\sqrt{-1}\partial\bar{\partial}\varphi^\beta_{\gamma_1,\varepsilon}(t))^n}{(\omega_0+\sqrt{-1}\partial\bar{\partial}\varphi^\beta_{\gamma_2,\varepsilon}(t))^n}+\gamma_1\Big(\varphi^\beta_{\gamma_1,\varepsilon}(t)-\varphi^\beta_{\gamma_2,\varepsilon}(t)\Big)\\
&\ \ \ \ -(\gamma_2-\gamma_1)\varphi^\beta_{\gamma_2,\varepsilon}(t)+(\gamma_2-\gamma_1)\varphi_{1}\\
&\leqslant\log\frac{(\omega_0+\sqrt{-1}\partial\bar{\partial}\varphi^\beta_{\gamma_1,\varepsilon}(t))^n}{(\omega_0+\sqrt{-1}\partial\bar{\partial}\varphi^\beta_{\gamma_2,\varepsilon}(t))^n}+\gamma_1\Big(\varphi^\beta_{\gamma_1,\varepsilon}(t)-\varphi^\beta_{\gamma_2,\varepsilon}(t)\Big).
\end{split}
\end{equation*}
Then by the maximum principle, we have $\varphi^\beta_{\gamma_1,\varepsilon}(t)\leqslant \varphi^\beta_{\gamma_2,\varepsilon}(t)$ for any $\varepsilon\in(0,1)$, $t\in(0,\hat{\delta})$ and $z\in M$. Then letting $\varepsilon\rightarrow0$, we get the $\varepsilon=0$ case.
\end{proof}

Since $\varphi_\varepsilon$ decrease to $\varphi_\beta$ as $\varepsilon\searrow0$, then for fix $\gamma\in[0,\beta]$, $t\in[0,\infty)$ and $z\in M$, $\varphi^\beta_{\gamma,\varepsilon}(t)$ is decreasing as $\varepsilon\searrow0$ (see Proposition $3.3$ in \cite{JWLXZ1}). Then by using the same arguments as in section $3$, we have the following results.
\begin{pro}\label{35}
Let $\{\varepsilon_i\}\in[0,1)$ and $\{\gamma_i\}\in[0,\beta]$. We assume that $\varepsilon_i$ and $\gamma_i$ converge to $\varepsilon_\infty$ and $\gamma_\infty$ respectively. Then for any  $[\delta,T]$ with $0<\delta<T<\infty$, there exists $\alpha\in(0,1)$ such that $\varphi^\beta_{\gamma_i,\varepsilon_i}(t)$ converge to $\varphi^\beta_{\gamma_\infty,\varepsilon_\infty}(t)$ on $[\delta,T]\times M$ in $C^{\alpha'}$-sense for any $\alpha'\in(0,\alpha)$, and on $(0,\infty)\times (M\setminus D)$ the convergence is in $C^\infty_{loc}$-sense.
\end{pro}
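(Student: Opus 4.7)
\medskip

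\noindent\textbf{Proof proposal for Proposition \ref{35}.} The plan is to mirror the argument used in Proposition \ref{201908} from the previous section, replacing $(TKRF_{\mu_\gamma,\varepsilon})$ by $(TKRF^{\beta}_{\mu_\gamma,\varepsilon})$ and keeping track of the extra twisting term $(\beta-\gamma)\omega_{\varphi_\varepsilon}$. First I would fix an arbitrary interval $[\delta,T]\subset(0,\infty)$ and use Lemma \ref{200} to get a uniform $C^{0}$ bound on $\phi^{\beta}_{\gamma_i,\varepsilon_i}(t)$ (and hence on $\varphi^{\beta}_{\gamma_i,\varepsilon_i}(t)$, since $k\chi_\beta$ is uniformly bounded). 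Lemma \ref{21} then provides uniform two-sided bounds on the Monge--Amp\`ere densities, so that Ko{\l}odziej's $L^{p}$-estimates \cite{K000,K} yield a uniform $C^{\alpha}(M,\omega_0)$-bound for some $\alpha\in(0,1)$ on $\varphi^{\beta}_{\gamma_i,\varepsilon_i}(t)$, uniform in $t\in[\delta,T]$ and in $i$. Combined with the uniform bound on the time derivative $\dot\varphi^{\beta}_{\gamma_i,\varepsilon_i}(t)$ coming from Lemma \ref{21} (via $\dot\phi^{\beta}_{\gamma_i,\varepsilon_i}(t)=\log\frac{(\omega^\beta_\varepsilon+\sqrt{-1}\partial\bar\partial\phi^\beta_{\gamma,\varepsilon}(t))^n}{(\omega^\beta_\varepsilon)^n}+F^\beta_{\gamma,\varepsilon}+\gamma(\phi^\beta_{\gamma,\varepsilon}(t)+k\chi_\beta)$), this gives a uniform $C^{\alpha}$-bound on $[\delta,T]\times M$.

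Next, Arzel\`a--Ascoli extracts a subsequence converging to some $\varphi_{\infty}(t)$ in $C^{\alpha'}([\delta,T]\times M)$ for any $\alpha'\in(0,\alpha)$, while Proposition \ref{30} upgrades the convergence to $C^{\infty}_{\text{loc}}$ on $(0,\infty)\times(M\setminus D)$. On the open set $(0,\infty)\times M$ (or $(0,\infty)\times(M\setminus D)$ if $\varepsilon_\infty=0$), passing to the limit in equation $(\ref{TKRF2})$ shows that $\varphi_{\infty}(t)$ satisfies
\begin{equation*}
\frac{\partial \varphi_{\infty}(t)}{\partial t}=\log\frac{(\omega_{0}+\sqrt{-1}\partial\bar{\partial}\varphi_{\infty}(t))^{n}}{\omega_{0}^{n}}+\gamma_{\infty}\varphi_{\infty}(t)+(\beta-\gamma_{\infty})\varphi_{\varepsilon_{\infty}}+F_{0}+(1-\beta)\log(\varepsilon_{\infty}^{2}+|s|_{h}^{2}),
\end{equation*}
where the convergence $\varphi_{\varepsilon_i}\to\varphi_{\varepsilon_\infty}$ in $L^\infty(M)$ (uniform if $\varepsilon_\infty>0$, monotone decreasing if $\varepsilon_\infty=0$, by the choice of the approximating sequence) handles the extra twisting term.

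The uniqueness step is where the work is. I would use Proposition \ref{36} (monotonicity of $\varphi^{\beta}_{\gamma,\varepsilon}(t)$ in $\gamma$ on a small time interval $[0,\hat\delta)$) together with the monotonicity of $\varphi^{\beta}_{\gamma,\varepsilon}(t)$ in $\varepsilon$ (cited just before the proposition via Proposition $3.3$ of \cite{JWLXZ1}) and the subsolution bound $(\ref{37})$ to sandwich $\varphi^{\beta}_{\gamma_i,\varepsilon_i}(t)$ between $\varphi_{0}+h_{1}(t)$ from below and $\varphi^{\beta}_{1,1/2}(t)-\varphi_{0}$ (up to constants) from above on $[0,\hat\delta]\times M$. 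Since the upper envelope converges to $\varphi_0$ in $L^\infty$ as $t\to 0^+$ and $h_1(0)=0$, we deduce that $\varphi_{\infty}(t)\to\varphi_{0}$ in $L^{\infty}(M)$ as $t\to 0^+$. The uniqueness theorems (Proposition $2.7$ or Theorem $3.7$ in \cite{JWLXZ1}, whose applicability to $(TKRF^{\beta}_{\mu_\gamma,\varepsilon})$ and $(CKRF^{\beta}_{\mu_\gamma})$ is ensured by the same arguments as in that paper) then force $\varphi_{\infty}(t)=\varphi^{\beta}_{\gamma_\infty,\varepsilon_\infty}(t)$. A standard contradiction argument -- assume a subsequence stays at $C^{\alpha_0}$-distance $\geqslant\epsilon_0$ from $\varphi^{\beta}_{\gamma_\infty,\varepsilon_\infty}(t)$, extract a further $C^{\alpha_0}$-convergent subsequence, get another solution with the same initial data, contradict uniqueness -- then upgrades subsequential to full convergence.

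The main obstacle will be the uniqueness/identification step: I need the limit $\varphi_\infty(t)$ to inherit the initial condition in the $L^\infty$ sense so that the uniqueness theorems from \cite{JWLXZ1} apply. The subsolution from Proposition \ref{36} gives the one-sided bound cleanly, but the matching upper bound relies on the monotonicity in $\varepsilon$ of $\varphi^{\beta}_{\gamma,\varepsilon}(t)$ on a small time interval together with the convergence $\varphi^{\beta}_{1,1/2}(t)\to\varphi_{0}$ in $L^{\infty}$ as $t\to 0^+$; verifying these monotonicities in the twisted setting $(TKRF^{\beta}_{\mu_\gamma,\varepsilon})$, where the twisting term $(\beta-\gamma)\omega_{\varphi_\varepsilon}$ depends on $\varepsilon$, is the delicate point that demands care, and should be the analogue for this flow of the sandwich $(\ref{201906006})$--$(\ref{201906007})$ used in Proposition \ref{201908}.
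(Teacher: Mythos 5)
Your proposal is correct and reconstructs essentially the proof the paper intends, since the paper gives none explicitly and simply says the result follows ``by using the same arguments as in section $3$'' (i.e., the proof of Proposition \ref{201908}). The one point you flag as delicate --- the $\varepsilon$-monotonicity of $\varphi^\beta_{\gamma,\varepsilon}(t)$ when the twist term $(\beta-\gamma)\omega_{\varphi_\varepsilon}$ itself depends on $\varepsilon$ --- is exactly what the paper addresses in the sentence preceding the proposition: since $\gamma\in[0,\beta]$ forces $\beta-\gamma\geqslant 0$ and $\varphi_\varepsilon\searrow\varphi_\beta$, the term $(\beta-\gamma)\varphi_\varepsilon$ is monotone in the favorable direction, so the comparison argument of Proposition $3.3$ in \cite{JWLXZ1} goes through unchanged; this is also precisely why the restriction $\gamma\leqslant\beta$ matters. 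Two cosmetic slips worth cleaning up: in the sandwich the upper envelope for $\varphi^\beta_{\gamma_i,\varepsilon_i}(t)$ should be $\varphi^\beta_{\beta,\varepsilon^*}(t)$ (not $\varphi^\beta_{1,1/2}(t)-\varphi_0$) for some fixed $\varepsilon^*<1$ dominating the $\varepsilon_i$ --- here $\gamma_i\in[0,\beta]$ rather than $[\beta,1]$, and $\varepsilon_i$ ranges in $[0,1)$ rather than $[0,\frac12]$, so the roles of $\gamma=1$ and $\varepsilon=\frac12$ from the $\mu_\beta=0$ case must be replaced accordingly.
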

\begin{rem}\label{43} From Proposition \ref{35}, we know that for any $[\delta,T]$ with $0<\delta<T<\infty$, the $C^\alpha$-norm of $\varphi^\beta_{\gamma,\varepsilon}(t)$ on $[\delta,T]\times M$ for some $\alpha\in(0,1)$ is continuous with respect to $\gamma\in[0,\beta]$ and $\varepsilon\in[0,1)$. If we fix $\varepsilon\in(0,1)$, then the $C^\infty$-norm of $\varphi^\beta_{\gamma,\varepsilon}(t)$ on $[\delta,T]\times M$ is continuous with respect to $\gamma\in[0,\beta]$.
\end{rem}
By following the arguments as that in Theorem \ref{2019021901} and Theorem \ref{20190002}, we have the following two theorems.
\begin{thm}\label{20190221001}
There exists uniform constant $D^\beta_0$ such that
\begin{equation}\label{19040001090}
\|u^\beta_{0,\varepsilon}(t)\|_{C^0(M)}+osc_{M}\varphi^\beta_{0,\varepsilon}(t)+tr_{\omega_\varepsilon^\beta}\omega^\beta_{0,\varepsilon} \leqslant D^\beta_{0}
\end{equation}
for any $\varepsilon\in(0,\frac{1}{2})$ and $t\geqslant 1$.
\end{thm}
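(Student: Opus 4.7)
The plan is to mirror the proof of Theorem \ref{2019021901} step-by-step, adapting to the twisted flow $(TKRF^\beta_{0,\varepsilon})$, whose parabolic Monge--Amp\`ere form is equation $(\ref{TKRF2})$ with $\gamma = 0$. The only structural novelty compared to $(TKRF_{0,\varepsilon})$ is the extra term $\beta\varphi_\varepsilon$ on the right-hand side, and since $\|\varphi_\varepsilon\|_{C^0(M)}\leqslant C$ by $(\ref{04})$, this term will only modify constants, not the qualitative estimates.

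First, from Lemma \ref{21} at $t=1$ we get that $\|\dot\varphi^\beta_{0,\varepsilon}(1)\|_{C^0(M)}$ is uniformly bounded in $\varepsilon$. Since $\mu_\gamma=0$, the evolution equation of $\dot\varphi^\beta_{0,\varepsilon}(t)$ is just the heat equation $\partial_t\dot\varphi^\beta_{0,\varepsilon} = \Delta_{\omega^\beta_{0,\varepsilon}(t)}\dot\varphi^\beta_{0,\varepsilon}$, so the maximum principle propagates this bound to all $t\geqslant 1$. Combined with the normalization $\frac{1}{V}\int_M e^{-u^\beta_{0,\varepsilon}(t)}dV^\beta_{0,\varepsilon}(t)=1$ and the identity $u^\beta_{0,\varepsilon}(t)=\dot\varphi^\beta_{0,\varepsilon}(t)+c^\beta_{0,\varepsilon}(t)$, this yields $\|u^\beta_{0,\varepsilon}(t)\|_{C^0(M)}\leqslant C$ uniformly in $\varepsilon\in(0,\tfrac12)$ and $t\geqslant 1$.

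For the oscillation bound, set $\tilde\varphi^\beta_{0,\varepsilon}(t)=\varphi^\beta_{0,\varepsilon}(t)-\frac1V\int_M\varphi^\beta_{0,\varepsilon}(t)\,dV_0$ and use Green's formula with respect to $\omega_0$ together with $-\Delta_{\omega_0}\tilde\varphi^\beta_{0,\varepsilon}(t)\leqslant n$ to get $\sup_M\tilde\varphi^\beta_{0,\varepsilon}(t)\leqslant C$. For the lower bound, I would repeat the $\chi_\beta$-test trick of Theorem \ref{2019021901}: integration by parts of $\int_M(A-\tilde\varphi^\beta_{0,\varepsilon}(t))\Delta_{\omega_0}\chi_\beta\,dV_0$ combined with the Campana--Guenancia--P\u aun lower bound $\Delta_{\omega_0}\chi_\beta\geqslant \delta(\varepsilon^2+|s|_h^2)^{\beta-1}-C$, which produces
\begin{equation*}
\int_M(A-\tilde\varphi^\beta_{0,\varepsilon}(t))\,dV^\beta_\varepsilon \leqslant C.
\end{equation*}
Setting $\tilde\phi^\beta_{0,\varepsilon}(t)=\phi^\beta_{0,\varepsilon}(t)-\frac1V\int_M\phi^\beta_{0,\varepsilon}(t)\,dV_0$ and $f_{\varepsilon t}=B-\tilde\phi^\beta_{0,\varepsilon}(t)\geqslant 1$, I would run Moser iteration exactly as in equations $(\ref{yewan10})$--$(\ref{AAA})$. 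The only change is that the Monge--Amp\`ere identity now reads $\omega^{\beta}_{0,\varepsilon}(t))^n/(\omega^\beta_\varepsilon)^n = e^{\dot\phi^\beta_{0,\varepsilon}(t) - \beta\varphi_\varepsilon - F^\beta_{0,\varepsilon}}$, so the extra factor $e^{-\beta\varphi_\varepsilon}$ is uniformly bounded and is absorbed into the iteration constants. This produces $osc_M\phi^\beta_{0,\varepsilon}(t)\leqslant C$, hence $osc_M\varphi^\beta_{0,\varepsilon}(t)\leqslant C$ uniformly.

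Finally, for the trace estimate, I would re-normalize by the weighted mean $\hat\phi^\beta_{0,\varepsilon}(t)=\phi^\beta_{0,\varepsilon}(t)-\frac1V\int_M\phi^\beta_{0,\varepsilon}(t)\,e^{-F_0+C_{\beta,\varepsilon}}\frac{dV_0}{(\varepsilon^2+|s|_h^2)^{1-\beta}}$ as in Theorem \ref{2019021901}, which combined with the oscillation bound gives $\|\hat\phi^\beta_{0,\varepsilon}(t)\|_{C^0(M)}\leqslant C$. Then I would apply the Chern--Lu inequality of Proposition 3.1 in \cite{JWLXZ} (valid since both twisting terms $\beta\omega_{\varphi_\varepsilon}$ and $(1-\beta)\theta_\varepsilon$ are semi-positive and so enter with the correct sign in the maximum principle), and Jensen's inequality applied to the evolution of the weighted mean to show it is non-increasing in $t$ (here the extra term contributes $-\frac\beta V\int_M\varphi_\varepsilon\, e^{-F_0+C_{\beta,\varepsilon}}(\varepsilon^2+|s|_h^2)^{\beta-1}dV_0$, which is uniformly bounded), exactly mimicking the bound $(\ref{201903040002})$. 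This gives the trace bound $\sup_M tr_{\omega^\beta_\varepsilon}\omega^\beta_{0,\varepsilon}(t)\leqslant C$.

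The main obstacle, as in the original Theorem \ref{2019021901}, is the Moser iteration step: one must check that the uniform Sobolev inequality from Theorem \ref{Sobolev} applies with the same constants and that the extra $e^{-\beta\varphi_\varepsilon}$ factor in the Monge--Amp\`ere density does not spoil the iteration; since $\varphi_\varepsilon$ is uniformly bounded in $\varepsilon\in(0,\tfrac12)$, this is indeed harmless. When $n=1$ the Moser iteration is replaced by Ko{\l}odziej's $L^p$-estimate as in the original proof.
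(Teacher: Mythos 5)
Your overall strategy is exactly the paper's: observe that the only new ingredient compared with Theorem~\ref{2019021901} is the additional $\beta\varphi_\varepsilon$ term in $F^\beta_{0,\varepsilon}$, that the $u$-bound, Green's-function and Moser-iteration steps go through unchanged because $\varphi_\varepsilon$ is uniformly bounded, and that the trace estimate closes once one knows the weighted mean $\frac{1}{V}\int_M\phi^\beta_{0,\varepsilon}(t)\,e^{-F_0+C_{\beta,\varepsilon}}\frac{dV_0}{(\varepsilon^2+|s|_h^2)^{1-\beta}}$ is non-increasing in $t$. The paper's own (very short) proof of Theorem~\ref{20190221001} isolates precisely this last point as the only thing that needs to be re-verified and then refers back to Theorem~\ref{2019021901} for everything else.

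Where your write-up has a genuine gap is in the justification of that monotonicity. You say the new contribution "is uniformly bounded," but boundedness of the time-derivative does not give monotonicity: if the derivative were positive and bounded away from zero, the weighted mean would grow linearly in $t$ and you could not conclude $\inf_{[1,t]}(\cdots)=$ the value at $t$, which is exactly what the Chern--Lu step in $(\ref{201903040002})$ needs. What is actually required, and what the paper explicitly invokes together with Jensen's inequality, is the \emph{sign} condition $\varphi_\varepsilon\leqslant 0$ from $(\ref{04})$, hence $\beta\varphi_\varepsilon\leqslant 0$. After Jensen the derivative is bounded above by $\log\big(\frac{1}{V}\int_M e^{\beta\varphi_\varepsilon+C_{\beta,\varepsilon}}\,dV^\beta_{0,\varepsilon}(t)\big)$, and since $e^{\beta\varphi_\varepsilon}\leqslant 1$ and $\frac{1}{V}\int_M dV^\beta_{0,\varepsilon}(t)=1$ this is $\leqslant C_{\beta,\varepsilon}\leqslant 0$. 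This is the whole reason for the paper's normalization $\sup_M\varphi_\beta\leqslant -1$ and $-C\leqslant\varphi_\varepsilon\leqslant 0$ at the start of Section~4. A related minor slip: the extra contribution to $\frac{d}{dt}$ of the weighted mean is $+\frac{\beta}{V}\int_M\varphi_\varepsilon\,e^{-F_0+C_{\beta,\varepsilon}}(\varepsilon^2+|s|_h^2)^{\beta-1}dV_0$ (not with a minus sign), which is non-positive precisely because $\varphi_\varepsilon\leqslant 0$.
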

\begin{proof} From the proof of Theorem \ref{2019021901}, we need only prove that $\frac{1}{V}\int_M\phi^\beta_{0,\varepsilon}(t) e^{-F_0+C_{\beta,\varepsilon}}\frac{dV_0}{(\varepsilon^2+|s|_h^2)^{1-\beta}}$ is decreasing with respect to $t$. By Jensen's inequality and $(\ref{04})$, we have
\begin{equation*}
\begin{split}
&\ \frac{d}{dt}\frac{1}{V}\int_M\phi^\beta_{0,\varepsilon}(t) e^{-F_0+C_{\beta,\varepsilon}}\frac{dV_0}{(\varepsilon^2+|s|_h^2)^{1-\beta}}\\
&=\frac{1}{V}\int_M\Big(\log\frac{(\omega^\beta_{0,\varepsilon}(t))^n}{\omega^n_0} +F_0+\beta\varphi_\varepsilon+(1-\beta)\log(\varepsilon^2+|s|_h^2)\Big)e^{-F_0+C_{\beta,\varepsilon}}\frac{dV_0}{(\varepsilon^2+|s|_h^2)^{1-\beta}}\\
&\leqslant \log \Big(\frac{1}{V}\int_M e^{\beta\varphi_\varepsilon+C_{\beta,\varepsilon}}dV^\beta_{0,\varepsilon}(t)\Big)\leqslant C_{\beta,\varepsilon}\leqslant0.
\end{split}
\end{equation*}
Then we can get this theorem by following the arguments in Theorem \ref{2019021901}.
\end{proof}
\begin{thm}\label{20190221002}
The conical K\"ahler-Ricci flow $(CKRF^\beta_{0})$ converges to the conical K\"ahler-Einstein metric $\omega_{\varphi_\beta}$ in $C_{loc}^{\infty}$-topology outside divisor $D$ and globally in $C^{\alpha,\beta}$-sense for any $\alpha\in(0,\min\{1,\frac{1}{\beta}-1\})$.
\end{thm}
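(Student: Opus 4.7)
The plan is to parallel the proof of Theorem \ref{20190002}, replacing the approximating family $(TKRF_{0,\varepsilon})$ by $(TKRF^\beta_{0,\varepsilon})$. First I would invoke the uniform bounds of Theorem \ref{20190221001}, namely $\|u^\beta_{0,\varepsilon}(t)\|_{C^0(M)} + osc_M\varphi^\beta_{0,\varepsilon}(t) + \sup_M tr_{\omega^\beta_\varepsilon}\omega^\beta_{0,\varepsilon}(t) \leq D^\beta_0$ for $\varepsilon \in (0,\frac{1}{2})$ and $t \geq 1$. Because $u^\beta_{0,\varepsilon}(t) = \dot\varphi^\beta_{0,\varepsilon}(t) + c^\beta_{0,\varepsilon}(t)$ with $c^\beta_{0,\varepsilon}(t)$ a spatial constant forced by the normalization of $u^\beta_{0,\varepsilon}$, the $C^0$-bound on $u^\beta_{0,\varepsilon}$ transfers to $\dot\varphi^\beta_{0,\varepsilon}$. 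Combined with the parabolic Monge--Amp\`ere equation (\ref{TKRF2}) and the standard reverse-trace inequality $tr_{\omega^\beta_{0,\varepsilon}(t)}\omega^\beta_\varepsilon \leq \frac{1}{(n-1)!}(tr_{\omega^\beta_\varepsilon}\omega^\beta_{0,\varepsilon}(t))^{n-1}(\omega^\beta_\varepsilon)^n/(\omega^\beta_{0,\varepsilon}(t))^n$, this yields the uniform two-sided equivalence $C^{-1}\omega^\beta_\varepsilon \leq \omega^\beta_{0,\varepsilon}(t) \leq C\omega^\beta_\varepsilon$ on $[1,\infty) \times M$. Then Proposition \ref{30} combined with Evans--Krylov--Safonov gives uniform $C^\infty_{loc}$-bounds on $M \setminus D$, while Ko{\l}odziej's $L^p$-estimates supply uniform $C^\alpha(M,\omega_0)$-bounds.

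Given any sequences $\varepsilon_i \searrow 0$ and $t_i \nearrow \infty$, a subsequence of $\varphi^\beta_{0,\varepsilon_i}(t_i)$ converges in $C^{\alpha'}(M)$ and in $C^\infty_{loc}(M\setminus D)$ to some $\varphi^\beta_\infty \in C^0(M) \cap C^\infty(M \setminus D)$, with $\int_M(\omega_0 + \sqrt{-1}\partial\bar\partial\varphi^\beta_\infty)^n = \int_M \omega_0^n$ by dominated convergence. To identify $\varphi^\beta_\infty$ with $\varphi_\beta$, I would mimic the Dinew-uniqueness argument used in the concluding part of Lemma \ref{1901}. The monotonicity $A^\beta_{0,\varepsilon}(t_j) \leq A^\beta_{0,\varepsilon}(t_i) \leq 0$ for $t_j < t_i$ (extending Theorem \ref{16} to the endpoint $\mu_\gamma = 0$, see the obstacle discussed below) passes to the conical limit, and the convergence $A^\beta_0(t)\to 0$ (obtained in parallel with Theorem \ref{46} using the uniform Perelman-type estimates available from Theorem \ref{20190221001}) forces equality in Jensen's inequality; by strict concavity of $\log$ the twisted Ricci potential $u^\beta_\infty$ of $\omega^\beta_\infty := \omega_0 + \sqrt{-1}\partial\bar\partial\varphi^\beta_\infty$ must vanish identically on $M\setminus D$. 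Consequently $\varphi^\beta_\infty$ satisfies the Monge--Amp\`ere equation (\ref{2019022301}), and Dinew's uniqueness theorem together with the normalization of $u^\beta_\infty$ identifies $\varphi^\beta_\infty$ with $\varphi_\beta$.

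Since every subsequential limit coincides with $\varphi_\beta$, the full convergence $\varphi^\beta_{0,\varepsilon}(t) \to \varphi_\beta$ follows by a standard compactness-uniqueness argument of the type used in Proposition \ref{35}. Sending $\varepsilon \to 0$ first produces the conical flow $(CKRF^\beta_0)$ via Theorem \ref{04.5001}, and then $t \to \infty$ delivers the announced $C^\infty_{loc}$-convergence on $M\setminus D$ to $\omega_{\varphi_\beta}$. The global $C^{\alpha,\beta}$-convergence for $\alpha\in(0,\min\{1,1/\beta-1\})$ is obtained by coupling the $C^\alpha(M,\omega_0)$-convergence with the H\"older regularity established in Theorem $3.8$ of \cite{JWLCJZ} (see also Theorem $3.10$ of \cite{JWLXZ1}). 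The principal obstacle I anticipate is justifying the monotonicity of $A^\beta_{0,\varepsilon}(t)$ at the borderline $\mu_\gamma = 0$: the usual Poincar\'e argument that yields nondecrease requires a spectral gap proportional to $\mu_\gamma$, which degenerates in this limit. The remedy is to exploit the strict positivity of the twisting form $\mu_\beta\omega_{\varphi_\varepsilon} + (1-\beta)\theta_\varepsilon$ (which, being bounded below by a fixed positive multiple of a smooth K\"ahler form when $\mu_\beta > 0$), to produce a uniform Poincar\'e inequality independent of $\mu_\gamma$ and thus rescue $\frac{d}{dt} A^\beta_{0,\varepsilon}(t) \geq 0$.
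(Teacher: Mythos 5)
The first paragraph of your plan (uniform estimates from Theorem~\ref{20190221001}, two-sided metric equivalence, Evans--Krylov--Safonov for $C^\infty_{loc}$ regularity outside $D$, Ko{\l}odziej for the $C^\alpha$ bound on $M$) matches the paper's route and is correct. The gap lies in the second paragraph, where you invoke the monotonicity of $A^\beta_{0,\varepsilon}(t)$ and the convergence $A^\beta_0(t)\to 0$ to identify the limit. Two issues. First, in the paper $A^\beta_0(t)\to 0$ is derived as a \emph{consequence} of Theorem~\ref{20190221002}, exactly as $A_0(t)\to 0$ is derived from Theorem~\ref{20190002} at the tail of the proof of Lemma~\ref{1901}; Theorem~\ref{46} and the uniform Perelman estimates it rests on are stated only for $\gamma\in(0,\beta]$, so you cannot appeal to a ``parallel of Theorem~\ref{46}'' to obtain $A^\beta_0(t)\to 0$ as a known fact without an independent argument, and the circularity this introduces is real. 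Second, the proposed repair of the monotonicity of $A^\beta_{0,\varepsilon}(t)$ is incorrect: the twisting form $\mu_\beta\omega_{\varphi_\varepsilon}+(1-\beta)\theta_\varepsilon$ is \emph{not} bounded below by a fixed positive multiple of a K\"ahler form. Both $\omega_{\varphi_\varepsilon}=\omega_0+\sqrt{-1}\partial\bar\partial\varphi_\varepsilon$ (with $\varphi_\varepsilon$ merely $\omega_0$-psh) and $\theta_\varepsilon$ are only semi-positive, and $\theta_\varepsilon$ degenerates near $D$ as $\varepsilon\to 0$, so no $\varepsilon$-uniform spectral gap can be extracted from it.

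The paper avoids both difficulties and uses no $A$-functional machinery at $\mu_\gamma=0$. After the uniform estimates of Theorem~\ref{20190221001} (and the $\|\dot\varphi^\beta_{0,\varepsilon}\|_{C^0}$ bound they give) produce the uniform two-sided metric equivalence and the $C^\infty_{loc}$ estimates, one observes that the twisted Mabuchi energy $\mathcal{M}^\beta_{0,\varepsilon}(\varphi^\beta_{0,\varepsilon}(t))$ is uniformly bounded from below, which follows from the existence of the conical K\"ahler--Einstein metric $\omega_{\varphi_\beta}$ via Theorem~\ref{201902131} or from the inequality~$(\ref{20190213133})$ at $\gamma=0$. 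Since $\frac{d}{dt}\mathcal{M}^\beta_{0,\varepsilon}=-\|\nabla u^\beta_{0,\varepsilon}(t)\|^2_{L^2}$, this gives an integrated decay of the gradient of the twisted Ricci potential, and the smooth-approximation arguments of section~7 of \cite{JWLXZ} (for a fixed cone angle, not a family) then yield convergence of the flow. The identification of the limit by Dinew's or Berndtsson's uniqueness theorem, and the upgrade from sequential to full convergence, proceed exactly as you describe; the difference is the energetic engine, and the one you chose requires machinery the paper does not establish and which does not readily extend to $\mu_\gamma=0$.
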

From the arguments in \cite{LW,JWLXZ,RGY,QSZ}, we have the following Sobolev inequalities along $(TKRF^\beta_{\gamma,\varepsilon})$ by using Theorem \ref{Sobolev}, Theorem \ref{Sobolev1} and Proposition \ref{2019022401}.
\begin{thm}\label{1.8.5.1kk} Let  $M$ be a Fano manifold with complex dimension $n\geq2$ and $\omega^\beta_{\gamma,\varepsilon}(t)$ be a solution of the twisted K\"ahler-Ricci flow $(TKRF^\beta_{\gamma,\varepsilon})$. There exist uniform constants A and B depending only on $\|\varphi_0\|_{L^\infty(M)}$, $n$, $\beta$ and $\omega_0$ such that
\begin{equation*}
\begin{split}
\label{3.22.31kk}(\int_{M}v^{\frac{2n}{n-1}}dV^\beta_{\gamma,\varepsilon}(t))^{\frac{n-1}{n}}&\leqslant A\Big(\int_{M}4|\nabla v|_{\omega^\beta_{\gamma,\varepsilon}(t)}^{2}dV^\beta_{\gamma,\varepsilon}(t)+\int_{M}\big(R(\omega^\beta_{\gamma,\varepsilon}(t))-(\beta-\gamma)tr_{\omega^\beta_{\gamma,\varepsilon}(t)}\omega_{\varphi_\varepsilon}\\
&\ \ \ \ \ -(1-\beta)tr_{\omega^\beta_{\gamma,\varepsilon}(t)}\theta_{\varepsilon}+B\big)v^{2}dV^\beta_{\gamma,\varepsilon}(t)\Big)
\end{split}
\end{equation*}
for any $v\in W^{1,2}(M,\omega^\beta_{\gamma,\varepsilon}(t))$, $\gamma\in(0,1]$, $\varepsilon>0$ and $t\geq 1$.
\end{thm}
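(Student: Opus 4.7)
The plan is to adapt the Ye--Zhang parabolic method for Sobolev inequalities along the Ricci flow to the twisted flow $(TKRF^\beta_{\gamma,\varepsilon})$, with the ordinary scalar curvature replaced throughout by the twisted scalar curvature
$$R_{tw}(t):=R(\omega^\beta_{\gamma,\varepsilon}(t))-(\beta-\gamma)tr_{\omega^\beta_{\gamma,\varepsilon}(t)}\omega_{\varphi_\varepsilon}-(1-\beta)tr_{\omega^\beta_{\gamma,\varepsilon}(t)}\theta_\varepsilon.$$
The argument splits into securing the Sobolev inequality at the initial time $t=1$ and then propagating it to all $t\geqslant 1$.

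At $t=1$, Lemma \ref{22} supplies a uniform two-sided bound $C^{-1}\omega^\beta_\varepsilon\leqslant\omega^\beta_{\gamma,\varepsilon}(1)\leqslant C\omega^\beta_\varepsilon$ with $C$ independent of $\gamma$ and $\varepsilon$, while Theorem \ref{Sobolev} provides a uniform Sobolev inequality for the smooth model $\omega^\beta_\varepsilon$. Combining these two ingredients produces a uniform bound on the Sobolev constant $\mathcal{C}_S(M,\omega^\beta_{\gamma,\varepsilon}(1))$ depending only on $\|\varphi_0\|_{L^\infty(M)}$, $n$, $\beta$ and $\omega_0$.

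For the propagation to $t\geqslant 1$, observe that along $(TKRF^\beta_{\gamma,\varepsilon})$ the volume form evolves by $\partial_t dV^\beta_{\gamma,\varepsilon}(t)=(n\gamma-R_{tw}(t))\,dV^\beta_{\gamma,\varepsilon}(t)$, so $R_{tw}$ plays precisely the role of the scalar curvature in Perelman's $\mathcal{W}$-entropy and in the derivation of a logarithmic Sobolev inequality from it. Proposition \ref{2019022401} supplies the a priori lower bound $R_{tw}(t)\geqslant -4n$ for $t\geqslant 1$, and feeding this together with the uniform constant $\mathcal{C}_S(M,\omega^\beta_{\gamma,\varepsilon}(1))$ into the Ye--Zhang machinery yields a uniform logarithmic Sobolev inequality along the flow. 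A standard Davies--Gross exponentiation then recovers the classical Sobolev inequality in the form displayed in the statement, with $R_{tw}$ appearing explicitly on the right-hand side.

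The main technical point, and the only place where the twisted setting might appear to differ essentially from the classical Ricci flow, lies in controlling the contribution of the twist terms to the Bochner-type identities that drive $\mathcal{W}$-monotonicity. Here the key observation is that $R_{tw}$ is exactly the trace of the twisted Ricci tensor $Ric(\omega^\beta_{\gamma,\varepsilon}(t))-(\beta-\gamma)\omega_{\varphi_\varepsilon}-(1-\beta)\theta_\varepsilon$ that governs the flow, so the Bochner computations close up without new terms and the whole argument reduces to a direct transcription of the proofs in \cite{LW,JWLXZ,RGY,QSZ} with $R$ replaced by $R_{tw}$ throughout.
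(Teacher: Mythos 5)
Your proposal is correct and takes essentially the same approach as the paper: a uniform Sobolev constant for $\omega^\beta_{\gamma,\varepsilon}(1)$ obtained from Lemma \ref{22} together with the model-metric Sobolev inequality of Theorem \ref{Sobolev}, the lower bound $R_{tw}\geqslant-4n$ from Proposition \ref{2019022401}, and then the Ye--Zhang propagation machinery with $R$ replaced by the twisted scalar curvature. The paper compresses the propagation step into a citation to \cite{LW,JWLXZ,RGY,QSZ}, and your outline (including the observation that $\partial_t dV^\beta_{\gamma,\varepsilon}=(n\mu_\gamma-R_{tw})\,dV^\beta_{\gamma,\varepsilon}$ so $R_{tw}$ assumes the role of $R$, and that the extra twist contribution in the entropy evolution has a sign because the twist form is closed and non-negative for $\gamma\leqslant\beta$) supplies exactly the ingredients the cited references require.
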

\begin{thm}\label{1.8.5.10001k} Let  $M$ be a Fano manifold with complex dimension $1$ and $\omega^\beta_{\gamma,\varepsilon}(t)$ be a solution of the twisted K\"ahler-Ricci flow $(TKRF^\beta_{\mu_\gamma,\varepsilon})$. For any $n_0>1$, there exist uniform constants A and B depending only on $\|\varphi_0\|_{L^\infty(M)}$, $n_0$, $\beta$ and $\omega_0$ such that
\begin{equation*}
\begin{split}
\label{3.22.310001k}(\int_{M}v^{\frac{2n_0}{n_0-1}}dV^\beta_{\gamma,\varepsilon}(t))^{\frac{n_0-1}{n_0}}&\leqslant A\Big(\int_{M}4|\nabla v|_{\omega^\beta_{\gamma,\varepsilon}(t)}^{2}dV^\beta_{\gamma,\varepsilon}(t)+\int_{M}\big(R(\omega^\beta_{\gamma,\varepsilon}(t))\\
&\ \ \ \ \ \ \ \ \ -(\beta-\gamma)tr_{\omega^\beta_{\gamma,\varepsilon}(t)}\omega_{\varphi_\varepsilon}-(1-\beta)tr_{\omega^\beta_{\gamma,\varepsilon}(t)}\theta_{\varepsilon}+B\big)v^{2}dV^\beta_{\gamma,\varepsilon}(t)\Big)
\end{split}
\end{equation*}
for any $v\in W^{1,2}(M,\omega^\beta_{\gamma,\varepsilon}(t))$, $\gamma\in(0,1]$, $\varepsilon>0$ and $t\geq 1$.
\end{thm}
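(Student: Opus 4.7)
The plan is to parallel the argument of Theorem \ref{1.8.5.1kk} (the $n\geqslant 2$ case), substituting Hsu's inequality (Theorem $1$ in \cite{SYH}) for the standard Sobolev embedding, which degenerates in complex dimension one. Throughout, I would work in the regime $t\geqslant 1$, so that Proposition \ref{2019022401} supplies the uniform lower bound $R(\omega^\beta_{\gamma,\varepsilon}(t))-(\beta-\gamma)tr_{\omega^\beta_{\gamma,\varepsilon}(t)}\omega_{\varphi_\varepsilon}-(1-\beta)tr_{\omega^\beta_{\gamma,\varepsilon}(t)}\theta_{\varepsilon}\geqslant -4n$ independent of $\gamma$ and $\varepsilon$.

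First, I would establish the desired inequality at the initial time $t=1$. By Lemma \ref{21} and Lemma \ref{22}, the metrics $\omega^\beta_{\gamma,\varepsilon}(1)$ are uniformly equivalent to the background metric $\omega^\beta_\varepsilon$ with constants depending only on $\|\varphi_0\|_{L^\infty(M)}$, $\beta$, and $\omega_0$. The uniform $W^{1,1}\to L^2$ type inequality of Theorem \ref{Sobolev1} for $\omega^\beta_\varepsilon$ in dimension one, upgraded through Hsu's iteration scheme, yields a uniform $W^{1,2}\to L^{\frac{2n_0}{n_0-1}}$ Sobolev inequality for any chosen exponent $n_0>1$. Transferring through the metric equivalence then gives the Sobolev inequality at $t=1$ with constants depending only on $\|\varphi_0\|_{L^\infty(M)}$, $n_0$, $\beta$ and $\omega_0$.

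Second, I would propagate this Sobolev inequality from $t=1$ to all $t\geqslant 1$ along $(TKRF^\beta_{\mu_\gamma,\varepsilon})$ by the scheme of Theorem $6.1$ in \cite{JWLXZ} (following Ye \cite{RGY} and Zhang \cite{QSZ}). One passes to a logarithmic Sobolev inequality attached to the twisted heat semigroup, exploits the monotonicity of the associated $\mathcal{W}$-entropy along the flow, and converts back to the power-integral form. In this step the uniform scalar lower bound from Proposition \ref{2019022401} is what allows the entropy manipulations to close with constants independent of $\gamma\in(0,1]$, $\varepsilon>0$, and $t\geqslant 1$, and the twisted scalar curvature term appears naturally on the right-hand side of the resulting inequality.

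The main technical obstacle is verifying that Hsu's upgrade from $W^{1,1}\to L^2$ to $W^{1,2}\to L^{\frac{2n_0}{n_0-1}}$ can be performed uniformly across the family of background metrics $\omega^\beta_\varepsilon$, and that the resulting constants remain controlled under the metric equivalence of Lemma \ref{22}, whose implicit constant deteriorates as $t\to 0^{+}$ but is uniformly bounded at $t=1$. Once this is in hand, the propagation in $t$ is routine because the arguments of \cite{JWLXZ,RGY,QSZ} use only the uniform lower scalar bound and the twisted flow equation, both of which are already in place.
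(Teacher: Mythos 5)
Your proposal is correct and follows essentially the same route as the paper: you establish uniform equivalence of $\omega^\beta_{\gamma,\varepsilon}(1)$ with the background metric $\omega^\beta_\varepsilon$ (via Lemmas \ref{21} and \ref{22}), upgrade the one-dimensional $W^{1,1}\to L^2$ inequality of Theorem \ref{Sobolev1} to a $W^{1,2}\to L^{2n_0/(n_0-1)}$ Sobolev inequality through Hsu's scheme \cite{SYH}, and then propagate along the flow via the logarithmic Sobolev/$\mathcal{W}$-entropy argument of Theorem $6.1$ in \cite{JWLXZ} (following \cite{RGY,QSZ}), with the uniform scalar lower bound from Proposition \ref{2019022401} closing the estimates independently of $\gamma$, $\varepsilon$ and $t$. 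This is precisely the chain of references the paper invokes for this statement.
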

Combining these uniform Sobolev inequalities with Proposition \ref{2019022401}, we have the following Theorem by following Jiang's work (Theorem $1.12$ in \cite{WSJ1}). 
\begin{thm}\label{Jiang1}
Let  $M$ be a Fano manifold of complex dimension $n$ and $\omega^\beta_{\gamma,\varepsilon}(t)$ be a solution of the twisted K\"ahler-Ricci flow $(TKRF^\beta_{\gamma,\varepsilon})$. Let $f$ be a non-negative Lipschitz continuous function on $[0,\infty)\times M$ satisfying 
\begin{equation}\label{20190306001k}
\frac{\partial}{\partial t}f\leqslant\Delta_{\omega^\beta_{\gamma,\varepsilon}(t)}f+af
\end{equation}
on $[0,\infty)\times M$ in the weak sense, where $a\geqslant0$. For any $p>0$, there exists a constant $C$ depending only on $\|\varphi_0\|_{L^\infty(M)}$, $\beta$, $a$, $p$, $\omega_0$ and $n_0$ ($n_0=n$ if $n\geqslant2$, $n_0>1$ if $n=1$) such that
\begin{equation}\label{20190306002l}
\sup\limits_M|f(t,x)|\leqslant\frac{C}{(t-T)^{\frac{n_0+1}{p}}}\Big(\int_T^{T+1}\int_M f^p dV^\beta_{\gamma,\varepsilon}(t)dt\Big)^{\frac{1}{p}}
\end{equation}
holds for any $T<t<T+1$ with $T\geqslant1$, $\gamma\in(0,1]$ and $\varepsilon>0$.
\end{thm}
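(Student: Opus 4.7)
The plan is to adapt the parabolic Moser iteration from Jiang's Theorem $1.12$ in \cite{WSJ1} to the twisted K\"ahler-Ricci flow $(TKRF^\beta_{\gamma,\varepsilon})$, exploiting the two uniform ingredients this paper has already prepared: the uniform Sobolev inequality (Theorem \ref{1.8.5.1kk} when $n\geqslant 2$, and Theorem \ref{1.8.5.10001k} when $n=1$) and the uniform lower bound for the twisted scalar curvature from Proposition \ref{2019022401}. Throughout, set $R^{tw}_{\gamma,\varepsilon}(t):=R(\omega^\beta_{\gamma,\varepsilon}(t))-(\beta-\gamma)tr_{\omega^\beta_{\gamma,\varepsilon}(t)}\omega_{\varphi_\varepsilon}-(1-\beta)tr_{\omega^\beta_{\gamma,\varepsilon}(t)}\theta_{\varepsilon}$.

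First I would compute the evolution of $\int_M f^q\,dV^\beta_{\gamma,\varepsilon}(t)$ for $q\geqslant 1$. Along $(TKRF^\beta_{\gamma,\varepsilon})$ the volume form satisfies $\partial_t(dV^\beta_{\gamma,\varepsilon}(t))=(n\gamma-R^{tw}_{\gamma,\varepsilon}(t))\,dV^\beta_{\gamma,\varepsilon}(t)$. Using $(\ref{20190306001k})$, integrating by parts, and discarding the favorable sign of $-q(q-1)\int f^{q-2}|\nabla f|^2$, I obtain
\begin{equation*}
\frac{d}{dt}\int_M f^q\,dV^\beta_{\gamma,\varepsilon}(t)\leqslant -\frac{4(q-1)}{q}\int_M |\nabla f^{q/2}|^2_{\omega^\beta_{\gamma,\varepsilon}(t)}\,dV^\beta_{\gamma,\varepsilon}(t) +(aq+n\gamma)\int_M f^q\,dV^\beta_{\gamma,\varepsilon}(t)-\int_M R^{tw}_{\gamma,\varepsilon}(t)f^q\,dV^\beta_{\gamma,\varepsilon}(t).
\end{equation*}
Next, apply the Sobolev inequality to $f^{q/2}$: the curvature term on the right of that inequality combines with the last term above, and Proposition \ref{2019022401} ($R^{tw}_{\gamma,\varepsilon}(t)\geqslant -4n$) converts everything into a favorable differential inequality of the shape
\begin{equation*}
\frac{d}{dt}\int_M f^q\,dV^\beta_{\gamma,\varepsilon}(t)+c_1\Big(\int_M f^{q\kappa}\,dV^\beta_{\gamma,\varepsilon}(t)\Big)^{1/\kappa}\leqslant C_1 q\int_M f^q\,dV^\beta_{\gamma,\varepsilon}(t),
\end{equation*}
where $\kappa=\tfrac{n_0}{n_0-1}$ and $c_1,C_1$ depend only on $\|\varphi_0\|_{L^\infty(M)}$, $\beta$, $a$, $\omega_0$ and $n_0$.

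Then I would run the standard parabolic Moser scheme on this inequality. Multiply by a Lipschitz time cutoff $\eta_k(t)$ that vanishes at $t=\tau_{k-1}$ and equals $1$ on $[\tau_k,T+1]$, where $\tau_k:=T+1-(t-T)\cdot 2^{-k}$ form a shrinking sequence of intervals inside $[T,T+1]$. Integrating in time from $\tau_{k-1}$ to $\tau_k$ and using the absorption trick in the standard way yields, for $q_k:=p\kappa^k$,
\begin{equation*}
\sup_{s\in[\tau_k,T+1]}\Big(\int_M f^{q_k}\,dV^\beta_{\gamma,\varepsilon}(s)\Big)^{1/q_k}\leqslant (C_2 q_{k-1}2^k)^{1/q_{k-1}}\sup_{s\in[\tau_{k-1},T+1]}\Big(\int_M f^{q_{k-1}}\,dV^\beta_{\gamma,\varepsilon}(s)\Big)^{1/q_{k-1}}.
\end{equation*}
Iterating and summing the convergent series $\sum_{k}\frac{\log(C_2 q_{k-1}2^k)}{q_{k-1}}$, and finally using the integrated inequality once more to pass from the supremum in $s$ of $\|f(s)\|_{L^p}$ on $[T,T+1]$ to the space-time $L^p$ norm on $[T,T+1]\times M$ appearing in $(\ref{20190306002l})$, produces the claimed bound with the $(t-T)^{-(n_0+1)/p}$ decay factor (coming from the shrinking time cutoffs).

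The step I expect to require the most care is the absorption/iteration bookkeeping that turns the differential inequality into the one-step recursion, and particularly the passage between the time-supremum of spatial $L^{q_k}$ norms and the space-time $L^p$ norm on $[T,T+1]\times M$ at the very end; everything else reduces cleanly to the uniform Sobolev inequality and the uniform lower bound $R^{tw}_{\gamma,\varepsilon}(t)\geqslant -4n$. Since both of these are independent of $\gamma\in(0,1]$, $\varepsilon>0$ and $t\geqslant 1$, the resulting constant $C$ depends only on the parameters listed in the statement.
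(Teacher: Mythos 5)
Your proposal is correct and follows exactly the route the paper intends: it cites Jiang's Theorem~$1.12$ and instructs the reader to rerun his parabolic Moser iteration, with the uniform Sobolev inequalities (Theorems~\ref{1.8.5.1kk} and \ref{1.8.5.10001k}) and the uniform twisted scalar curvature lower bound (Proposition~\ref{2019022401}) serving as the two inputs that make all constants independent of $\gamma\in(0,1]$, $\varepsilon>0$ and $t\geqslant 1$. The differential inequality you derive via the volume-form evolution $\partial_t(dV^\beta_{\gamma,\varepsilon})=(n\gamma-R^{tw}_{\gamma,\varepsilon})dV^\beta_{\gamma,\varepsilon}$, the absorption of the curvature term into the Sobolev inequality, and the shrinking-time-cutoff iteration are precisely Jiang's scheme; no new ideas beyond that are needed.
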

Denote $L^\beta_0=\max(\sup\limits_M tr_{\omega_\beta}\omega_{\varphi_\beta}+osc_M\varphi_\beta, D^\beta_0)$. By using the arguments as that in the proof of Lemma \ref{1901}, we have
\begin{lem}\label{20190221003}
There exists a constant $\tilde{\delta}>0$ such that
\begin{equation}\label{1903}
\|u^\beta_{\gamma,\varepsilon}(t)\|_{C^0(M)}+osc_{M}\varphi^\beta_{\gamma,\varepsilon}(t)+\sup\limits_M tr_{\omega_\varepsilon^\beta}\omega^\beta_{\gamma,\varepsilon} \leqslant L^\beta_{0}+1
\end{equation}
for any $\varepsilon\in(0,\tilde{\delta})$, $\gamma\in(0,\tilde{\delta})$ and $t\in[\frac{1}{\tilde{\delta}},+\infty)$.
\end{lem}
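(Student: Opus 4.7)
The plan is to prove this by a contradiction argument that mirrors exactly the structure of the proof of Lemma \ref{1901}, now applied to the family $(TKRF^{\beta}_{\gamma,\varepsilon})$ as $\gamma\searrow 0$ instead of $(TKRF_{\gamma,\varepsilon})$ as $\gamma\searrow\beta$. Suppose the lemma fails. Using Theorem \ref{20190221001} (uniform bounds along $(TKRF^{\beta}_{0,\varepsilon})$) together with the continuity of the flow in the parameters (Remark \ref{43}), I extract sequences $\varepsilon_i\searrow 0$, $\gamma_i\searrow 0$ and times $t_i\nearrow\infty$ with $t_i\in[T_{i-1}+1,T_i]$ such that on each slab the supremum of the quantity in $(\ref{1903})$ is exactly $L^{\beta}_0+1$ while at $t_i$ it still exceeds $L^{\beta}_0+\tfrac{7}{8}$. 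This step is essentially formal and uses only intermediate-value continuity in $\gamma$.

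The heart of the argument is to obtain intermediate-slab estimates on $[T_{i-1}+\tfrac12,T_i]$ whose constants are independent of $\gamma_i$ down to $0$. For this I invoke the uniform lower bound on the twisted scalar curvature (Proposition \ref{2019022401}), the uniform Sobolev inequality (Theorem \ref{1.8.5.1kk} or \ref{1.8.5.10001k}) and, crucially, Jiang's parabolic maximum-principle iteration (Theorem \ref{Jiang1}) applied to
\[
R(\omega^{\beta}_{\gamma,\varepsilon}(t))-(\beta-\gamma)\mathrm{tr}_{\omega^{\beta}_{\gamma,\varepsilon}(t)}\omega_{\varphi_\varepsilon}-(1-\beta)\mathrm{tr}_{\omega^{\beta}_{\gamma,\varepsilon}(t)}\theta_{\varepsilon}+4n+|\nabla u^{\beta}_{\gamma,\varepsilon}(t)|^{2}_{\omega^{\beta}_{\gamma,\varepsilon}(t)}.
\]
The $L^{1}$-norm in the right hand side of $(\ref{20190306002l})$ is controlled by combining the monotonicity $\dot{\mathcal{M}}^{\beta}_{\gamma,\varepsilon}=-\|\nabla u^{\beta}_{\gamma,\varepsilon}\|^{2}_{L^{2}}$ with an a priori upper bound on $\mathcal{M}^{\beta}_{\gamma,\varepsilon}(\varphi^{\beta}_{\gamma,\varepsilon}(T_{i-1}+1))$ (the latter again obtained from the hypothesized bound $(\ref{1915})$ and the representation $u^{\beta}_{\gamma,\varepsilon}(t)=\dot{\varphi}^{\beta}_{\gamma,\varepsilon}(t)-\gamma\varphi^{\beta}_{\gamma,\varepsilon}(t)-(\beta-\gamma)\varphi_\varepsilon+\mathrm{const}$). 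Together with $A^{\beta}_{\gamma,\varepsilon}(t)\leqslant 0$ from Jensen, this yields $\|u^{\beta}_{\gamma_i,\varepsilon_i}(t)\|_{C^{0}}\leqslant C$ and $|\nabla u^{\beta}_{\gamma_i,\varepsilon_i}(t)|\leqslant C$ on the slab. A Chern--Lu/Laplacian argument exactly as in Lemma \ref{22} then gives $\mathrm{tr}_{\omega^{\beta}_\varepsilon}\omega^{\beta}_{\gamma_i,\varepsilon_i}(t)\leqslant C$, and combined with the $C^{0}$-bound on $\dot{\psi}^{\beta}_{\gamma_i,\varepsilon_i}$ (Proposition \ref{1.8.5.3}) one obtains the two-sided comparison $C^{-1}\omega^{\beta}_{\varepsilon_i}\leqslant\omega^{\beta}_{\gamma_i,\varepsilon_i}(t)\leqslant C\omega^{\beta}_{\varepsilon_i}$.

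Armed with these uniform estimates, Evans--Krylov--Safonov regularity gives $C^{\infty}_{\mathrm{loc}}$-bounds on $M\setminus D$, and a subsequence of $\varphi^{\beta}_{\gamma_i,\varepsilon_i}(t_i)$ converges on $M\setminus D$ in $C^{\infty}_{\mathrm{loc}}$ to a limit $\varphi^{\beta}_\infty\in L^{\infty}(M)\cap C^{\infty}(M\setminus D)$, with the volumes converging by dominated convergence. Since $\gamma_i\to 0$ and $\varphi_{\varepsilon_i}\to\varphi_\beta$ in $L^{\infty}$, the limit satisfies $-\mathrm{Ric}(\omega_{\varphi^{\beta}_\infty})+\beta\omega_{\varphi_\beta}+(1-\beta)[D]=\sqrt{-1}\partial\bar\partial u^{\beta}_\infty$ on $M\setminus D$ with the proper normalization. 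Exploiting the monotonicity of $A^{\beta}_{\gamma,\varepsilon}$ along $(TKRF^{\beta}_{\gamma,\varepsilon})$ (Theorem \ref{16}) and the fact that $A^{\beta}_{0}(t)\to 0$ from Theorem \ref{20190221002}, the same chain of inequalities used at the end of the proof of Lemma \ref{1901} forces $\tfrac{1}{V}\int_M u^{\beta}_\infty e^{-u^{\beta}_\infty}\,dV^{\beta}_\infty=0$, and Jensen's equality case gives $u^{\beta}_\infty\equiv 0$. Hence $\omega_{\varphi^{\beta}_\infty}$ is a conical K\"ahler--Einstein metric with Ricci curvature $\beta$, cone angle $2\pi\beta$ along $D$, and bounded potential; Berndtsson's uniqueness theorem then yields $\varphi^{\beta}_\infty=\varphi_\beta+\mathrm{const}$. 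Passing to the limit in the inequality $\mathrm{osc}_M\varphi^{\beta}_{\gamma_i,\varepsilon_i}(t_i)+\sup_{M\setminus D}\mathrm{tr}_{\omega^{\beta}_{\varepsilon_i}}\omega^{\beta}_{\gamma_i,\varepsilon_i}(t_i)\geqslant L^{\beta}_0+\tfrac{7}{8}-\|u^{\beta}_\infty\|_{L^\infty(M\setminus D)}$ produces $\mathrm{osc}_M\varphi_\beta+\sup_{M\setminus D}\mathrm{tr}_{\omega_\beta}\omega_{\varphi_\beta}\geqslant L^{\beta}_0+\tfrac{7}{8}$, contradicting the definition of $L^{\beta}_0$.

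The main obstacle I expect is engineering the intermediate-slab estimates so that all constants are genuinely independent of $\gamma_i^{-1}$: the standard uniform Perelman estimates (Theorem \ref{20190314001}) blow up as $\gamma\to 0$, so one cannot invoke them directly. The Jiang-type iteration applied to the twisted scalar curvature plus $|\nabla u|^{2}$, fed by the lower bound from Proposition \ref{2019022401} and the Mabuchi energy decay, is what replaces them; verifying that every constant in the cascade from the uniform Sobolev inequality through the Laplacian estimate to the final $C^{0}$-bound on $\dot\psi$ remains $\gamma$-independent is the delicate bookkeeping step.
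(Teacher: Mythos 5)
Your proposal follows the same approach as the paper: the paper's proof of this lemma is simply a pointer to ``the arguments as that in the proof of Lemma \ref{1901},'' and your outline replicates that contradiction-and-uniformization strategy with the correct inputs (Proposition \ref{2019022401}, Theorem \ref{20190221001}, Remark \ref{43}, Theorem \ref{Jiang1}, the monotonicity of $A^\beta_{\gamma,\varepsilon}$ with the convergence $A^\beta_0(t)\to0$, and uniqueness of the limit).

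Two details deserve correction. You attribute the $\gamma$-uniform $C^0$-bound on $\dot\psi^\beta_{\gamma_i,\varepsilon_i}$ to Proposition \ref{1.8.5.3}; but that proposition's constant $\hat{C}_{\beta_0}$ is only established for $\gamma\in[\beta_0,\beta]$ with $\beta_0>0$ fixed and blows up as $\beta_0\to 0$ (it comes from Perelman estimates that depend on $\mu_\gamma^{-1}$), so it cannot be invoked along a sequence with $\gamma_i\searrow 0$. The $\gamma$-uniform bound must come, as in the proof of Lemma \ref{1901}, from the choice of shift $\tilde{C}_{\gamma_i,\varepsilon_i}$ and the backward integration
\[
\alpha_{\gamma_i,\varepsilon_i}(t)=\int_t^{T_{i-1}+1}e^{-\gamma_i(s-t)}\|\nabla u^\beta_{\gamma_i,\varepsilon_i}(s)\|^2_{L^2}\,ds\leqslant C,
\]
whose uniformity is fed by the $|\nabla u^\beta_{\gamma_i,\varepsilon_i}|^2$ bound you already derived from Theorem \ref{Jiang1}. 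In addition, the input to the energy identity should be a \emph{lower} bound (not an upper bound) on $\mathcal{M}^\beta_{\gamma_i,\varepsilon_i}$ at time $T_{i-1}+1$, so that $\int_{T_{i-1}}^{T_{i-1}+1}\|\nabla u^\beta_{\gamma_i,\varepsilon_i}\|^2_{L^2}\,dt=\mathcal{M}^\beta_{\gamma_i,\varepsilon_i}(T_{i-1})-\mathcal{M}^\beta_{\gamma_i,\varepsilon_i}(T_{i-1}+1)\leqslant C$; and since $\gamma_\infty=0$ turns the limiting equation into the fixed Monge--Amp\`ere equation $(\ref{2019022301})$, the relevant uniqueness result is Dinew's theorem, exactly as in Lemma \ref{1901}, rather than Berndtsson's.
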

Following the arguments in the proof of Theorem \ref{20190223001}, we improve the uniform Perelman's estimates in Theorem \ref{20190314001} independent of $\gamma\in(0,\beta]$ along $(TKRF^\beta_{\gamma,\varepsilon})$.
\begin{thm}\label{201902230010} Let $\omega^\beta_{\gamma,\varepsilon}(t)$ be a solution of the twisted K\"ahler Ricci flow $(TKRF^\beta_{\gamma,\varepsilon})$. There exists a uniform constant $C$, such that
\begin{equation}\begin{split}\label{p0}
&\ |R(\omega^\beta_{\gamma,\varepsilon}(t))-(\beta-\gamma)tr_{\omega^\beta_{\gamma,\varepsilon}(t)}\omega_{\varphi_\varepsilon}-(1-\beta)tr_{\omega^\beta_{\gamma,\varepsilon}(t)}\theta_{\varepsilon}|\leqslant C,\\
&\ \|u^\beta_{\gamma,\varepsilon}(t)\|_{C^{1}(\omega^\beta_{\gamma,\varepsilon}(t))}\leqslant C,\\
&\ diam(M,\omega^\beta_{\gamma,\varepsilon}(t))\leqslant C
\end{split}
\end{equation}
hold for any $\gamma\in(0,\beta]$, $t\geqslant 1$ and $\varepsilon\in(0,\tilde{\delta})$, where $\tilde{\delta}$ is the constant in Lemma \ref{20190221003}.
\end{thm}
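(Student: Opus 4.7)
The plan is to mirror closely the structure of the proof of Theorem \ref{20190223001} from \emph{Case 2}, splitting the parameter range $(0,\beta]$ into two regimes and gluing the resulting estimates. The first regime is a small interval $\gamma\in(0,\tilde\delta)$ around zero, handled via Lemma \ref{20190221003} and the uniform lower bound of Proposition \ref{2019022401}. The second regime $\gamma\in[\tilde\delta,\beta]$ is already covered by Theorem \ref{20190314001} (with $\beta_1=\tilde\delta$, $\beta_2=\beta$), so the only real work is the small-$\gamma$ regime.

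For $\gamma\in(0,\tilde\delta)$ and $\varepsilon\in(0,\tilde\delta)$, Lemma \ref{20190221003} yields a uniform $C^0$-bound for $u^\beta_{\gamma,\varepsilon}(t)$ on $[1/\tilde\delta,\infty)$, while Proposition \ref{2019022401} gives the uniform lower bound $R(\omega^\beta_{\gamma,\varepsilon}(t))-(\beta-\gamma)tr_{\omega^\beta_{\gamma,\varepsilon}(t)}\omega_{\varphi_\varepsilon}-(1-\beta)tr_{\omega^\beta_{\gamma,\varepsilon}(t)}\theta_\varepsilon\geqslant -4n$. I would then run the analogue of Lemma \ref{1.8.601} along $(TKRF^\beta_{\gamma,\varepsilon})$: the evolution equations of $\Delta u^\beta_{\gamma,\varepsilon}(t)$ and $|\nabla u^\beta_{\gamma,\varepsilon}(t)|^2$ are formally the same as those recorded in $(\ref{3.22.16})$ but with an extra non-negative contribution from $\omega_{\varphi_\varepsilon}$ which only helps the maximum principle. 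The auxiliary functions $H=(t-T_1)|\nabla u^\beta_{\gamma,\varepsilon}(t)|^2+\tfrac{3}{2}(u^\beta_{\gamma,\varepsilon}(t))^2$ and $G=\tfrac{-\Delta u^\beta_{\gamma,\varepsilon}(t)}{u^\beta_{\gamma,\varepsilon}(t)+2B}+2\tfrac{|\nabla u^\beta_{\gamma,\varepsilon}(t)|^2}{u^\beta_{\gamma,\varepsilon}(t)+2B}$ then yield the upper bound on the twisted scalar curvature and the $C^1$-bound for $u^\beta_{\gamma,\varepsilon}(t)$ exactly as in the proof of Lemma \ref{1.8.601}, uniformly in $\gamma\in(0,\tilde\delta)$, $\varepsilon\in(0,\tilde\delta)$, and $t\geqslant 1/\tilde\delta+2$.

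To promote these to a uniform metric equivalence, and in particular to the uniform diameter bound, I would use Lemma \ref{20190221003} once more to bound $osc_M\psi^\beta_{\gamma,\varepsilon}(t)$ and $\sup_M tr_{\omega^\beta_\varepsilon}\omega^\beta_{\gamma,\varepsilon}(t)$, and combine these with $\|\dot\psi^\beta_{\gamma,\varepsilon}(t)-\gamma\psi^\beta_{\gamma,\varepsilon}(t)\|_{C^0(M)}\leqslant C$ (obtained as in $(\ref{192202})$ from integrating the Monge--Amp\`ere equation) to also control $tr_{\omega^\beta_{\gamma,\varepsilon}(t)}\omega^\beta_\varepsilon$. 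The standard inequality $tr_{\omega^\beta_{\gamma,\varepsilon}(t)}\omega^\beta_\varepsilon\leqslant\frac{1}{(n-1)!}(tr_{\omega^\beta_\varepsilon}\omega^\beta_{\gamma,\varepsilon}(t))^{n-1}\frac{(\omega^\beta_\varepsilon)^n}{(\omega^\beta_{\gamma,\varepsilon}(t))^n}$ then produces $C^{-1}\omega^\beta_\varepsilon\leqslant\omega^\beta_{\gamma,\varepsilon}(t)\leqslant C\omega^\beta_\varepsilon$, and since $\omega^\beta_\varepsilon\leqslant C\omega_\beta$ uniformly, comparison with the fixed conical metric $\omega_\beta$ (whose metric completion has finite diameter) gives $\mathrm{diam}(M,\omega^\beta_{\gamma,\varepsilon}(t))\leqslant C$, as in $(2.4)$ of \cite{CDS1}. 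Finally, fixing $\gamma_0=\tilde\delta/2$, Theorem \ref{20190314001} applied on $[\gamma_0,\beta]$ glues continuously with the small-$\gamma$ estimates above, producing a single uniform constant valid for all $\gamma\in(0,\beta]$.

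The main obstacle I expect is verifying that the Perelman-type gradient argument of Lemma \ref{1.8.601} truly carries over to $(TKRF^\beta_{\gamma,\varepsilon})$: the extra twist $(\beta-\gamma)\omega_{\varphi_\varepsilon}$ enters the Bochner-type evolution equations, and one must check that its contribution $-\tfrac{\beta-\gamma}{2}\omega_{\varphi_\varepsilon}(\mathrm{grad}\,u^\beta_{\gamma,\varepsilon}(t),\mathcal{J}\,\mathrm{grad}\,u^\beta_{\gamma,\varepsilon}(t))$ has the favourable sign. Because $\omega_{\varphi_\varepsilon}\geqslant 0$ and $\beta-\gamma\geqslant 0$ on $(0,\beta]$, this term indeed has the correct sign so that the auxiliary function arguments go through verbatim, but this point must be checked explicitly. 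Once it is verified, the remainder of the proof is routine bookkeeping combining the two $\gamma$-regimes.
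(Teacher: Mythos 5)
Your proposal is correct and follows essentially the same route as the paper, which compresses its own proof into a single sentence referring back to the argument for Theorem \ref{20190223001}. You correctly identify the key point to verify, namely the favourable sign of the $(\beta-\gamma)\omega_{\varphi_\varepsilon}$ contribution in the Bochner-type evolutions, together with the combination of Lemma \ref{20190221003}, Proposition \ref{2019022401} and the $(TKRF^\beta_{\gamma,\varepsilon})$-analogue of Lemma \ref{1.8.601} on the small-$\gamma$ regime, glued to Theorem \ref{20190314001} on $[\tilde\delta,\beta]$ — exactly the structure of the paper's proof.
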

From the definitions $(\ref{9090990})$ and $(\ref{9090991})$, for $\gamma\in(0,\beta]$, we have the following inequalities.
\begin{equation}\label{20190213133}
\begin{split}
\mathcal{M}^\beta_\gamma(\phi)&=\mathcal{M}_\beta(\phi)+(\beta-\gamma)(I_{\omega_0}-J_{\omega_0})(\phi)-(\beta-\gamma)\frac{1}{V}\int_M \varphi_\beta (dV_0-dV_\phi)\\
&\geqslant \mathcal{M}_\beta(\varphi_\beta)+\frac{(\beta-\gamma)}{n}J_{\omega_0}(\phi)-(\beta-\gamma)\frac{1}{V}\int_M \varphi_\beta (dV_0-dV_\phi)\\
&\geqslant -C,
\end{split}
\end{equation}
where constant $C$ independent of $\gamma$.
\begin{thm}\label{51} Fix a $\beta_0\in(0,\tilde{\delta})$ obtained in Lemma \ref{20190221003}, there exists a constant $C_{\beta_0}$ such that
\begin{equation}\label{52}
\|\psi^\beta_{\beta_0,\varepsilon}(t)\|_{C^0(M)}\leqslant C_{\beta_0}
\end{equation}
for any $\varepsilon\in(0,\tilde{\delta})$ and $t\in[0,\infty)$.
\end{thm}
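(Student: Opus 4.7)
The plan is to prove the uniform $C^0$-bound on $\psi^\beta_{\beta_0,\varepsilon}(t)$ in three stages, mirroring the strategy of Theorem~\ref{2019021901} but adjusted for the positive reaction term $\beta_0\psi$. Since $\beta_0\in(0,\tilde\delta)\subset(0,\beta]$ is fixed and positive, the uniform Perelman estimates of Theorem~\ref{20190314001} and the $\dot\psi$-bound of Proposition~\ref{1.8.5.3} are available uniformly in $\varepsilon$ and $t\geqslant 1$; these will be the main analytic inputs, together with the uniform Sobolev inequalities of Theorems~\ref{1.8.5.1kk}--\ref{1.8.5.10001k}, Proposition~\ref{2019022401}, and the monotonicity of the twisted Mabuchi energy from Theorem~\ref{16}.

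For $t\in[0,1]$, Lemmas~\ref{200}, \ref{21}, and \ref{22} yield a uniform bound on $\varphi^\beta_{\beta_0,\varepsilon}(t)$. Since $\tilde{C}^\beta_{\beta_0,\varepsilon}$ is uniformly bounded, coming from its defining formula combined with the uniform Perelman estimates for the fixed positive $\beta_0$, the identity $\psi^\beta_{\beta_0,\varepsilon}(t) = \varphi^\beta_{\beta_0,\varepsilon}(t) + \tilde{C}^\beta_{\beta_0,\varepsilon} e^{\beta_0 t}$ gives the $C^0$-bound on $[0,1]$. For $t\geqslant 1$, I would rewrite equation~(\ref{TKRF5}) as a complex Monge-Amp\`ere equation at each fixed $t$ and adapt the Moser iteration from Theorem~\ref{2019021901}, using the normalized potential $\tilde\psi = \psi^\beta_{\beta_0,\varepsilon}(t) - \overline\psi^\beta_{\beta_0,\varepsilon}(t)$ to absorb the $\beta_0\psi$ contribution into a constant shift. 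The $\dot\psi$-bound, the uniform Sobolev inequalities, and the uniform lower bound on $R(\omega^\beta_{\beta_0,\varepsilon}(t))-(\beta-\beta_0)\mathrm{tr}_{\omega^\beta_{\beta_0,\varepsilon}(t)}\omega_{\varphi_\varepsilon}-(1-\beta)\mathrm{tr}_{\omega^\beta_{\beta_0,\varepsilon}(t)}\theta_\varepsilon$ provide the remaining ingredients for the iteration, yielding $\mathrm{osc}_M \psi^\beta_{\beta_0,\varepsilon}(t) \leqslant C$ uniformly.

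The main obstacle is then bounding the spatial average $\overline\psi^\beta_{\beta_0,\varepsilon}(t)$ uniformly. I would use Theorem~\ref{16}: along $(TKRF^\beta_{\beta_0,\varepsilon})$ the twisted Mabuchi energy $\mathcal{M}^\beta_{\beta_0,\varepsilon}$ is non-increasing with $\frac{d}{dt}\mathcal{M}^\beta_{\beta_0,\varepsilon} = -\|\nabla u^\beta_{\beta_0,\varepsilon}(t)\|_{L^2}^2$. Together with a uniform-in-$\varepsilon$ lower bound on $\mathcal{M}^\beta_{\beta_0,\varepsilon}$, obtained by comparison with the limiting twisted Log Mabuchi energy $\mathcal{M}^\beta_{\beta_0}$ (which satisfies inequality~(\ref{20190213133}) thanks to the conical K\"ahler-Einstein metric $\omega_{\varphi_\beta}$ being a minimizer of $\mathcal{M}_{\mu_\beta}$ by Theorem~\ref{201902131}) and using the uniform convergence $\varphi_\varepsilon \to \varphi_\beta$ and the bound~(\ref{04}), this shows the $I_{\omega_0}-J_{\omega_0}$ functional of $\psi^\beta_{\beta_0,\varepsilon}(t)$ stays bounded, which combined with the oscillation bound yields a uniform bound on $\overline\psi^\beta_{\beta_0,\varepsilon}(t)$ and hence on $\|\psi^\beta_{\beta_0,\varepsilon}(t)\|_{C^0(M)}$. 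The delicate point is ensuring the Mabuchi comparison is truly uniform in $\varepsilon$, which rests on the positivity of the twisting $(\beta-\beta_0)\omega_{\varphi_\varepsilon}+(1-\beta)\theta_\varepsilon$, the decreasing convergence $\varphi_\varepsilon \searrow \varphi_\beta$, and careful tracking of the $\log(\varepsilon^2+|s|_h^2)$-type terms against $\log|s|_h^2$ in the two functionals.
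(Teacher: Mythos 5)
The statement is left unproved explicitly in the paper, but the implicit argument is short and visible in the surrounding text: for $t\geqslant\frac{1}{\tilde\delta}$ one already has $\operatorname{osc}_M\psi^\beta_{\beta_0,\varepsilon}(t)=\operatorname{osc}_M\varphi^\beta_{\beta_0,\varepsilon}(t)\leqslant L^\beta_0+1$ from Lemma~\ref{20190221003}; integrating equation~(\ref{60}) over $M$ and using the $\dot\psi$-bound of Proposition~\ref{1.8.5.3} together with~(\ref{04}) gives, as in~(\ref{61}), that $\inf_M\psi^\beta_{\beta_0,\varepsilon}(t)\leqslant C$ and $\sup_M\psi^\beta_{\beta_0,\varepsilon}(t)\geqslant -C$; these two facts combine to give the $C^0$-bound for $t\geqslant\frac{1}{\tilde\delta}$, and the interval $[0,\frac{1}{\tilde\delta}]$ is covered by Lemma~\ref{200} plus the uniform bound on $\tilde C^\beta_{\beta_0,\varepsilon}$.

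Your proposal diverges from this in the crucial final step, and the divergence introduces a genuine gap. You try to control the spatial average $\overline\psi^\beta_{\beta_0,\varepsilon}(t)$ by bounding $I_{\omega_0}-J_{\omega_0}$ via a two-sided bound on the twisted Mabuchi energy $\mathcal{M}^\beta_{\beta_0,\varepsilon}$. But $I_{\omega_0}$, $J_{\omega_0}$, and every Mabuchi-type functional in~(\ref{9090989})--(\ref{9090991}) are \emph{translation-invariant}: replacing $\phi$ by $\phi+c$ leaves $dV_\phi$ unchanged and shifts $\frac{1}{V}\int_M\phi\,(dV_0-dV_\phi)$ by $\frac{c}{V}\int_M(dV_0-dV_\phi)=0$. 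Consequently no bound on these functionals can distinguish $\psi$ from $\psi+c$, so they cannot yield a bound on $\overline\psi^\beta_{\beta_0,\varepsilon}(t)$, no matter how precisely you track the Mabuchi comparison in $\varepsilon$. The information you need lives exactly where translation-invariance breaks: the Monge--Amp\`ere equation~(\ref{60}) has $\psi^\beta_{\beta_0,\varepsilon}(t)$ appearing linearly in the exponent of the density (the $-\beta_0\psi$ term), and it is precisely the positivity of $\beta_0$ that makes this term bite. Integrating~(\ref{60}) over $M$ with the bounds on $\dot\psi$, $\varphi_\varepsilon$, and $F_0$ pins down the additive normalization, which your route never sees.

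Two smaller remarks. First, redoing the Moser iteration from Theorem~\ref{2019021901} is unnecessary: the oscillation bound for $t\geqslant\frac{1}{\tilde\delta}$ is already the content of Lemma~\ref{20190221003}, which is the cited input for Theorem~\ref{51}. Second, you attribute ``monotonicity of the twisted Mabuchi energy'' to Theorem~\ref{16}, but Theorem~\ref{16} concerns the functional $A^\beta_{\mu_\gamma,\varepsilon}(t)$; the Mabuchi energy's monotonicity is the separate fact~(\ref{1916001}). Neither misattribution affects the main point, which is that the Mabuchi-energy route to the average is structurally incapable of succeeding.
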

{\it Proof of Lemma \ref{0080}.}\ \ We denote $A=\max(\|\varphi_\beta\|_{C^0(M)}+\frac{\hat{C}_{\beta_0}+|\xi_\beta|}{\beta_0},C_{\beta_0})$. If the lemma is not true. For $\delta_1=\min(1,\tilde{\delta})$, there are $\varepsilon_1\in(0,\delta_1)$, $\gamma'_1\in(\beta_0,\beta)$ and $t'_1\in[\frac{1}{\delta_1},\infty)$, such that
\begin{equation}\label{53}
\|\psi^{\beta}_{\gamma'_1,\varepsilon_1}(t'_1)\|_{C^0(M)}>A+1.
\end{equation}
Assume that $t'_1\in[1,T_1]$, we have
\begin{equation}\label{55}
\sup\limits_{t\in[1,T_1]}\|\psi^{\beta}_{\gamma'_1,\varepsilon_1}(t)\|_{C^0(M)}>A+1.
\end{equation}
Combining Remark \ref{43} with $(\ref{52})$, there exists $\gamma_1\in(\beta_0,\gamma'_1)$, such that
\begin{equation}\label{56}
\sup\limits_{t\in[1,T_1]}\|\psi^{\beta}_{\gamma_1,\varepsilon_1}(t)\|_{C^0(M)}=A+1.
\end{equation}
Then there exists $t_1\in[1,T_1]$ such that
\begin{equation}\label{57}
\|\psi^{\beta}_{\gamma_1,\varepsilon_1}(t_1)\|_{C^0(M)}\geqslant A+\frac{7}{8}.
\end{equation}
For $\delta_2=\min(\frac{1}{2}, \frac{1}{T_1+1}, \varepsilon_1)$, there exist $\varepsilon_2\in(0,\delta_2)$, $\gamma'_2\in(\beta_0,\beta)$ and $t'_2\in[\frac{1}{\delta_2},\infty)$, such that
\begin{equation}\label{53}
\|\psi^{\beta}_{\gamma'_2,\varepsilon_2}(t'_2)\|_{C^0(M)}>A+1.
\end{equation}
Assume $t'_2\in[T_1+1,T_2]$, then we have
\begin{equation}\label{55}
\sup\limits_{t\in[T_1+1,T_2]}\|\psi^{\beta}_{\gamma'_2,\varepsilon_2}(t)\|_{C^0(M)}>A+1.
\end{equation}
Combining Remark \ref{43} with $(\ref{52})$ again, there exists $\gamma_2\in(\beta_0,\gamma'_2)$, such that
\begin{equation}\label{56}
\sup\limits_{t\in[T_1+1,T_2]}\|\psi^{\beta}_{\gamma_2,\varepsilon_2}(t)\|_{C^0(M)}=A+1,
\end{equation}
and then there exists $t_2\in[T_1+1,T_2]$ such that
\begin{equation}\label{57}
\|\psi^{\beta}_{\gamma_2,\varepsilon_2}(t_2)\|_{C^0(M)}\geqslant A+\frac{7}{8}.
\end{equation}
After repeating above process, we get a subsequence $\psi^{\beta}_{\gamma_i,\varepsilon_i}(t_i)$ with $\varepsilon_i\searrow0$, $\gamma_i\in(\beta_0,\beta)$, $t_i\in[T_{i-1}+1,T_{i}]$ and $t_i\nearrow\infty$ satisfying
\begin{equation}\label{58}
\sup\limits_{t\in[T_{i-1}+1,T_{i}]}\|\psi^{\beta}_{\gamma_i,\varepsilon_i}(t)\|_{C^0(M)}=A+1\ \ and \ \ \|\psi^{\beta}_{\gamma_i,\varepsilon_i}(t_i)\|_{C^0(M)}\geqslant A+\frac{7}{8}.
\end{equation}
For any $t\in[T_{i-1},T_{i-1}+1]$, we assume the maximum (or minimun) point of $\psi^{\beta}_{\gamma_i,\varepsilon_i}(t)$ on $M$ is $z_1$ (or $z_2$), then by Proposition \ref{1.8.5.3} and $(\ref{58})$,
\begin{equation}\label{59}
\begin{split}
osc_M\psi^{\beta}_{\gamma_i,\varepsilon_i}(t)&=\sup\limits_M \psi^{\beta}_{\gamma_i,\varepsilon_i}(t)-\inf\limits_M\psi^{\beta}_{\gamma_i,\varepsilon_i}(t)=\psi^{\beta}_{\gamma_i,\varepsilon_i}(t,z_1)-\psi^{\beta}_{\gamma_i,\varepsilon_i}(t,z_2)\\
&=\psi^{\beta}_{\gamma_i,\varepsilon_i}(t,z_1)-\psi^{\beta}_{\gamma_i,\varepsilon_i}(T_{i-1}+1,z_1)+\psi^{\beta}_{\gamma_i,\varepsilon_i}(T_{i-1}+1,z_1)\\
&\ \ \ -\psi^{\beta}_{\gamma_i,\varepsilon_i}(T_{i-1}+1,z_2)+\psi^{\beta}_{\gamma_i,\varepsilon_i}(T_{i-1}+1,z_2)-\psi^{\beta}_{\gamma_i,\varepsilon_i}(t,z_2)\\
&\leqslant 2\|\dot{\psi}^{\beta}_{\gamma_i,\varepsilon_i}(t)\|_{C^0(M)}+2\|\psi^{\beta}_{\gamma_i,\varepsilon_i}(T_{i-1}+1)\|_{C^0(M)}\leqslant C,
\end{split}
\end{equation}
where constant $C$ independent of $\gamma_i$, $\varepsilon_i$ and $t\in[T_{i-1},T_{i-1}+1]$. On the other hand, we can write equation $(\ref{TKRF5})$ as
\begin{equation}\label{60}
(\omega_0+\sqrt{-1}\partial\bar{\partial}\psi^{\beta}_{\gamma,\varepsilon}(t))^n=e^{\dot{\psi}^{\beta}_{\gamma,\varepsilon}(t)-\gamma\psi^{\beta}_{\gamma,\varepsilon}(t)-(\beta-\gamma)\varphi_{\varepsilon}-F_0}\frac{\omega_0^n}{(\varepsilon^2+|s|_h)^{2(1-\beta)}}.
\end{equation}
We integrate above equality on both sides, there exists a uniform constant $C$ such that
\begin{equation}\label{61}
\inf\limits_M\psi^{\beta}_{\gamma,\varepsilon}(t)\leqslant C\ \ \ and\ \ \ \sup\limits_M\psi^{\beta}_{\gamma,\varepsilon}(t)\geqslant -C
\end{equation}
for any $\gamma\in[\beta_0,\beta]$, $\varepsilon\in(0,1)$ and $t\geqslant 1$. Then by $(\ref{59})$, we have
\begin{equation}
\begin{split}
\|\psi^{\beta}_{\gamma_i,\varepsilon_i}(t)\|_{C^0(M)}\leqslant C
\end{split}
\end{equation}
for any $t\in[T_{i-1},T_{i-1}+1]$, $\gamma_i$ and $\varepsilon_i$. Hence there exists uniform constant $C$ such that
\begin{equation}\label{62}
\sup\limits_{t\in[T_{i-1},T_{i}]}\|\psi^{\beta}_{\gamma_i,\varepsilon_i}(t)\|_{C^0(M)}\leqslant C
\end{equation}
for any $\gamma_i$ and $\varepsilon_i$. By Lemma \ref{22} and Proposition \ref{1.8.5.3}, there exists constant $C$ such that
\begin{equation}\label{19240001}
e^{-\frac{C}{t-T_{i-1}}}\omega^{\beta}_{\varepsilon_i}\leqslant\omega^\beta_{\gamma_i,\varepsilon_i}(t)\leqslant e^{\frac{C}{t-T_{i-1}}}\omega^{\beta}_{\varepsilon_i}\ \ \ on\ \ (T_{i-1},T_{i-1}+\frac{1}{4}]\times M.
\end{equation}
Then for any $\gamma_i$ and $\varepsilon_i$, at $t=T_{i-1}+\frac{1}{4}$,
\begin{equation}C^{-1}\omega^\beta_{\varepsilon_i}\leqslant\omega^\beta_{\gamma_i,\varepsilon_i}(T_{i-1}+\frac{1}{4})\leqslant C\omega^\beta_{\varepsilon_i}.
\end{equation}
Then by Proposition $3.1$ in \cite{JWLXZ}, $(\ref{62})$ and Proposition \ref{1.8.5.3}, on $[T_{i-1}+\frac{1}{4},T_i]\times M$, we have
\begin{equation}
e^{-C}\omega^\beta_{\varepsilon_i}\leqslant\omega^\beta_{\gamma_i,\varepsilon_i}(t)\leqslant e^{C}\omega^\beta_{\varepsilon_i}.
\end{equation}
By Evans-Krylov-Safonov's estimates, for any $B_r(p)\subset\subset M\setminus D$, there is uniform constant $C_{k,p,r}$ such that
\begin{equation*}\label{1924}
\|\psi^\beta_{\gamma_i,\varepsilon_i}(t)\|_{C^{k}\big(B_r(p)\big)}\leqslant C_{k,p,r}
\end{equation*}
for any $t\in[T_{i-1}+\frac{1}{2},T_i]$, $\gamma_i$ and $\varepsilon_i$. Hence, for any $K\subset\subset M\setminus D$, we have
\begin{equation}\label{64}
A+\frac{7}{8}\leqslant\|\psi^{\beta}_{\gamma_i,\varepsilon_i}(t_i)\|_{C^0(M)}\leqslant A+1 \ \ and\ \ \|\psi^{\beta}_{\gamma_i,\varepsilon_i}(t_i)\|_{C^k(K)}\leqslant C_{k,K}
\end{equation}
for any $i$ and $k\in\mathbb{N}^+$. Then $\psi^{\beta}_{\gamma_i,\varepsilon_i}(t_i)$ (by taking a subsequence if necessary) converge to a function $\psi^{\beta}_{\gamma_\infty}\in C^0(M)\bigcap C^\infty(M\setminus D)$.

Let $u^{\beta}_{\gamma_\infty}$ be the twisted Ricci potential of $\omega^{\beta}_{\gamma_\infty}=\omega_0+\sqrt{-1}\partial\bar{\partial}\psi^{\beta}_{\gamma_\infty}$. The Lebesgue Dominated Convergence theorem implies that $\frac{1}{V}\int_M e^{-u^{\beta}_{\gamma_\infty}}dV^\beta_{\gamma_\infty}=1$. Since $A^\beta_{\gamma,\varepsilon}(t)$ is increasing along the flow $(TKRF^\beta_{\gamma,\varepsilon})$, for any $j<i$,
\begin{equation}\label{65}
A^\beta_{\gamma_i,\varepsilon_i}(t_j)\leqslant A^\beta_{\gamma_i,\varepsilon_i}(t_i)\leqslant0,
\end{equation}
the second inequality due to Jensen's inequality. Let $i\rightarrow\infty$, we have
\begin{equation}\label{66}
A^\beta_{\gamma_\infty}(t_j)\leqslant \frac{1}{V}\int_M u^{\beta}_{\gamma_\infty}e^{-u^{\beta}_{\gamma_\infty}}dV^\beta_{\gamma_\infty}\leqslant0.
\end{equation}
Since $\mathcal{M}^\beta_{\gamma_\infty}$ is bounded from below (see $(\ref{20190213133})$), by Theorem \ref{46}, we obtain
\begin{equation}\label{67}
\frac{1}{V}\int_M u^{\beta}_{\gamma_\infty}e^{-u^{\beta}_{\gamma_\infty}}dV^\beta_{\gamma_\infty}=0
\end{equation}
after letting $j\rightarrow\infty$. Since function $\log$ is strictly concave, we conclude $u^{\beta}_{\gamma_\infty}=0$ by using Jensen's inequality and the normalization of $u^{\beta}_{\gamma_\infty}$. This means that $\omega^{\beta}_{\gamma_\infty}$ is a twisted conical K\"ahler-Einstein metric with cone angle $2\pi\beta$ along $D$ and satisfies
\begin{equation}\label{68}
Ric(\omega^\beta_{\gamma_\infty})=\gamma_\infty\omega^\beta_{\gamma_\infty}+(\beta-\gamma_\infty)\omega_{\varphi_\beta}+(1-\beta)[D].
\end{equation}
At the same time, we also have
\begin{equation}\label{69}
Ric(\omega_{\varphi_\beta})=\gamma_\infty\omega_{\varphi_\beta}+(\beta-\gamma_\infty)\omega_{\varphi_\beta}+(1-\beta)[D].
\end{equation}
Both $\psi^\beta_{\gamma_\infty}$ and $\varphi_\beta$ are bounded (in fact, they are H\"older continuous). Berndtsson's uniqueness theorem \cite{BBERN} implies that
\begin{equation}\label{70}
\begin{split}
F^*\omega^\beta_{\gamma_\infty}&=\omega_{\varphi_\beta}\\
F^*\Big((\beta-\gamma_\infty)\omega_{\varphi_\beta}+(1-\beta)[D]\Big)&=(\beta-\gamma_\infty)\omega_{\varphi_\beta}+(1-\beta)[D],
\end{split}
\end{equation}
where $F\in Aut(M)$ is generated by a holomorphic vector $X$ on $M$. Since $(\beta-\gamma_\infty)\omega_{\varphi_\beta}$ has zero Lelong number, then by Siu's decomposition for a positive closed current, we have
\begin{equation}\label{71}
F^*[D]=[D]
\end{equation}
which implies that $X$ is tangential to $D$. Then $f=id$ because we assume that there is no nontrivial holomorphic vector fields tangent to $D$. Hence $\omega^\beta_{\gamma_\infty}=\omega_{\varphi_\beta}$, and then $\psi^\beta_{\gamma_\infty}=\varphi_\beta+C_1$. On the other hand, we have
\begin{equation*}\label{1922001}
(\omega_0+\sqrt{-1}\partial\bar{\partial}\psi^\beta_{\gamma_i,\varepsilon_i}(t_i))^n=e^{\dot{\psi}^\beta_{\gamma_i,\varepsilon_i}(t_i)-\gamma_i\psi^\beta_{\gamma_i,\varepsilon_i}(t_i)-(\beta-\gamma_i)\varphi_{\varepsilon_i}-F_0}\frac{\omega_0^n}{(\varepsilon_i^2+|s|_h)^{2(1-\beta)}}.
\end{equation*}
Let $\dot{\psi}^\beta_{\gamma_i,\varepsilon_i}(t_i)=u^\beta_{\gamma_i,\varepsilon_i}(t_i)+c^\beta_{\gamma_i,\varepsilon_i}(t_i)$, where constants $c^\beta_{\gamma_i,\varepsilon_i}(t_i)$ are uniformly bounded and hence converge to a constant $C_2$ (by taking a subsequence if necessary). Since $u^\beta_{\gamma_i,\varepsilon_i}(t_i)$ converge to $0$ in $C^\infty_{loc}$-topology in $M\setminus D$, $\dot{\psi}^\beta_{\gamma_i,\varepsilon_i}(t_i)$ converge to $C_2$ in $C^\infty_{loc}$-topology in $M\setminus D$. Letting $i\rightarrow\infty$ in above equation, we have
\begin{equation}\label{1922002}
(\omega_0+\sqrt{-1}\partial\bar{\partial}\varphi_{\beta})^n=e^{C_2-\gamma_\infty C_1-\beta\varphi_{\beta}-F_0}\frac{\omega_0^n}{|s|_h^{2(1-\beta)}}.
\end{equation}
Equation $(\ref{2019022301})$ implies that $C_1=\frac{C_2-\xi_\beta}{\gamma_\infty}$. From Proposition \ref{1.8.5.3}, we conclude that $|C_1|\leqslant\frac{\hat{C}_{\beta_0}+|\xi_\beta|}{\beta_0}$. Letting $i\rightarrow\infty$ in $(\ref{58})$, we have
\begin{equation*}\label{72}
\frac{\hat{C}_{\beta_0}+|\xi_\beta|}{\beta_0}+\|\varphi_\beta\|_{C^0(M)}\geqslant \|\psi^{\beta}_{\gamma_\infty}\|_{C^0(M)}\geqslant A+\frac{7}{8}\geqslant \|\varphi_\beta\|_{C^0(M)}+\frac{\hat{C}_{\beta_0}+|\xi_\beta|}{\beta_0}+\frac{7}{8}.
\end{equation*}
This leads to a contradiction. Thus Lemma \ref{0080} is proved.

\medskip

By using Lemma \ref{0080} and Remark \ref{43}, we get the uniform $C^0$-estimates for $\psi^\beta_{\beta,\varepsilon}(t)=\psi_{\beta,\varepsilon}(t)$.
\begin{pro} There exists a uniform constant $B_\beta$ such that
\begin{equation}\label{731}
\|\psi_{\beta,\varepsilon}(t)\|_{C^0(M)}\leqslant B_\beta
\end{equation}
for any $\varepsilon\in(0,\delta_{\beta_0})$ and $t\in[0,+\infty)$.
\end{pro}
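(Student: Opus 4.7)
The plan is to deduce the uniform bound for $\psi_{\beta,\varepsilon}(t) = \psi^\beta_{\beta,\varepsilon}(t)$ directly from Lemma \ref{0080} by a limit passage $\gamma \nearrow \beta$, combined with standard short-time estimates. I split the argument into the ranges $t \in [1/\delta_{\beta_0}, +\infty)$ and $t \in [0, 1/\delta_{\beta_0}]$.

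\textbf{Large-time range.} By Lemma \ref{0080}, the bound $\|\psi^\beta_{\gamma,\varepsilon}(t)\|_{C^0(M)} \leq M$ holds for every $\gamma \in [\beta_0,\beta)$, $\varepsilon \in (0, \delta_{\beta_0})$, and $t \in [1/\delta_{\beta_0}, +\infty)$, where $M$ denotes the right-hand side of $(\ref{006})$. Fixing such $\varepsilon$ and $t$ and choosing any sequence $\gamma_i \nearrow \beta$, Remark \ref{43} yields $\varphi^\beta_{\gamma_i,\varepsilon}(t) \to \varphi^\beta_{\beta,\varepsilon}(t) = \varphi_{\beta,\varepsilon}(t)$ in $C^{\alpha'}$-norm globally and in $C^\infty_{loc}$ on $M\setminus D$. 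Using the uniform Perelman's estimates from Theorem \ref{20190314001} and dominated convergence, the defining integrals of $\tilde{C}^\beta_{\gamma,\varepsilon}$ are continuous in $\gamma$ at $\gamma = \beta$, so $\tilde{C}^\beta_{\gamma_i,\varepsilon} \to \tilde{C}^\beta_{\beta,\varepsilon} = \tilde{C}_{\beta,\varepsilon}$. Hence $\psi^\beta_{\gamma_i,\varepsilon}(t) \to \psi_{\beta,\varepsilon}(t)$ pointwise, and the bound passes to the limit: $\|\psi_{\beta,\varepsilon}(t)\|_{C^0(M)} \leq M$ on this time range.

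\textbf{Short-time range.} Writing $\psi_{\beta,\varepsilon}(t) = \varphi_{\beta,\varepsilon}(t) + \tilde{C}_{\beta,\varepsilon} e^{\beta t}$ and using $\varphi^\beta_{\beta,\varepsilon} = \phi^\beta_{\beta,\varepsilon} + k\chi_\beta$, Lemma \ref{200} gives $\|\phi^\beta_{\beta,\varepsilon}(t)\|_{C^0(M)} \leq C$ on $[0, 1/\delta_{\beta_0}]$, so $\|\varphi_{\beta,\varepsilon}(t)\|_{C^0(M)}$ is uniformly bounded there. Since $\mu_\beta > 0$ is fixed, Theorem \ref{1.8.2} delivers uniform Perelman's estimates for the flow $(TKRF_{\beta,\varepsilon})$ independent of $\varepsilon$, and these force $\tilde{C}_{\beta,\varepsilon}$ to be uniformly bounded. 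Combined with $e^{\beta t} \leq e^{\beta/\delta_{\beta_0}}$, this controls $\psi_{\beta,\varepsilon}(t)$ on $[0, 1/\delta_{\beta_0}]$, and taking the maximum of the two bounds yields the desired uniform constant $B_\beta$.

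\textbf{Main obstacle.} The only delicate point is the continuity of $\tilde{C}^\beta_{\gamma,\varepsilon}$ in $\gamma$ at $\gamma = \beta$, which amounts to dominated convergence for $\int_1^\infty e^{-\gamma(t-1)}\|\nabla u^\beta_{\gamma,\varepsilon}(t)\|_{L^2}^2\,dt$ as $\gamma_i \nearrow \beta$. A uniform-in-$\gamma$ dominating function is supplied by the Perelman bounds $\|u^\beta_{\gamma,\varepsilon}\|_{C^1(\omega^\beta_{\gamma,\varepsilon}(t))} \leq C$ for $\gamma \in [\beta_0,\beta]$ from Theorem \ref{20190314001}, together with the uniform diameter bound; the pointwise convergence of the integrand follows from the $C^\infty_{loc}$-convergence of $\omega^\beta_{\gamma_i,\varepsilon}(t)$ on $M \setminus D$ guaranteed by Remark \ref{43}.
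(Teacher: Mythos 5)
Your proof is correct and follows the same route as the paper, which simply invokes Lemma \ref{0080} together with Remark \ref{43} to pass to the limit $\gamma\nearrow\beta$. You fill in what the paper leaves implicit — the convergence $\tilde{C}^\beta_{\gamma,\varepsilon}\to\tilde{C}_{\beta,\varepsilon}$ via dominated convergence (note that the diameter bound is not really needed here: the Perelman $C^1$-bound together with the fixed volume $V$ and $\gamma\geqslant\beta_0$ already yields the dominating function $C\,e^{-\beta_0(t-1)}$), and the short-time range via Lemma \ref{200} plus boundedness of $\tilde{C}_{\beta,\varepsilon}$ — and both steps are sound.
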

At last, we prove Lemma \ref{009}.

\medskip

{\it Proof of Lemma \ref{009}.}\ \   We only prove that there exists $\delta>0$ such that
\begin{equation}\label{73}
\|\psi_{\gamma,\varepsilon}(t)\|_{C^0(M)}\leqslant\max(\|\varphi_\beta\|_{C^0(M)}+\frac{C_\beta+|\xi_\beta|}{\beta},B_{\beta})+1
\end{equation}
for any $\varepsilon\in(0,\delta)$, $\gamma\in[\beta,\beta+\delta)$ and $t\in[\frac{1}{\delta},+\infty)$, where $\xi_\beta$ is the constant in $(\ref{2019022301})$, $B_{\beta}$ is the constant in $(\ref{731})$ and $C_\beta$ comes from Proposition \ref{1.8.5.3.11}. The other direction is similar.

Denote $L=\max(\|\varphi_\beta\|_{C^0(M)}+\frac{C_\beta+|\xi_\beta|}{\beta},B_{\beta})$. If this lemma is not true. For $\delta_1=\min(1, \delta_{\beta_0})$, there exist $\varepsilon_1\in(0,\delta_1)$, $\gamma'_1\in(\beta,\beta+\delta_1)$ and $t'_1\in[\frac{1}{\delta_1},\infty)$, such that
\begin{equation}\label{74}
\|\psi_{\gamma'_1,\varepsilon_1}(t'_1)\|_{C^0(M)}>L+1.
\end{equation}
Assume $t'_1\in[1,T_1]$, then we have
\begin{equation}\label{75}
\sup\limits_{t\in[1,T_1]}\|\psi_{\gamma'_1,\varepsilon_1}(t)\|_{C^0(M)}>L+1.
\end{equation}
Combining Remark \ref{201908001} with $(\ref{731})$, there exists $\gamma_1\in(\beta,\gamma'_1)$, such that
\begin{equation}\label{76}
\sup\limits_{t\in[1,T_1]}\|\psi_{\gamma_1,\varepsilon_1}(t)\|_{C^0(M)}=L+1.
\end{equation}
Then there exists $t_1\in[1,T_1]$ such that
\begin{equation}\label{77}
\|\psi_{\gamma_1,\varepsilon_1}(t_1)\|_{C^0(M)}\geqslant L+\frac{7}{8}.
\end{equation}
For $\delta_2=\min(\frac{1}{2}, \frac{1}{T_1+1}, \varepsilon_1, \gamma_1-\beta)$, there exist $\varepsilon_2\in(0,\delta_2)$, $\gamma'_2\in(\beta,\beta+\delta_2)$ and $t'_2\in[\frac{1}{\delta_2},\infty)$, such that
\begin{equation}\label{78}
\|\psi_{\gamma'_2,\varepsilon_2}(t'_2)\|_{C^0(M)}>L+1.
\end{equation}
Assume $t'_2\in[T_1+1,T_2]$, then we have
\begin{equation}\label{79}
\sup\limits_{t\in[T_1+1,T_2]}\|\psi_{\gamma'_2,\varepsilon_2}(t)\|_{C^0(M)}>L+1.
\end{equation}
Combining Remark \ref{201908001} with $(\ref{731})$ again, there exists $\gamma_2\in(\beta,\gamma'_2)$, such that
\begin{equation}\label{80}
\sup\limits_{t\in[T_1+1,T_2]}\|\psi_{\gamma_2,\varepsilon_2}(t)\|_{C^0(M)}=L+1,
\end{equation}
and then there exists $t_2\in[T_1+1,T_2]$ such that
\begin{equation}\label{81}
\|\psi_{\gamma_2,\varepsilon_2}(t_2)\|_{C^0(M)}\geqslant L+\frac{7}{8}.
\end{equation}
After repeating above process, we get a subsequence $\psi_{\gamma_i,\varepsilon_i}(t_i)$ with $\varepsilon_i\searrow0$, $\gamma_i\searrow\beta$, $t_i\in[T_{i-1}+1,T_{i}]$ and $t_i\nearrow\infty$ satisfying
\begin{equation}\label{82}
\sup\limits_{t\in[T_{i-1}+1,T_{i}]}\|\psi_{\gamma_i,\varepsilon_i}(t)\|_{C^0(M)}=L+1\ \ and \ \ \|\psi_{\gamma_i,\varepsilon_i}(t_i)\|_{C^0(M)}\geqslant L+\frac{7}{8}.
\end{equation}
By the similar arguments as in the proof of Lemma \ref{0080}, we conclude that there exists uniform constants $C$ independent of $\varepsilon_i$ and $\gamma_i$, such that
\begin{equation*}\label{87}
e^{-C}\omega^{\gamma_i}_{\varepsilon_i}\leqslant\omega_{\gamma_i,\varepsilon_i}(t)\leqslant e^{C}\omega^{\gamma_i}_{\varepsilon_i}\ \ \ on\ \ \ [T_{i-1}+\frac{1}{2},T_i]\times M.
\end{equation*}
The Evans-Krylov-Safonov's estimates imply that for any $K\subset\subset M\setminus D$ and $k\in\mathbb{N}^+$, there exists uniform constant $C_{k,K}$ such that for any $\gamma_i$, $\varepsilon_i$ and $t\in[T_{i-1}+\frac{3}{4},T_i]$,
\begin{equation*}\label{87001}
\|\psi_{\gamma_i,\varepsilon_i}(t)\|_{C^k(K)}\leqslant C_{k,K}.
\end{equation*}
Then we can conclude that $\psi_{\gamma_i,\varepsilon_i}(t_i)$ (by taking a subsequence if necessary) converge to a function $\psi_{\beta}\in C^0(M)\bigcap C^\infty(M\setminus D)$.

Let $u_{\beta}$ be the twisted Ricci potential of $\omega_{\psi_\beta}=\omega_0+\sqrt{-1}\partial\bar{\partial}\psi_{\beta}$, then $u_\beta$ satisfies $\frac{1}{V}\int_M e^{-u_{\beta}}dV_{\psi_{\beta}}=1$. Since $A_{\gamma,\varepsilon}(t)$ is increasing along the flow $(TKRF_{\gamma,\varepsilon})$, for any $j<i$,
\begin{equation}\label{89}
A_{\gamma_i,\varepsilon_i}(t_j)\leqslant A_{\gamma_i,\varepsilon_i}(t_i)\leqslant0,
\end{equation}
the second inequality due to Jensen's inequality. Let $i\rightarrow\infty$, we have
\begin{equation}\label{90}
A_\beta(t_j)\leqslant \frac{1}{V}\int_M u_{\beta}e^{-u_{\beta}}dV_\beta\leqslant0.
\end{equation}
Since the Log Mabuchi energy $\mathcal{M}_\beta$ is bounded from below, by Theorem \ref{17}, we obtain
\begin{equation}\label{91}
\frac{1}{V}\int_M u_{\beta}e^{-u_{\beta}}dV_\beta=0
\end{equation}
after letting $j\rightarrow\infty$. Since $\log$ is strictly concave, we conclude $u_{\beta}=0$ by using Jensen's inequality and its normalization. This means that $\omega_{\psi_\beta}$ is a conical K\"ahler-Einstein metric with cone angle $2\pi\beta$ along $D$. Since both $\psi_\beta$ and $\varphi_\beta$ are bounded (in fact, they are H\"older continuous), Berndtsson's uniqueness theorem \cite{BBERN} implies that
$\omega_{\psi_\beta}=\omega_{\varphi_\beta}$, and then $\psi_\beta=\varphi_\beta+C_1$. On the other hand, we have
\begin{equation}\label{1922001111}
(\omega_0+\sqrt{-1}\partial\bar{\partial}\psi_{\gamma_i,\varepsilon_i}(t_i))^n=e^{\dot{\psi}_{\gamma_i,\varepsilon_i}(t_i)-\gamma_i\psi_{\gamma_i,\varepsilon_i}-F_0}\frac{\omega_0^n}{(\varepsilon_i^2+|s|_h)^{2(1-\gamma_i)}}.
\end{equation}
Let $\dot{\psi}_{\gamma_i,\varepsilon_i}(t_i)=u_{\gamma_i,\varepsilon_i}(t_i)+c_{\gamma_i,\varepsilon_i}(t_i)$, where constants $c_{\gamma_i,\varepsilon_i}(t_i)$ are uniformly bounded and hence converge to a constant $C_2$ (by taking a subsequence if necessary). Since $u_{\gamma_i,\varepsilon_i}(t_i)$ converge to $0$ in $C^\infty_{loc}$-topology in $M\setminus D$, $\dot{\psi}_{\gamma_i,\varepsilon_i}(t_i)$ converge to $C_2$ in $C^\infty_{loc}$-topology in $M\setminus D$. Letting $i\rightarrow\infty$ in $(\ref{1922001111})$, we have
\begin{equation}\label{192200211}
(\omega_0+\sqrt{-1}\partial\bar{\partial}\varphi_{\beta})^n=e^{C_2-\beta C_1-\beta\varphi_{\beta}-F_0}\frac{\omega_0^n}{|s|_h^{2(1-\beta)}}.
\end{equation}
Equation $(\ref{2019022301})$ implies that $C_1=\frac{C_2-\xi_\beta}{\beta}$. From Proposition \ref{1.8.5.3.11}, we conclude that $|C_1|\leqslant\frac{C_{\beta}+|\xi_\beta|}{\beta}$. Hence the following inequalities are deduced after we let $i\rightarrow\infty$ in $(\ref{82})$.
\begin{equation}\label{96}
\frac{C_{\beta}+|\xi_\beta|}{\beta}+\|\varphi_\beta\|_{C^0(M)}\geqslant \|\psi_{\beta}\|_{C^0(M)}\geqslant L+\frac{7}{8}\geqslant \frac{C_{\beta}+|\xi_\beta|}{\beta}+\|\varphi_\beta\|_{C^0(M)}+\frac{7}{8}.
\end{equation}
This leads to a contradiction. Thus Lemma \ref{009} is proved.

\medskip

Fix a $\gamma\in(\beta-\delta,\beta+\delta)$ obtained in Lemma \ref{009}, combining Proposition \ref{1.8.5.3.11} with Proposition $3.1$ in \cite{JWLXZ}, we know that there exists uniform constant $C$ depending only on $\|\varphi_0\|_{L^\infty(M)}$, $n$, $\beta$ and $\omega_0$ such that
\begin{equation*}
C^{-1}\omega^{\gamma}_{\varepsilon}\leqslant\omega_{\gamma,\varepsilon}(t)\leqslant C\omega^{\gamma}_{\varepsilon}\ \ \ on\ \ \ [1,\infty)\times M
\end{equation*}
for any $\varepsilon\in(0,\delta)$. Then for $k\in \mathbb{N}^{+} $ and $K\subset\subset M\setminus D$, there exists constant $C_{k,K}$ depending only on $\|\varphi_0\|_{L^\infty(M)}$, $n$, $\beta$, $k$, $\omega_0$ and $dist_{\omega_{0}}(K,D)$, such that for $\varepsilon\in(0,\delta)$ and $t\geqslant1$, we have
\begin{eqnarray}\|\psi_{\gamma,\varepsilon}(t)\|_{C^{k}(K)}\leq C_{k,K}.
\end{eqnarray}
At the same time, the twisted Mabuchi energy $\mathcal{M}_{\gamma,\varepsilon}$ are uniformly bounded from below along the twisted K\"ahler-Ricci flow $(TKRF_{\gamma,\varepsilon})$, that is, there exists uniform constant $C$ such that
\begin{equation*}
\begin{split}
\mathcal{M}_{\gamma,\varepsilon}(\psi_{\gamma,\varepsilon}(t))&=-\gamma\Big(I_{\omega_{0}}(\psi_{\gamma,\varepsilon}(t))-J_{\omega_{0}}(\psi_{\gamma,\varepsilon}(t))\Big)+\frac{1}{V}\int_{M}\log\frac{\omega^n_{\gamma,\varepsilon}(t)}{\omega_{0}^{n}}dV_{\gamma,\varepsilon}(t)\\
&\ \ \ \ \ -\frac{1}{V}\int_{M}\Big(F_0+(1-\gamma)\log(\varepsilon^2+|s|^2_h)\Big)(dV_{0}-dV_{\gamma,\varepsilon}(t))\\
&=-\gamma\Big(I_{\omega_{0}}(\psi_{\gamma,\varepsilon}(t))-J_{\omega_{0}}(\psi_{\gamma,\varepsilon}(t))\Big)+\frac{1}{V}\int_{M}\Big(\dot{\psi}_{\gamma,\varepsilon}(t)-\gamma\psi_{\gamma,\varepsilon}(t)\Big)dV_{\gamma,\varepsilon}(t)\\
&\ \ \ \ \ -\frac{1}{V}\int_{M}\Big(F_0+(1-\gamma)\log(\varepsilon^2+|s|^2_h)\Big)dV_{0}\\
&\geqslant -C
\end{split}
\end{equation*}
for any $\varepsilon\in(0,\delta)$ and $t\geqslant1$. Then by using the arguments in \cite{JWLXZ, JWLXZ1}, we can prove that the conical K\"ahler-Ricci flow $(CKRF_{\gamma})$ converges to a conical K\"ahler-Einstein metric with cone angle $2\pi\gamma$ along $D$ in $C_{loc}^{\infty}$-topology outside $D$ and globally in $C^{\alpha,\gamma}$-sense for any $\alpha\in(0,\min\{1,\frac{1}{\gamma}-1\})$.

\end{document}